\journalname{}
\newtheorem{assumption}{Assumption}
\newtcolorbox{smallObjectiveBox}[1][]{
  breakable,
  enhanced,
  skin=enhanced,  % ✅ 必须加上这一行，才能启用 frame style
  colback=gray!5,
  colframe=black,
  boxrule=0.pt,       % 边框粗细
  arc=2mm,
  left=1pt, right=1pt, top=1pt, bottom=1pt,
  before upper=\everymath{\displaystyle},
  frame style={
	draw=black,
    line width=0.6pt,
    dash pattern=on 3pt off 2pt
  },
  #1
}
\newcommand{\innerp}[1]{\langle {#1} \rangle }
\newcommand{\norm}[1]{\left\| {#1} \right\| }
\newcommand{\zkh}[1]{\left[ {#1} \right]}
\newcommand{\hkh}[1]{\left\{ {#1} \right\}}
\newcommand{\kh}[1]{\left( {#1} \right)}
\DeclareMathOperator*{\argmin}{arg\,min}
\DeclareMathOperator{\ddiag}{diag}
\DeclareMathOperator{\Diag}{Diag}
\DeclareMathOperator{\dist}{dist}
\DeclareMathOperator{\rank}{rank}
\DeclareMathOperator{\vecspan}{span}
\DeclareMathOperator{\ima}{im}
\DeclareMathOperator{\kernel}{ker}
\newcommand{\st}{\mathrm{s.\,t.}}
\newcommand{\projection}{\mathcal{P}}
\newcommand{\manifold}{\mathcal{M}}
\newcommand{\danifold}{\mathcal{D}}
\newcommand{\aanifold}{\mathcal{A}}
\newcommand{\lanifold}{\mathcal{L}}
\newcommand{\wanifold}{\mathcal{W}}
\newcommand{\qanifold}{\mathcal{Q}}
\newcommand{\janifold}{\mathcal{J}}
\newcommand{\canifold}{\mathcal{C}}
\newcommand{\uanifold}{\mathcal{U}}
\newcommand{\vanifold}{\mathcal{V}}
\newcommand{\ranifold}{\mathcal{R}}
\newcommand{\eanifold}{\mathcal{E}}
\newcommand{\kanifold}{\mathcal{K}}
\newcommand{\xanifold}{\mathcal{X}}
\newcommand{\nanifold}{\mathcal{N}}
\newcommand{\sanifold}{\mathcal{S}}
\newcommand{\hanifold}{\mathcal{H}}
\newcommand{\zanifold}{\mathcal{Z}}
\newcommand{\hyperboloid}{\mathbb{H}_{m-1}}
\newcommand{\matrixhyperboloid}{\mathbb{H}^n_{m-1}}
\newcommand{\boundedrank}{\mathcal{M}_{\le r}}
\newcommand{\lowrank}{\mathcal{M}_{s}}
\newcommand{\fixedrank}{\mathcal{M}_{r}}
\newcommand{\stiefel}{\mathrm{St}}
\newcommand{\grassmann}{\mathrm{Gr}}
\newcommand{\trace}{\mathrm{tr}}
\newcommand{\orth}{\mathcal{O}}
\newcommand{\oblique}{\mathrm{Ob}}
\newcommand{\Fsphere}{\mathrm{S_F}}
\newcommand{\affine}{\mathrm{Aff}}
\newcommand{\sdp}{\mathcal{S}}
\newcommand{\boundedranksdp}{\sdp^+_{\le r}}
\newcommand{\boundedranks}{\sym_{\le r}(n)}
\newcommand{\manifoldLR}{\manifold_{\mathrm{LR}}}
\newcommand{\phiLR}{\phi_{\mathrm{LR}}}
\newcommand{\manifoldBM}{\manifold_{\mathrm{BM}}}
\newcommand{\phiBM}{\phi_{\mathrm{BM}}}
\newcommand{\manifolddesing}{\manifold_{\mathrm{desing}}}
\newcommand{\phidesing}{\phi_{\mathrm{desing}}}
\newcommand{\bmanifold}{\overline{\mathcal{M}}}
\newcommand{\bkanifold}{\overline{\mathcal{K}}}
\newcommand{\bhanifold}{\overline{\mathcal{H}}}
\newcommand{\buanifold}{\overline{\mathcal{U}}}
\newcommand{\bq}{\bar{q}}
\newcommand{\secfunda}{\mathrm{II}}
\newcommand{\diff}{\mathrm{D}}
\newcommand{\codiff}{\mathrm{D}^*}
\newcommand{\tangent}{\mathrm{T}}
\newcommand{\normal}{\mathrm{N}}
\newcommand{\tanL}{\mathrm{\bf L}}
\newcommand{\tanQ}{\mathrm{\bf Q}}
\newcommand{\qf}{\operatorname{orth}}
\newcommand{\Fnormal}{\hat{\mathrm{N}}}
\newcommand{\Lnormal}{\mathrm{N}}
\newcommand{\Btangent}{\mathrm{T}}
\newcommand{\tangenttwo}{\mathrm{T}^2}
\newcommand{\twototwo}{{``$2\!\Rightarrow\!2$''}}
\newcommand{\graph}{\mathrm{gph}}
\newcommand{\upr}{\bar{r}}
\newcommand{\downr}{{\underline{r}}}
\newcommand{\sigmar}{\sigma_{r+1}}
\newcommand{\tensorspace}{\mathbb{R}^{n_1\times n_2\times\cdots\times n_d}}
\newcommand{\vecr}{\mathbf{r}}
\newcommand{\boundedtucker}{\mathcal{M}^{\mathrm{tc}}_{\le \vecr}}
\newcommand{\boundedtt}{\mathcal{M}^{\mathrm{tt}}_{\le \vecr}}
\newcommand{\boundedht}{\mathcal{M}^{\mathrm{ht}}_{\le \vecr}}
\newcommand{\mtt}{{\mathrm{tt}}}
\newcommand{\mtc}{{\mathrm{tc}}}
\newcommand{\mht}{{\mathrm{ht}}}
\newcommand{\boundedtensor}{\manifold_{\le \vecr}^\mht}
\newcommand{\ranktt}{\mathrm{rank}_{\mathrm{tt}}}
\newcommand{\ranktc}{\mathrm{rank}_{\mathrm{tc}}}
\newcommand{\rankht}{\mathrm{rank}_{\mathrm{ht}}}
\newcommand{\tensorize}{\mathrm{ten}}
\newcommand{\tensorizeht}{\mathrm{ten}^{\mathrm{ht}}}
\newcommand{\tensY}{\mathbf{Y}}
\newcommand{\tensX}{\mathbf{X}}
\newcommand{\tenseta}{\bm{\eta}}
\newcommand{\tenszeta}{\bm{\zeta}}
\newcommand{\myfig}{{Fig.\,}}
\newcommand{\frob}{\mathrm{F}}
\newcommand{\sksym}{\mathrm{Skew}}
\newcommand{\sym}{\mathcal{S}}
\newcommand{\mbR}{\mathbb{R}}
\newcommand{\R}{\mathbb{R}}
\newcommand{\mbN}{\mathbb{N}}
\newcommand{\mbRmn}{\mathbb{R}^{m\times n}}
\newcommand{\bU}{\bar{U}}
\newcommand{\bV}{\bar{V}}
\newcommand{\bareta}{\bar{\eta}}
\newcommand{\barzeta}{\bar{\zeta}}
\newcommand{\lineB}{\overline{\neighbor}}
\newcommand{\closeconv}{\overline{\mathrm{conv}}}
\newcommand{\clique}{CLIQUE}
\newcommand{\verifysec}{VERSOC}
\newcommand{\neighbor}{\mathcal{B}}
\definecolor{comblue}{RGB}{2,0,255}
\definecolor{blue}{rgb}{0, 0.451, 1}
\definecolor{myblue}{rgb}{0,0,.5}
\definecolor{bblue}{rgb}{0,0,.85}
\definecolor{mygreen}{rgb}{0, 0.62, 0.38}
\definecolor{red}{rgb}{0.796, 0.059, 0.063}
\definecolor{myred}{rgb}{.5,0,0}
\definecolor{LightGray}{rgb}{0.98, 0.98, 0.98}
\definecolor{Gray}{gray}{0.85}
\definecolor{setcolor}{cmyk}{0, 0, 0, 1}  % Blue for the Set
\definecolor{tan1color}{RGB}{70, 160, 220}  % Red for Tangent Cone
\definecolor{tan2color}{RGB}{220, 100, 100}  % Green for 2nd Order
\definecolor{myLightBlueBorder}{RGB}{70, 160, 220} % 柔和的天蓝色
\definecolor{myLightBlueFill}{cmyk}{0.05, 0, 0, 0}  % 极淡的青色背景 (5%)
\definecolor{myLightRedBorder}{RGB}{220, 100, 100} % 柔和的淡红色
\definecolor{myLightRedFill}{cmyk}{0, 0.08, 0.08, 0}
\definecolor{colFocusAlpha}{RGB}{0, 80, 180}   % 重点关注的 Alpha_k (深蓝)
\definecolor{colFocusBeta}{RGB}{180, 30, 30}    % 重点关注的 Beta (深红)
\definecolor{colNormal}{RGB}{0, 0, 0}           % 普通线条 (黑)
\begin{document}

\title{Variational analysis of determinantal varieties\thanks{This work was supported by the National Key R\&D Program of China (grant 2023YFA1009300). BG and YY were supported by the National Natural Science Foundation of China (grant No. 12288201).}}
% \subtitle{Do you have a subtitle?\\ If so, write it here}

\titlerunning{Variational analysis of determinantal varieties}
% \titlerunning{Space-decoupling framework}        % if too long for running headints

\author{Yan Yang  \and Bin Gao \and Ya-xiang Yuan}

% \authorrunning{Space-decoupling framework} % if too long for running head

\institute{
Yan Yang \at
Academy of Mathematics and Systems Science, Chinese Academy of Sciences, and the University of Chinese Academy of Sciences, Beijing, China
\\
\email{yangyan@amss.ac.cn} 
\and
Bin Gao \and Ya-xiang Yuan \at
State Key Laboratory of Mathematical Sciences, Academy of Mathematics and Systems Science, Chinese Academy of Sciences, Beijing, China \\
\email{\{gaobin,yyx\}@lsec.cc.ac.cn; }
}

\date{Received: date / Accepted: date}
% The correct dates will be entered by the editor

\maketitle

\begin{abstract}
Determinantal varieties---the sets of bounded-rank matrices or tensors---have attracted growing interest in low-rank optimization. The tangent cone to low-rank sets is widely studied and underpins a range of geometric methods. The second-order geometry, which encodes curvature information, is more intricate. In this work, we develop a unified framework to derive explicit formulas for both first- and second-order tangent sets to various low-rank sets, including low-rank matrices, tensors, symmetric matrices, and positive semidefinite matrices. The framework also accommodates the intersection of a low-rank set and another set satisfying mild assumptions, thereby yielding a tangent intersection rule. Through the lens of tangent sets,  we establish a necessary and sufficient condition under which a nonsmooth problem and its smooth parameterization share equivalent second-order stationary points. Moreover, we exploit tangent sets to characterize optimality conditions for low-rank optimization and prove that verifying second-order optimality is NP-hard. In a separate line of analysis, we investigate variational geometry of the graph of the normal cone to matrix varieties, deriving the explicit Bouligand tangent cone, Fr\'echet and Mordukhovich normal cones to the graph. These results are further applied to develop optimality conditions for low-rank bilevel programs.

% . 
% via facilitating the derivation of search directions and the formulation of optimality conditions
\keywords{Determinantal variety \and variational analysis \and tangent set \and low-rank optimization  \and bilevel programming}
\PACS{49J53 \and 65K05 \and 65K10 \and 90C30 \and 90C46}
\end{abstract}

\hypersetup{linkcolor=black}
\tableofcontents
\hypersetup{linkcolor=blue}

\section{Introduction}\label{sec:intro}
The low-rank structure of matrix data is widely exploited in various applications to improve memory and computation efficiency \cite{markovsky2008systemide,udell2019big}, which leads to the consideration of the set of bounded-rank matrices:
\begin{equation}\label{eq:boundedrank}
    \boundedrank := \hkh{X \in \mathbb{R}^{m \times n} \mid \rank(X) \leq r},
\end{equation}
where $r\le \min\{m,n\}$ is the rank parameter. In fact, $\boundedrank$ is a real algebraic variety, commonly referred to as the \emph{determinantal variety} \cite{harris2013algebraic}, and the concepts can be extended to tensor determinantal varieties~\cite{kutschan2018tangentTT,gao2025lowranktucker}. 

First-order variational results on $\boundedrank$ \cite{luke2013Mordukhovichcone,cason2013iterative,hosseini2019MordukhovichClarke,olikier2022continuity}, including the characterization of its tangent and normal cones, have contributed to low-rank optimization by allowing the derivation of first-order optimality conditions and supporting the development of geometric methods \cite{schneider2015Lojaconvergence,gao2022rankadap,olikier2023RFDR,olikier2025PGD,olikier2025gauss}. Furthermore, imposing an additional constraint $\hanifold$ on low-rank optimization has attracted growing interest in the geometry of $\boundedrank\cap\hanifold$, which, however, was previously treated on a case-by-case basis for different $\hanifold$ in the literature~\cite{cason2013iterative,tam2017sparsesdp,li2020jotaspectral,li2023normalboundedaffine,yang2025spacedecouple}.

Moreover, the second-order geometry of $\boundedrank$ remains unclear, mainly due to two obstacles: 1) second-order analysis in essence captures curvature information \cite{bonnans2000perturbationanalysis,gfrerer2017robinsonstability}, thereby appearing more involved than the first-order counterpart; 2) the determinantal variety is both nonconvex and nonsmooth~\cite{olikier2022continuity,levin2023remedy}, hindering the direct use of standard tools in variational analysis.

The following two problems underpin the importance of variational analysis of $\boundedrank$.

\paragraph{\emph{\textbf{Optimization problem over bounded-rank matrices.}}} By imposing a low-rank constraint (possibly together with an additional constraint $\hanifold$) on the matrix variable, the following formulation has been successfully used in a multitude of applications~\cite{markovsky2008systemide,zhu2022learningmarkov},
\begin{equation}\label{eq:boundedrankopt}
\begin{aligned}
    \min_{X\in\mbR^{m\times n}}\ \ & f(X)\\[-1.5mm]
    \mathrm{s.\,t.}\ \ \ \ & X\in\boundedrank\cap\hanifold.
    \end{aligned}
\end{equation}
When $\hanifold=\mbRmn$, the problem reduces to minimizing a function over bounded-rank matrices. The first-order optimality conditions are well understood \cite{cason2013iterative,hosseini2019MordukhovichClarke,levin2023remedy}, whereas the second-order counterparts remain ambiguous, since the second-order geometry of $\boundedrank$, specifically its second-order tangent set (see section~\ref{sec:background_vari}), has yet to be fully explored. When the data exhibits an additional structure, the constraint evolves into $X\in\boundedrank\cap\hanifold$ with a nontrivial $\hanifold$~\cite{cason2013iterative,tam2017sparsesdp,li2020jotaspectral,li2023normalboundedaffine,yang2025spacedecouple}. More specifically, when $\hanifold$ enforces the semidefinite constraint, problem~\eqref{eq:boundedrankopt} gives rise to a line of low-rank semidefinite programming problems~\cite{boumal2016BMSDP,wang2023decomposition,tang2024feasibleSDP,levin2025effect}. The coupled structure renders the geometry of the feasible region more intricate, impeding the development of a unified analysis of first- and second-order tangent sets to a general $\boundedrank \cap \hanifold$.

\paragraph{\emph{\textbf{Bilevel programming with low-rank structure.}}}
We consider a bilevel programming problem where the lower-level task seeks a low-rank solution, modeling applications across various fields \cite{shaban2019truncated,grangier2023LLMshift,shen2025seal,zangrando2025debora} (see section~\ref{sec:bilevel_app}):
\begin{equation}\label{eq:lowrank_BiO}\tag{LRBP}
    \begin{aligned}
    \min_{x\in\mbR^q,X^*\in\mbR^{m\times n}}&\ \lanifold(x,X^*)\\[-1.5mm]
    \mathrm{s.\,t.}\ \ \ \ \ \ &\ G(x)\le 0,
        \\
        &\ X^*\in \argmin_{X\in\mbRmn}\ \ F(x,X),
        \\
        &\ \ \ \ \ \ \ \ \ \ \ \ \ \ \mathrm{s.\,t.}\ \ \ X\in\boundedrank.
    \end{aligned}
\end{equation}
Generally, finding a global minimizer of a function subject to the bounded-rank constraint is NP-hard \cite{gillis2011NPlowrank}. Nevertheless, existing literature~\cite{schneider2015Lojaconvergence,levin2023remedy,jia2023convergencePGDKL,aragon2024coderivativeNewton} is able to compute an \emph{M-stationary point}, at which the antigradient belongs to the Mordukhovich normal cone of $\boundedrank$. Therefore, it is reasonable to introduce the following relaxation for \eqref{eq:lowrank_BiO}, by replacing the lower-level global optimality with the M-stationarity,
\begin{equation}\label{eq:lowrank_BiO_M1}
\begin{aligned}
    \min_{x\in\mbR^q,X\in\mbR^{m\times n}}&\ \lanifold(x,X)\\[-1.5mm]
    \mathrm{s.\,t.}\ \ \ \ \ \,&\ G(x)\le 0,
        \\
        &\ -\nabla_X F(x,X)\in\normal_{\boundedrank}(X),
    \end{aligned}
\end{equation}
where $\nabla_X$ denotes the partial gradient with respect to $X$. Since the Mordukhovich normal cone mapping $\normal_{\boundedrank}(\cdot)$ arises in the constraints, its \emph{coderivative}---a generalized subdifferential~\cite{mordukhovich2006variationalI}---would be involved in deriving the first-order optimality condition for \eqref{eq:lowrank_BiO_M1}. Note that the normal cone mapping corresponds to the subdifferential of the indicator function associated with the set $\boundedrank$, and thus the coderivative naturally enters the scope of the second-order variational analysis of $\boundedrank$.

\medskip

Corresponding to the above two examples, we are concerned with variational analysis of determinantal varieties from two aspects: the first- and second-order tangent sets to an array of low-rank sets, and the coderivative of the normal cone mapping $\normal_{\boundedrank}(\cdot)$. Next, we provide an overview of the existing literature and main challenges.

\subsection{Related work and main challenges}\label{sec:relatedwork}
We begin by summarizing the first-order variational results on $\boundedrank$, which enlighten a range of geometric algorithms for low-rank optimization. The term first-order tangent set refers to the Bouligand tangent cone.

\paragraph{\emph{\textbf{First-order variational analysis of the determinantal variety.}}} Research on the geometry of $\boundedrank$, especially the associated tangent and normal cones, has flourished in low-rank optimization. Typical characterizations include the Mordukhovich normal cone \cite{luke2013Mordukhovichcone}, the Bouligand tangent cone \cite{cason2013iterative,schneider2015Lojaconvergence,olikier2025fourtangentproof}, and the Clarke tangent and normal cones \cite{hosseini2019MordukhovichClarke,li2019optimalitylowrank}. The optimality conditions are derived: the projection of the antigradient onto the corresponding tangent cone vanishes. Furthermore, Olikier and Absil \cite{olikier2022continuity} investigated the continuity of the cone mappings, which underlies the so-called \emph{apocalypse} phenomenon observed in optimization problems over the bounded-rank matrices~\cite{levin2023remedy}. 

The developed geometry has given rise to numerous algorithms for low-rank optimization, i.e., problem~\eqref{eq:boundedrankopt} with $\hanifold=\mbRmn$. One class builds on the projected gradient descent framework \cite{schneider2015Lojaconvergence,olikier2022P2GDR,olikier2025PGD}, where each iteration proceeds by taking a descent step followed by a projection onto the feasible region $\boundedrank$. Another line of work embraces a retraction-free approach \cite{schneider2015Lojaconvergence,olikier2023RFDR,olikier2024ERFDR}: instead of performing projections, one adopts search directions from the so-called \emph{restricted tangent cone} to~$\boundedrank$ (see \cite{olikier2023RFDR}) and iterates along straight lines. The third class leverages a smooth parameterization of $\boundedrank$ by constructing a manifold~$\bmanifold$ and a mapping~$\phi$ such that $\phi(\bmanifold) = \boundedrank$, thereby implementing optimization algorithms over the smooth manifold instead of the nonsmooth determinantal variety \cite{khrulkov2018desingularization,rebjock2024boundedrank,levin2023remedy,levin2025effect,olikier2025gauss}. 

When we introduce an additional constraint set $\hanifold$ to study the coupled region $\boundedrank\cap\hanifold$, the geometry becomes more complicated. Specifically, for the case $m=n$ and $\hanifold =\sym(n)$, the set of $n\times n$ symmetric matrices, Tam et al. \cite{tam2017sparsesdp} established the Mordukhovich normal cone to $\boundedrank \cap \hanifold$. Subsequently, Li et al. \cite{li2020jotaspectral} provided formulations for the Fr\'{e}chet normal cones when $\hanifold$ represents the intersection of $\sym(n)$ with the closed unit Frobenius ball, the symmetric box, or the spectrahedron. Moreover, Levin et al. \cite{levin2025effect} explicitly computed the Bouligand tangent cone to $\boundedranksdp(n):=\boundedrank\cap\sym^+(n)$, where $\sym^+(n)$ denotes the closed convex cone of all positive semidefinite matrices in $\sym(n)$. The above advancements require the matrix to be square and symmetric. When $m\neq n$ breaks this symmetry, more challenges arise, and new techniques tailored to the geometry of the coupled region are needed. A closed-form expression of the Bouligand tangent cone to the intersection of $\boundedrank$ and the Frobenius sphere was developed in~\cite{cason2013iterative}. Recently, Li and Luo \cite{li2023normalboundedaffine} characterized the Fr\'{e}chet normal cone to $\boundedrank\cap\hanifold$ with $\hanifold$ as an affine manifold. In addition, Yang et al.~\cite{yang2025spacedecouple} derived the Bouligand tangent cone and the Fr\'{e}chet normal cone to $\boundedrank\cap\hanifold$ for $\hanifold=\{X\in\mbRmn\mid h(X)=0\}$ with a differentiable and \emph{orthogonally invariant} mapping $h$, i.e., $h(X)=h(XQ)$ for all orthogonal matrices~$Q$.

\bigskip

Notice that second-order results of the determinantal variety remain limited. Therefore, we then revisit relevant developments in second-order variational analysis for general sets; details are referred to \cite{rockafellar2009variationalanalysis,mordukhovich2024secondorder} and the references therein. More specifically, regarding certain structured sets, we discuss some existing techniques for deriving the associated second-order tangents set and coderivatives of normal cone mappings.

\paragraph{\emph{\textbf{Second-order tangent set.}}} Analogous to how the second-order derivative of a mapping refines the linear approximation into the quadratic one, the second-order tangent set to a given set provides a more accurate local approximation than the Bouligand tangent cone \cite{bonnans2000perturbationanalysis,chen2019exactSOC}. Therefore, it serves as an important tool for analyzing optimality conditions \cite{bonnans1999parabolicset,gfrerer2022secondnonconvex}, metric subregularity \cite{gfrerer2011subregularity}, and system stability \cite{gfrerer2017robinsonstability}. More relevant to this work, Levin et al. \cite{levin2025effect} tackled optimization problems over a nonsmooth set (e.g., the determinantal variety) by studying the parameterization technique that recasts the problem on a smooth manifold. Specifically, the second-order tangent set to the manifold is exploited to establish the equivalence between the smooth and nonsmooth problems.
% More recently, Gfrerer and Mordukhovich \cite{gfrerer2017robinsonstability} utilized the second-order tangent set to estimate the upper curvature of a set, which was then employed to analyze the Robinson stability of parametric constraint systems. 

There are two classes of sets to which the second-order tangent set is well understood. The first class considers sets of the form $\xanifold = \{X \in \mathbb{R}^q \mid h(X) \in \canifold\}$, where $h$ is a smooth mapping and $\canifold$ is a closed set. A characterization of the second-order tangent set to $\xanifold$ was given in \cite[Proposition 13.13]{rockafellar2009variationalanalysis}. As a special case, when $\xanifold$ is a smooth manifold, an interpretation through the lens of smooth curves was provided in \cite{levin2025effect}. The second class consists of sets of the form $\xanifold = \{X \in \mathbb{R}^q \mid h(X) \le 0\}$, where $h$ is convex. Under the Slater condition, Bonnans et al. computed the second-order tangent set by relating it to the second-order subdifferential of $h$ \cite[Proposition 2.1]{bonnans1999parabolicset}. Applying this theory, the second-order tangent sets to the convex cone $\sym^+(n)$ and to the \emph{second-order cone}\footnote{The second-order cone is defined as $\xanifold := \{(X_1, X_2) \in \mathbb{R} \times \mathbb{R}^q : \norm{X_2}_2 \le X_1 \}$.} (SEC) were characterized in \cite[Example 3.40]{bonnans2000perturbationanalysis} and \cite[Lemma 27]{bonnans2005perturbation}, respectively.

However, when the set $\xanifold$ of interest is both nonsmooth and nonconvex, the analysis should resort to the specific structure of $\xanifold$. An example of $\xanifold$ is the \emph{SEC complementarity set}. In this case, Chen et al. \cite{chen2019exactSOC} noticed that the metric projection operator onto the SEC is well-defined and admits second-order directional derivatives, based on which they gave the exact formula for the second-order tangent set to $\xanifold$.

\paragraph{\emph{\textbf{Coderivative of normal cone mappings.}}} Given a set $\xanifold$, the coderivative of the associated normal cone mapping, denoted by $\codiff\Lnormal_\xanifold$, can be treated as the second-order subdifferential of the indicator function of $\xanifold$ \cite{mordukhovich2015secondvarconic}. The concept of coderivative plays a pivotal role in investigating the stability and sensitivity of variational systems \cite{poliquin1998tilt,dontchev2009implicitsolutionmappings}, regularity properties of set-valued mappings \cite{dontchev2009implicitsolutionmappings}, and optimality conditions of bilevel programming problems \cite{ding2014SDCMPCC,dempe2018optimality}. As pointed out in \cite{chieu2017coderivativeweakcondition}, computing explicitly the coderivative of a given set-valued mapping is generally a demanding task.

When the considered $\xanifold$ is a polyhedral convex set, Dontchev and Rockafellar \cite{dontchev1996polyhedralconvex} characterized the associated $\codiff\Lnormal_\xanifold$; and then a line of works~\cite{gfrerer2015weakest,gfrerer2016computationgeneralizedderivatives,chieu2017coderivativeweakcondition} extended the results to the case of $\xanifold=\{X\in\mbR^q\mid h(X)\in\canifold\}$ with a twice continuously differentiable mapping $h$ and a polyhedral convex set $\canifold$ satisfying some qualification conditions. Additionally, for $\xanifold=\sym^+(n)$ as a closed convex cone, the directional derivative of the projection operator onto $\sym^+(n)$ was exploited to obtain the explicit formula of $\codiff\Lnormal_{\sym^+(n)}$ \cite{ding2014SDCMPCC,wu2014SDCMPCC}. This identification facilitates the derivation of optimality conditions for bilevel programming problems where $\sym^+(n)$ appears as a constraint set in the lower-level problem \cite{dempe2018optimality}.

\bigskip

In summary, the first-order geometry of low-rank sets $\boundedrank \cap \hanifold$ has been treated in the literature only on a case-by-case basis for different choices of $\hanifold$---there is currently no framework that both unifies existing results and guides new developments. In addition, the second-order analysis of $\boundedrank$ is even more challenging. Essentially, the determinantal variety is nonconvex and nonsmooth: the recent work \cite{olikier2022continuity} showed that the first-order geometric objects associated with $\boundedrank$---such as the Mordukhovich normal cone mapping---are discontinuous, further underscoring the difficulty of developing second-order theory. Moreover, the lack of regularity precludes the straightforward employment of existing techniques to low-rank sets. For instance, in the cases of the aforementioned SEC and $\sym^+(n)$, analyzing the directional derivative of the associated projection operator is able to identify the second-order tangent set or the coderivative of the normal cone mapping. However, for the determinantal variety, the projection $\projection_{\boundedrank}(X)$ becomes set-valued when the nonzero $r$-th and $(r+1)$-th largest singular values of $X$ coincide. This non-uniqueness breaks the differentiability of the projection operator, thereby restricting the translation of the existing projection-based techniques.

\subsection{Contributions}
In this paper, we investigate variational properties of low-rank sets. The contributions unfold along two lines: I. a unified framework for developing first- and second-order tangent sets to low-rank sets; II. the geometry of the graph of the normal cone mapping induced by the determinantal variety, which facilitates the characterization of the associated coderivatives; see the roadmap in \myfig\ref{fig:twocontributions}.

\begin{figure}[htbp]
\begin{minipage}{1\textwidth}
% \centering
\hspace{-3.7mm}
\begin{tikzpicture}
	\coordinate (O) at (0,0);
	\coordinate (M1) at (-3.1,2);
	\coordinate (Tangent) at (-3.1,-0.331);

	\coordinate (M2) at (3.2,1.58);
	\coordinate (M3) at (0,-4.);

	\draw[-,thick,color=cyan!85!green!80!blue,dashed,rounded corners,fill=myLightBlueFill] ($(M1)+(-2.96,-0.6)$) rectangle ($(M1)+(3.1,0.65)$);
	\node at (M1) {
		\begin{minipage}{4.5cm}
            \begin{small}
            \centering
            Problem on intersection of sets
			\vspace{-1.5mm}
			\begin{align*}
				\min\ \ f(X)\ \ \ \st\ \ X\in\manifold\cap{\kanifold}
			\end{align*}   
            \end{small}
		\end{minipage} 
	};
	\node at ([xshift=0cm, yshift=10mm]M1) {I. {Tangent sets and optimization}};

	\draw[-,thick,color=cyan!85!green!80!blue,dashed,rounded corners,fill=myLightBlueFill] ($(Tangent)+(-2.96,-0.85)$) rectangle ($(Tangent)+(3.1,0.75)$);
	\node at ($(Tangent)+(0.14cm,0.13cm)$) {
		\begin{minipage}{0.55\textwidth}
            \vspace{3mm}
			\begin{small}
            \phantom{Table}Theorem~\ref{the:cal_tangentsets}: tangent sets to $\manifold$
            \\
            \phantom{Table}Theorem~\ref{the:expressions_McapK}: intersection rule for $\manifold\cap\kanifold$
            \\[1mm]
            \phantom{Theorem}Table~\ref{tab:tangentsets}: applications to low-rank sets
			\end{small}
		\end{minipage} 
	};

	\coordinate (App1) at ([xshift=-1.55cm, yshift=-2.4cm]Tangent);
	\draw[-,thick,color=cyan!85!green!80!blue,dashed,rounded corners,fill=myLightBlueFill] ($(App1)+(-1.4,-1.)$) rectangle ($(App1)+(1.12,0.45)$);
	\node[align=center] at ($(App1)+(-0.15cm,-0.23cm)$) {
        {\small Second-order}
        \\
        {\small optimality (SOC)}
	};

	\coordinate (App2) at ([xshift=1.4cm, yshift=-2.4cm]Tangent);
	\draw[-,thick,color=cyan!85!green!80!blue,dashed,rounded corners,fill=myLightBlueFill] ($(App2)+(-1.6,-1.)$) rectangle ($(App2)+(1.7,0.45)$);
	\node[align=center] at ($(App2)+(0.05cm,-0.24cm)$) {
        {\small Sufficient and necessary}
        \\
        {\small condition for \twototwo}
	};

	\coordinate (NP) at ([xshift=-1.55cm, yshift=-5cm]Tangent);
	\draw[-,thick,color=cyan!85!green!80!blue,dashed,rounded corners,fill=myLightBlueFill] ($(NP)+(-1.4,-1.)$) rectangle ($(NP)+(1.12,0.45)$);
	\node[align=center] at ($(NP)+(-0.2cm,-0.23cm)$) {
		% \small Theorems~\ref{the:np} and \ref{the:np_approx}
        {\small NP-hardness of}
        \\
        {\small verifying SOC}
	};

	\coordinate (find2) at ([xshift=1.4cm, yshift=-5cm]Tangent);
	\draw[-,thick,color=cyan!85!green!80!blue,dashed,rounded corners,fill=myLightBlueFill] ($(find2)+(-1.6,-1.)$) rectangle ($(find2)+(1.7,0.45)$);
	\node[align=center] at ($(find2)+(0.1cm,-0.24cm)$) {
		% \small Theorems~\ref{the:np} and \ref{the:np_approx}
        {\small Parameterizations yield }
        \\
        {\small SOC on $\boundedrank$}
        \\
        {\small only at rank-$r$ points}
	};

	\draw[->,thick] ($(M1)+(0,-0.7)$) -- ($(Tangent)+(0,0.84)$);
    
    \draw[->,thick] 
        ($(Tangent)+(-0.4,-0.93)$) 
        -- ($(App1)+(0.7,0.55)$) 
        node[midway, left, xshift=-1pt] {Propositions~\ref{pro:2order_opt_H}-\ref{pro:sec_opt_Mr}};
        
	\draw[->,thick] ($(Tangent)+(0.4,-0.93)$) -- ($(App2)+(-0.7,0.55)$) node[midway, right, xshift=1pt] {Theorem~\ref{the:2to2}};
    
	\draw[->,thick] ($(App1)+(0.7,-1.1)$) -- ($(NP)+(0.7,0.55)$) node[midway, left, xshift=-1pt] {Theorems~\ref{the:np}-\ref{the:np_approx}};

	\draw[->,thick] ($(App2)+(-0.7,-1.1)$) -- ($(find2)+(-0.7,0.55)$) node[midway, right, xshift=0.1pt] {Propositions~\ref{pro:LR_2to2}-\ref{pro:desing_2to2}};

	\draw[-,thick,color=red!70!red,dashed,rounded corners,fill=myLightRedFill] ($(M2)+(-2.7,-1.5)$) rectangle ($(M2)+(2.7,1.1)$);
	\node at ($(M2)+(0,-0.2)$) {
		\begin{minipage}[c][1.5cm][c]{5cm}\centering
            \begin{small}
                Low-rank bilevel programming problem~\eqref{eq:lowrank_BiO}
            \end{small}
            
            \vspace{1mm}
            
            {\large $\Downarrow$}
            
            \vspace{1mm}
            
            \begin{small}
                Relaxed version involving $\graph\Lnormal_{\boundedrank}$ 
                \\
                as a constraint~\eqref{eq:lowrank_BiO_M}
            \end{small}
		\end{minipage} 
	};
	\node at ([xshift=-0.15cm, yshift=14mm]M2) {II. Geometry of $\graph\normal_{\!\boundedrank}$\,and bilevel program};

	\coordinate (NN) at ([xshift=-0cm, yshift=-2.7cm]M2);
	\draw[-,thick,color=red!70!red,dashed,rounded corners,fill=myLightRedFill] ($(NN)+(-2.7,-0.4)$) rectangle ($(NN)+(2.7,0.6)$);

    \node[align=center] at ($(NN)+(0.cm,0.1cm)$) {
        {\scriptsize Bouligand tangent cone to ${\graph\Lnormal_{\boundedrank}}$}
        \\
        {(\scriptsize Theorem~\ref{the:Btangent_graph})}
        
	};

    \coordinate (Fnormal) at ([xshift=-0cm, yshift=-1.6cm]NN);
    \draw[-,thick,color=red!70!red,dashed,rounded corners,fill=myLightRedFill] ($(Fnormal)+(-2.7,-0.4)$) rectangle ($(Fnormal)+(2.7,0.6)$);
    \node[align=center] at ($(Fnormal)+(0.cm,0.1cm)$) {
    {\scriptsize Fr\'echet normal cone to ${\graph\Lnormal_{\boundedrank}}$}
    \\
    {\scriptsize (Corollaries~\ref{cor:Fnormal_graph_1}-\ref{cor:Fnormal_graph_2})}
	};

    \coordinate (Mnormal) at ([xshift=-0cm, yshift=-1.6cm]Fnormal);
    \draw[-,thick,color=red!70!red,dashed,rounded corners,fill=myLightRedFill] ($(Mnormal)+(-2.7,-0.4)$) rectangle ($(Mnormal)+(2.7,0.6)$);
    \node[align=center] at ($(Mnormal)+(0.cm,0.1cm)$) {
    {\scriptsize Mordukhovich normal cone to ${\graph\Lnormal_{\boundedrank}}$}
    \\
    {\scriptsize(Theorem~\ref{the:Lnormal_graph})}
	};

	\coordinate (BiO) at ([xshift=-0cm, yshift=-1.6cm]Mnormal);
	\draw[-,thick,color=red!70!red,dashed,rounded corners,fill=myLightRedFill] ($(BiO)+(-2.7,-0.4)$) rectangle ($(BiO)+(2.7,0.6)$);
	\node[align=center] at ($(BiO)+(-0cm,0.1cm)$) {
		% Proposition~\ref{pro:M-stationary}
        {\scriptsize Optimality conditions for~\eqref{eq:lowrank_BiO_M}}
        \\
        {\scriptsize (Proposition~\ref{pro:M-stationary})}
        
	};

	\draw[->,thick] ($(M2)+(0,-1.6)$) -- ($(NN)+(0,0.7)$);
    % node[midway, right, xshift=0.1pt]{\scriptsize $\Lnormal_{\le r}\!:X\mapsto \Lnormal_{\boundedrank}\!(X)$};
	\draw[->,thick] ($(NN)+(0,-0.5)$) -- ($(Fnormal)+(0,0.7)$);
	\draw[->,thick] ($(Fnormal)+(0,-0.5)$) -- ($(Mnormal)+(0,0.7)$);
	\draw[->,thick] ($(Mnormal)+(0,-0.5)$) -- ($(BiO)+(0,0.7)$);
	% \draw[->,thick] ($(Tangent)+(0.5,-1.1)$) -- ($(App2)+(0,0.55)$);
	
	% \node at ($(M3)+(0cm,-0.4cm)$) {
	% 	\begin{minipage}{0.5\textwidth}
	% 		\begin{center}
	% 			\normalanswer{\dgret{Discussion on more applications...}}
	% 		\end{center}
	% 	\end{minipage}
	% };
\end{tikzpicture}
\end{minipage}
\caption{Roadmap of the contributions. Part~I: sections~\ref{sec:2tangentset}-\ref{sec:SecondorderstationaryMr}; Part~II: sections~\ref{sec:Geometryofgraph}-\ref{sec:LRBP}.}
\label{fig:twocontributions}
\end{figure}
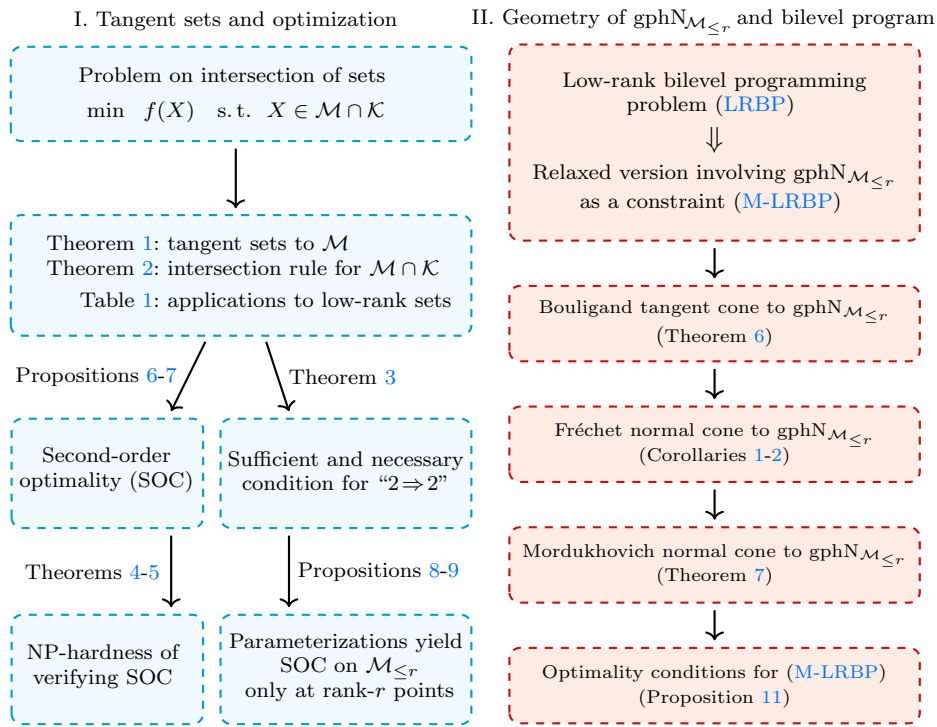

\medskip

\noindent\textbf{I. Tangent sets and optimization.} The main goal of this part is to identify the tangent sets to low-rank sets, thereby providing the optimality analysis for rank-constrained optimization problems.

We endeavor to relate the tangent sets of the determinantal variety to appropriate (generalized) differentials of mappings that capture the low-rank structure. To this end, we consider the singular value mappings which are non-negative and are ordered in a non-increasing fashion: $\sigma_1(X)\ge\sigma_2(X)\ge\cdots\ge\sigma_{\min\{m,n\}}(X)\ge 0$. Subsequently, $\boundedrank$ can be treated as the zeros of the $(r+1)$-th singular value mapping $\sigmar$:
\begin{equation*}
    \boundedrank=\{X\in\mbRmn\mid\sigma_{r+1}(X)=0\}.
\end{equation*}
Specifically, the mapping $\sigmar$ comes with two benefits: 1) it is Lipschitz continuous by Weyl's inequality~\cite{weyl1912inequality}; 2) it satisfies the error bound condition according to the truncated SVD, namely, $\dist(X,\boundedrank)\le (\min\{m,n\}-r)^{1/2} \sigmar(X)$ for any $X\in\mbRmn$. We generalize these two conditions into Assumption~\ref{assu:errorbound}, under which Theorem~\ref{the:cal_tangentsets} is established: given a general set $\manifold$ realized as the zeros of a mapping $c$, the first- and second-order tangent sets to $\manifold$ coincide with the zeros of the first- and second-order directional differentials of $c$, respectively. Applying the developed results to $\manifold=\boundedrank$ and substituting the directional differentials of $\sigmar$ given in~\cite{zhang2013secondordersingular}, we recover the first-order geometry of $\boundedrank$~\cite{schneider2015Lojaconvergence}, and compute explicitly the second-order tangent set to $\boundedrank$ in Proposition~\ref{pro:tangenttwo_mr}. 

Interestingly, the perspectives can be extended to broader scenarios: we view low-rank sets of tensors, symmetric matrices, and positive semidefinite (PSD) matrices as the zeros of singular or eigenvalue mappings (see~\eqref{eq:tensor_defined_sigma}, \eqref{eq:characterization_Sr}, and~\eqref{eq:characterization_S+r}, respectively), which allows us to invoke Theorem~\ref{the:cal_tangentsets} to obtain the associated tangent sets.

Moreover, when an additional structured set $\kanifold$ is imposed, we consider the geometry of the intersection $\manifold\cap\kanifold$, and employ the idea of \emph{smooth parameterization} \cite{levin2025effect}, which introduces a smooth manifold $\bmanifold$ embedded in another Euclidean space $\wanifold$ and a smooth mapping $\phi$ with $\phi(\bmanifold)=\manifold$; see~\myfig\ref{fig:diagram} for an illustration. We pull back $\kanifold$ through $\phi$ to obtain $\bkanifold:=\phi^{-1}(\kanifold)$, and then take into account the intersection $\bmanifold\cap\bkanifold$ in the ``auxiliary space'' $\wanifold$. Theorem~\ref{the:expressions_McapK} reveals that, under Assumption~\ref{assu:MintersecK}, some benign properties of the lift $\bmanifold\cap\bkanifold$ can be inherited by $\manifold\cap\kanifold$, therefore decoupling the computation of the tangent sets to $\manifold\cap\kanifold$ into the intersection of those to each component,
\begin{equation*}
\begin{aligned}
    \Btangent_{\manifold\cap\kanifold}(X) &= \Btangent_{\manifold}(X)\cap\Btangent_{\kanifold}(X),
    \\
    \tangenttwo_{\manifold\cap\kanifold}(X;\eta) &= \tangenttwo_{\manifold}(X;\eta)\cap\tangenttwo_{\kanifold}(X;\eta)\ \ \text{for any}\ \eta\in\Btangent_{\manifold\cap\kanifold}(X).
\end{aligned}
\end{equation*}
Theorem~\ref{the:expressions_McapK} serves as an extension of Theorem~\ref{the:cal_tangentsets} to the intersection of sets.

Applications of Theorems~\ref{the:cal_tangentsets} and~\ref{the:expressions_McapK} to the (structured) low-rank sets---realized with different choices of $\manifold$ and $\kanifold$---are summarized in Table~\ref{tab:tangentsets}. Notably, the Bouligand tangent cones to hierarchical Tucker varieties and to $\boundedrank\cap\hanifold$ with hyperbolic $\hanifold$ are new results, and all the second-order tangent sets reported in the table are also novel, to our knowledge.

Indeed, first- and second-order tangent sets play a crucial role in bridging optimization landscapes. Specifically, as demonstrated in~\myfig\ref{fig:diagram}, the smooth parameterization $(\bmanifold,\phi)$ reformulates the original nonsmooth problem~\eqref{eq:P} as a smooth optimization problem~\eqref{eq:PM}. In light of~\cite{levin2025effect}, Theorem~\ref{the:2to2} exploits the structures of tangent sets to $\bmanifold$ and $\manifold$, and provides a sufficient and necessary condition to characterize when second-order stationary points of~\eqref{eq:PM} map to those of~\eqref{eq:P}, which partially answers a question raised in~\cite[\S 6]{levin2025effect}.

\begin{figure}[htbp]
    \newcommand{\ratio}{0.5}
    \newcommand{\customrectangle}[7]{
            \draw[densely dashed, draw=#6, fill=#7] 
                (#1-#3/2, #2-#4/2) rectangle 
                (#1+#3/2, #2+#4/2); 
            \node at (#1, #2) {#5}; 
        }
    \begin{minipage}{1\textwidth}
    \begin{center}
    \begin{tikzpicture}
        \newcommand{\nodegap}{1.3}
        \tikzset{
        node distance=\nodegap cm,
        post/.style={->,shorten >=2pt,shorten <=2pt,>={Stealth[round]},thick},
        space/.style={
          draw=none, 
          fill=none, 
          inner sep=0pt,
          minimum size=6mm
        },
        hookarrow/.style={{Hooks[left]}->}
        }
        
        \node[space] (Y) {\normalsize $\bmanifold\subseteq\wanifold$};
        \node[space] (X) [below=1.2cm of Y] {\normalsize $\manifold\subseteq \eanifold$};
        \node[space] (R) [right=2.5cm of X] {\normalsize $\mbR$};

        \node[rectangle, text width=6cm, align=left, right=3.5cm of Y, yshift=-0.6cm] (PM) {\normalsize
            \begin{align}
                \min_{Y\in\bmanifold}&\ \bar{f}(Y) = f(\phi(Y))   \tag{P-M} \label{eq:PM}
                \\[2mm]
                \min_{X\in\manifold}&\ f(X)   \tag{P}   \label{eq:P}
            \end{align}
        };

        \draw[post] (Y) to node[midway, left] {\normalsize $\phi$} (X);
        \draw[post] (Y) to node[midway, above, xshift=16pt] {\normalsize $\bar{f}=f\circ \phi$} (R);
        \draw[->,shorten <=4pt,>={Stealth[round]},thick] (X) to node[midway, below] {\normalsize $f$} (R);

    \end{tikzpicture}
    \end{center}
    \end{minipage}
    \caption{Illustration of optimization through a smooth parameterization, where $\manifold$ is a possibly nonsmooth set, $\bmanifold$ is a smooth manifold, and $\phi$ is a smooth mapping between the two Euclidean spaces $\wanifold$ and $\eanifold$.}
    \label{fig:diagram}
\end{figure}

Finally, the developed framework is applied to low-rank optimization in section~\ref{sec:SecondorderstationaryMr}. Specifically, substituting the characterizations of tangent sets to the low-rank sets, we derive in Proposition~\ref{pro:2order_opt_H} the first- and second-order optimality conditions for the low-rank problem~\eqref{eq:boundedrankopt}. Building on these results, we reveal in Theorems~\ref{the:np} and~\ref{the:np_approx} that verifying second-order optimality for low-rank optimization is NP-hard in general. Nevertheless, in certain special cases, it is still possible to achieve second-order stationarity on $\boundedrank$ at rank-$r$ points, by adopting specific smooth parameterizations; see Propositions~\ref{pro:LR_2to2} and~\ref{pro:desing_2to2}. Therefore, the NP-hardness identified in Theorems~\ref{the:np} and~\ref{the:np_approx} essentially stems from the singular points on the determinantal variety.

\begin{table*}[htbp]
    \setlength{\tabcolsep}{5.5pt}
    \setlength{\extrarowheight}{1ex} 
    \centering
    {
    \caption{Summary of Bouligand tangent cones (first-order) and second-order tangent sets to the low-rank sets.}
    \label{tab:tangentsets}
    \vspace{0.1cm}
        \begin{tabular}{@{\hspace{2.5em}}c@{\hspace{0.5em}}clll}
            \toprule
            % \midrule
            \multicolumn{2}{c}{ Set } &  {Format} & {First-order} & {Second-order} \\
            \midrule
            {$\manifold_{\phantom{\le r}}$} & \phantom{\eqref{eq:boundedrank}} &  {Assumption~\ref{assu:errorbound}} & {Theorem~\ref{the:cal_tangentsets}} & {Theorem~\ref{the:cal_tangentsets}} \\
            $\boundedrank$ & \eqref{eq:boundedrank}  & matrix & \cite{cason2013iterative,schneider2015Lojaconvergence} & Proposition~\ref{pro:tangenttwo_mr} \\
            $\boundedht$ & \eqref{eq:httensor}  & hierarchical Tucker & Proposition~\ref{pro:tangenttovarieties}  & Proposition~\ref{pro:tangenttovarieties} \\
            $\boundedtucker$ & \eqref{eq:tctensor} & Tucker & \cite{gao2025lowranktucker}  & Proposition~\ref{pro:tangenttovarieties}  \\
            $\boundedtt$ & \eqref{eq:tttensor} & tensor train & \cite{kutschan2018tangentTT}  & Proposition~\ref{pro:tangenttovarieties} \\
            $\sym_{\le r}(n)$ & \eqref{eq:define_Srn} & symmetric matrix & \cite{li2020jotaspectral} & Proposition~\ref{pro:tangenttoS} \\
            $\boundedranksdp(n)$ & \eqref{eq:boundedranksdp} & PSD matrix & \cite{levin2025effect} & Proposition~\ref{pro:tangenttoS+r} \\
            \midrule
            \multicolumn{2}{c}{Intersection of sets} & {Structured set} & {First-order} & {Second-order} \\
            \midrule
            \multicolumn{2}{c}{{\ \ \ $\manifold\cap\kanifold$}} &  Assumption~\ref{assu:MintersecK} & Theorem~\ref{the:expressions_McapK} & Theorem~\ref{the:expressions_McapK} \\
            \multicolumn{2}{c}{{$\boundedrank\cap\hanifold$}} & $\hanifold=\{X\in\mbRmn\mid\aanifold(X)=b\}$ & \cite{li2023normalboundedaffine} & Appendix~\ref{app:Haffine} \\
            \multicolumn{2}{c}{{$\boundedrank\cap\hanifold$}} & $\hanifold$ is orthogonally invariant~\eqref{eq:orth_invariant_h} & \cite{yang2025spacedecouple} & Appendix~\ref{app:Horthogonalinvariant} \\
            \multicolumn{2}{c}{{$\boundedrank\cap\hanifold$}} & $\hanifold$ is hyperbolic~\eqref{eq:hyperbolicmatrixset} & Appendix~\ref{app:hyperbolic} & Appendix~\ref{app:hyperbolic} \\
            \multicolumn{2}{c}{\!\!\!\!\!$\sdp_{\le r}(n)\cap\uanifold$} & $\uanifold=\{X\in\sym(n)\mid\|X\|^2_\frob=1\}$ & Appendix~\ref{app:SU} & Appendix~\ref{app:SU} \\      
            \multicolumn{2}{c}{\!\!\!\!\!$\boundedranksdp(n)\cap\uanifold$} & $\uanifold=\{X\in\sym(n)\mid\aanifold(X)=b\}$ & \cite{levin2025effect} & Appendix~\ref{app:S+U} \\
            \bottomrule	
        \end{tabular}
    }
\end{table*}

\medskip

\noindent\textbf{II. Geometry of $\bm \graph\normal_{\boundedrank}$\,and bilevel program.} We investigate, in section~\ref{sec:Geometryofgraph}, the geometry of the graph of the Mordukhovich normal cone mapping associated with the determinantal variety $\boundedrank$, along the right side of \myfig\ref{fig:twocontributions}. In detail, the Bouligand tangent cone to $\graph\Lnormal_{\boundedrank}$ is characterized in Theorem~\ref{the:Btangent_graph}, and then we take the polar operation to obtain the Fr\'echet normal cone in Corollaries~\ref{cor:Fnormal_graph_1}-\ref{cor:Fnormal_graph_2}. Consequently, in Theorem~\ref{the:Lnormal_graph}, the Mordukhovich normal cone to $\graph\Lnormal_{\boundedrank}$ is identified as the outer limit of the developed Fr\'echet normal cone, which induces the calculation of the coderivative of the Mordukhovich normal cone mapping $X\mapsto\normal_{\boundedrank}(X)$.

The geometry of $\graph\normal_{\boundedrank}$ is important in analyzing bilevel programs with low-rank constraints at the lower level. Specifically, relaxing the bilevel problem~\eqref{eq:lowrank_BiO}, we propose a formulation~\eqref{eq:lowrank_BiO_M} equivalent to~\eqref{eq:lowrank_BiO_M1}, which involves $\graph\normal_{\boundedrank}$ in the constraints. Consequently, Proposition~\ref{pro:M-stationary} applies the results of Theorem~\ref{the:Lnormal_graph} to give an optimality condition for problem~\eqref{eq:lowrank_BiO_M}.

\subsection{Organization}
Section~\ref{sec:prelimi} reviews some notation and preliminaries from variational analysis. In section~\ref{sec:2tangentset}, we present a rule to compute the first- and second-order tangent sets to a general set, and then apply the results to the determinantal variety $\boundedrank$. Section~\ref{sec:extent_structured} extends the analysis to more structured low-rank sets, including sets of low-rank matrices, tensors, symmetric matrices, PSD matrices, and intersections of sets; details are organized in appendices. Section~\ref{sec:tangent_bridge_land} unveils that the concept of tangent sets indeed bridges optimization landscapes under smooth parameterization. The developed framework is then applied to low-rank optimization in section~\ref{sec:SecondorderstationaryMr}. In section~\ref{sec:Geometryofgraph}, we investigate the geometry of the graph of the normal cone to the determinantal variety, which is applied to give an optimality condition in low-rank bilevel programs; see section~\ref{sec:LRBP}. Finally, we draw the conclusion in section~\ref{sec:conclusion}.

\section{Notation and preliminaries}\label{sec:prelimi}
This section summarizes the notation and reviews some preliminaries in variational analysis. For further background and references, see \cite{bonnans2000perturbationanalysis,rockafellar2009variationalanalysis}.

\subsection{Notation} 
Let $\stiefel(n, p)=\{X \in \mbR^{n \times p}\mid X^{\top} X=I_p\}$ be {the} Stiefel manifold, $\orth(n)=\stiefel(n,n)$ be the orthogonal group, $\lowrank=\{X\in \mbR^{m\times n}\mid \rank(X)=s\}$ be the set of fixed-rank matrices, and $\sksym(n)=\{\Omega\in\mbR^{n\times n}\mid \Omega^\top=-\Omega\}$ be the set of skew-symmetric matrices. We use $\Diag(x)$ to denote the diagonal matrix with diagonal entries given by the vector $x$, and $\ddiag(X)$ to denote the vector extracting the diagonal from a square matrix $X$. Given a smooth manifold $\xanifold$, $\tangent_{\xanifold}(X)$ denotes the tangent space at $X\in\xanifold$. Given a mapping $F:\xanifold_1 \to \xanifold_2$ between two manifolds, $\diff F_{X}: \tangent_{\xanifold_1}(X) \to \tangent_{\xanifold_2}(F(X))$ denotes the differential of $F$ at $X\in\xanifold_1$. The standard inner product in an Euclidean space is given by $\innerp{X_1,X_2}:=\trace(X_1^\top X_2)$. The distance from $Y$ to $\xanifold$ is defined as $\dist(Y,\xanifold)=\inf_{X\in\xanifold}\|X-Y\|$. Let $\projection_\xanifold $ denote the projection onto the set $\xanifold$.  Given a matrix $V\in\stiefel(n,r)$, $V_{\bot}$ is an orthogonal complement of it in the sense of $[V\ V_\bot]\in\orth(n)$. Throughout the paper, whenever the rank of a matrix $X\in\mathbb{R}^{m\times n}$ is explicitly specified---e.g., $\rank(X)=s$ or $X\in\lowrank$---and an SVD of $X$ is invoked, we use the following conventions: for the \emph{compact SVD}, $X= U\varSigma V^\top$ with $U\in\stiefel(m,s)$, $\varSigma\in\mbR^{s\times s}$, and $V\in\stiefel(n,s)$; for the \emph{full SVD}, $X=\bar{U}[\bar{\varSigma}\ 0]\bar{V}^\top$ with $\bar U\in\orth(m)$, $\bar{\varSigma}\in\mbR^{m\times m}$, and $\bar V\in\orth(n)$ (when $m\le n$). Based on the compact SVD, the Moore--Penrose inverse of $X$ is $X^\dagger=V\varSigma^{-1} U^\top$. Additionally, we use $\sigma_i(X)$ to denote the $i$-th {largest} singular value of $X\in\mbRmn$; when $X$ is symmetric, i.e., $X\in\sym(n)$,  we use $\lambda_i(X)$ to denote its $i$-th {largest} eigenvalue. For convenience, we set $\sigma_i \equiv 0$ for all $i > \min\{m, n\}$ and $\lambda_i \equiv 0$ for all $i>n$. Matrices or vectors are denoted by usual roman letters (e.g., $X$, $\eta$), while higher-order tensors are written in boldface (e.g., $\tensX$, $\tenseta$).

\subsection{Background in variational analysis}\label{sec:background_vari}
Let $\xanifold$ be a nonempty and closed subset of a finite-dimensional Euclidean space $\eanifold$. The \emph{Bouligand tangent cone} to $\xanifold$ at a point $X\in\xanifold$ is
\begin{equation}\label{eq:tangentcone}
    \begin{aligned}
        \tangent_\xanifold(X) = \left\{ \eta\in\eanifold\mid \exists t_i\to0,\,\text{such that}\,\dist(X+t_i\eta,\xanifold)=o(t_i)\right\}.
    \end{aligned}
\end{equation}
The \emph{second-order tangent set} to $\xanifold$ at $X$ in the direction $\eta\in\tangent_\xanifold(X)$ is defined by
% \cite{bonnans2000perturbationanalysis} Definition 3.28
\begin{equation}\label{eq:second_tangentcone}
    \begin{aligned}
        \tangenttwo_\xanifold(X;\eta) = \{ \zeta\in\eanifold\mid \exists t_i\to0,\,\text{such that}\, \dist(X+t_i\eta+\frac{1}{2}t_i^2\zeta,\xanifold)=o(t_i^2)\}.      
    \end{aligned}
\end{equation}
In fact, the tangent cone $\tangent_{\xanifold}(X)$ provides a linear approximation of the set $\xanifold$ around $X$, whereas the second-order tangent set further captures curvature information and thus facilitates a more precise local approximation, as illustrated by the example in \myfig\ref{fig:tangent_comparison}.
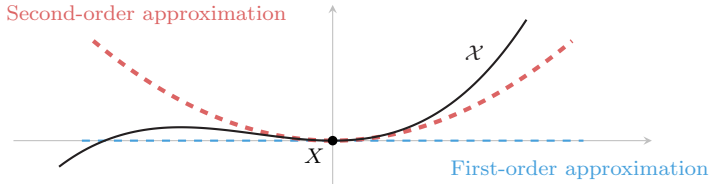
\begin{figure}[htbp]
    \centering
    \begin{tikzpicture}[scale=3, >=stealth]
        % Grid and Axes
        \draw[->, gray!50] (-1.4, 0) -- (1.4, 0) node[above, black] {};
        \draw[->, gray!50] (0, -0.2) -- (0, 0.6) node[left, black] {};
        \node[below left] at (0,0) {$X$};

        % 1. First Order Approximation (Tangent Cone direction)
        \draw[tan1color, thick, dashed] (-1.1, 0) -- (1.1, 0);
        
        % Simplified Label
        \node[tan1color, below, font=\small, align=left] at (1.1, -0.05) {
            {First-order approximation}
        };
        
        % 2. Second Order Approximation (Parabolic path)
        % Flattened by factor 0.5
        \draw[tan2color, ultra thick, dashed, domain=-1.05:1.05, samples=100, opacity=1] 
            plot (\x, {0.4 * (\x*\x)});
            
        % Simplified Label
        \node[tan2color, above, font=\small, align=right] at (-0.8, {0.5*(-0.6)*(-0.6) + 0.3}) {
           {Second-order approximation}
        };

        % 3. The Set M
        % Flattened by factor 0.5
        \draw[setcolor, thick, domain=-1.2:0.85, samples=100] 
            plot (\x, {0.4 * (\x*\x + \x*\x*\x)});
            
        % Label for Set M
        \node[setcolor, left, font=\small] at (0.7, {0.5*(0.6*0.6 + 0.6*0.6*0.6)+0.1}) {
            $\mathcal{X}$
        };

    \fill[black] (0,0) circle (0.6pt); % Small black dot

    \end{tikzpicture}
    \caption{Illustration of approximations to the set $\mathcal{X} = \{(x,y) \in \mathbb{R}^2 \mid y = x^2 + x^3\}$ around $X=(0,0)$. The blue dashed line corresponds to the first-order path $\gamma_1(t)=X+t\eta$ with $\eta=(1,0)\in \mathrm{T}_{\mathcal{X}}(X) = \mathbb{R} \times \{0\}$. The red parabola represents the second-order path $\gamma_2(t)=X + t\eta + \frac{1}{2}t^2\zeta$ with $\zeta=(0,2)\in \mathrm{T}^2_{\mathcal{X}}(X;\eta) = \{(\zeta_1, 2) \mid \zeta_1 \in \mathbb{R}\}$, capturing the curvature of $\mathcal{X}$ and yielding a quadratic approximation of $\xanifold$.}
    \label{fig:tangent_comparison}
\end{figure}

Taking the polar operation on $\tangent_\xanifold(X)$ yields the \emph{Fr\'echet normal cone} (also known as the regular normal cone), 
\begin{equation*}
\Fnormal_\xanifold(X):=\kh{\tangent_\xanifold(X)}^\circ = \hkh{ Y\in\eanifold \mid \langle Y, \eta\rangle \le 0, \ \text{for all}\ \eta \in \Btangent_\xanifold(X)}.
\end{equation*}
The \emph{Mordukhovich normal cone} can be obtained as the outer limit of $\Fnormal_{\xanifold}$, i.e.,
\begin{equation}\label{eq:liminting_normalcone}
\begin{aligned}
    \Lnormal_\xanifold(X) := \{Y\in\eanifold\mid\exists\,X_k\rightarrow X,\  Y_k\rightarrow Y\ \text{such that}\ X_k\in\xanifold,\ Y_k\in\Fnormal_{\xanifold}(X_k)\},
\end{aligned}
\end{equation}
which is also referred to as the limiting normal cone. When $\xanifold$ is a smooth manifold, the two normal cones coincide with the normal space.

We revisit the basics for tangent and normal sets to the union (or intersection) of finite sets; see~\cite{rockafellar2009variationalanalysis,lee2012manifolds}. Given $X\in\bigcup_{i=1}^d{\xanifold_i}$, it holds that
\begin{equation}\label{eq:tangentcupXi}
    \begin{aligned}
        \tangent_{\bigcup\nolimits_{i=1}^d{\xanifold_i}}(X) &=  \bigcup\nolimits_{i=1}^d \Btangent_{\xanifold_i}(X),
        \\
        \tangenttwo_{\bigcup\nolimits_{i=1}^d{\xanifold_i}}(X;\eta) &=  \bigcup\nolimits_{i=1}^d \tangenttwo_{\xanifold_i}(X;\eta)\ \ \text{for any}\ \eta\in\tangent_{\bigcup\nolimits_{i=1}^d{\xanifold_i}}(X),
    \end{aligned}
\end{equation}
where we denote $\Btangent_{\xanifold_i}(X)=\emptyset$ and $\tangenttwo_{\xanifold_i}(X;\eta)=\emptyset$ if $X\notin\xanifold_i$. Moreover, given $X\in \xanifold_1\cap \xanifold_2$, {it has}
\begin{equation}\label{eq:cone_oneside}
    \Btangent_{\xanifold_1\cap \xanifold_2}(X) \subseteq  \Btangent_{\xanifold_1}(X) \cap \Btangent_{\xanifold_2}(X)\quad \text{and} \quad\ 
    \normal_{\xanifold_1\cap \xanifold_2}(X) \supseteq \normal_{\xanifold_1}(X) + \normal_{\xanifold_2}(X).
\end{equation}
Specifically, if both $\xanifold_1$ and $\xanifold_2$ are smooth manifolds and intersect \emph{transversally}, i.e., for any $X\in\xanifold_1\cap\xanifold_2$, $\tangent_{\xanifold_1}(X) + \tangent_{\xanifold_2}(X) = \eanifold$,  or equivalently, $\ \normal_{\xanifold_1}(X)\cap \normal_{\xanifold_2}(X) = \{0\}$, then $\xanifold_1\cap \xanifold_2$ is also a smooth manifold with
\begin{equation}\label{eq:cone_tranverse}
    \Btangent_{\xanifold_1\cap \xanifold_2}(X) = \Btangent_{\xanifold_1}(X) \cap \Btangent_{\xanifold_2}(X)\quad \text{and} \quad\ 
    \normal_{\xanifold_1\cap \xanifold_2}(X) = \normal_{\xanifold_1}(X) + \normal_{\xanifold_2}(X).
\end{equation}

The attention then turns to directional derivatives of a mapping $h:\eanifold_1\to\eanifold_2$. Specifically, we say that $h$ is directionally differentiable at $X\in\xanifold$ in a direction $\eta\in\eanifold_1$ if the following limit exists,
% Definition 2.44
\begin{equation*}
    h^\prime(X;\eta):=\lim_{t\to 0}\frac{h(X+t\eta)-h(X)}{t}.
\end{equation*}
If $h$ is directionally differentiable at $X$ in any direction $\eta\in\eanifold_1$, then $h$ is termed \emph{directionally differentiable} at $X$. Additionally, its parabolic second-order directional derivative is defined by
\begin{equation*}
    h^{\prime\prime}(X;\eta,\zeta):=\lim_{t\to 0}\frac{h(X+t\eta+\frac{1}{2}t^2\zeta)-h(X)-th^\prime(X;\eta)}{\frac{1}{2}t^2},
\end{equation*}
provided that the above limit exists. When $h$ is locally Lipschitz around $X$, we have
\begin{align}
    &h\kh{X+t\eta+o(t)} = h(X) + th^\prime(X;\eta)+o(t),    \label{eq:first_order_exp}
    \\
    &h(X+t\eta+\frac{1}{2}t^2\zeta+o(t^2)) = h(X)+th^\prime(X;\eta)+ \frac{1}{2}t^2h^{\prime\prime}(X;\eta,\zeta) + o(t^2). \label{eq:second_order_exp}
\end{align}

\subsection{Variational geometry of low-rank sets}
The first-order geometry of the low-rank sets is well developed; see~\cite{luke2013Mordukhovichcone,vandereycken2013lowrankcompletion,schneider2015Lojaconvergence}. As a fixed-rank layer of {$\boundedrank=\bigcup_{s=0}^{s=r}\lowrank$}, $\lowrank$ is indeed an analytic manifold. Given $X\in\lowrank$ with the singular value decomposition $X=U\varSigma V^\top$, the tangent and normal spaces are outlined below, 
\begin{align}
     & \tangent_{\lowrank}(X) = \hkh{[U\ U_{\bot}]\left[ \begin{matrix}
        	W_1&		W_2\\
        	W_3&		0\\
        \end{matrix} \right][V\ V_{\bot}]^\top \left|\,\begin{array}{l}
        W_1\in\mbR^{s\times s},\, W_2\in\mbR^{s\times (n-s)},
        \\
        W_3\in\mbR^{(m-s)\times s}
        \end{array}\right.
        },  \label{eq:Tcone_lowrank}
    \\[1mm]
    &\normal_{\lowrank}(X) = \hkh{U_\bot JV_\bot^\top\mid J\in\mbR^{(m-s)\times (n-s)}}. \label{eq:Ncone_lowrank}
\end{align}
Assembling the layers yields the bounded-rank set $\boundedrank$, with its tangent and normal cones at {$X\in\lowrank$} formulated as follows,
\begin{align}
    &\Btangent_{\boundedrank}(X) = \tangent_{\lowrank}(X) + \hkh{R\in\normal_{\lowrank}(X)\mid\rank(R)\le r-s}, \label{eq:Btangent_cone_boundedrank}
    \\
    &\Fnormal_{\boundedrank}(X) = \begin{cases}
        \normal_{\lowrank}(X),\,\,\quad \ \mathrm{if}\,\,s=r,\\
    \left\{ 0 \right\}, \,\,\quad\quad\quad \ \,  \mathrm{if}\,\,s<r,
    \end{cases} \label{eq:Fnormal_cone_boundedrank}
    \\
    &\Lnormal_{\boundedrank}(X) = \hkh{R\in\normal_{\lowrank}(X)\mid\rank(R)\le \min\{m,n\}-r}.
    \label{eq:Lnormal_cone_boundedrank}
\end{align}
{Let $P_W := WW^\top$ and $P_{W_\bot}:=I-P_W$ for any $W\in\stiefel(n,s)$.} The projection of $E\in\mbR^{m\times n}$ onto $\tangent_{\boundedrank}(X)$ is given by
\begin{equation*}
    \projection_{\tangent_{\boundedrank}(X)}(E) = P_UEP_V + P_UEP_{V_\bot} + P_{U_\bot}EP_V + \projection_{\manifold_{r-s}} \kh{P_{U_\bot}EP_{V_\bot}}.
\end{equation*}

\section{First- and second-order tangent sets}\label{sec:2tangentset}
As introduced in section~\ref{sec:relatedwork}, the first- and second-order tangent sets play a key role in developing optimality conditions for constrained problems \cite{bonnans2000perturbationanalysis,chen2019exactSOC}. Specifically, regarding the determinantal variety $\boundedrank$, its first-order geometry is well understood~\cite{luke2013Mordukhovichcone,vandereycken2013lowrankcompletion,schneider2015Lojaconvergence}, while the second-order counterpart remains unclear. Additionally, we note that the rank function used to define $\boundedrank$ in \eqref{eq:boundedrank} is discontinuous, through which implementing the second-order variational analysis is not straightforward. To circumvent the challenges, we turn to the following perspective,
\begin{equation}\label{eq:sigmarboundedrank}
    \boundedrank=\{X\in\mbRmn\mid\sigma_{r+1}(X)=0\}.
\end{equation}
This serves as another characterization of $\boundedrank$, since the singular value mappings are non-negative and are ordered in a non-increasing fashion---$\sigma_1(X)\ge\sigma_2(X)\ge\cdots\ge\sigma_{\min\{m,n\}}(X)\ge 0$---implying that $\sigma_{r+1}(X)=0$ {if and only if} $\rank(X)\le r$.

Although the mapping $\sigmar$ still exhibits both nonsmoothness and nonconvexity, we notice that
\begin{itemize}
    \item[1)] it is locally Lipschitz, since $|\sigmar(X)-\sigmar(X+\Delta)|\le \norm{\Delta}_2$ for any $\Delta\in\mbRmn$ by Weyl's inequality \cite{weyl1912inequality}; and
    \item[2)] according to the truncated SVD, $\sigmar$ can control the distance of points to the set $\boundedrank$, i.e., $\dist(X,\boundedrank)\le (\min\{m,n\}-r)^{1/2} \sigmar(X)$ for any $X\in\mbRmn$.
\end{itemize}
Motivated by the two observations, we distill the ideas into a rule for computing tangent sets to a general set, as presented in section~\ref{sec:generaltengentset}. Subsequently, the developed results, together with several properties of singular value mappings reviewed in section~\ref{sec:directionalderisigma}, are applied to the determinantal variety in section~\ref{sec:tangenttoMr}.

\subsection{Tangent sets to a general set}\label{sec:generaltengentset}
Given an Euclidean space $\mbR^q$ and two mappings $c_1:\mbR^{q}\to\mbR^{n_1}$ and $c_2:\mbR^{q}\to\mbR^{n_2}$, define the set $\manifold\subseteq\mbR^q$ by
\[\manifold:=\{\tilde{X}\in\mbR^{q}\mid\ c_1(\tilde{X})=0,\ c_2(\tilde{X})\le 0\},\]
where the relations ``$=$" and ``$\le$" are understood component-wise. Around a point $X\in\manifold$, we introduce a regularity assumption as follows.

\begin{assumption}\label{assu:errorbound}
There exists a neighborhood $\neighbor$ of $X\in\manifold$ and a constant $\rho>0$ satisfying the following two conditions.
\begin{itemize}
\item[\emph{(i)}] Both $c_1$ and $c_2$ are Lipschitz in $\neighbor$;
\item[\emph{(ii)}] For all $\tilde X\in \neighbor$, it holds that $\dist(\tilde{X},\manifold)\le\rho\|(c_1(\tilde{X}),\,[c_2(\tilde{X})]_+)\|_2$, where $[\,\cdot\,]_+$ is a component-wise operation and maps each entry $e$ to $\max\{e,0\}$.
\end{itemize}
\end{assumption}

Under the above assumption, we can bridge the tangent sets to $\manifold$ with the directional derivatives of $c_1$ and $c_2$.
\begin{theorem}[Computation of tangent sets]\label{the:cal_tangentsets}
Suppose that $\manifold$ satisfies Assumption~\ref{assu:errorbound} at $X\in\manifold$, and define the index set $I_0(X):=\{j\in\{1,\dots,n_2\}\mid c_2(X)_j=0\}$.
\begin{enumerate}
\item[\emph{(i)}] \emph{(First-order)} If $c_1$ and $c_2$ are directionally differentiable at $X$, then
\[
\tangent_{\manifold}(X)=\big\{\eta\in\mbR^q\,|\ c_1^\prime(X;\eta)=0,\ c_2^\prime(X;\eta)_j\le 0\ \ \text{for all}\ j\in I_0(X)\big\}.
\]

\item[\emph{(ii)}] \emph{(Second-order)} If, in addition, $c_1$ and $c_2$ admit parabolic second-order directional derivatives at $X$ for every direction pair $(\eta,\zeta)$, then for any $\eta\in\tangent_{\manifold}(X)$,
\[
\tangenttwo_{\manifold}(X;\eta)
=\Big\{\zeta\in\mbR^q\,|\ c_1^{\prime\prime}(X;\eta,\zeta)=0,\ c_{2}^{\prime\prime}(X;\eta,\zeta)_j\le 0\ \ \text{for all}\ j\in I_1(X;\eta)\Big\},
\]
where $I_1(X;\eta):=\{j\in I_0(X)\,|\ c_2^{\prime}(X;\eta)_j=0\}$.
\end{enumerate}
\end{theorem}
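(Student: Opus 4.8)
The plan is to establish each of the two set equalities by proving the two inclusions separately. For the inclusion in which the tangent set is contained in the directional-derivative characterization (necessity), I would argue directly from the definitions \eqref{eq:tangentcone} and \eqref{eq:second_tangentcone} together with the Lipschitz expansions \eqref{eq:first_order_exp}--\eqref{eq:second_order_exp}, whose validity is exactly what Assumption~\ref{assu:errorbound}(i) guarantees. For the reverse inclusion (sufficiency), the error bound in Assumption~\ref{assu:errorbound}(ii) is the crucial tool: it converts $\dist(\,\cdot\,,\manifold)$ into the size of the constraint residual $\|(c_1,[c_2]_+)\|_2$, which can then be controlled termwise by the directional derivatives. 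The two parts share this template, with part (ii) run one order higher.

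For part (i), necessity: given $\eta\in\tangent_{\manifold}(X)$, pick $t_i\downarrow 0$ and $X_i\in\manifold$ with $X_i=X+t_i\eta+o(t_i)$. Applying \eqref{eq:first_order_exp} to $c_1$ and using $c_1(X)=c_1(X_i)=0$ gives $t_i c_1'(X;\eta)+o(t_i)=0$; dividing by $t_i$ and letting $i\to\infty$ yields $c_1'(X;\eta)=0$. The same expansion applied to each $j\in I_0(X)$, together with $c_2(X_i)_j\le 0$, gives $c_2'(X;\eta)_j\le 0$. For sufficiency, take $\eta$ in the right-hand set and set $\tilde X=X+t\eta$, which lies in $\neighbor$ for small $t$, so the error bound applies. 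The expansion shows $c_1(\tilde X)=o(t)$; for $j\in I_0(X)$ the sign condition $c_2'(X;\eta)_j\le 0$ forces $[c_2(\tilde X)_j]_+=o(t)$ (vanishing identically once the derivative is strictly negative), and for $j\notin I_0(X)$ continuity makes the $j$-th residual vanish for small $t$. Hence the residual vector is $o(t)$, and the error bound delivers $\dist(X+t\eta,\manifold)=o(t)$, i.e.\ $\eta\in\tangent_{\manifold}(X)$.

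Part (ii) proceeds identically at second order, now using $\eta\in\tangent_{\manifold}(X)$ (so $c_1'(X;\eta)=0$ and $c_2'(X;\eta)_j\le 0$ on $I_0(X)$, from part (i)) and the expansion \eqref{eq:second_order_exp}. For necessity, a point $X_i=X+t_i\eta+\tfrac12 t_i^2\zeta+o(t_i^2)\in\manifold$ gives, after cancelling the zeroth- and first-order terms, $c_1''(X;\eta,\zeta)=0$ and $c_2''(X;\eta,\zeta)_j\le 0$ for $j\in I_1(X;\eta)$. For sufficiency I would invoke the error bound at $Z(t)=X+t\eta+\tfrac12 t^2\zeta$ and estimate the residuals componentwise. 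The main obstacle, and the only genuinely delicate point, is the three-way case analysis for the inequality residuals: indices in $I_1(X;\eta)$ are handled as the active indices were at first order, giving $o(t^2)$; indices in $I_0(X)\setminus I_1(X;\eta)$ have $c_2'(X;\eta)_j<0$ strictly, so the first-order term dominates and drives $c_2(Z(t))_j$ negative, whence $[c_2(Z(t))_j]_+=0$ for small $t$; and inactive indices $j\notin I_0(X)$ vanish by continuity. Confirming that the middle case cannot spoil the $o(t^2)$ estimate is precisely what justifies matching the index set $I_1(X;\eta)$ in the statement. Collecting the three cases shows the residual vector is $o(t^2)$, and the error bound then yields $\dist(Z(t),\manifold)=o(t^2)$, i.e.\ $\zeta\in\tangenttwo_{\manifold}(X;\eta)$.
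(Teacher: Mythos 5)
Your proposal is correct and follows essentially the same route as the paper's proof: the necessity inclusions come from the Lipschitz expansions \eqref{eq:first_order_exp}--\eqref{eq:second_order_exp} applied along feasible points realizing the tangent/second-order tangent direction, and the sufficiency inclusions come from bounding the constraint residual componentwise and invoking the error bound of Assumption~\ref{assu:errorbound}(ii). The only difference is that you make the case analysis for indices in $I_0(X)\setminus I_1(X;\eta)$ and $j\notin I_0(X)$ fully explicit, which the paper leaves implicit; this is a presentational refinement rather than a different argument.
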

\begin{proof}
    (i) If $\eta\in\tangent_\manifold(X)$, there exist $t_i\to 0,$ and $\eta_i\to\eta$ such that $X+t_i\eta_i\in\manifold$. Therefore, we have $0=c_1(X+t_i\eta_i)-c_1(X)=t_ic_1^\prime(X;\eta)+o(t_i)$ and $0\ge c_2(X+t_i\eta_i)_j-c_2(X)_j=t_ic_2^\prime(X;\eta)_j+o(t_i)$ for $j\in I_0(X)$, by the local Lipschitzness of $c_1,c_2$. Dividing the (in)equalities by $t_i$ and letting $t_i\to 0$ yield $c_1^\prime(X;\eta)=0$ and $c_2^\prime(X;\eta)_j\le 0$, respectively. 
    
    Conversely, given a direction $\eta$ with $c_1^\prime(X;\eta)=0$, which implies that $c_1(X+t\eta)-c_1(X)=o(t)$ and thus $c_1(X+t\eta)=o(t)$. For $j\in I_0(X)$, if $c_2^{\prime}(X;\eta)_j<0$, it holds that $c_2(X+t\eta)_j<0$ when $t$ is small enough. If $c_2^{\prime}(X;\eta)_j=0$, we have $c_2(X+t\eta)_j=c_2(X+t\eta)_j-c_2(X)_j=o(t)$. Consequently, we have $\dist(X+t\eta,\manifold)=o(t)$ since $\dist(X+t\eta,\manifold)\le\rho\|(c_1(X+t\eta),\max\{c_2(X+t\eta),0\})\|$. Therefore, it is concluded that $\eta\in\tangent_\manifold(X)$ by definition of the Bouligand tangent cone.

    (ii) If $\zeta\in\tangenttwo_\manifold(X;\eta)$, there exist $t_i\to 0$ and $\zeta_i\to\zeta$ such that $X+t_i\eta+\frac{1}{2}t_i^2\zeta_i\in\manifold$, which reveals that $0=c_1(X+t_i\eta+\frac{1}{2}t_i^2\zeta_i)-c_1(X)=\frac{1}{2}t_i^2c_1^{\prime\prime}(X;\eta,\zeta)+o(t_i^2)$, and $0\ge c_2(X+t_i\eta+\frac{1}{2}t_i^2\zeta_i)_j-c_2(X)_j=\frac{1}{2}t_i^2c_2^{\prime\prime}(X;\eta,\zeta)_j+o(t_i^2)$ for $j\in I_1(X;\eta)$, by the local Lipschitzness of $c_1,c_2$. Hence, we divide the (in)equalities by $t_i^2$ and let $t_i\to 0$ to obtain $c_1^{\prime\prime}(X;\eta,\zeta)=0$ and $c_2^{\prime\prime}(X;\eta,\zeta)_j\le 0$.

    Conversely, if $c_1^{\prime\prime}(X;\eta,\zeta)=0$, it holds that $c_1(X+t\eta+\frac{1}{2}t^2\zeta)-c_1(X)=o(t^2)$. Additionally, for $j\in I_1(X;\eta)$, if $c_2^{\prime\prime}(X;\eta,\zeta)_j<0$, it holds that $c_2(X+t\eta+\frac{1}{2}t^2\zeta)_j<0$ for small enough $t$; if $c_2^{\prime\prime}(X;\eta,\zeta)_j=0$, it holds that $c_2(X+t\eta+\frac{1}{2}t^2\zeta)_j=o(t^2)$. Therefore, we derive that $\dist(X+t\eta+\frac{1}{2}t^2\zeta,\manifold)=o(t^2)$ since $\dist(X+t\eta+\frac{1}{2}t^2\zeta,\manifold)\le\rho\|(c_1(X+t\eta+\frac{1}{2}t^2\zeta),\max\{o(t^2),0\})\|$. By definition of the second-order tangent set, we have $\zeta\in\tangenttwo_\manifold(X;\eta)$.
\end{proof}

\begin{remark}
    In fact, the inequality $\dist(\tilde{X},\manifold)\le\rho\|(c_1(\tilde{X}),\,[c_2(\tilde{X})]_+)\|_2$ in Assumption~\ref{assu:errorbound} is the so-called \emph{error bound property} \cite{luo1993errorbound,luo1994errorboundapplication}, which has garnered wide interest in optimization and variational analysis; see \cite{luo1993errorbound,luo1994errorboundapplication,aze2003errorboundsurvey} and references therein for more details.
\end{remark}

\subsection{Directional derivatives of singular values}\label{sec:directionalderisigma}
We aim at applying Theorem~\ref{the:cal_tangentsets} to the determinantal variety $\boundedrank=\{X\in\mbRmn\mid\sigma_{r+1}(X)=0\}$. To this end, we review the first- and second-order directional derivatives of singular-value mappings, as developed in \cite{lewis2005nonsmoothPartI,zhang2013secondordersingular,ding2014introductiontomatrixCP}. These will be instrumental in identifying the zeros of $\sigma_{r+1}^\prime$ and $\sigma_{r+1}^{\prime\prime}$, which in turn characterize the tangent sets to $\boundedrank$.

We follow the notation introduced in \cite{zhang2013secondordersingular}, which, although somewhat involved, is essential for the analysis. Without loss of generality, assume that $m\le n$. Given $X\in\mbRmn$ with the full SVD,
\begin{equation}\label{eq:fullSVD}
    X=\bar{U}[\bar{\varSigma}\ 0]\bar{V}^\top,
\end{equation}
where $\ddiag(\bar{\varSigma})=(\sigma_1(X),\sigma_2(X),\ldots,\sigma_m(X))$. The set of such matrices $(\bU,\bV)$ is defined by
\begin{equation*}
    \orth^{m,n}(X):=\{(\bU^\prime,\bV^\prime)\in\orth(m)\times \orth(n)\mid X=\bU^\prime[\bar{\varSigma}\ 0]\bV^{\prime\top}\}.
\end{equation*}
Similarly, if $X \in \sym(m)$, we define the following set according to the spectral decomposition,
\begin{equation*}
    \orth^{n}(X):=\{\bU^\prime\in \orth(n)\mid X=\bU^\prime \bar{\varLambda} \bU^{\prime\top}\},
\end{equation*}
where $\bar{\varLambda}=\Diag(\lambda_1(X),\lambda_2(X),\ldots,\lambda_m(X))$ collects the eigenvalues of $X$. Let $I \subseteq \{1,2,\ldots,m\}$ and $J \subseteq \{1,2,\ldots,n\}$ be index sets. For a matrix $Z$, we denote by $Z_{IJ}$ the submatrix obtained by selecting
the rows indexed by $I$ and the columns indexed by $J$.
Likewise, $Z_{I}$ denotes the submatrix of $Z$ formed by the columns indexed by $I$.

The index set $\{1,2,\ldots,n\}$ is divided into three subsets: $\alpha =\{i \mid \sigma_i(X)>0,1 \leq i \leq m\}$, $\beta =\{i \mid \sigma_i(X)=0,1 \leq i \leq m\}$, and $\beta_0 =\{m+1, \ldots, n\}$. Suppose that $X$ admits $t+1$ distinct singular values, $\mu_1>\mu_2>\ldots>\mu_t>\mu_{t+1}=0$, based on which we categorize the index set $\alpha$ into $\alpha_k=\left\{i\mid \sigma_i(X)=\mu_k, 1 \leq i \leq m\right\}$ for $k=1, \ldots, t$. Denote $\alpha_{t+1}=\beta$ and $\widehat{\beta}=\beta\cup\beta_0$. We then partition the matrices as $\bU=[\bU_{\alpha_1}\ \bU_{\alpha_2}\ \cdots\ \bU_{\alpha_{t+1}}]$ and $\bV=[\bV_{\alpha_1}\ \bV_{\alpha_2}\ \cdots\ \bV_{\alpha_t}\ \bV_{\widehat{\beta}}]$, where $\bU_{\alpha_k}\in\mbR^{m\times|\alpha_k|}$ for $k=1,2,\ldots,t+1$, $\bV_{\alpha_k}\in\mbR^{n\times|\alpha_k|}$ for $k=1,2,\ldots,t$, and $\bV_{\widehat{\beta}}\in\mbR^{n\times|\widehat{\beta}|}$.

Given a direction $\eta\in\mbRmn$, we denote $\bareta=\bU^\top\eta\bV$. For $k=1,2,\ldots,t$, let the eigenvalues of $\frac{1}{2}(\bareta_{\alpha_k \alpha_k}+\bareta_{\alpha_k \alpha_k}^\top)$ be $\lambda_i^k$ ($i=1,2,\ldots,|\alpha_k|$) and the distinct eigenvalues be $\theta_1^k>\theta_2^k>\ldots>\theta_{N_k}^k$, which induce the partitions $\beta_j^k:=\{i\mid \lambda_i^k=\theta_j^k, i=1, \ldots,|\alpha_k|\}$ for $j=1,2,\ldots,N_k$. In parallel, letting the distinct singular values of $\bareta_{\beta \widehat{\beta}}$ be $\theta_1^{t+1}>\theta_2^{t+1}>\ldots>\theta_{N_{t+1}+1}^{t+1}=0$, we define $\beta_j^{t+1}=\{i\mid\sigma_i(\bareta_{\beta \widehat{\beta}})=\theta_j^{t+1}, i=1, \ldots,|\beta|\}$ for $j=1,2,\ldots,N_{t+1}+1$. \myfig\ref{fig:illustration_index} illustrates the partitions of the index sets. Moreover, we need the following mappings between index sets,
\begin{equation}\label{eq:indexmappings}
\begin{aligned}
q_a &:\{1, \ldots, m\} \rightarrow\{1, \ldots, t+1\},\ q_a(i)=k, \text { if } i \in \alpha_k, \\
l &:\{1, \ldots, m\} \rightarrow \mathbb{N},\ l(i)=i-\kappa_{q_a(i)-1}, \\
q_b &:\{1, \ldots, m\} \rightarrow \mathbb{N},\ q_b(i)=e,\ \text{if}\ l(i) \in \beta_e^{q_a(i)},\\
l^{\prime} &:\{1, \ldots, m\} \rightarrow \mathbb{N},\ l^{\prime}(i)=l(i)-\kappa_{q_b(i)-1}^{(q_a(i))},
\end{aligned}  
\end{equation}
where $\kappa_i:=\sum_{j=1}^i|\alpha_j|$ and $\kappa_i^{(k)}:=\sum_{j=1}^i|\beta_j^k|$.

\begin{figure}[htbp]
\begin{minipage}{\textwidth}
\centering
\begin{tikzpicture}[thick, >=Stealth, font=\small]

    % ==========================================
    % 1. 布局参数
    % ==========================================
    \def\layerDist{1.7}      % 上下层间距 (稍微加大一点，给下面留空间)
    \def\sPos{5}             % s 点位置
    \def\mPos{7.5}           % m 点位置
    \def\nPos{10}            % n 点位置
    \def\gap{0.08}           % 花括号间隙
    
    % 定义 alpha_k 在主轴上的具体位置 (用于着色)
    \def\akStart{2.8}
    \def\akEnd{3.6}
    \def\akCenter{3.2}

    % ==========================================
    % 2. 顶层：主指标集
    % ==========================================
    
    % --- 主线条 (分段着色) ---
    % 1 -> alpha_k 左侧 (黑)
    \draw[colNormal] (0, 0) -- (\akStart, 0);
    
    % alpha_k 区间 (蓝 - 重点)
    \draw[colFocusAlpha, line width=1.2pt] (\akStart, 0) -- (\akEnd, 0);
    
    % alpha_k 右侧 -> s (黑)
    \draw[colNormal] (\akEnd, 0) -- (\sPos, 0);
    
    % s -> m (beta 区间) (红 - 重点)
    \draw[colFocusBeta, line width=1.2pt] (\sPos, 0) -- (\mPos, 0);
    
    % m -> n (beta_0 区间) (黑)
    \draw[colNormal] (\mPos, 0) -- (\nPos, 0);

    % --- 刻度与标签 ---
    % 修改意见2：小黑线朝上 (与放大图统一)
    \foreach \x in {0, \nPos} {
        \draw (\x, 0) -- (\x, 0.15); % 朝上
    }
    
    % 坐标轴下方的标签 1, s, m, n
    \node[below] at (0, -0.1) {1};
    \node[below] at (\sPos, -0.1) {$s$};
    \node[below] at (\mPos, -0.1) {$m$};
    \node[below] at (\nPos, -0.1) {$n$};

    % --- 中间层标签 (Alpha 内部细分) ---
    % 修改意见3：把 alpha_1...alpha_t 装在花括号下面 (即线和花括号中间)
    \node at (0.5, 0.4) {$\alpha_1$};
    \node at (1.65, 0.4) {$\cdots$};
    \node[colFocusAlpha] at (\akCenter, 0.4) {$\alpha_k$}; % 蓝色强调
    \node at (4.0, 0.4) {$\cdots$};
    \node at (4.6, 0.4) {$\alpha_t$};
    
    % 给 alpha_k 画两个小刻度界定范围
    \draw[colFocusAlpha] (\akStart, 0) -- (\akStart, 0.15);
    \draw[colFocusAlpha] (\akEnd, 0) -- (\akEnd, 0.15);

     % 给 beta 画两个小刻度界定范围
    \draw[colFocusBeta] (\sPos, 0) -- (\sPos, 0.15);
    \draw[colFocusBeta] (\mPos, 0) -- (\mPos, 0.15);

    % --- 最上方的大括号 ---
    % 修改意见3：花括号往上挪 (raise 值加大)
    \def\braceRaise{18pt} % 抬高高度，给中间的文字留出空间
    
    % Alpha
    \draw [decorate, decoration={brace, amplitude=6pt, raise=\braceRaise}, colNormal]
        (0, 0) -- (\sPos-\gap, 0) node [midway, above=26pt] {$\alpha$};
    
    % Beta
    \draw [decorate, decoration={brace, amplitude=6pt, raise=\braceRaise}, colFocusBeta, thick]
        (\sPos+\gap, 0) -- (\mPos-\gap, 0) node [midway, above=26pt] {$\beta=\alpha_{t+1}$};
        
    % Beta_0
    \draw [decorate, decoration={brace, amplitude=6pt, raise=\braceRaise}, colNormal]
        (\mPos+\gap, 0) -- (\nPos, 0) node [midway, above=26pt] {$\beta_0$};

    % 用于连线的锚点
    \coordinate (ak_bottom) at (\akCenter, 0);
    \coordinate (beta_bottom) at (6.25, 0); % (5+7.5)/2

    % ==========================================
    % 3. 底层左侧：Alpha_k 放大图 (蓝色系)
    % ==========================================
    \begin{scope}[shift={(0, -\layerDist)}]
        \def\w{3.5}
        % 主线 (蓝)
        \draw[colFocusAlpha] (0, 0) -- (\w, 0);
        \draw[colFocusAlpha] (0, 0) -- (0, 0.15);
        \draw[colFocusAlpha] (\w, 0) -- (\w, 0.15);
        
        % 内部划分
        \node[colFocusAlpha] at (0.5, 0.4) {$\beta_1^k$};
        \draw[colFocusAlpha] (1.0, 0) -- (1.0, 0.15);
        
        \node[colFocusAlpha] at (1.75, 0.3) {$\cdots$};
        
        \draw[colFocusAlpha] (2.5, 0) -- (2.5, 0.15);
        \node[colFocusAlpha] at (3.0, 0.4) {$\beta_{N_k}^k$};
        
        % 底部标签
        \node[colFocusAlpha] at (\w/2, -0.3) {$\alpha_k$};
        
        \coordinate (sub_ak_top) at (\w/2, 0.6);
    \end{scope}

    % ==========================================
    % 4. 底层右侧：Beta 放大图 (红色系)
    % ==========================================
    \begin{scope}[shift={(6.5, -\layerDist)}]
        \def\w{4.5}
        % 主线 (红)
        \draw[colFocusBeta] (0, 0) -- (\w, 0);
        \draw[colFocusBeta] (0, 0) -- (0, 0.15);
        \draw[colFocusBeta] (\w, 0) -- (\w, 0.15);
        
        % 内部划分
        \node[colFocusBeta] at (0.6, 0.4) {$\beta_1^{t+1}$};
        \draw[colFocusBeta] (1.2, 0) -- (1.2, 0.15);
        
        \node[colFocusBeta] at (2.25, 0.3) {$\cdots$};
        
        \draw[colFocusBeta] (3.3, 0) -- (3.3, 0.15);
        \node[colFocusBeta] at (3.9, 0.4) {$\beta_{N_{t+1}+1}^{t+1}$};
        
        % 底部标签
        \node[colFocusBeta] at (\w/2, -0.3) {$\beta$};
        
        \coordinate (sub_beta_top) at (\w/2, 0.6);
    \end{scope}
    
    % ==========================================
    % 5. 连接箭头
    % ==========================================
    
    % 蓝色箭头
    \draw[->, colFocusAlpha!60, thick] (ak_bottom) to[out=-90, in=90, looseness=0.8] ($(sub_ak_top)+(0pt,0pt)$);;
    % 红色箭头
    \draw[->, colFocusBeta!60, thick] (beta_bottom) to[out=-90, in=90, looseness=0.8] ($(sub_beta_top)+(0pt,0pt)$);

\end{tikzpicture}
\end{minipage}
\caption{Illustration of partitions for the index sets, where $s$ denotes the rank of $X$, $\alpha$ corresponds to the indices of nonzero singular values, $\beta$ corresponds to the zero singular values, and $\beta_0$ represents the indices of the remaining dimensions $m+1, \dots, n$.}
\label{fig:illustration_index}
\end{figure}
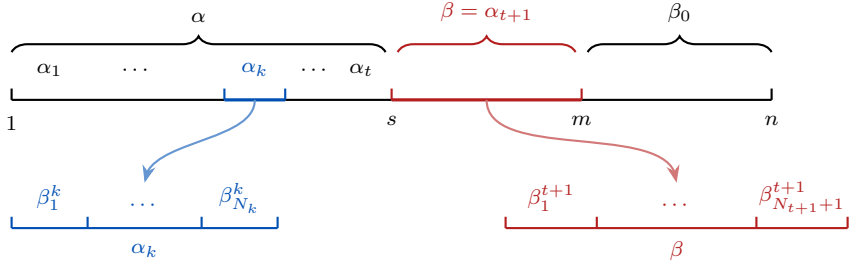

Given an index $i \in \{1, 2, \ldots, m\}$, the rules for computing the first- and second-order directional derivatives of $\sigma_i$ vary depending on the location of $i$, determined by the values $q_a(i)$, $l(i)$, $q_b(i)$, and $l'(i)$. We organized the results of \cite[Theorem 3.1]{zhang2013secondordersingular} as the flowchart in \myfig\ref{fig:flowchart_2ndderivative}, where the expression of $\widehat{V}_k$ for $k\in\{1,2,\ldots,t+1\}$ is detailed as follows,
\begin{equation*}\label{eq:widehatV}
    \widehat{V}_k(\eta,\zeta)\!=\!\begin{cases}
	\frac{\barzeta_{\alpha_k\alpha_k}+\barzeta^\top_{\alpha_k\alpha_k}}{2}+\frac{1}{\mu_k}(\frac{\bareta_{\alpha_k\alpha_k}-\bareta^\top_{\alpha_k\alpha_k}}{2})^\top(\frac{\bareta_{\alpha_k\alpha_k}-\bareta^\top_{\alpha_k\alpha_k}}{2})
    \\
    +\frac{1}{\mu_k}(\bareta^\top_{\beta\alpha_k}\bareta_{\beta\alpha_k}+\bareta_{\alpha_k\beta}\bareta^\top_{\alpha_k\beta}+\bareta_{\alpha_k\beta_0}\bareta^\top_{\alpha_k\beta_0})
    \\
    +\!\!\sum\limits_{\substack{j \ne k \\ j \le t}}\frac{\mu_j\bareta_{\alpha_k\alpha_j}\bareta_{\alpha_j\alpha_k}+\mu_k\bareta^\top_{\alpha_j\alpha_k}\bareta_{\alpha_j\alpha_k}+\mu_k\bareta_{\alpha_k\alpha_j}\bareta^\top_{\alpha_k\alpha_j}+\mu_j\bareta^\top_{\alpha_j\alpha_k}\bareta^\top_{\alpha_k\alpha_j}}{2(\mu_k^2-\mu_j^2)}, \hspace{1mm} \text{if}\ k\le t,
    \\[1mm]
    \barzeta_{\beta\widehat{\beta}}-2\bareta_{\beta\alpha}\bar{\varSigma}^{-1}_{\alpha\alpha}\bareta_{\alpha\widehat{\beta}},\hspace{57mm} \text{if}\ k=t+1,
    \end{cases}
\end{equation*}
and the matrices $Q^k$ and $(Q_{\beta\beta},\widehat{Q}_{\widehat{\beta}\widehat{\beta}})$ are arbitrarily chosen such that
\begin{equation}\label{eq:choseQ}
    Q^k\in\orth^{|\alpha_k|}\kh{\frac{\bareta_{\alpha_k\alpha_k}+\bareta_{\alpha_k\alpha_k}^\top}{2}}\ \ \text{and}\ \ (Q_{\beta\beta},\widehat{Q}_{\widehat{\beta}\widehat{\beta}})\in\orth^{|\beta|,|\widehat{\beta}|}(\bareta_{\beta\widehat{\beta}}).
\end{equation}

\begin{figure}[htbp]
\centering
\resizebox{\textwidth}{!}{%
\begin{tikzpicture}[
    % 全局节点距离：垂直 2.0cm，水平 1.5cm
    node distance=2.0cm and 1.5cm, 
    dashedbox/.style={
        rectangle,
        draw=black,
        fill=gray!5,
        line width=0.6pt,
        dash pattern=on 3pt off 2pt,
        rounded corners=2mm,
        inner sep=6pt, 
        align=center,
        font=\footnotesize,
        minimum width=1.2cm, 
        execute at begin node={\everymath{\displaystyle}} % 确保单行公式也是显示模式
    },
    arrowline/.style={
        ->,
        thick,
        >=stealth,
        color=black!85,
        shorten >=3pt,
        shorten <=3pt
    },
    arrowlabel/.style={
        midway,
        sloped,
        font=\scriptsize,
        fill=white,
        inner sep=1.5pt,
        text=black
    }
]

    % =========================================
    % 第一行 (Row 1)
    % =========================================

    % 节点 1
    \node[dashedbox] (start1) {
        $i\in\alpha_k$
    };

    % 节点 2
    % 【修改】手动设置距离为 1.0cm (比全局的 1.5cm 更短)
    \node[dashedbox, right=0.85cm of start1] (mid1) {
        $\sigma_i^{\prime}=\frac{1}{2} \lambda_{l(i)}(\bareta_{\alpha_k \alpha_k}+\bareta_{\alpha_k \alpha_k}^\top)$
    };

    % 节点 3
    \node[dashedbox, right=of mid1, xshift=0.1cm] (end1) {
        $\sigma_i^{\prime \prime} = \lambda_{l^{\prime}(i)}\kh{Q_{\beta_{q_b(i)}^k}^{k\top} \widehat{V}_k(\eta, \zeta) Q_{\beta_{q_b(i)}^k}^k}$
    };

    % 第一行连线
    \draw[arrowline] (start1) -- (mid1);
    \draw[arrowline] (mid1) -- (end1);

    % =========================================
    % 第二行 (Row 2)
    % =========================================

    % 节点 4
    % 【修改】由于分叉变窄了，稍微减小行间距以保持平衡 (5.5cm -> 4.0cm)
    \node[dashedbox, below=1.7cm of start1] (start2) {
        $i\in\beta$
    };

    % 节点 5
    \node[dashedbox, right=0.9cm of start2] (mid2) {
        $\sigma_i^{\prime}=\sigma_{l(i)}(\bareta_{\beta \widehat{\beta}})$
    };

    % === 分支结构 ===
    
    % 节点 6: 上分支
    % 【修改】减小垂直偏移 (1.8cm -> 0.8cm)，收紧分叉间距
    \node[dashedbox, above right=0.8cm and 2.0cm of mid2.east, anchor=west] (end2_upper) {
        $\sigma_i^{\prime \prime} = \lambda_{l^{\prime}(i)}\kh{\mathrm{sym}(Q_{\beta \beta_{q_b(i)}^{t+1}}^\top \widehat{V}_{t+1}(\eta,\zeta) \widehat{Q}_{\widehat{\beta} \beta_{q_b(i)}^{t+1}})}$
    };

    % 节点 7: 下分支
    % 【修改】减小垂直偏移 (1.8cm -> 0.8cm)
    \node[dashedbox, below right=0.8cm and 2.0cm of mid2.east, anchor=west] (end2_lower) {
        $\sigma_i^{\prime \prime} = \sigma_{l^{\prime}(i)}\kh{Q_{\beta \beta_{q_b(i)}^{t+1}}^\top \widehat{V}_{t+1}(\eta,\zeta)[\widehat{Q}_{\widehat{\beta} \beta_{q_b(i)}^{t+1}} \widehat{Q}_{\widehat{\beta}\beta_0}]}$\!
    };

    % 第二行连线
    \draw[arrowline] (start2) -- (mid2);

    % 分支连线
    \draw[arrowline] (mid2.east) -- (end2_upper.west) 
        node[arrowlabel, above=2pt] {$q_b(i)\le N_{t+1}$};
        
    \draw[arrowline] (mid2.east) -- (end2_lower.west) 
        node[arrowlabel, below=2pt] {$q_b(i)\!=\!N_{t+1}\!\!+\!\!1$};

    % =========================================
    % 新增部分：左侧总入口判断 (Root)
    % =========================================

    % 计算 start1 和 start2 左侧边缘的中心点，并向左偏移 3.5cm
    \node[dashedbox] (root) at ($(start1.west)!0.5!(start2.west) - (2, 0)$) {
        Index $i$
    };

    % 连接 root -> start1 (往右上)
    \draw[arrowline] (root) -- (start1.west) 
        node[arrowlabel, above=2pt] {$q_a(i)=k \le t$};

    % 连接 root -> start2 (往右下)
    \draw[arrowline] (root) -- (start2.west) 
        node[arrowlabel, below=2pt] {$q_a(i)=t+1$};

\end{tikzpicture}
}
\caption{Flowchart of computing directional derivatives of singular values in directions $\eta,\zeta\in\mbRmn$, where we abbreviate $\sigma_i^\prime(X;\eta)$ and $\sigma_i^{\prime\prime}(X;\eta,\zeta)$ as $\sigma_i^\prime$ and $\sigma_i^{\prime\prime}$, respectively.}
\label{fig:flowchart_2ndderivative}
\end{figure}
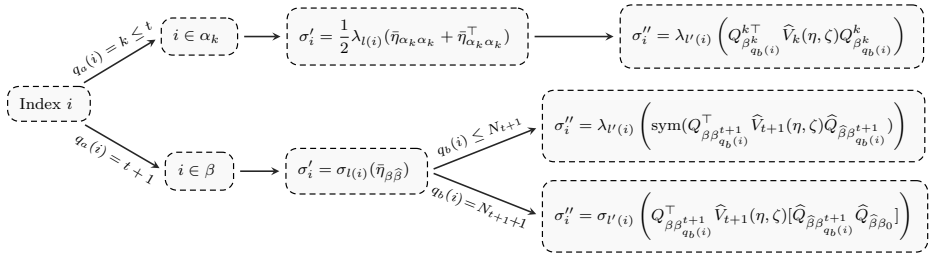

Generally, to compute the directional derivatives, it first determines whether the singular value associated with $i$ is nonzero: if $1\le i\le\rank(X)$, i.e., $i\in\alpha_k$ for some $k\le t$, the first row in the flowchart applies. Otherwise, we have $i\in\beta$, and a second stage further determines how the second-order derivative is evaluated, depending on whether the index $q_b(i)$ corresponds to a zero singular value of $\bareta_{\beta\widehat{\beta}}$.

\subsection{Tangent sets to $\boundedrank$}\label{sec:tangenttoMr}
Given $X\in\boundedrank$, we begin by identifying the mapping $\sigmar^{\prime}(X;\cdot)$, and then find its zeros to explicitly give the tangent cone to the determinantal variety, as an application of Theorem~\ref{the:cal_tangentsets}. Section~\ref{sec:directionalderisigma} reveals that computing directional derivatives of $\sigma_{r+1}$ is generally nontrivial. Nevertheless, we then show that the condition $\sigmar(X)=0$ and the geometry of $\boundedrank$ can facilitate the derivation.

Assume that $\rank(X)=s$ with the full SVD $X=\bU[\bar{\varSigma}\ 0]\bV^\top$ as in~\eqref{eq:fullSVD}. According to \myfig\ref{fig:flowchart_2ndderivative} and letting $i=r+1$, we first notice that $q_a(r+1)=t+1$ since $\sigmar(X)=0$, which implies that the index $r+1\in\beta$. Therefore, the computation follows the second row of the flowchart. Moreover, by definition of the mapping $l$ in~\eqref{eq:indexmappings}, we have $l(r+1)=r+1-\kappa_{q_a(r+1)-1}=r-s+1$, since $\kappa_{q_a(r+1)-1}$ counts the number of nonzero singular values of $X$ in this case. Consequently, it holds that $\sigmar^\prime(X;\eta)=\sigma_{l(r+1)}(\bareta_{\beta\widehat{\beta}})=\sigma_{r-s+1}(\bareta_{\beta\widehat{\beta}})$ for any direction $\eta\in\mbRmn$. Hence, $\sigmar^\prime(X;\eta)=0$ is equivalent to the condition $\rank(\bareta_{\beta\widehat{\beta}})\le r-s$.

Denote $U=[\bU_{\alpha_1}\ \bU_{\alpha_2}\ \cdots\ \bU_{\alpha_t}]\in\stiefel(m,s)$, $U_\bot=\bU_{\beta}\in\stiefel(m,m-s)$, $V=[\bV_{\alpha_1}\ \bV_{\alpha_2}\ \cdots\ \bV_{\alpha_t}]\in\stiefel(n,s)$, and $V_\bot=\bV_{\widehat{\beta}}\in\stiefel(n,n-s)$.  Applying Theorem~\ref{the:cal_tangentsets} and substituting $\bareta_{\beta\widehat{\beta}}=U_\bot^\top\eta V_\bot$, we obtain
\begin{align}
    \Btangent_{\boundedrank}(X)&=\{\eta\in\mbRmn\mid\sigmar^\prime(X;\eta)=0\}  \label{eq:Btangent_cone_boundedrank_sigma_descrip} 
    \\
    &=\{\eta\in\mbRmn\mid\rank(U_\bot^\top\eta V_\bot)\le r-s\}.   \nonumber
\end{align}
This recovers the existing result~\eqref{eq:Btangent_cone_boundedrank}, which follows by parameterizing $\eta$ by $\eta=UW_1V^\top+UW_2V_\bot^\top+U_\bot W_3V^\top+R$ with $W_i$ ($i=1,2,3$) being matrices of appropriate dimensions and $R\in\normal_{\lowrank}(X)$. Additionally, a recent work \cite{olikier2025fourtangentproof} focuses on various descriptions of the tangent cone to $\boundedrank$; we remark that the characterization~\eqref{eq:Btangent_cone_boundedrank_sigma_descrip} serves as an alternative perspective.

More importantly, Theorem~\ref{the:cal_tangentsets} enlightens the following proposition, which provides a closed-form expression for the second-order tangent set to $\boundedrank$---a new development on the geometry of the determinantal variety.

\begin{proposition}\label{pro:tangenttwo_mr}
    Given $X\in\boundedrank$ and $\eta\in\tangent_{\boundedrank}(X)$, where $\rank{(X)}=s$ and $\rank(\projection_{\normal_{\lowrank}(X)}(\eta))=\ell-s$ for some $s\le \ell\le r$. Let the compact SVDs be $X=U\varSigma V^\top$ and $\projection_{\normal_{\lowrank}(X)}(\eta)=U_{\eta} \varSigma_\eta V_{\eta}^\top$, respectively. Take ${U}_{\eta\bot}$ and ${V}_{\eta\bot}$ such that $[U\ U_\eta\ U_{\eta\bot}]\in\orth(m)$ and $[V\ V_\eta\ V_{\eta\bot}]\in\orth(n)$, and denote $U^+=[U\ U_{\eta}]$, $V^+=[V\ V_{\eta}]$. It holds that
    \begin{equation}\label{eq:tangenttwo_mr}
        \tangenttwo_{\boundedrank}(X;\eta)=\hkh{2\eta X^\dagger\eta + [U^+\ U_{\eta\bot}]\left[ \begin{matrix}
        	W_1&		W_2\\
        	W_3&		J\\
        \end{matrix} \right][V^+\ V_{\eta\bot}]^\top \left|\,\begin{array}{l}
        W_1\in\mbR^{\ell\times\ell},
        \\
        W_2\in\mbR^{\ell\times (n-\ell)},
        \\
        W_3\in\mbR^{(m-\ell)\times \ell},
        \\
        J\in\mbR^{(m-\ell)\times (n-\ell)},
        \\
        \rank(J)\le r-\ell
        \end{array}\right.
        }.
    \end{equation}
\end{proposition}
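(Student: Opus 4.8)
The plan is to invoke Theorem~\ref{the:cal_tangentsets}(ii) with the single equality constraint $c_1=\sigmar$ and no inequality constraint (so $n_2=0$), using the characterization $\boundedrank=\{X:\sigmar(X)=0\}$ from~\eqref{eq:sigmarboundedrank}. The hypotheses are in place: Assumption~\ref{assu:errorbound} holds by Weyl's inequality (Lipschitzness) and the truncated-SVD error bound recorded before~\eqref{eq:sigmarboundedrank}, while the directional and parabolic second-order directional differentiability of $\sigmar$ are supplied by~\cite{zhang2013secondordersingular}. This gives $\tangenttwo_{\boundedrank}(X;\eta)=\{\zeta\in\mbRmn:\sigmar^{\prime\prime}(X;\eta,\zeta)=0\}$, so the whole proof reduces to evaluating $\sigmar^{\prime\prime}(X;\eta,\cdot)$ through the flowchart in \myfig\ref{fig:flowchart_2ndderivative} and locating its zeros.

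First I would pin down the branch for $i=r+1$. Since $\sigmar(X)=0$ we have $r+1\in\beta$, so $q_a(r+1)=t+1$ and the second row applies; moreover $l(r+1)=r+1-\kappa_{q_a(r+1)-1}=r-s+1$ because exactly $s$ singular values of $X$ are nonzero. The hypothesis $\eta\in\tangent_{\boundedrank}(X)$ forces $\rank(\bareta_{\beta\widehat{\beta}})=\rank(\projection_{\normal_{\lowrank}(X)}(\eta))=\ell-s\le r-s$, so the position $l(r+1)=r-s+1$ exceeds the number $\ell-s$ of nonzero singular values of $\bareta_{\beta\widehat{\beta}}$ and lands in the zero block $\beta_{N_{t+1}+1}^{t+1}$. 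This selects the lower branch with $l^{\prime}(r+1)=l(r+1)-\kappa_{N_{t+1}}^{(t+1)}=(r-s+1)-(\ell-s)=r-\ell+1$, so
\[
\sigmar^{\prime\prime}(X;\eta,\zeta)=\sigma_{r-\ell+1}\!\left(Q_{\beta\beta_{N_{t+1}+1}^{t+1}}^\top\,\widehat{V}_{t+1}(\eta,\zeta)\,[\,\widehat{Q}_{\widehat{\beta}\beta_{N_{t+1}+1}^{t+1}}\ \widehat{Q}_{\widehat{\beta}\beta_0}\,]\right).
\]

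The two remaining ingredients are an algebraic simplification of $\widehat{V}_{t+1}$ and a geometric identification of the surrounding orthonormal blocks. For the former, substituting $\bareta_{\beta\alpha}=U_\bot^\top\eta V$, $\bareta_{\alpha\widehat{\beta}}=U^\top\eta V_\bot$, $\bar{\varSigma}_{\alpha\alpha}=\varSigma$, and $\barzeta_{\beta\widehat{\beta}}=U_\bot^\top\zeta V_\bot$ into the $k=t+1$ case of $\widehat{V}_{t+1}$ and recognizing $V\varSigma^{-1}U^\top=X^\dagger$ collapses it to $\widehat{V}_{t+1}(\eta,\zeta)=U_\bot^\top(\zeta-2\eta X^\dagger\eta)V_\bot$; this is exactly where the offset $2\eta X^\dagger\eta$ in~\eqref{eq:tangenttwo_mr} originates. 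For the latter, by~\eqref{eq:choseQ} the columns $Q_{\beta\beta_{N_{t+1}+1}^{t+1}}$ and $[\widehat{Q}_{\widehat{\beta}\beta_{N_{t+1}+1}^{t+1}}\ \widehat{Q}_{\widehat{\beta}\beta_0}]$ are orthonormal bases of, respectively, the left and right null spaces of $\bareta_{\beta\widehat{\beta}}$, of sizes $m-\ell$ and $n-\ell$. Since $\projection_{\normal_{\lowrank}(X)}(\eta)=U_\bot\bareta_{\beta\widehat{\beta}}V_\bot^\top=U_\eta\varSigma_\eta V_\eta^\top$, applying $U_\bot$ (resp.\ $V_\bot$) carries the range of $\bareta_{\beta\widehat{\beta}}$ (resp.\ its row space) onto $\vecspan(U_\eta)$ (resp.\ $\vecspan(V_\eta)$); hence $U_\bot Q_{\beta\beta_{N_{t+1}+1}^{t+1}}$ and $V_\bot[\widehat{Q}_{\widehat{\beta}\beta_{N_{t+1}+1}^{t+1}}\ \widehat{Q}_{\widehat{\beta}\beta_0}]$ are orthonormal bases of $\vecspan(U^+)^\perp=\vecspan(U_{\eta\bot})$ and $\vecspan(V^+)^\perp=\vecspan(V_{\eta\bot})$, so they differ from $U_{\eta\bot},V_{\eta\bot}$ only by right multiplication by orthogonal matrices, which preserves singular values.

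Combining the two, $\sigmar^{\prime\prime}(X;\eta,\zeta)=\sigma_{r-\ell+1}\big(U_{\eta\bot}^\top(\zeta-2\eta X^\dagger\eta)V_{\eta\bot}\big)$, so $\sigmar^{\prime\prime}(X;\eta,\zeta)=0$ is equivalent to $\rank\big(U_{\eta\bot}^\top(\zeta-2\eta X^\dagger\eta)V_{\eta\bot}\big)\le r-\ell$. Expanding $\zeta-2\eta X^\dagger\eta$ in the orthonormal bases $[U^+\ U_{\eta\bot}]$ and $[V^+\ V_{\eta\bot}]$, the $(2,2)$ block equals $U_{\eta\bot}^\top(\zeta-2\eta X^\dagger\eta)V_{\eta\bot}=J$ with $\rank(J)\le r-\ell$, while the blocks $W_1,W_2,W_3$ are unconstrained, which is precisely~\eqref{eq:tangenttwo_mr}. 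I expect the main obstacle to be the bookkeeping in the middle steps: correctly diagnosing that $i=r+1$ routes to the lower branch with shift $l^{\prime}(r+1)=r-\ell+1$, and rigorously matching the abstract null-space blocks $Q,\widehat{Q}$ of~\cite{zhang2013secondordersingular} to the concrete complements $U_{\eta\bot},V_{\eta\bot}$. Once those identifications are established, the rank reformulation is routine.
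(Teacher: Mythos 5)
Your proposal is correct and follows essentially the same route as the paper's proof: both reduce to Theorem~\ref{the:cal_tangentsets}(ii), trace $i=r+1$ through the lower branch of the flowchart with $l'(r+1)=r-\ell+1$, simplify $\widehat V_{t+1}(\eta,\zeta)$ to $U_{\eta\bot}$-$V_{\eta\bot}$ compressions of $\zeta-2\eta X^\dagger\eta$, and convert the vanishing of $\sigma_{r-\ell+1}$ into the rank condition on $J$. The only cosmetic difference is in identifying the selector blocks $Q,\widehat Q$: the paper aligns the full SVD so that $\bareta_{\beta\widehat\beta}$ is diagonal and takes $Q=I$, whereas you argue via orthogonal invariance of singular values that the null-space bases agree with $U_{\eta\bot},V_{\eta\bot}$ up to orthogonal factors — both are valid and equivalent.
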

\begin{proof}
    According to Theorem~\ref{the:cal_tangentsets} and expression~\eqref{eq:Btangent_cone_boundedrank_sigma_descrip}, given the $\eta\in\tangent_{\boundedrank}(X)$, the second-order tangent set to $\boundedrank$ at $X$ in direction $\eta$ coincides with the zeros of the mapping $\sigmar^{\prime\prime}(X;\eta,\cdot)$.

    Without loss of generality, we can assume that the full SVD of $X$ is $X=\bU[\bar{\varSigma}\ 0]\bV^\top$, and it holds that $U=[\bU_{\alpha_1}\ \bU_{\alpha_2}\ \cdots\ \bU_{\alpha_t}]\in\stiefel(m,s)$, $[U_\eta\ U_{\eta\bot}]=\bU_{\beta}\in\stiefel(m,m-s)$, $V=[\bV_{\alpha_1}\ \bV_{\alpha_2}\ \cdots\ \bV_{\alpha_t}]\in\stiefel(n,s)$, and $[V_\eta\ V_{\eta\bot}]=\bV_{\widehat{\beta}}\in\stiefel(n,n-s)$. In this case, 
    \begin{equation}\label{eq:decompose_bareta}
        \bareta_{\beta\widehat{\beta}}=\bU_\beta^\top\eta \bV_{\widehat{\beta}}=[U_\eta\ U_{\eta\bot}]^\top\eta[V_\eta\ V_{\eta\bot}]=\left[ \begin{matrix}
        	\varSigma _{\eta}&		0_{(\ell-s)\times (n-\ell)}\\
        	0_{(m-\ell)\times(\ell-s)}&		0_{(m-\ell)\times(n-\ell)}\\
        \end{matrix} \right].
    \end{equation}
    Following the derivation in the first-order case~\eqref{eq:Btangent_cone_boundedrank_sigma_descrip}, we have $q_a(r+1)=t+1$ and $l(r+1)=r-s+1$. Through the second row of the flowchart in \myfig\ref{fig:flowchart_2ndderivative}, the next step is to determine the indices $q_b(r+1)$ and $l^\prime(r+1)$. 

    We notice that $\rank(\bareta_{\beta\widehat{\beta}})=\rank(\varSigma_\eta)=\ell-s\le r-s$, and thus $l(r+1)=r-s+1$ indexes a zero singular value of $\bareta_{\beta\widehat{\beta}}$. Hence, the definition of the mapping $q_b$ in~\eqref{eq:indexmappings} reveals that $q_b(r+1)=N_{t+1}+1$. Furthermore, substituting the values of $q_a(r+1)$, $l(r+1)$, and $q_b(r+1)$, we obtain
    \begin{equation*}
        l^\prime(r+1)=l(r+1)-\kappa^{(q_a(r+1))}_{q_b(r+1)-1}=r-s+1 - \kappa^{(t+1)}_{N_{t+1}}=r-\ell+1,
    \end{equation*}
    where the last equality holds since $\kappa^{(t+1)}_{N_{t+1}}=\rank(\bareta_{\beta\widehat{\beta}})=\ell-s$ counts the number of nonzero singular values of $\bareta_{\beta\widehat{\beta}}$.

    Consequently, the indices $r+1\in\beta$ and $q_b(r+1)=N_{t+1}+1$ point the computation of $\sigma_{r+1}^{\prime\prime}$ to the branch at the bottom of the flowchart (see \myfig\ref{fig:flowchart_2ndderivative}):
    \begin{align}        
        \sigma_{r+1}^{\prime \prime}(X;\eta,\zeta)=&\ \sigma_{l^{\prime}(r+1)}(Q_{\beta \beta_{q_b(r+1)}^{t+1}}^\top \widehat{V}_{t+1}(\eta,\zeta)[\widehat{Q}_{\widehat{\beta} \beta_{q_b(r+1)}^{t+1}} \widehat{Q}_{\widehat{\beta}\beta_0}])    \nonumber
        \\
        =&\ \sigma_{r-\ell+1}(Q_{\beta \beta_{N_{t+1}+1}^{t+1}}^\top \widehat{V}_{t+1}(\eta,\zeta)[\widehat{Q}_{\widehat{\beta} \beta_{N_{t+1}+1}^{t+1}} \widehat{Q}_{\widehat{\beta}\beta_0}]).   \label{eq:sigmarpp}
    \end{align}
    Then we calculate $\widehat{V}_{t+1}(\eta,\zeta)$ according to the expressions provided in section~\ref{sec:directionalderisigma}:
    \begin{align}
        \widehat{V}_{t+1}(\eta,\zeta)&=[U_\eta\ U_{\eta\bot}]^\top(\zeta-2\eta V\varSigma^{-1}U^\top\eta)[V_\eta\ V_{\eta\bot}]     \nonumber
        \\
        &=[U_\eta\ U_{\eta\bot}]^\top(\zeta-2\eta X^\dagger\eta)[V_\eta\ V_{\eta\bot}].     \label{eq:wideV_t+1}
    \end{align}
    Moreover, noticing from~\eqref{eq:choseQ} and~\eqref{eq:decompose_bareta} that $Q_{\beta\beta}$ and $\widehat{Q}_{\widehat{\beta}\widehat{\beta}}$ can be chosen as $Q_{\beta\beta}=I_{m-s}$ and $Q_{\widehat{\beta}\widehat{\beta}}=I_{n-s}$, which reveals that 
    $$Q_{\beta \beta_{N_{t+1}+1}^{t+1}}=[0_{(m-\ell)\times(\ell-s)}\ I_{m-\ell}]^\top\ \text{and}\ \ [\widehat{Q}_{\widehat{\beta} \beta_{N_{t+1}+1}^{t+1}} \widehat{Q}_{\widehat{\beta}\beta_0}]=[0_{(n-\ell)\times(\ell-s)}\ I_{n-\ell}]^\top.$$
    Substituting the above equalities and the expression~\eqref{eq:wideV_t+1} into~\eqref{eq:sigmarpp} yields
    \begin{equation*}
        \sigma_{r+1}^{\prime \prime}(X;\eta,\zeta) = \sigma_{r-\ell+1}(U_{\eta\bot}^\top(\zeta-2\eta X^\dagger\eta)V_{\eta\bot}). 
    \end{equation*}
    Therefore, $\sigmar^{\prime\prime}(X;\eta,\zeta)=0$ is equivalent to $\rank(U_{\eta\bot}^\top(\zeta-2\eta X^\dagger\eta)V_{\eta\bot})\le r-\ell$. Recall that $U^+=[U\ U_{\eta}]$, $V^+=[V\ V_{\eta}]$, and consider the decomposition $\zeta-2\eta X^\dagger\eta=U^+W_1V^{+\top}+U^+W_2V_{\eta\bot}^\top+U_{\eta\bot}W_3V^{+\top}+U_{\eta\bot}JV_{\eta\bot}^\top$ with $W_i$ ($i=1,2,3$) and $J$ being matrices of appropriate dimensions. In this view, $\sigmar^{\prime\prime}(X;\eta,\zeta)=0$ is equivalent to $\rank(J)\le r-\ell$. Applying Theorem~\ref{the:cal_tangentsets} concludes the expression~\eqref{eq:tangenttwo_mr}.
\end{proof}

Note that $\fixedrank$ is relatively open in $\boundedrank$, i.e., for any $X\in\fixedrank$, there exists a neighborhood $\neighbor$ around $X$ such that $\boundedrank\cap\neighbor\subseteq\fixedrank$ \cite{olikier2022continuity}. Therefore, the second-order tangent set to $\fixedrank$ at $X$ coincides with that to $\boundedrank$. Specifically, we inherit the notation from Proposition~\ref{pro:tangenttwo_mr} and find that $\ell=\rank(X)=r$, implying that $U_\eta$ and $V_\eta$ vanish, and thus $U_{\eta\bot}$ and $V_{\eta\bot}$ become $U_\bot$ and $V_\bot$, respectively. Consequently, the expression~\eqref{eq:tangenttwo_mr} reduces to the tangent set to the fixed-rank manifold, i.e., $\tangenttwo_{\fixedrank}(X;\eta)=\tangenttwo_{\boundedrank}(X;\eta)$, and more concisely,
\begin{equation}\label{eq:tangenttwo_fixedrank}
    \tangenttwo_{\fixedrank}(X;\eta)=\hkh{2\eta X^\dagger\eta + [U\ U_{\bot}]\left[ \begin{matrix}
        W_1&		W_2\\
        W_3&		0\\
    \end{matrix} \right][V\ V_{\bot}]^\top \left|\,\begin{array}{l}
    W_1\in\mbR^{r\times r},
    \\
    W_2\in\mbR^{r\times (n-r)},
    \\
    W_3\in\mbR^{(m-r)\times r}
    \end{array}\right.
    },
\end{equation}
for any $\eta\in\tangent_{\fixedrank}(X)$. This explicit formula for $\tangenttwo_{\fixedrank}(X;\eta)$ is also a new result to our knowledge, shedding light on the (second-order) geometry of $\fixedrank$.

\begin{remark}
    Given a smooth manifold $\xanifold$ defined as a level set of a mapping $h$, where $\diff h$ has full rank in $\xanifold$, the second-order tangent set to $\xanifold$ can, in theory, be computed via $\nabla h$ and $\nabla^2 h$ \cite[Proposition 13.13]{rockafellar2009variationalanalysis}. However, for the case of $\fixedrank$, the commonly adopted choice of $X\mapsto h(X)$ involves a specific partition of $X$ and the inverse of a submatrix (see \cite[\S1.4]{guillemin1974differentialtopology} or \cite[\S7.5]{boumal2023introduction}). As a result, incorporating such an $h$ directly into the computation makes it difficult to explicitly derive~\eqref{eq:tangenttwo_fixedrank}, which relies on the SVD of the full matrix $X$---underlining the contribution of the approach developed in this section.
\end{remark}

\section{Extensions to structured low-rank sets}\label{sec:extent_structured}
In this section, we show that the developed analysis can be extended to more scenarios. Specifically, Theorem~\ref{the:cal_tangentsets} is applied to low-rank tensor varieties~\eqref{eq:httensor}, symmetric matrices~\eqref{eq:characterization_Sr}, and positive semidefinite matrices~\eqref{eq:boundedranksdp}. Moreover, following the spirit of Theorem~\ref{the:cal_tangentsets}, we establish in Theorem~\ref{the:expressions_McapK} the intersection rules for the intersection of two general sets, which are then applied to obtain tangent sets to an array of structured low-rank sets; all the results are summarized in Table~\ref{tab:tangentsets} and appendices.

\subsection{Tangent sets to tensor varieties}
Low-rank tensor decompositions compactly represent multi-dimensional data, capturing essential structure with far less storage. The Tucker format \cite{tucker1964extension}, the tensor train (TT) format \cite{oseledets2011TTSVD}, and more generally, the hierarchical Tucker (HT) format \cite{grasedyck2010hierarchicalSVD} are among the most typical formats. Each format induces a kind of tensor variety when the low-rank structure is imposed, and in particular, the Tucker and TT varieties are special cases of the HT variety (see Appendix~\ref{app:reduction}). Therefore, in this section, we calculate the tangent sets to the HT variety, and then obtain the results for the Tucker and TT varieties as immediate reductions.

We call $\tensX$ a tensor if it is a $d$-dimensional array in the space $\tensorspace$, and introduce in Appendix~\ref{app:HTvariety} the preliminaries for HT varieties, including the \emph{dimension tree} $T$ with nodes denoted by $t\in T$ and the associated dimensions $\{n_t,n_{t_-}\}_{t\in T}$, \emph{mode-$t$ matricization} $X_{(t)}^\mht$ and \emph{tensorization} $\tensorizeht_{(t)}(\cdot)$, and the \emph{HT rank} $\rankht(\tensX)$. Given an array $\vecr=(r_t)_{t\in T}$ of positive integers indexed by nodes of $T$, we define $\boundedht$ as the set of tensors with an HT rank at most $\vecr$:
\begin{equation}\label{eq:httensor}
    \boundedht = \{\tensX\in\tensorspace \mid \rankht(\tensX)\le\vecr\},
\end{equation}
where the ``$\le$" is understood component-wise. Through the lens of matricization, $\boundedht$ coincides with the intersection of tensorized matrix varieties along different modes, i.e.,
\begin{equation}\label{eq:defineht_intersecion}
    \manifold_{\le\vecr}^\mht =  \bigcap_{t\in T}{\tensorize_{(t)}^\mht\kh{\mbR^{n_t\times n_{t_-}}_{\le r_t}}},
\end{equation}
where we adopt $\mbR^{n_1\times n_2}_{\le r}:=\{X\in\mbR^{n_1\times n_2}\mid \rank(X)\le r\}$ to explicitly reveal the shape of the matrices. Therefore, $\boundedht$ is also a real algebraic variety; we refer to it as the \emph{HT variety}.

Motivated by \eqref{eq:defineht_intersecion}, we extend the perspective~\eqref{eq:sigmarboundedrank} to the tensor scenario:
\begin{equation}\label{eq:tensor_defined_sigma}
    \manifold_{\le\vecr}^\mht = \hkh{\tensX\in\tensorspace \mid \sigma_{r_t+1}(X^\mht_{(t)})=0\ \text{for all}\ t\in T},
\end{equation}
which enlightens the application of Theorem~\ref{the:cal_tangentsets}. To this end, it suffices to verify that the $\boundedtensor$ given through~\eqref{eq:tensor_defined_sigma} satisfies Assumption~\ref{assu:errorbound}. 

Firstly, note that the mapping $\tensX\mapsto\sigma_{r_t+1}(X^\mht_{(t)})$ is Lipschitz continuous for all $t\in T$. Then, for any $\tensY\in\tensorspace$, we can find a $\tensY_p\in\boundedtensor$ such that $\|\tensY-\tensY_p\|_{\frob}$ can be bounded by the values of $\{\sigma_{r_t+1}(Y^\mht_{(t)})\}_{t\in T}$. In fact, we resort to the hierarchical SVD \cite{grasedyck2010hierarchicalSVD} to produce a low-rank truncation of $\tensY$ as the candidate for $\tensY_p$; see Appendix~\ref{app:tensorvariety} for more details. 

Consequently, we apply Theorem~\ref{the:cal_tangentsets} to the variety $\boundedtensor$, unveiling the intersection rules for the associated tangent sets.

\begin{proposition}\label{pro:tangenttovarieties}
    The tangent sets to the tensor varieties $\boundedtensor$ equal the intersection of tensorized tangent sets to unfolding matrices along different modes, i.e.,
    \begin{align}
        \Btangent_{\boundedtensor}(\tensX) &= \bigcap_{t\in T}{\tensorize_{(t)}^\mht\kh{\Btangent_{\ranifold_t}(X^\mht_{(t)})}},    \label{eq:expressions_tensor_1}
        \\
        \tangenttwo_{\boundedtensor}(\tensX;\tenseta) &= \bigcap_{t\in T}{\tensorize_{(t)}^\mht\kh{\tangenttwo_{\ranifold_t}(X^\mht_{(t)};\eta^\mht_{(t)})}} \ \ \text{for any}\ \tenseta\in \Btangent_{\boundedtensor}(\tensX).    \label{eq:expressions_tensor_2}
    \end{align}
    where we denote $\ranifold_t:=\mbR_{\le r_t}^{n_t\times n_{t_-}}$.
\end{proposition}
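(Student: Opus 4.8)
The plan is to recognize $\boundedtensor$, written as in \eqref{eq:tensor_defined_sigma}, as the zero set of the single vector-valued mapping
\[
c(\tensX) := \kh{\sigma_{r_t+1}(X^\mht_{(t)})}_{t\in T},
\]
which stacks one scalar equality constraint per node and carries no inequality constraints, and then to invoke Theorem~\ref{the:cal_tangentsets} with $c_1=c$. This reduces both assertions to two tasks: verifying Assumption~\ref{assu:errorbound}, and identifying the zeros of the first- and second-order directional derivatives of $c$.

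First I would verify Assumption~\ref{assu:errorbound}. Condition (i), Lipschitzness of each component $\tensX\mapsto\sigma_{r_t+1}(X^\mht_{(t)})$, follows from Weyl's inequality together with the fact that mode-$t$ matricization is an isometric reshaping. Condition (ii), the error bound, is the substantive ingredient and the main obstacle: since $\boundedtensor$ is the intersection over all modes \eqref{eq:defineht_intersecion}, one must control the distance to this intersection by the individual mode-wise quantities $\sigma_{r_t+1}(X^\mht_{(t)})$. I would exhibit a single truncated tensor $\tensY_p\in\boundedtensor$ via the hierarchical SVD \cite{grasedyck2010hierarchicalSVD}, which simultaneously meets every rank bound $\rankht\le\vecr$, and bound $\|\tensY-\tensY_p\|_\frob$ by an aggregate of the mode-wise truncation errors, each in turn controlled by $\sigma_{r_t+1}$; the technical estimates are deferred to Appendix~\ref{app:tensorvariety}.

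Next I would compute the directional derivatives of $c$. Because matricization $\tensX\mapsto X^\mht_{(t)}$ is a linear isomorphism with inverse $\tensorize_{(t)}^\mht$, it commutes with the directional-difference quotients, so $(\tensX+s\tenseta+\tfrac12 s^2\tenszeta)^\mht_{(t)}=X^\mht_{(t)}+s\,\eta^\mht_{(t)}+\tfrac12 s^2\,\zeta^\mht_{(t)}$. Consequently, for the stacked mapping, $c'(\tensX;\tenseta)=0$ holds if and only if $\sigma_{r_t+1}'(X^\mht_{(t)};\eta^\mht_{(t)})=0$ for every $t\in T$, and $c''(\tensX;\tenseta,\tenszeta)=0$ if and only if $\sigma_{r_t+1}''(X^\mht_{(t)};\eta^\mht_{(t)},\zeta^\mht_{(t)})=0$ for every $t\in T$. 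The matrix results then identify each factor, since $\ranifold_t=\mbR^{n_t\times n_{t_-}}_{\le r_t}$ is itself a determinantal variety: by \eqref{eq:Btangent_cone_boundedrank_sigma_descrip} the zeros of $\sigma_{r_t+1}'(X^\mht_{(t)};\cdot)$ are exactly $\Btangent_{\ranifold_t}(X^\mht_{(t)})$, and by the proof of Proposition~\ref{pro:tangenttwo_mr} the zeros of $\sigma_{r_t+1}''(X^\mht_{(t)};\eta^\mht_{(t)},\cdot)$ are exactly $\tangenttwo_{\ranifold_t}(X^\mht_{(t)};\eta^\mht_{(t)})$.

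Finally I would translate back through tensorization. The condition $\eta^\mht_{(t)}\in\Btangent_{\ranifold_t}(X^\mht_{(t)})$ is equivalent to $\tenseta\in\tensorize_{(t)}^\mht\kh{\Btangent_{\ranifold_t}(X^\mht_{(t)})}$, and intersecting over $t\in T$ yields the first-order formula \eqref{eq:expressions_tensor_1}; the identical argument with the second-order derivatives, valid for any $\tenseta\in\Btangent_{\boundedtensor}(\tensX)$, yields \eqref{eq:expressions_tensor_2}. The only genuine difficulty is the error bound of the preceding step; once it is established, the rest is a direct application of Theorem~\ref{the:cal_tangentsets} together with the matrix formulas, with matricization passing transparently through every differential operation.
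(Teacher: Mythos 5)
Your proposal is correct and follows essentially the same route as the paper's proof: verify Assumption~\ref{assu:errorbound} for the stacked singular-value description \eqref{eq:tensor_defined_sigma} (Lipschitzness via Weyl, error bound via the hierarchical SVD truncation of \cite{grasedyck2010hierarchicalSVD}), then apply Theorem~\ref{the:cal_tangentsets} and identify each mode-wise factor with $\Btangent_{\ranifold_t}$ and $\tangenttwo_{\ranifold_t}$ via \eqref{eq:Btangent_cone_boundedrank_sigma_descrip} and Proposition~\ref{pro:tangenttwo_mr}. The only cosmetic difference is that the paper writes out the quantitative bound $\dist(\tensY,\boundedht)\le\rho\bigl(\sum_{t\in T}\sigma^2_{r_t+1}(Y^\mht_{(t)})\bigr)^{1/2}$ explicitly where you defer it, but the argument is the same.
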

\begin{proof}
    See Appendix~\ref{app:prooftangentvariety}.
\end{proof}

To our knowledge, it is the first time that the intersection rules~\eqref{eq:expressions_tensor_1} and~\eqref{eq:expressions_tensor_2} are identified for low-rank HT varieties. In addition, with appropriate dimension trees, the results realize the tangent sets to TT varieties and to Tucker varieties as immediate reductions (see Appendix~\ref{app:reduction}). Specifically, the first-order rule~\eqref{eq:expressions_tensor_1} for TT and Tucker varieties have been reported in \cite[Corollary 2.9]{kutschan2018tangentTT} and \cite[Corollary 1]{gao2025lowranktucker}, respectively; while the second-order counterparts are new results.

\subsection{Tangent sets to intersection of sets}\label{sec:McapH}
A natural question is whether the preceding analysis can be extended to the intersection of the low-rank set, e.g. $\boundedrank$, with another set $\kanifold$ defined as a level set of a mapping $h$---a setting attracting growing interest in recent years \cite{cason2013iterative,li2020jotaspectral,li2023normalboundedaffine,yang2025spacedecouple}. To this end, following the spirit of section~\ref{sec:generaltengentset}, we analyze the tangent sets to a general intersection $\manifold\cap\kanifold$, and then apply the results to several specific instances in sections~\ref{sec:lowrank_rectangular}-\ref{sec:lowrank_posi}; results are summarized in Table~\ref{tab:tangentsets}.

Given an Euclidean space $\mbR^q$, consider two sets as follows: 
\begin{equation}\label{eq:def_MK}
    \manifold=\{\tilde{X}\in\mbR^q \mid c_1(\tilde{X})=0\}\ \ \text{and}\ \ \kanifold=\{\tilde{X}\in\mbR^q\mid h(\tilde{X})=0\},
\end{equation}
where $c_1:\mbR^q\to\mbR^{n_1}$ is a possibly nonsmooth mapping, and $h:\mbR^q\to\mbR^{n_2}$ is a smooth mapping. Studying the geometry of $\manifold\cap\hanifold$ is obstructed in two respects: 1) the set $\manifold$ can be nonsmooth, and thus the intersection rule~\eqref{eq:cone_tranverse}, which relies on transversality, becomes invalid; 2) application of the developed Theorem~\ref{the:cal_tangentsets} is not straightforward, as it is uncertain whether the intersection $\manifold\cap\kanifold$ satisfies Assumption~\ref{assu:errorbound}(ii)---partly because the metric projection onto the coupled set remains unclear in general.

To circumvent the nonsmooth geometry, we employ the idea of \emph{smooth parameterization}, which introduces a smooth manifold $\bmanifold\subseteq\mathbb{R}^{\bar q}$ and a smooth mapping $\phi:\mathbb{R}^{\bar q}\to\mathbb{R}^{q}$ with $\phi(\bmanifold)=\manifold$, originally proposed as a remedy for nonsmooth optimization problems~\cite{levin2023remedy,levin2025effect}. Subsequently, we pull back $\kanifold$ through $\phi$ to obtain
\[\bkanifold:=\{\tilde{x}\in\mbR^{\bq}\mid h(\phi(\tilde{x}))=0\}=\phi^{-1}(\kanifold).\]
Then $\mbR^{\bq}$ is viewed as an auxiliary space, and it is hoped that the benign properties of $\bmanifold\cap\bkanifold$ in the lift space $\mbR^{\bq}$ can shed light on the analysis for $\manifold\cap\kanifold$ in the original space $\mbR^q$. Consequently, we identify some mild regularity conditions as follows, which are illustrated in \myfig\ref{fig:diagramMK}.

\begin{assumption}\label{assu:MintersecK}
    Given $\manifold$ and $\kanifold$ as in~\eqref{eq:def_MK}. At $X\in\manifold\cap\kanifold$, $\manifold$ and $\kanifold$ satisfy Assumption~\ref{assu:errorbound}, respectively. Moreover, it admits a smooth parameterization $(\bmanifold\subseteq\mbR^{\bq},\phi)$ of $\manifold$ satisfying the following conditions:
    \begin{itemize}
        \item [\emph{(i)}] the differential $\diff (h\circ\phi)$ has constant rank in a neighborhood of $\bkanifold$, which implies that $\bkanifold$ is a smooth manifold;
        \item [\emph{(ii)}] the manifolds $\bmanifold$ and $\bkanifold$ intersect transversally in the ambient space $\mbR^{\bq}$;
        \item [\emph{(iii)}] there exists an $x\in\bmanifold\cap\bkanifold$ such that $\phi(x)=X$ and the restriction $\phi|_{\bmanifold}:\bmanifold\to\manifold$ is open\footnote{The mapping $\phi_{\bmanifold}:\bmanifold\to\manifold$ is said to be open at $x\in\bmanifold$ if it maps neighborhoods of $x$---sets in $\bmanifold$ containing $x$ in their interior---to neighborhoods of $\phi(x)\in\manifold$ endowed with the subspace topology inherited from the ambient space.} at $x$.
    \end{itemize}
\end{assumption}

\begin{figure}[htbp]
    \vspace{0mm}
    \newcommand{\ratio}{0.5}
    \newcommand{\customrectangle}[7]{
            \draw[densely dashed, draw=#6, fill=#7] 
                (#1-#3/2, #2-#4/2) rectangle 
                (#1+#3/2, #2+#4/2); 
            \node at (#1, #2) {#5}; 
        }
    \begin{minipage}{1\textwidth}
    \begin{center}
    \begin{tikzpicture}
        \newcommand{\nodegap}{1.3}
        \tikzset{
        node distance=\nodegap cm,
        post/.style={->,shorten >=2pt,shorten <=2pt,>={Stealth[round]},thick},
        space/.style={
          draw=none, 
          fill=none, 
          inner sep=0pt,
          minimum size=6mm
        },
        hookarrow/.style={{Hooks[left]}->}
        }
        
        \node[space] (Y) {\normalsize $\bmanifold\cap\bkanifold\subseteq\mbR^{\bar{q}}$};
        \node[space] (X) [below=1.2cm of Y] {\normalsize $\manifold\cap\kanifold\subseteq \mbR^q$};
        \node[space] (R) [right=2.5cm of X] {\normalsize $\mbR^{n_2}$};
        
        \draw[post] (Y) to node[midway, left, xshift=-2pt] (phi) {\normalsize $\phi$} (X);
        \draw[post] (Y) to node[midway, above, xshift=42pt] {\normalsize $h\circ \phi$\ \,(rank-constant)} (R);
        \draw[->,shorten <=4pt,>={Stealth[round]},thick] (X) to node[midway, below, yshift=-1pt] {\normalsize $h$} (R);

        \node[space] (tran) [left=0.2cm of Y] {\normalsize (transversal)};
        \node[space] (open) [left=0.0cm of phi] {\normalsize (open)};

    \end{tikzpicture}
    \end{center}
    \end{minipage}
    \caption{Illustration of Assumption~\ref{assu:MintersecK} with the three regularity conditions.}
    \label{fig:diagramMK}
\end{figure}

\begin{remark}
Assumption~\ref{assu:MintersecK}(i) ensures the regularity of the intersection $\bmanifold \cap \bkanifold$, while Assumption~\ref{assu:MintersecK}(iii) helps transfer local structure from the auxiliary space to the original space. In fact, these two conditions can be readily verified in certain scenarios. For example, when considering the LR parameterization for $\boundedrank$, that is, 
\begin{equation}\label{eq:MH_LR}
    \bmanifold = \mbR^{m\times r} \times \mbR^{n\times r}\ \ \ \text{and}\ \ \ \phi: (L,R)\mapsto LR^\top,
\end{equation}
the manifold $\bmanifold$ coincides with the ambient Euclidean space, and thus the transversal property naturally holds for $\bmanifold\cap\bkanifold$ provided $\bkanifold$ is a manifold. In addition, the work~\cite{levin2025effect} showed that the openness of $\phi$ is commonly satisfied by smooth parameterizations of low-rank sets, which aligns with the spirit of our paper.
\end{remark}

We prove in the following theorem that if $\manifold$\ and $\kanifold$ satisfy Assumption~\ref{assu:MintersecK}, the intersection $\manifold\cap\hanifold$ satisfies Assumption~\ref{assu:errorbound}, thereby validating the application of the developed Theorem~\ref{the:cal_tangentsets} to $\manifold\cap\kanifold$.

\begin{theorem}[Intersection rule]\label{the:expressions_McapK}
    Suppose that $\manifold$ and $\kanifold$ satisfy Assumption~\ref{assu:MintersecK} at $X\in\manifold\cap\kanifold$. We have the following intersection rules for the tangent sets to $\manifold\cap\kanifold$,
    \begin{enumerate}
    \item[\emph{(i)}] \emph{(First-order)} If $c_1$ is directionally differentiable at $X$, then
    \begin{equation}\label{eq:decoupletangentMK}
        \Btangent_{\manifold\cap\kanifold}(X) = \Btangent_{\manifold}(X)\cap\Btangent_{\kanifold}(X);
    \end{equation}
    \item[\emph{(ii)}] \emph{(Second-order)} If, in addition, $c_1$ admits parabolic second-order directional derivatives at $X$ for every direction pair $(\eta,\zeta)$, then for any $\eta\in\Btangent_{\manifold\cap\kanifold}(X)$,
    \begin{equation}\label{eq:decoupletangenttwoMK}
        \tangenttwo_{\manifold\cap\kanifold}(X;\eta) = \tangenttwo_{\manifold}(X;\eta)\cap\tangenttwo_{\kanifold}(X;\eta).
    \end{equation}
    \end{enumerate}
\end{theorem}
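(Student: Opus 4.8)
The plan is to show that the intersection $\manifold\cap\kanifold$, viewed as the zero set of the stacked constraint map $(c_1,h)$, itself satisfies Assumption~\ref{assu:errorbound} at $X$; once this is established, both formulas follow immediately by applying Theorem~\ref{the:cal_tangentsets} to $(c_1,h)$ and comparing with the individual characterizations $\Btangent_\manifold(X)=\{\eta\mid c_1'(X;\eta)=0\}$ and $\Btangent_\kanifold(X)=\{\eta\mid h'(X;\eta)=0\}$ (and their parabolic second-order analogues), which Theorem~\ref{the:cal_tangentsets} supplies because $\manifold$ and $\kanifold$ separately satisfy Assumption~\ref{assu:errorbound}. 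The Lipschitz requirement of Assumption~\ref{assu:errorbound}(i) is immediate, since $c_1$ is Lipschitz by hypothesis and $h$ is smooth. Hence the whole proof reduces to the error bound $\dist(\tilde X,\manifold\cap\kanifold)\le\rho\,\|(c_1(\tilde X),h(\tilde X))\|$ for $\tilde X$ near $X$; the trivial inclusion in~\eqref{eq:cone_oneside} already gives one side, so this error bound is exactly what upgrades the inclusions to the equalities~\eqref{eq:decoupletangentMK} and~\eqref{eq:decoupletangenttwoMK}.

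To prove the error bound I would chain four estimates, transferring between the original space $\mbR^q$ and the auxiliary space $\mbR^{\bq}$. First, the error bound for $\manifold$ yields a point $X'\in\manifold$ with $\|\tilde X-X'\|=\dist(\tilde X,\manifold)\le\rho_1\|c_1(\tilde X)\|$, and $X'\to X$ as $\tilde X\to X$. Second, the openness of $\phi|_{\bmanifold}$ at $x$ (Assumption~\ref{assu:MintersecK}(iii)) lifts $X'$ to a preimage $x'\in\bmanifold$ with $\phi(x')=X'$ and $x'$ close to $x$: fixing a small $U\ni x$ in $\bmanifold$, $\phi(U)$ contains $\manifold\cap B(X,\delta)$ for some $\delta>0$, so every such $X'$ within $\delta$ of $X$ has a preimage in $U$. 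Third, in $\mbR^{\bq}$: since $\bkanifold=\{h\circ\phi=0\}$ has a differential of constant rank near $\bkanifold$ (Assumption~\ref{assu:MintersecK}(i)), the constant-rank theorem gives $\dist(x',\bkanifold)\le C_1\|h(\phi(x'))\|=C_1\|h(X')\|$, while transversality of $\bmanifold$ and $\bkanifold$ (Assumption~\ref{assu:MintersecK}(ii)) supplies the linear-regularity estimate $\dist(x',\bmanifold\cap\bkanifold)\le C_2(\dist(x',\bmanifold)+\dist(x',\bkanifold))=C_2\dist(x',\bkanifold)$, using $x'\in\bmanifold$. Fourth, picking $x''\in\bmanifold\cap\bkanifold$ nearest to $x'$ and setting $X'':=\phi(x'')$, the identity $\phi(\bmanifold\cap\bkanifold)=\manifold\cap\kanifold$ gives $X''\in\manifold\cap\kanifold$ and local Lipschitzness of $\phi$ gives $\|X'-X''\|\le L_\phi\|x'-x''\|$. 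Combining these with $\|h(X')\|\le\|h(\tilde X)\|+L_h\|X'-\tilde X\|$ and the triangle inequality $\dist(\tilde X,\manifold\cap\kanifold)\le\|\tilde X-X'\|+\|X'-X''\|$ yields the claim, with $\rho$ assembled from $\rho_1,C_1,C_2,L_\phi,L_h$.

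The main obstacle is precisely this error-bound transfer, which is where all three conditions of Assumption~\ref{assu:MintersecK} are consumed. Two ingredients deserve care and would be isolated as lemmas or citations: that a transversal intersection of smooth manifolds is linearly regular, and that a constant-rank level set admits a local error bound. The remaining subtlety is uniformity of the lifting step, but this is handled cleanly by openness: one fixes a single neighborhood $U$ of $x$ on which the constant-rank, transversality, and Lipschitz estimates hold simultaneously, extracts the corresponding $\delta$, and then requires $\tilde X$ close enough to $X$ that $X'$ lands within $\delta$ of $X$, forcing the preimage into $U$. Once the error bound is secured, the passage to the two stated formulas via Theorem~\ref{the:cal_tangentsets} is routine, using only that $h$ is smooth so that $h'(X;\cdot)$ and $h''(X;\eta,\cdot)$ are its first and parabolic second differentials, and that the first-order identity $\eta\in\Btangent_{\manifold\cap\kanifold}(X)=\Btangent_{\manifold}(X)\cap\Btangent_{\kanifold}(X)$ makes all three second-order tangent sets in~\eqref{eq:decoupletangenttwoMK} simultaneously well-defined.
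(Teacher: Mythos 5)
Your proposal is correct and follows essentially the same route as the paper's proof in Appendix~\ref{app:McapK}: the paper likewise isolates the two ingredients you name as Lemmas~\ref{lem:local_error_bound_constant_rank} (constant-rank level sets satisfy a local error bound) and~\ref{lem:transverse_local_error_bound} (transversal intersections are linearly regular), lifts the nearest point of $\manifold$ through the open map $\phi|_{\bmanifold}$, chains the same four estimates to obtain the error bound for $\manifold\cap\kanifold$, and then concludes by applying Theorem~\ref{the:cal_tangentsets} to the stacked constraints $(c_1,h)$. No substantive differences.
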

\begin{proof}
    See Appendix~\ref{app:McapK}.
\end{proof}

\begin{remark}
    If Assumption~\ref{assu:MintersecK} is relaxed to require Assumption~\ref{assu:errorbound} only for $\manifold$ rather than for both $\manifold$ and $\kanifold$, the explicit calculation $\Btangent_{\manifold\cap\kanifold}(X) = \{\eta\in\mbR^q\mid c_1^\prime(X;\eta)=0,\,h^\prime(X;\eta)=0\}$ still holds. The role of Assumption~\ref{assu:errorbound} for $\kanifold$ is to guarantee the decoupling principle $\{\eta\in\mbR^q\mid c_1^\prime(X;\eta)=0,\,h^\prime(X;\eta)=0\}=\Btangent_{\manifold}(X)\cap\Btangent_{\kanifold}(X)$; an analogous rationale applies to the second-order counterpart.
\end{remark}

Theorem~\ref{the:expressions_McapK} serves as an extension of Theorem~\ref{the:cal_tangentsets}, incorporating an additional set $\kanifold$ and decoupling the computation of the tangent sets to $\manifold\cap\kanifold$ into the intersection of those to each component. In essence, Theorem~\ref{the:expressions_McapK} provides a unified perspective to unveil the first-order rule~\eqref{eq:decoupletangentMK}, which were previously verified on a case-by-case basis for certain choices of $(\manifold,\kanifold)$~\cite{cason2013iterative,li2023normalboundedaffine,yang2025spacedecouple,peng2025normalizedTT}. More importantly, the second-order intersection rule~\eqref{eq:decoupletangenttwoMK} yields new theoretical insights into the geometry of the intersection $\manifold\cap\kanifold$.

Next, we demonstrate the broad applicability of the developed Theorems~\ref{the:cal_tangentsets} and \ref{the:expressions_McapK} through several instances where low-rank sets intersect with additional structured sets. For clarity, we divide the discussion into three groups: first, low-rank rectangular matrices in $\mathbb{R}^{m\times n}$; second, low-rank symmetric matrices; and third, low-rank positive semidefinite matrices; see Table~\ref{tab:tangentsets} for a summary.

\subsubsection{Low-rank rectangular matrices}\label{sec:lowrank_rectangular}
We begin with the intersection of the determinantal variety $\boundedrank$ and another structured set $\hanifold\subseteq\mbRmn$. Four typical choices of $\hanifold$ considered in existing literature are the affine manifold~\cite{li2023normalboundedaffine}, the Frobenius sphere~\cite{cason2013iterative,yang2025spacedecouple,peng2025normalizedTT}, the oblique manifold~\cite{yang2025spacedecouple}, and the hyperbolic manifold~\cite{jawanpuria2019lowrankhyperbolic}:
\begin{equation}\label{eq:exampleHi}
    \begin{aligned}
        \hanifold_1 &=\affine(m,n):= \{X\in\mbRmn\,|\ \aanifold(X)-b={0}\},
        \\
        \hanifold_2 &=\Fsphere(m,n):= \{X\in\mbRmn\,|\ \norm{X}^2_{\frob}-1=0\},
        \\
        % \hanifold_2 &=\Fball(m,n):= \{X\in\mbRmn:h_2(X):=\norm{X}_{\frob}-1\le 0\},
        % \\
        \hanifold_3 &=\oblique(m,n):= \{X\in\mbRmn\,|\ \ddiag(XX^\top)-{\bf 1}={0}\},
        \\
        \hanifold_4 &= \matrixhyperboloid:=\{X\in\mbRmn\mid X_i\in\hyperboloid\ \text{for}\ i=1,2,\ldots,n\},
    \end{aligned}
\end{equation}
where ${\bf 1}\in\mathbb{R}^{m}$ denotes an all-ones vector, $\hyperboloid:=\{x\in\mathbb{R}^m\mid-x_1y_1 + \sum_{i=2}^m x_iy_i=-1,\ x_1>0\}$, and $X_{i}$ extracts the $i$-th column of $X$. Checking that all the $\boundedrank\cap\hanifold_j$ ($j=1,2,3,4$) satisfy Assumption~\ref{assu:MintersecK} by choosing $(\bmanifold,\phi)$ as the LR parameterization~\eqref{eq:MH_LR}, we then apply Theorem~\ref{the:expressions_McapK} to obtain the following intersection rules,
\begin{equation*}
\begin{aligned}
    \Btangent_{\boundedrank\cap\hanifold_j}(X) &= \Btangent_{\boundedrank}(X)\cap\Btangent_{\hanifold_j}(X),
    \\
    \tangenttwo_{\boundedrank\cap\hanifold_j}(X;\eta) &= \tangenttwo_{\boundedrank}(X;\eta)\cap\tangenttwo_{\hanifold_j}(X;\eta)\ \ \text{for any}\ \eta\in\Btangent_{\boundedrank\cap\hanifold_j}(X).
\end{aligned}
\end{equation*}
The above results enable us to derive the closed-form expressions for the tangent sets to $\boundedrank\cap\hanifold_j$; more details are given in Appendix~\ref{app:McapH_application}.

\subsubsection{Low-rank symmetric matrices}\label{sec:lowrank_sym}
Subsequently, we turn to the symmetric scenario when $m=n$ and $\hanifold=\sdp(n)\cap\uanifold$ (resp. $\sdp^+(n)\cap\uanifold$) for some $\uanifold\subseteq\sym(n)$---this is a topic of independent interest~\cite{pataki1998lowranksolution,li2020jotaspectral}, and thus we rewrite the intersection $\boundedrank\cap\hanifold$ in the more specific form $\sym_{\le r}(n)\cap\uanifold$ (resp. $\boundedranksdp(n)\cap\uanifold$), where it is recalled that
\begin{equation}\label{eq:define_Srn}
    \boundedranks = \hkh{X\in\sanifold(n)\mid \rank(X)\le r}.
\end{equation}

Inspired by the perspective~\eqref{eq:sigmarboundedrank}, we consider the eigenvalue mappings arranged in a non-increasing order, $\lambda_1(X)\ge\lambda_2(X)\ge\cdots\ge\lambda_n(X)$, and then draw on the following characterization of $\boundedranks$, 
\begin{equation}\label{eq:characterization_Sr}
    \sym_{\le r}(n)=\bigcup_{j=1}^{r+1} \sanifold_j,\ \text{with}\ \sanifold_j:=\{X\in\sym(n)\mid\lambda_{j}(X)=0,\,\lambda_{j+n-r-1}(X)=0\}.
\end{equation}
We briefly explain the above decomposition. In fact, any $X\in\sym(n)$ with $\rank(X)\le r$ must have at least $n-r$ consecutive eigenvalues $\lambda_i$ equal zero, and by the non-increasing ordering of the $\lambda_i$, we conclude that $X\in\sanifold_j$ for some $1\le j \le r+1$, i.e.,
$\lambda_1(X) \ge  \cdots \ge \lambda_{j-1}(X) \ge 0 = \lambda_j(X) = \cdots =\lambda_{j+n-r-1}(X) = 0 \ge \lambda_{j+n-r}(X)\ge\!\cdots \ge \lambda_n(X)$.

Combining the rules~\eqref{eq:tangentcupXi} and~\eqref{eq:characterization_Sr}, we can identify the tangent sets to $\boundedranks$ by taking the union of those to each $\sanifold_j$, which shifts our focus to $\sanifold_j$. In fact, it can be verified that for $1\le j\le r+1$, the set $\sanifold_j$ satisfies Assumption~\ref{assu:errorbound}. Subsequently, we apply Theorem~\ref{the:cal_tangentsets} to $\sanifold_j$, obtaining the associated tangent sets as the zeros of directional derivatives of eigenvalue mappings, which is achievable since explicit expressions for directional derivatives of $\lambda_i$ are given in \cite{torki2001secondtoeigen,zhang2013secondordersingular}; the tangent sets to $\sanifold_j$ are derived in Proposition~\ref{pro:tangenttoSj} of Appendix~\ref{app:tangent_Sj}. Collecting the results produces tangent sets to $\sym_{\le r}(n)$, as presented in the following Proposition.

\begin{proposition}\label{pro:tangenttoS}
    Given $X\in\boundedranks$ with $\rank{(X)}=s$ and the spectral decomposition $X=U\varLambda U^\top$ with $U\in\stiefel(n,s)$. The tangent cone to $\boundedranks$ can be characterized by
    \begin{equation}\label{eq:tangent_Sr}
        \Btangent_{\boundedranks}(X)=\hkh{[U\ U_{\bot}]\left[ \begin{matrix}
        	W_1&		W_2\\
        	W_2^\top&		J\\
        \end{matrix} \right][U\ U_{\bot}]^\top \left|\,\begin{array}{l}
        W_1\in\sym(s),
        \\
        W_2\in\mbR^{s\times (n-s)},
        \\
        J\in\sym_{\le r-s}(n-s)
        \end{array}\right.
        }.
    \end{equation}
    Additionally, given a direction $\eta\in\Btangent_{\boundedranks}(X)$ parameterized in the above manner with $\rank(J)=\ell-s$ for some $s\le\ell\le r$. Let the spectral decomposition of $U_\bot J U^\top_\bot$ be $U_\bot J U^\top_\bot=U_{\eta} \varSigma_\eta U_{\eta}^\top$ with $U_{\eta}\in\stiefel(n,\ell-s)$. Take ${U}_{\eta\bot}$ such that $[U\ U_\eta\ U_{\eta\bot}]\in\orth(n)$, and denote $U^+=[U\ U_{\eta}]$. It holds that
    \begin{equation}\label{eq:tangenttwo_Sr}
        \tangenttwo_{\boundedranks}(X;\eta)=\hkh{2\eta X^\dagger\eta + [U^+\ U_{\eta\bot}]\left[ \begin{matrix}
        	W_1&		W_2\\
        	W_2^\top&		L\\
        \end{matrix} \right][U^+\ U_{\eta\bot}]^\top \left|\,\begin{array}{l}
        W_1\in\sym(\ell),
        \\
        W_2\in\mbR^{\ell\times (n-\ell)},
        \\
        L\in\sym_{\le r-\ell}(n-\ell)
        \end{array}\right.
        }.
    \end{equation}
\end{proposition}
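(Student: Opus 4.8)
The plan is to build the tangent sets to $\boundedranks$ out of those to its constituent strata, exploiting the decomposition $\boundedranks=\bigcup_{j=1}^{r+1}\sanifold_j$ from~\eqref{eq:characterization_Sr}. By the union rules~\eqref{eq:tangentcupXi}, both the first- and second-order tangent sets to $\boundedranks$ at $X$ equal the unions, taken over those indices $j$ with $X\in\sanifold_j$, of the corresponding tangent sets to $\sanifold_j$ supplied by Proposition~\ref{pro:tangenttoSj}. The first step is therefore to pin down the admissible indices. Writing the eigenvalues of $X$ in non-increasing order and letting $p$ be the number of strictly positive ones, so that $\rank(X)=s$ forces $s-p$ strictly negative eigenvalues and places the $n-s$ zeros at positions $p+1,\dots,p+n-s$, a direct check of the two defining equations $\lambda_j(X)=0$ and $\lambda_{j+n-r-1}(X)=0$ shows $X\in\sanifold_j$ precisely for $j\in\{p+1,\dots,p+r-s+1\}$, a block of $r-s+1$ consecutive indices.

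For the first-order claim~\eqref{eq:tangent_Sr}, I would feed each admissible $\sanifold_j$ through Proposition~\ref{pro:tangenttoSj}. In the eigenbasis $[U\ U_\bot]$, the constraints defining $\Btangent_{\sanifold_j}(X)$ act only on the tail block $M:=U_\bot^\top\eta U_\bot\in\sym(n-s)$: writing $a:=j-p$, the directional derivatives of $\lambda_j$ and $\lambda_{j+n-r-1}$ force the $a$-th and $(a+n-r-1)$-th eigenvalues of $M$ to vanish, hence by ordering $n-r$ consecutive eigenvalues of $M$ vanish, so $\rank(M)\le r-s$ with at most $a-1$ positive and at most $r-s-a+1$ negative eigenvalues. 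As $a$ runs over $\{1,\dots,r-s+1\}$ these inertia windows sweep out every admissible signature, and I would verify that their union is exactly $\{M\in\sym(n-s)\mid\rank(M)\le r-s\}=\sym_{\le r-s}(n-s)$: any rank-$\le r-s$ symmetric $M$ with $p'$ positive and $q'$ negative eigenvalues (so $p'+q'\le r-s$) is captured by any integer $a$ in the nonempty range $[p'+1,\,r-s+1-q']$. Since the $U$-block and off-diagonal block remain free, this yields~\eqref{eq:tangent_Sr}.

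The second-order claim~\eqref{eq:tangenttwo_Sr} follows the same two-level pattern. For fixed $\eta$ parameterized as in the statement with $\rank(U_\bot J U_\bot^\top)=\ell-s$, I would first isolate the admissible strata, namely those $\sanifold_j$ with $X\in\sanifold_j$ and $\eta\in\Btangent_{\sanifold_j}(X)$, and compute $\tangenttwo_{\sanifold_j}(X;\eta)$ from the second-order part of Proposition~\ref{pro:tangenttoSj}. Exactly as in the proof of Proposition~\ref{pro:tangenttwo_mr}, the second-order eigenvalue derivative produces the common curvature correction $2\eta X^\dagger\eta$, which is symmetric because $\eta$ and $X^\dagger$ are, and its vanishing reduces to a rank/inertia condition on the block $L:=U_{\eta\bot}^\top(\zeta-2\eta X^\dagger\eta)U_{\eta\bot}\in\sym(n-\ell)$. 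Running the same union argument, now over the shifted window of positions available to $L$, fills out $\sym_{\le r-\ell}(n-\ell)$ and gives~\eqref{eq:tangenttwo_Sr}.

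The main obstacle I anticipate is the inertia bookkeeping in the union step, rather than any single derivative computation. Each stratum $\sanifold_j$ admits only a prescribed number of positive and negative eigenvalues in the relevant block, so one must argue carefully that sweeping $j$ across its admissible range produces neither gaps nor spurious directions, i.e. that the partial signature constraints union precisely to the unconstrained rank bound. A secondary point requiring care is consistency across strata in the second-order case: I must check that the curvature term $2\eta X^\dagger\eta$ and the subspace splitting induced by $\eta$, encoded in $\ell$, $U_\eta$, and $U_{\eta\bot}$, are identical for every admissible $j$, so that the union genuinely assembles into the single closed form~\eqref{eq:tangenttwo_Sr}.
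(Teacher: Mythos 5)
Your proposal is correct and follows essentially the same route as the paper: decompose $\boundedranks=\bigcup_{j}\sanifold_j$, identify the admissible index window (your $\{p+1,\dots,p+r-s+1\}$ is exactly the paper's $\janifold(X)=\{s_++1,\dots,r+1-s_-\}$), apply Proposition~\ref{pro:tangenttoSj} stratum by stratum, and verify that the union of the inertia-constrained blocks sweeps out $\sym_{\le r-s}(n-s)$ (respectively $\sym_{\le r-\ell}(n-\ell)$ in the second-order case via $\janifold'(X;\eta)$). Your inertia bookkeeping for the nonempty range $[p'+1,\,r-s+1-q']$ is precisely the verification the paper leaves implicit, and your consistency concern about $2\eta X^\dagger\eta$ and the splitting $(U^+,U_{\eta\bot})$ is resolved by noting these depend only on $(X,\eta)$, not on $j$.
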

\begin{proof}
    See Appendix~\ref{app:SU}.
\end{proof}

\smallskip

Furthermore, imposing an additional constraint $\uanifold$ on $\boundedranks$ has recently received increasing attention. A representative example is $\uanifold=\{X\in\sym(n)\mid \|X\|_{\frob}^2-1=0\}$~\cite{cason2013iterative,li2020jotaspectral}. Extending the spirit of the rule~\eqref{eq:tangentcupXi} and the decomposition~\eqref{eq:characterization_Sr}, we have $\sym_{\le r}(n)\cap\uanifold=\bigcup_{j=1}^{r+1} (\sanifold_j\cap\uanifold)$, and thus it suffices to compute the tangent sets to each $\sanifold_j\cap\uanifold$, followed by assembling them to obtain those to the union $\sym_{\le r}(n)\cap\uanifold$; the results are provided in Appendix~\ref{app:SU}.

\subsubsection{Low-rank positive semidefinite matrices}\label{sec:lowrank_posi}
As shown by Pataki~\cite{pataki1998lowranksolution}, semidefinite programs (SDPs) often admit low-rank solutions, underlining the importance of the geometry of low-rank positive semidefinite matrices:
\begin{equation}\label{eq:boundedranksdp}
    \boundedranksdp(n) = \hkh{X \in \sym(n)\ |\  X\succeq 0,\,\rank(X) \leq r}.
\end{equation}
It is noteworthy to observe that $\boundedranksdp(n)$ coincides with $S_{r+1}$ defined in~\eqref{eq:characterization_Sr}, i.e.,
\begin{equation}\label{eq:characterization_S+r}
    \boundedranksdp(n)=\sanifold_{r+1}=\{X\in\sym(n)\mid\lambda_{r+1}(X)=0,\lambda_n(X)=0\}.
\end{equation}
The equality holds from the non-increasing ordering of eigenvalues, $\lambda_1(X)\ge\cdots\ge\lambda_r(X)\ge 0 = \lambda_{r+1}(X)=\cdots=\lambda_n(X)$. Since the tangent sets to the $\sanifold_j$ in~\eqref{eq:characterization_Sr} have been derived in Proposition~\ref{pro:tangenttoSj}, we specify the computation in the following proposition by taking $j=r+1$.

\begin{proposition}\label{pro:tangenttoS+r}
    Given $X\in\boundedranksdp(n)$ with $\rank{(X)}=s$ and the spectral decomposition $X=U\varLambda U^\top$, where $U\in\stiefel(n,s)$. The tangent cone to $\boundedranksdp(n)$ can be characterized by
    \begin{equation*}
        \Btangent_{\boundedranksdp(n)}(X)=\hkh{[U\ U_{\bot}]\left[ \begin{matrix}
        	W_1&		W_2\\
        	W_2^\top&		J\\
        \end{matrix} \right][U\ U_{\bot}]^\top \left|\,\begin{array}{l}
        W_1\in\sym(s),
        \\
        W_2\in\mbR^{s\times (n-s)},
        \\
        J\in\sym^+_{\le r-s}(n-s)
        \end{array}\right.
        }.
    \end{equation*}
    Additionally, given a direction $\eta\in\Btangent_{\boundedranksdp(n)}(X)$ parameterized in the above manner with $\rank(J)=\ell-s$ for some $s\le\ell\le r$. Let the spectral decomposition of $U_\bot J U^\top_\bot$ be $U_\bot J U^\top_\bot=U_{\eta} \varSigma_\eta U_{\eta}^\top$ with $U_{\eta}\in\stiefel(n,\ell-s)$. Take ${U}_{\eta\bot}$ such that $[U\ U_\eta\ U_{\eta\bot}]\in\orth(n)$, and denote $U^+=[U\ U_{\eta}]$. It holds that
    \begin{equation*}
        \tangenttwo_{\boundedranksdp(n)}(X;\eta)=\hkh{2\eta X^\dagger\eta + [U^+\ U_{\eta\bot}]\left[ \begin{matrix}
        	W_1&		W_2\\
        	W_2^\top&		L\\
        \end{matrix} \right][U^+\ U_{\eta\bot}]^\top \left|\,\begin{array}{l}
        W_1\in\sym(\ell),
        \\
        W_2\in\mbR^{\ell\times (n-\ell)},
        \\
        L\in\sym^+_{\le r-\ell}(n-\ell)
        \end{array}\right.
        }.
    \end{equation*}
\end{proposition}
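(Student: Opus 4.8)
The plan is to apply Theorem~\ref{the:cal_tangentsets} to the representation $\boundedranksdp(n)=\sanifold_{r+1}$ recorded in~\eqref{eq:characterization_S+r}, running in parallel to the derivation of Proposition~\ref{pro:tangenttwo_mr} for the nonsymmetric case. First I would recast the two equalities $\lambda_{r+1}(X)=0$ and $\lambda_n(X)=0$ as inequalities: since $\lambda_n(X)\le\lambda_{r+1}(X)$ by the non-increasing ordering, $\sanifold_{r+1}$ coincides with $\{X\in\sym(n)\mid \lambda_{r+1}(X)\le 0,\ -\lambda_n(X)\le 0\}$, because $\lambda_n(X)\ge 0$ forces $X\succeq 0$ and then $\lambda_{r+1}(X)\le 0$ collapses to $\lambda_{r+1}(X)=0$, i.e.\ $\rank(X)\le r$. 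This puts $\boundedranksdp(n)$ in the format of Theorem~\ref{the:cal_tangentsets} with $c_2=(\lambda_{r+1},-\lambda_n)$ and no equality block. Assumption~\ref{assu:errorbound} is then verified just as for the singular-value mapping: the eigenvalue mappings are Lipschitz by Weyl's inequality, and the PSD rank-$r$ truncation (keep the largest $\min\{r,\#\text{positive}\}$ eigenvalues, zero the rest) yields $\dist(X,\boundedranksdp(n))\le\sqrt{n}\,\|([\lambda_{r+1}(X)]_+,[-\lambda_n(X)]_+)\|$, since each discarded positive eigenvalue is bounded by $[\lambda_{r+1}(X)]_+$ and each negative one by $[-\lambda_n(X)]_+$.

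For the first-order cone I would invoke the symmetric analog of the flowchart in \myfig\ref{fig:flowchart_2ndderivative} (the eigenvalue derivatives of~\cite{torki2001secondtoeigen,zhang2013secondordersingular}). At $X\in\boundedranksdp(n)$ with $\rank(X)=s$, the indices $r+1,\dots,n$ all lie in the zero-eigenvalue cluster spanned by $U_\bot$, so both constraints are active and the directional derivatives are eigenvalues of $J:=U_\bot^\top\eta U_\bot\in\sym(n-s)$, namely $\lambda_{r+1}'(X;\eta)=\lambda_{r+1-s}(J)$ and $\lambda_n'(X;\eta)=\lambda_{\min}(J)$. Theorem~\ref{the:cal_tangentsets}(i) then requires $\lambda_{\min}(J)\ge 0$ and $\lambda_{r+1-s}(J)\le 0$, i.e.\ $J\succeq 0$ with at most $r-s$ positive eigenvalues, which is exactly $J\in\sym^+_{\le r-s}(n-s)$. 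Writing $\eta=[U\ U_\bot]\left[\begin{smallmatrix}W_1&W_2\\W_2^\top&J\end{smallmatrix}\right][U\ U_\bot]^\top$ recovers the stated $\Btangent_{\boundedranksdp(n)}(X)$.

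For the second-order set I would run the two-stage index refinement exactly as in the proof of Proposition~\ref{pro:tangenttwo_mr}. Fixing $\eta$ with $\rank(J)=\ell-s$, the first-stage values $\lambda_{r+1}'(X;\eta)=\lambda_{r+1-s}(J)=0$ and $\lambda_n'(X;\eta)=\lambda_{\min}(J)=0$ keep both constraints in the refined active set $I_1(X;\eta)$, and the surviving directions project onto $U_{\eta\bot}$, the kernel of $J$ inside the cluster (so $U_{\eta\bot}\in\stiefel(n,n-\ell)$). The symmetric parabolic second-order derivative produces the same curvature correction $2\eta X^\dagger\eta$ as in the SVD case, giving $\lambda_{r+1}''(X;\eta,\zeta)=\lambda_{r-\ell+1}(U_{\eta\bot}^\top(\zeta-2\eta X^\dagger\eta)U_{\eta\bot})$ and $\lambda_n''(X;\eta,\zeta)=\lambda_{\min}(U_{\eta\bot}^\top(\zeta-2\eta X^\dagger\eta)U_{\eta\bot})$. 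Imposing $\le 0$ and $\ge 0$ respectively forces $L:=U_{\eta\bot}^\top(\zeta-2\eta X^\dagger\eta)U_{\eta\bot}\in\sym^+_{\le r-\ell}(n-\ell)$, and parameterizing $\zeta-2\eta X^\dagger\eta=[U^+\ U_{\eta\bot}]\left[\begin{smallmatrix}W_1&W_2\\W_2^\top&L\end{smallmatrix}\right][U^+\ U_{\eta\bot}]^\top$ yields the claimed $\tangenttwo_{\boundedranksdp(n)}(X;\eta)$.

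The main obstacle I anticipate lies in the second-order step: one must invoke the correct symmetric-eigenvalue analog of the parabolic derivative (the $\widehat V_{t+1}$-type curvature term specialized to the zero cluster, where the reduced resolvent becomes $X^\dagger$), carefully track the double index refinement $l\mapsto l'$ so that the surviving position is $r-\ell+1$, and confirm that the two active constraints jointly cut out the PSD cone $\sym^+_{\le r-\ell}(n-\ell)$ rather than the full $\sym_{\le r-\ell}(n-\ell)$ that appears in Proposition~\ref{pro:tangenttoS}. The other delicate point is the error-bound half of Assumption~\ref{assu:errorbound} for the coupled rank-plus-PSD constraint, though this reduces to the eigenvalue-truncation estimate sketched above.
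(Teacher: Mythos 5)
Your proposal is correct, but it takes a route that differs from the paper's in two respects. The paper's own proof of this proposition is a one-line specialization: it sets $j=r+1$ in Proposition~\ref{pro:tangenttoSj} (proved in Appendix~\ref{app:tangent_Sj}), where $\sanifold_{r+1}$ is described by the two eigenvalue \emph{equalities} $\lambda_{r+1}(X)=0$ and $\lambda_n(X)=0$, so only the equality branch of Theorem~\ref{the:cal_tangentsets} is ever invoked, and the PSD structure of $J$ and $L$ emerges indirectly from the ordering $\lambda_{r+1-s}(J)=\lambda_{\min}(J)=0$. You instead recast the set via the \emph{inequalities} $\lambda_{r+1}(X)\le 0$ and $-\lambda_n(X)\le 0$ and run the inequality branch ($c_2$, active sets $I_0$ and $I_1$) of Theorem~\ref{the:cal_tangentsets} from scratch. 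Your reformulation is valid (the two one-sided conditions do collapse to the equalities on the feasible set), your error-bound estimate via PSD rank-$r$ truncation reproduces the paper's computation \eqref{eq:proj_Sj}--\eqref{eq:estimate_X-Xp}, and your index bookkeeping ($l(r+1)=r+1-s$, $l'(r+1)=r-\ell+1$, $l'(n)=n-\ell$) and the curvature term $2\eta X^\dagger\eta$ from $2\eta(X-\lambda_i(X)I)^\dagger\eta$ at $\lambda_i=0$ all match \eqref{eq:lambdaprime}--\eqref{eq:lambdaprimeprime}. What your route buys is transparency: the sign conditions $\lambda_{r+1-s}(J)\le 0$, $\lambda_{\min}(J)\ge 0$ (and likewise for $L$) make it immediate why the PSD cones $\sym^+_{\le r-s}$ and $\sym^+_{\le r-\ell}$ appear rather than the full $\sym_{\le r-s}$ of Proposition~\ref{pro:tangenttoS}; what the paper's route buys is economy, since the general $\sanifold_j$ machinery is already in place and covers all $j$ simultaneously. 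The only caveat is that your inequality description and the paper's equality description coincide only because $r<n$ rules out $J\succ 0$ on an $(n-s)$-dimensional block; this is automatic here but worth a sentence if you write the argument out in full.
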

\begin{proof}
Setting $j=r+1$ in Proposition~\ref{pro:tangenttoSj}, the associated quantities $s_+$ and $\ell_+$ reduce to $s_+=s$ and $\ell_+=\ell-s$, respectively, and thus the tangent sets of $\sanifold_j$ in~\eqref{eq:tangenttoSj} and~\eqref{eq:tangenttwotoSj} reduce directly to those of $\sanifold_{r+1}=\boundedranksdp(n)$.
\end{proof}

Subsequently, taking into account an additional structured set $\uanifold\subseteq\sym(n)$, the geometry of the coupled set $\boundedranksdp(n)\cap\uanifold$ becomes more complicated. Typically, low-rank SDPs with linear equality constraints have found a broad range of applications, which motivates the study of $\boundedranksdp(n)\cap\uanifold$ with $\uanifold (n)=\{X\in\sym(n)\mid\aanifold(X)-b=0\}$~\cite{boumal2020deterministic,levin2025effect}. In this case, the tangent cone to $\boundedranksdp(n)\cap\uanifold$ was first derived in \cite[Corollary 4.12]{levin2025effect}, and we note that applying the established Theorem~\ref{the:expressions_McapK} not only recovers the first-order result directly, but also identifies the second-order tangent set to $\boundedranksdp(n)\cap\uanifold$; see Appendix~\ref{app:S+U} for details.

\section{Tangent sets bridge optimization landscapes}\label{sec:tangent_bridge_land}
Consider a general constrained optimization problem as follows,
\begin{equation*}\label{eq:general_xanifold}
    \min_{X\in\manifold}\ f(X),
\end{equation*}
where the feasible region $\manifold$ is possibly nonsmooth and nonconvex, with a typical example being $\manifold=\boundedrank$. To circumvent the irregularity of $\manifold$, a common approach is to employ the technique of {smooth parameterization}~\cite{levin2023remedy,rebjock2024boundedrank,gao2024desingularizationtensor,levin2025effect,yang2025spacedecouple}, which introduces a smooth manifold $\bmanifold$ to (over)parameterize $\manifold$, thereby inducing a Riemannian optimization problem; see \myfig\ref{fig:diagram} for an illustration and see~\cite{absil2008optimization,boumal2023introduction} for more background of Riemannian optimization.

Then, the central question is: what is the relationship between the landscapes---or more precisely, the stationary points---of the reformulated problem~\eqref{eq:PM} and the original problem~\eqref{eq:P}? To answer this, we unveil that the first- and second-order tangent sets to $\manifold$ and $\bmanifold$ play a crucial role, indeed bridging the landscapes of the two optimization problems. Specifically, section~\ref{sec:smoothpara_opt} reviews existing results on smooth parameterization. More essentially, section~\ref{sec:equi_2ndpoints} presents the key finding: we precisely identify a sufficient and necessary condition under which the second-order stationary points of~\eqref{eq:PM} map to those of~\eqref{eq:P}.

\subsection{Smooth parameterization for optimization problems}\label{sec:smoothpara_opt}
We begin by revisiting the first- and second-order stationarity for a constrained optimization problem; see \cite{ruszczynski2006nonlinearopt}. Specifically, consider the problem $\min_{X\in\manifold} f(X)$. We say a point $X^*\in\manifold$ is \emph{first-order stationary} if $\projection_{\Btangent_{\manifold}(X^*)}(-\nabla f(X^*))=0$, or equivalently, $-\nabla f(X^*)\in\Fnormal_\manifold(X^*)$; and $X^*$ is \emph{second-order stationary} if, in addition, it satisfies
\begin{equation}\label{eq:def2optcon}
    \innerp{\nabla f(X^*),\zeta}+\innerp{\eta,\nabla^2 f(X^*)[\eta]} \ge 0,\quad\text{for all}\ \zeta\in\tangenttwo_\manifold(X^*;\eta),
\end{equation}
for every $\eta\in\Btangent_\manifold(X^*)$ such that $\innerp{\nabla f(X^*),\eta}=0$. The definitions are general, and apply analogously when the region~$\manifold$ is replaced by other sets, such as~$\bmanifold$.

Implementing algorithms directly on~$\manifold$ may suffer from the nonsmoothness. To address this, smooth parameterization is introduced~\cite{levin2023remedy,rebjock2024boundedrank,levin2025effect,yang2025spacedecouple}. Specifically, as demonstrated in \myfig\ref{fig:diagram}, let $\manifold$ and $\bmanifold$ denote a (possibly nonsmooth) set in $\eanifold$ and a smooth manifold embedded in $\wanifold$, respectively, and let $\phi:\wanifold\to\eanifold$ be a smooth mapping between the two Euclidean spaces such that $\phi(\bmanifold)=\manifold$. Through the parameterization~$(\bmanifold,\phi)$, the original nonsmooth problem~\eqref{eq:P} can thus be cast as a smooth optimization problem on the manifold~\eqref{eq:PM}. 

However, nonlinear parameterizations may distort the optimization landscape, underscoring the importance of studying the relationship between the stationary points of the two problems. More formally, we say that the parameterization $(\bmanifold,\phi)$ satisfies ``${p\Rightarrow q}$" ($p,q=1$ or $2$) at $Y$, if for any objective function $f$, $Y$ being a $p$-th-order stationary point for problem \eqref{eq:PM} implies that $\phi(Y)$ is a $q$-th-order stationary point for problem~\eqref{eq:P}.

Levin et al.~\cite{levin2025effect} established a comprehensive analysis to answer when the implications ``$1\!\Rightarrow\!1$" or ``$2\!\Rightarrow\!1$" hold. Specifically, let $X=\phi(Y)$, and it is proved in~\cite[Theorem 2.4]{levin2025effect} that ``$1\!\Rightarrow\!1$" holds at $Y$ if and only if the image $\ima(\diff \phi_Y(\tangent_{\bmanifold}(Y)))=\Btangent_\manifold(X)$---that is, the differential $\diff \phi$ fully preserves the information contained in the Bouligand tangent cone. Moreover, \cite[Theorem 3.23]{levin2025effect} provides several conditions to guarantee the property ``$2\!\Rightarrow\!1$". These results collectively reveal that the parameterization approach is effective in finding first-order stationary points on the nonsmooth $\manifold$.

However, exactly identifying the conditions under which \twototwo\ holds remains an open problem, since second-order stationarity on the (possibly nonsmooth) set $\manifold$ is more involved than its first-order counterpart, as remarked in~\cite[\S 6]{levin2025effect}. 

\subsection{Equivalence between second-order stationary points}\label{sec:equi_2ndpoints}
We provide in this section a sufficient and necessary condition to characterize when second-order stationary points of~\eqref{eq:PM} map to those of~\eqref{eq:P}, i.e., \twototwo\ holds. We define the following mappings, which borrow the idea from \cite{levin2025effect},
\begin{align*}
    &\tanL_Y:\tangent_{\bmanifold}(Y)\to\eanifold:\ v\mapsto \diff\phi_Y[v],\ \ \text{for}\ Y\in\bmanifold,
    \\
    &\tanQ_{Y,v}:\tangenttwo_{\bmanifold}(Y;v)\to\eanifold:\ u_v\mapsto \diff\phi_Y[u_v]+\diff^2\phi_Y [v,v],\ \ \text{for}\ Y\in\bmanifold\ \text{and}\ v\in\tangent_{\bmanifold}(Y).
\end{align*}
Let $X=\phi(Y)$. In fact, the mappings $\tanL$ and $\tanQ$ convey the geometric information encoded in the tangent sets of $\bmanifold$ to those of $\manifold$. To see this, we note that for the manifold $\bmanifold$, any $v\in\tangent_{\bmanifold}(Y)$ admits a curve $\gamma(t)$ such that $\gamma(0)=Y,\gamma^\prime(0)=v$, and thus $\tanL_Y(v)=(\phi\circ\gamma)^{\prime}(0)$, which implies $\ima(\tanL_Y)\subseteq\tangent_{\manifold}(X)$. Moreover, it is revealed from \cite[Proposition 13.13]{rockafellar2009variationalanalysis} that given any $u_v\in\tangenttwo_{\bmanifold}(Y;v)$ associated with $v\in\Btangent_{\bmanifold}(Y)$, there exists a curve $\beta(t)$ satisfying $\beta(0)=Y$, $\beta^\prime(0)=v$, and $\beta^{\prime\prime}(0)=u_v$. Hence, we have $\tanQ_{Y,v}(u_v)=(\phi\circ \beta)^{\prime\prime}(0)$, indicating that $\ima(\tanQ_{Y,v})\subseteq \tangenttwo_{\manifold}(X;\tanL_Y(v))$. The construction of the curve $\beta(t)$ indeed introduces the concept of \emph{second fundamental form}; see \cite[\S 6.2]{do1992riemannian} and~\cite[\S 5.11]{boumal2023introduction} for more details. Specifically, we can define a quadratic form\footnote{Different choices of $\beta$ yield equal $\beta^{\prime\prime}(0)$, modulo $\tangent_{\bmanifold}(Y)$, ensuring no ambiguity in the definition of $\secfunda_Y(v,v)$; see, e.g.,~\cite[formula~(3.7)]{levin2025effect}.} $\secfunda_Y(v, v) = \projection_{\normal_{\bmanifold}(Y)}(\beta''(0))$, for $v\in\tangent_{\bmanifold}(Y)$, which induces the second fundamental form $\secfunda_Y(v,v^\prime)$:
\begin{equation*}
    \secfunda_Y(v, v^\prime) := \frac{1}{4}\left[\secfunda_Y(v+v^\prime, v+v^\prime) - \secfunda_Y(v-v^\prime, v-v^\prime)\right],
\end{equation*}
for $(v,v^\prime)\in\tangent_{\bmanifold}(Y)\times\tangent_{\bmanifold}(Y)$. In this view, we have another characterization of the second-order tangent set to the manifold, 
\[\tangenttwo_{\bmanifold}(Y; v) = \secfunda_Y(v, v) + \tangent_{\bmanifold}(Y);\]
see Lemma~\ref{lem:funda_tangent2}. Therefore, the image of $\tanQ_{Y,v}$ is indeed a translate of $\ima(\tanL_Y)$, i.e.,
\begin{equation}\label{eq:imaQ_secfunda}
    \ima(\tanQ_{Y,v}) = \diff\phi_Y[\secfunda_Y(v, v)] + \diff^2\phi_Y[v, v] + \ima(\tanL_Y).
\end{equation}

It is concluded in \cite[Theorem 2.4]{levin2025effect} that ``$1\!\Rightarrow\!1$" fails at $Y$ only when $\tanL_Y$ loses information, i.e., $\ima(\tanL_Y)\subsetneq\tangent_{\manifold}(X)$, which suggests that a ``comparison'' between the second-order tangent sets to $\bmanifold$ and to $\manifold$ through $\phi$ would facilitate characterizing the \twototwo\ property. This, however, appears more intricate than the first-order counterpart, suffering from two pains: 1) a direction $\eta=\tanL_Y(v)$ may admit multiple preimages $v^\prime$ under $\tanL_Y$; 2) the asymptotic behavior of $v_i$ approaching such $v^\prime$ (in the sense of $\tanL_Y(v_i)\to\tanL_Y(v^\prime)=\eta$) also plays a role, which necessitates aggregating the images of $\tanQ_{Y,v_i}$ to capture $\tangenttwo_\manifold(X;\eta)$. Motivated by the two considerations, we formalize the idea in Theorem~\ref{the:2to2}.

Before delving into the analysis, we present some basic computations. The derivatives of $\bar{f}=f\circ\phi$ can be computed as follows,
\begin{equation}\label{eq:compute_barf}
    \nabla \bar{f}(Y)=\diff\phi^*_Y(\nabla f(X))\ \ \text{and}\ \ \nabla^2 \bar{f}(Y) = \diff\phi^*_Y\circ \nabla^2 f(X)\circ\diff\phi_Y + \nabla^2\phi_{\nabla f(X)}(Y),
\end{equation}
where $\diff\phi^*_Y:\eanifold\to\wanifold$ is the adjoint of $\diff\phi_Y$, and the mapping $\phi_\eta:\wanifold\to\mbR$ is given by $\phi_{\eta}(W)=\innerp{\eta,\phi(W)}$ for any $(\eta,W)\in(\eanifold,\wanifold)$. Then, given $v\in\tangent_{\bmanifold}(Y)$ and $u_v\in\tangenttwo_{\bmanifold}(Y;v)$, we can specify the following computation,
\begin{align}
        &\ \innerp{\nabla \bar{f}(Y),u_v}+\innerp{v,\nabla^2 
        \bar{f}(Y)[v]}  \nonumber
        \\
        =&\ \innerp{\diff\phi^*_Y(\nabla f(X)),u_v} + \left\langle v, \kh{\diff\phi^*_Y\circ \nabla^2 f(X)\circ\diff\phi_Y + \nabla^2\phi_{\nabla f(X)}(Y)}[v]\right \rangle    \nonumber
        \\
        =&\ \innerp{\nabla f(X),\diff\phi_Y(u_v)} + \left\langle\tanL_Y(v),\nabla^2 f(X)[\tanL_Y(v)]\right \rangle + \innerp{\nabla f(X),\diff^2\phi_Y[v,v]}    \nonumber
        \\
        =&\ \innerp{\nabla f(X),\tanQ_{Y,v}(u_v)}  + \left\langle\tanL_Y(v),\nabla^2 f(X)[\tanL_Y(v)]\right \rangle, \label{eq:2order_deri}
\end{align}
where the first equality comes by substituting the derivatives of $\bar{f}$~\eqref{eq:compute_barf}, and the second equality holds by identifying $\diff\phi_Y=\tanL_Y$ on $\tangent_{\bmanifold}(Y)$. Moreover, we say that a sequence $\{\zeta_i+Z\}_{i\in\mathbb{N}}$ of translates of a subspace $Z\subseteq\eanifold$ converges (necessarily to a translate of $Z$) if there exists a sequence $\{z_i\}_{i\in\mathbb{N}}\subseteq Z$ such that $\{\zeta_i+z_i\}_{i\in\mathbb{N}}$ converges. 

In Theorem~\ref{the:2to2}, We resort to the concept of the following sets:
\begin{align}
    &\qanifold_Y(v):=\bigcup_{\{v_i\}_{i\in\mbN}:\,\tanL_Y(v_i)\to \tanL_Y(v)}\lim_{i\to\infty}\kh{\tanQ_{Y,v_i}(u_{v_i}) + \ima(\tanL_{Y})},\ \ \text{for}\ v\in\tangent_{\bmanifold}(Y),  \nonumber
    \\
    &\varGamma_Y:=\hkh{\projection_{\Fnormal_{\manifold}(\phi(Y))}\kh{\diff\phi_Y[\secfunda(v_0,v_1)]+\diff^2\phi_Y[v_0,v_1]}\mid v_0\in\kernel(\tanL_Y),\,v_1\in\tangent_{\bmanifold}(Y)}. \label{eq:defGammaY}
\end{align}
We use $\closeconv(\cdot)$ to denote the closed convex hull of a set, and propose a sufficient and necessary condition for the property \twototwo. More specifically, when the condition is violated, the failure of \twototwo\ can be witnessed by an explicitly constructed objective. The analysis invokes several lemmas capturing the properties of the constructed sets $\qanifold_Y(v)$ and $\varGamma_Y$, which are presented in Appendix~\ref{app:lemmas2to2}.

\begin{theorem}\label{the:2to2}
    The parameterization $(\bmanifold,\phi)$ satisfies ``\,$2\!\Rightarrow\!2$'' at $Y\in\bmanifold$ if and only if $\ima(\tanL_Y)=\Btangent_{\manifold}(X)$ where $X=\phi(Y)$, and for all $v\in\tangent_{\bmanifold}(Y)$,
    \begin{equation}\label{eq:nece_suff_2to2}
        \!\!\tangenttwo_\manifold(X;\tanL_Y(v)) \subseteq \closeconv \kh{\qanifold_Y(v)+\qanifold_Y(0)+\varGamma_Y}.
    \end{equation}
    If ``\,$2\!\Rightarrow\!2$'' does not hold, there always exists a smooth function $f$ such that $Y$ is second-order stationary for~\eqref{eq:PM} while $\phi(Y)$ is not second-order stationary for~\eqref{eq:P}.
\end{theorem}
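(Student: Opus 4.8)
The plan is to convert both notions of second-order stationarity into pointwise inequalities through the identity~\eqref{eq:2order_deri}, and then transport nonnegativity between the two landscapes. Write $a:=\nabla f(X)$ and $H:=\nabla^2 f(X)$. Since $\bmanifold$ is a smooth manifold, $Y$ is first-order stationary for~\eqref{eq:PM} exactly when $\tanL_Y^*(a)=0$, i.e.\ $a\perp\ima(\tanL_Y)$. At such a $Y$, varying $u_v$ moves $\tanQ_{Y,v}(u_v)$ only within a translate of $\ima(\tanL_Y)$, on which $a$ vanishes, so $\innerp{a,\tanQ_{Y,v}(u_v)}$ is independent of the choice of $u_v$; hence~\eqref{eq:2order_deri} shows that $Y$ is second-order stationary for~\eqref{eq:PM} if and only if
\[ \innerp{a,\tanQ_{Y,v}(u_v)}+\innerp{\tanL_Y(v),H[\tanL_Y(v)]}\ge 0 \quad\text{for all } v\in\tangent_{\bmanifold}(Y). \]
This reduced inequality drives both directions; I also use the degree-two homogeneity $\tangenttwo_\manifold(X;s\eta)=s^2\tangenttwo_\manifold(X;\eta)$ for $s>0$ together with the matching scaling of $\tanQ$.

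For sufficiency, assume $T:=\ima(\tanL_Y)=\Btangent_\manifold(X)$ and~\eqref{eq:nece_suff_2to2}, and let $Y$ be second-order stationary for~\eqref{eq:PM}, so $a\perp T$. Then $-a\in\Fnormal_\manifold(X)$, and since $T$ is a subspace every $\eta\in T$ is critical with $\innerp{a,\eta}=0$; thus $X$ is first-order stationary. Fix $\eta=\tanL_Y(v)\in T$ and $\zeta\in\tangenttwo_\manifold(X;\eta)$. Along any sequence with $\tanL_Y(v_i)\to\eta$, the reduced inequality together with $a\perp T$ and continuity give $\innerp{a,\psi}+\innerp{\eta,H[\eta]}\ge 0$ for every $\psi$ in the limiting translate $\lim_i\kh{\tanQ_{Y,v_i}(u_{v_i})+T}$. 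Because $\psi\mapsto\innerp{a,\psi}+\innerp{\eta,H[\eta]}$ is affine and continuous, this survives on the closed convex hull, hence on $\zeta$ by~\eqref{eq:nece_suff_2to2}; this is exactly~\eqref{eq:def2optcon}, so $X$ is second-order stationary.

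For necessity I argue the contrapositive, constructing a smooth witness $f$ in each failure mode. When the image condition holds but~\eqref{eq:nece_suff_2to2} fails at some $v^*$, there is $\zeta^*\in\tangenttwo_\manifold(X;\eta^*)$, with $\eta^*=\tanL_Y(v^*)$, lying outside the closed convex set $S^*$ on its right-hand side. Since $S^*$ is closed, convex, and invariant under translation by $T$, separation furnishes $a\perp T$ with $\innerp{a,\zeta^*}<\inf_{\psi\in S^*}\innerp{a,\psi}=:c^*$. A quadratic $f$ with $\nabla f(X)=a$ and $\innerp{\eta^*,H[\eta^*]}$ in the nonempty window $[-c^*,-\innerp{a,\zeta^*})$ then breaks~\eqref{eq:def2optcon} at $(\eta^*,\zeta^*)$ while keeping the reduced P-M inequality nonnegative along every sequence feeding $\eta^*$. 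The complementary mode $\ima(\tanL_Y)\subsetneq\Btangent_\manifold(X)$ is approached through the slice $\eta=0$, using $\tangenttwo_\manifold(X;0)=\Btangent_\manifold(X)$: one seeks $a\perp\ima(\tanL_Y)$ with $\innerp{a,\eta_0}<0$ for some $\eta_0\in\Btangent_\manifold(X)\setminus\ima(\tanL_Y)$, which already destroys first-order, hence second-order, stationarity of $X$.

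The main obstacle is the simultaneous design of $(a,H)$ so that the reduced inequality holds for \emph{every} $v\in\tangent_{\bmanifold}(Y)$ at once---not merely along the sequences feeding the witness direction---while the single strict violation survives. The delicate contributions come from $\ker(\tanL_Y)$, where $\innerp{\tanL_Y(v),H[\tanL_Y(v)]}$ vanishes so that only $\innerp{a,\tanQ_{Y,v}(u_v)}$ remains and cannot be repaired through $H$, and from the cross terms between $\ker(\tanL_Y)$ and its complement, which scale linearly in the kernel component and are unbounded unless annihilated by $a$. The plan is to exploit the degree-two homogeneity and the $T$-invariance of $S^*$ to dominate these quadratic and linear parts by a large multiple of $\innerp{\tanL_Y(\cdot),H[\tanL_Y(\cdot)]}$ off the kernel while pinning the value at the witness direction; verifying that such a domination is feasible \emph{precisely} when a listed condition fails---and that the resulting quadratic is a bona fide smooth $f$ realizing the prescribed gradient and Hessian at $X$---is the technical heart of the argument.
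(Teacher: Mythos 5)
Your ``if'' direction is correct and is essentially the paper's argument: you reduce second-order stationarity of $Y$ via \eqref{eq:compute_barf}--\eqref{eq:2order_deri} to the inequality $\innerp{\nabla f(X),\tanQ_{Y,v}(u_v)}+\innerp{\tanL_Y(v),\nabla^2 f(X)[\tanL_Y(v)]}\ge 0$ for all $v$, pass to limits along sequences realizing each translate (using $\nabla f(X)\perp\ima(\tanL_Y)$ to absorb the shifts $z_i$), and extend to the closed convex hull by affinity and continuity; the paper does exactly this with an explicit $\varepsilon$-approximation.

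The ``only if'' direction, however, has two genuine gaps. First, in the mode $\ima(\tanL_Y)\subsetneq\Btangent_\manifold(X)$ you propose a \emph{linear} witness $f(\tilde X)=\innerp{a,\tilde X-X}$ with $a\perp\ima(\tanL_Y)$ and $\innerp{a,\eta_0}<0$, and conclude by destroying first-order stationarity of $X$. But by \eqref{eq:compute_barf} the Hessian of the lifted objective is $\nabla^2\bar f(Y)=\nabla^2\phi_{a}(Y)$, i.e.\ $v\mapsto\innerp{a,\diff^2\phi_Y[v,v]}$ (plus curvature of $\bmanifold$), which has no sign in general; your $Y$ therefore need not be second-order stationary for \eqref{eq:PM}, and the witness collapses. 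The paper sidesteps this entirely by taking $\nabla f(X)=0$ and a pure quadratic: then $\nabla^2\phi_{\nabla f(X)}(Y)$ vanishes identically, $Y$ is second-order stationary because $\tanL_Y^*\circ\nabla^2 f(X)\circ\tanL_Y\succeq 0$, and $X$ fails the \emph{second}-order test along a direction in $\Btangent_\manifold(X)\setminus\ima(\tanL_Y)$ (every direction is critical since $\nabla f(X)=0$).

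Second, in the mode where \eqref{eq:nece_suff_2to2} fails, your separation setup matches the paper's, but you stop at the decisive step: verifying the reduced inequality for \emph{every} $v'\in\tangent_{\bmanifold}(Y)$, not only along sequences with $\tanL_Y(v_i)\to\eta^*$. Your proposed repair---dominating the offending terms by a large multiple of $\innerp{\tanL_Y(\cdot),H\tanL_Y(\cdot)}$---cannot succeed: as you note it gives nothing on $\ker(\tanL_Y)$, and the cross term $\innerp{a,\diff^2\phi_Y[v,k]}$ is linear in the kernel component $k$ while the compensating quadratic degenerates there, so no finite scaling of $H$ works near the kernel either. The paper closes this step without any domination: it takes $H=\projection_{\ima(\tanL_Y)\cap\vecspan\{\eta^*\}^\perp}\succeq 0$ and obtains $\innerp{w,\tanQ_{Y,v'}(u_{v'})}\ge 0$ directly from the membership $\tanQ_{Y,v'}(u_{v'})\in\canifold$ together with $\innerp{w,\cdot}\ge 0$ on $\canifold$. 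If you rework this half, that membership (or a substitute derived from sequences of the form $v^*+\lambda k$, $k\in\ker(\tanL_Y)$, which stay over $\eta^*$ and force sign constraints on $\innerp{w,\diff^2\phi_Y[\cdot,\cdot]}$) is the point you must establish; the domination route is a dead end.
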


\begin{proof}
    \textbf{(Sufficiency)} We first assume that $\ima(\tanL_Y)=\Btangent_{\manifold}(X)$ and the inclusion~\eqref{eq:nece_suff_2to2} holds for all $v\in\Btangent_{\bmanifold}(Y)$. Suppose that $Y$ is second-order stationary for problem~\eqref{eq:PM}. Then, the first-order condition $\nabla \bar{f}(Y)\in\normal_{\bmanifold}(Y)$ implies that
    \begin{equation*}
        0=\innerp{v,\nabla\bar{f}(Y)}=\innerp{v,\diff\phi_Y^*(\nabla f(X))}=\innerp{\tanL_Y(v),\nabla f(X)},\ \ \text{for all}\ v\in\Btangent_{\bmanifold}(Y),
    \end{equation*}
    which, together with $\ima(\tanL_Y)=\Btangent_{\manifold}(X)$ confirms the first-order stationarity of $X$.

    Subsequently, we turn to the second-order condition. Given an arbitrary $\eta\in \Btangent_{\manifold}(X)$ and an associated $\zeta\in \tangenttwo_{\manifold}(X;\eta)$. By $\ima(\tanL_Y)=\Btangent_{\manifold}(X)$, pick $v\in \tangent_{\bmanifold}(Y)$ with $\eta=\tanL_Y(v)$. According to the assumed inclusion~\eqref{eq:nece_suff_2to2}, for any $\varepsilon>0$, there exist $\{(s^{(j)},b^{(j)},\tau^{(j)})\}_{j=1}^N\subseteq\qanifold_Y(v)\times \qanifold_Y(0)\times \varGamma_Y$, together with the coefficients $\{\mu_j\}_{j=1}^N$ satisfying $\mu_j\ge0$ and $\sum_{j=1}^N\mu_j=1$, such that     
    \begin{equation}\label{eq:varepsilon_sj}
        \|\,\zeta - \sum_{j=1}^N \mu_j (s^{(j)}+b^{(j)}+\tau^{(j)})\,\|<\varepsilon.
    \end{equation}
    For all $j=1,2,\ldots,N$, applying Lemmas~\ref{lem:second_Q} and~\ref{lem:Gamma_Y}, we have
    \[
    \innerp{\nabla f(X),s^{(j)}}  + \left\langle\eta,\nabla^2 f(X)[\eta]\right \rangle \ge 0,\ \ \langle\nabla f(X),b^{(j)}\rangle{\ge 0},\ \ \text{and}\ \innerp{\nabla f(X),\tau^{(j)}}=0.
    \]
    Taking the convex combination with the coefficients $\{\mu_j\}_{j=1}^N$ gives
    \begin{equation*}
        \Big\langle \nabla f(X),\,\sum_{j=1}^N \mu_j (s^{(j)}+b^{(j)}+\tau^{(j)})\Big\rangle + \langle \eta,\nabla^2 f(X)[\eta]\rangle\ge 0.
    \end{equation*}
    Finally, letting the parameter $\varepsilon$ in~\eqref{eq:varepsilon_sj} tend to $0$, and using the continuity of $s\mapsto \langle \nabla f(X),s\rangle$, we have $\langle \nabla f(X),\zeta\rangle\ +\ \langle \eta,\nabla^2 f(X)[\eta]\rangle\ \ge\ 0$. By the arbitrariness of $\eta\in\Btangent_\manifold(X)$ and $\zeta\in\tangenttwo_{\manifold}(X;\eta)$, we conclude the second-order stationarity of $X=\phi(Y)\in\manifold$ for problem~\eqref{eq:P}.

    \textbf{(Necessity)} We then turn to the ``only if\," part. Firstly, suppose that $\ima(\tanL_Y)\subsetneq \Btangent_{\manifold}(X)$. Taking polars reverses the inclusion $(\Btangent_\manifold(X))^{\circ}\subsetneq\ima(\tanL_Y))^{\circ}$, where we note that the strict inclusion still holds since $\ima(\tanL_Y)$ is a linear space. Pick $w\in \Btangent_{\manifold}(X)\setminus \ima(\tanL_Y)$. Let $w_\bot:=w-\projection_{\ima(\tanL_Y)}(w)$, and the operator $H:z\mapsto-\innerp{w_\bot,z}w_\bot$. Define $f(\tilde{X})=\tfrac12\innerp{\tilde{X}-X, H(\tilde{X}-X)}$. Then $\nabla f(X)=0$ and $\nabla^2 f(X)=H$. By the computation \eqref{eq:compute_barf}, $\nabla\bar f(Y)=0$ and $\innerp{v,\nabla^2 \bar{f}(Y)[v]}=\langle\tanL_Y(v), H\circ\tanL_Y(v)\rangle= 0$ for all $v\in\tangent_{\bmanifold}(Y)$. Hence $Y$ is second-order stationary for~\eqref{eq:PM}. However, at $X$, $\nabla f(X)=0$ and $\innerp{w,\nabla^2 f(X)[w]}=-\|w_\bot\|^4<0$, violating the second-order condition for~\eqref{eq:P}. Therefore, to guarantee the property \twototwo, $\ima(\tanL_Y)=\Btangent_{\manifold}(X)$ must hold.

Then, we assume that condition~\eqref{eq:nece_suff_2to2} fails for some $v$. Let $\eta = \tanL_Y(v)$. If $\eta=0$, we have 
\begin{equation*}
\tangenttwo_{\manifold}(X;\eta)=\tangenttwo_{\manifold}(X;0)=\tangent_{\manifold}(X)=\ima(\tanL_Y)\subseteq \qanifold_Y(0),
\end{equation*}
where the second equality holds from~\cite[Proposition 9]{giorgi2010overviewsectengent}, and the last inclusion comes from Lemma~\ref{lem:cone_QY0}. Therefore, the inclusion~\eqref{eq:nece_suff_2to2} always holds for $v\in\kernel(\tanL_Y)$, and thus we assume that~\eqref{eq:nece_suff_2to2} fails for some $v$ with $\|\eta\|>0$ in the following discussion. Then there exists $\zeta \in \tangenttwo_{\manifold}(X;\eta)$ outside the closed convex set $\mathcal{C}_v := \closeconv(\mathcal{Q}_Y(v)+\mathcal{Q}_Y(0)+\varGamma_Y)$. By the strict separation theorem~\cite{boyd2004convex}, there exists a vector $w \in \eanifold$, a threshold $\kappa\in\mbR$, and a gap $\delta > 0$ such that:
\begin{equation}\label{eq:strict_separation}
    \innerp{w, \zeta} \le \kappa - \delta < \kappa := \inf_{p \in \mathcal{C}_v} \innerp{w, p}.
\end{equation}
Since $\mathcal{Q}_Y(0)$ is a cone (see Lemma~\ref{lem:cone_QY0}), we have $\innerp{w, u} \ge 0$ for all $u \in \mathcal{Q}_Y(0)$. Noticing that the subspace $\ima(\tanL_Y)$ belongs to $\qanifold_Y(0)$, we have $w\perp \ima(\tanL_Y)$. Additionally, since $c\varGamma_Y\subseteq\varGamma_Y$ for any scaling $c\in\mbR$, we have $w\perp \varGamma_Y$. In summary, it holds that
\begin{equation}\label{eq:separation_pro}
    \innerp{w,u_0}\ge0,\ \ \innerp{w,u_1}=0,\ \ \text{and}\ \innerp{w,u_2}=0,
\end{equation}
{for all} $u_0\in\qanifold_Y(0)$, $u_1\in\ima(\tanL_Y)$, and $u_2\in\varGamma_Y$.

We then construct the following function,
\[
    f_{\rho}(\tilde{X}) := \innerp{w, \tilde{X}-X} + \frac{1}{2} \innerp{\tilde{X}-X, (H_{\mathrm{shift}} + \rho H) (\tilde{X}-X)},
\]
where $H = I - \projection_{\vecspan\{\eta\}}$ and $H_{\mathrm{shift}} = \frac{-\kappa+\delta/2}{\|\eta\|^2} \projection_{\vecspan\{\eta\}}$. The Euclidean derivatives of $f_\rho$ at $X$ are provided below,
\begin{equation*}
    \nabla f_\rho(X)=w,\ \ \ \text{and}\ \ \ \nabla^2 f_\rho(X)=H_{\mathrm{shift}} + \rho H.
\end{equation*}

First, we observe that $X$ is not a second-order stationary point of $f_\rho$. Specifically, along the direction $(\eta, \zeta)$, the second-order condition fails:
\[
    \innerp{\nabla f_\rho(X), \zeta} + \innerp{\eta, \nabla^2 f_\rho(X)[\eta]} \!=\! \innerp{w, \zeta} + \innerp{\eta,H_{\mathrm{shift}}(\eta)}\!\le\!(\kappa - \delta) + (-\kappa + \delta/2) = -\delta/2\!<\!0.
\]

Second, we aim to show that there exists a $\rho>0$ such that $Y\in\bmanifold$ is a second-order stationary point for the objective $\bar{f}_\rho = f_\rho \circ \phi$. Given any direction $d\in\tangent_{\bmanifold}(Y)$, we consider the following function induced by the Hessian of $\bar{f}_\rho$ at $Y$:
\begin{align}
    \mathcal{H}_\rho(d) :=&\ \innerp{\nabla \bar{f}_\rho(Y),u_d} + \innerp{d, \nabla^2 \bar{f}_\rho(Y)[d]}  \nonumber
    \\
    =&\ \innerp{\nabla {f}_\rho(X), \tanQ_{Y,d}(u_d)} + \innerp{\tanL_Y(d), \nabla^2 {f}_\rho(X)[\tanL_Y(d)]}   \label{eq:Hrhod}
    \\
    =&\ \underbrace{\innerp{w, \tanQ_{Y,d}(u_d)} + \innerp{\tanL_Y(d), H_{\mathrm{shift}}\circ \tanL_Y(d)}}_{\text{A quadratic form}\ E(d,d)} + \rho \|H(\tanL_Y(d))\|^2,   \label{eq:defEdd}
\end{align}
where~\eqref{eq:Hrhod} comes from~\eqref{eq:2order_deri}. Since $w\perp\ima(\tanL_Y)$, the identity~\eqref{eq:imaQ_secfunda} implies that the term $\innerp{w, \tanQ_{Y,d}(u_d)}=\innerp{w,\diff\phi_Y(\secfunda_Y(d,d))+\diff ^2\phi_Y[d,d]}$. Therefore, the value of $\innerp{w, \tanQ_{Y,d}(u_d)}$ is independent of the choice of representative $u_d\in\tangenttwo_{\bmanifold}(Y;d)$, and the function $\hanifold_\rho(d)$ and the quadratic form $E(d,d)$ given in~\eqref{eq:defEdd} are well-defined.

Let $K := \ker(H \circ \tanL_Y) = \tanL_Y^{-1}(\vecspan(\eta))$. We present two properties of $E(d,d)$:
\begin{enumerate}
    \item [1)] Given $d \in \ker(\tanL_Y)$, it holds that $\tanQ_{Y,d}(u_d) \subseteq \qanifold_Y(0)$. By $\tanL_Y(d)=0$ and the separation property~\eqref{eq:separation_pro}, we have $E(d,d) = \innerp{w, \tanQ_{Y,d}(u_d)} \ge 0$.
    \item [2)] Given $d \in K \setminus \kernel({\tanL_Y})$, then $\tanL_Y(d) = c \eta$ with $c \neq 0$. Let $\hat{d} = d/c$. Since $\tanL_Y(\hat{d}) = \eta=\tanL_Y(v)$, we have $\tanQ_{Y,\hat{d}}(u_{\hat{d}}) \subseteq \qanifold_Y(v)$ for $u_{\hat{d}}\in\tangenttwo_{\bmanifold}(Y;\hat{d})$. Thus, we have $\innerp{w, \tanQ_{Y,\hat{d}}(u_{\hat{d}})} \ge \kappa$ by the separation~\eqref{eq:strict_separation}. Consequently, $E(\hat{d},\hat{d}) \ge \kappa + \innerp{\tanL_Y(\hat{d}), H_{\mathrm{shift}}\circ\tanL_Y(\hat{d})} = \kappa + (-\kappa + \delta/2) = \delta/2 > 0$, which indicates that $E(d,d)=c^2E(\hat{d},\hat{d}) > 0$.
\end{enumerate}
In summary, $E(d,d) \ge 0$ for all $d \in K$. Let $K_0 = \{ d \in K \mid E(d,d) = 0 \}$, and the property 2) indicates that $K_0 \subseteq \ker(\tanL_Y)$. More importantly, we find that
\begin{equation}\label{eq:Ed0d_0}
    E(d_0,d) = 0,\ \ \ \text{for all}\ d_0 \in K_0,\ \text{and}\ d \in \tangent_{\bmanifold}(Y),
\end{equation}
the derivation of which is deferred to the end of this proof. Now, let $K_+$ be the orthogonal complement of $K_0$ in $K$, and let $K^\bot$ be the orthogonal complement of $K$ in $\tangent_{\bmanifold}(Y)$, that is, $\tangent_{\bmanifold}(Y)=K_0\oplus K_+\oplus K^\bot$. Then, there exists a $\lambda > 0$ such that $E(d_+,d_+)\ge \lambda \|d_+\|^2$ for all $d_+ \in K_+$.

We claim that there exists a $\rho>0$ such that $\mathcal{H}_\rho(d) \ge 0$ for all $d\in\eanifold$; otherwise, we can find a sequence $(\rho_k)_{k\in\mbN}$ with $\rho_k\to\infty$ and unit vectors $d_k$ such that 
$\hanifold_{\rho_k}(d_k)<0$ for every $k$. Decompose $d_k = d_{k,0} + d_{k,+} + \ell_k$, where $d_{k,0} \in K_0$, $d_{k,+} \in K_+$, and $\ell_k \in K^\perp$. The penalty term satisfies $\|H(\tanL_Y(d_k))\| = \|H(\tanL_Y(\ell_k))\| \ge \sigma \|\ell_k\|$ for some $\sigma > 0$. Using $E(d_{k,0},\cdot)= 0$, the condition $\mathcal{H}_{\rho_k}(d_k) < 0$ becomes $E(d_{k,+} + \ell_k,d_{k,+} + \ell_k) + \rho_k \sigma^2 \|\ell_k\|^2 < 0$. We then expand and bound the terms to obtain
\[
    \lambda \|d_{k,+}\|^2 - 2 \|E\| \|d_{k,+}\| \|\ell_k\| + (\rho_k \sigma^2 - \|E\|) \|\ell_k\|^2 < 0.
\]
If $\ell_k = 0$, then $\lambda \|d_{k,+}\|^2 < 0 $ leads to a contradiction. If $\ell_k \neq 0$, dividing by $\|\ell_k\|^2$ yields the following quadratic inequality with $t_k = \|d_{k,+}\|/\|\ell_k\|$:
\begin{equation}\label{eq:imposs_ineq}
     \lambda t_k^2 - 2 \|E\| t_k + (\rho_k \sigma^2 - \|E\|) < 0.
\end{equation}
The minimum value of the left side is $\rho_k \sigma^2 - \|E\| - \frac{\|E\|^2}{\lambda}$. As $\rho_k \to \infty$, this minimum becomes positive, making the inequality~\eqref{eq:imposs_ineq} impossible. Thus, there indeed exists a sufficiently large $\rho$ such that $\mathcal{H}_\rho(d) \ge 0$ for all $d\in\tangent_{\bmanifold}(Y)$. Consequently, we have
\begin{equation*}
    \innerp{\nabla \bar{f}_\rho(Y),u_d} + \innerp{d,\nabla^2\bar{f}_\rho(Y)(d)} = {\hanifold_\rho(d)} \ge 0,
\end{equation*}
for all $d\in\tangent_{\bmanifold}(Y)$ and $u_d\in\tangenttwo_{\bmanifold}(Y;d)$. Therefore, $Y$ is second-order stationary for~\eqref{eq:PM}, while $X$ is not second-order stationary for~\eqref{eq:P}. This contradicts the property \twototwo, thereby showing the necessity of the inclusion~\eqref{eq:nece_suff_2to2}.

\medskip

\noindent\textit{Derivation of~\eqref{eq:Ed0d_0}}\ \ \ In fact, taking into account the definition of $E(d,d)$~\eqref{eq:defEdd}, the property $w\perp \ima(\tanL_Y)$~\eqref{eq:separation_pro}, and the equality~\eqref{eq:imaQ_secfunda}, we have
\[
E(d,d)=\innerp{w,\diff\phi_Y(\secfunda(d,d))+\diff^2\phi_Y[d,d]}+\innerp{\tanL_Y(d), H_{\mathrm{shift}}\circ \tanL_Y(d)}.
\]
Then, $\tanL_Y(d_0)=0$ reveals that
\begin{align}
E(d_0,d)&=\innerp{w,\diff\phi_Y(\secfunda(d_0,d))+\diff^2\phi_Y[d_0,d]} \nonumber
\\
&=\innerp{w,\projection_{\Fnormal_{\manifold}(X)}\kh{\diff\phi_Y(\secfunda(d_0,d))+\diff^2\phi_Y[d_0,d]}}, \nonumber
\end{align}
where the last equality holds from the separation properties $w\in(\ima(\tanL_Y))^\circ=(\tangent_{\manifold}(X))^\circ=\Fnormal_{\manifold}(X)$. Finally, we employ the definition of the set $\varGamma_Y$~\eqref{eq:defGammaY} and the property $w\perp \varGamma_Y$~\eqref{eq:separation_pro} to conclude $E(d_0,d)=0$ for all $d_0\in K_0$ and $d\in \tangent_{\bmanifold}(Y)$.
\end{proof}

To broaden the applicability, we extend the result to composition of parameterizations, which is inspired by~\cite[\S 3.3]{levin2025effect}

\begin{proposition}\label{pro:composition}
    Let $(\bmanifold,\phi)$ be a smooth parameterization of $\manifold$. Given another smooth manifold $\zanifold$, let $\varphi:\zanifold\to\bmanifold$ be a smooth mapping such that $\psi:=\phi\circ\varphi$ is surjective. Then $(\zanifold,\psi)$ is a smooth parameterization of $\manifold$. Moreover, for $Z\in\zanifold$ and $Y:=\varphi(Z)\in\bmanifold$, the following properties hold.
    \begin{itemize}
        \item [\emph{(i)}] If $(\zanifold,\psi)$ satisfies ``\,$2\!\Rightarrow\!2$'' at $Z$, then $(\bmanifold,\phi)$ satisfies ``\,$2\!\Rightarrow\!2$'' at $Y$.
        \item [\emph{(ii)}] If $\varphi$ is a submersion at $Z$ and $(\bmanifold,\phi)$ satisfies ``\,$2\!\Rightarrow\!2$'' at $Y$, then $(\zanifold,\psi)$ satisfies ``\,$2\!\Rightarrow\!2$'' at $Z$.
    \end{itemize}
\end{proposition}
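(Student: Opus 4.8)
The plan is to reduce both claims to a single transfer lemma for second-order stationarity under the smooth map $\varphi$ between the two \emph{smooth} manifolds $\zanifold$ and $\bmanifold$, and then to compose that lemma with the two hypotheses. The organizing identity is $f\circ\psi=f\circ\phi\circ\varphi=\bar f\circ\varphi$ with $\bar f:=f\circ\phi$, valid for every objective $f$; it says that the upper-level problem attached to $(\zanifold,\psi)$ is nothing but $\min_{Y\in\bmanifold}\bar f(Y)$ reparameterized through $\varphi$. First I would dispatch the easy preliminary that $(\zanifold,\psi)$ is a smooth parameterization: $\psi$ is smooth as a composition of smooth maps, and $\psi(\zanifold)=\phi(\varphi(\zanifold))\subseteq\phi(\bmanifold)=\manifold$, with equality by the assumed surjectivity of $\psi$.

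Next I would establish the transfer lemma. Because $\zanifold$ and $\bmanifold$ are smooth manifolds, the stationarity notion in~\eqref{eq:def2optcon} collapses to the Riemannian one---vanishing Riemannian gradient and positive semidefinite Riemannian Hessian---through the curve characterization of second-order tangent sets to a smooth manifold (cf.\ \cite[Proposition 13.13]{rockafellar2009variationalanalysis} and the discussion preceding Theorem~\ref{the:2to2}). For a smooth $g$ on $\bmanifold$ and $h:=g\circ\varphi$, with $Y:=\varphi(Z)$, the Riemannian chain rule (see, e.g., \cite{boumal2023introduction}) gives $\nabla h(Z)=(\diff\varphi_Z)^*[\nabla g(Y)]$ and
\begin{equation*}
    \innerp{u,\nabla^2 h(Z)[u]}=\innerp{\diff\varphi_Z[u],\,\nabla^2 g(Y)[\diff\varphi_Z[u]]}+\innerp{\nabla g(Y),\,\diff^2\varphi_Z[u,u]},
\end{equation*}
where $\diff^2\varphi_Z$ denotes the (covariant) second derivative of $\varphi$. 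The crucial feature is that the correction term vanishes at a critical point, i.e.\ once $\nabla g(Y)=0$. From this I would extract two directions. (a) If $Y$ is second-order stationary for $g$, then $\nabla g(Y)=0$ forces $\nabla h(Z)=0$ and $\innerp{u,\nabla^2 h(Z)[u]}=\innerp{\diff\varphi_Z[u],\nabla^2 g(Y)[\diff\varphi_Z[u]]}\ge0$ for all $u$, so $Z$ is second-order stationary for $h$; this direction needs no regularity of $\varphi$. (b) If moreover $\varphi$ is a submersion at $Z$, then $\diff\varphi_Z$ is surjective and $(\diff\varphi_Z)^*$ injective, so $\nabla h(Z)=0$ forces $\nabla g(Y)=0$, and testing $\nabla^2 g(Y)$ along an arbitrary $w=\diff\varphi_Z[u]$ gives $\nabla^2 g(Y)\succeq0$; hence $Z$ second-order stationary for $h$ implies $Y$ second-order stationary for $g$.

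With the lemma in hand, I would assemble the two statements by taking $g=\bar f$ and $h=\bar f\circ\varphi=f\circ\psi$. For (i), given any $f$ with $Y$ second-order stationary for $\bar f$ on $\bmanifold$, part~(a) makes $Z$ second-order stationary for $f\circ\psi$ on $\zanifold$, and the assumed ``$2\!\Rightarrow\!2$'' of $(\zanifold,\psi)$ at $Z$ then forces $\psi(Z)=X$ to be second-order stationary for $f$ on $\manifold$; as $f$ is arbitrary, this is exactly ``$2\!\Rightarrow\!2$'' of $(\bmanifold,\phi)$ at $Y$. For (ii), given any $f$ with $Z$ second-order stationary for $f\circ\psi=\bar f\circ\varphi$ on $\zanifold$, the submersion hypothesis lets part~(b) promote this to $Y$ being second-order stationary for $\bar f$ on $\bmanifold$, and the assumed ``$2\!\Rightarrow\!2$'' of $(\bmanifold,\phi)$ at $Y$ then makes $\phi(Y)=X$ second-order stationary for $f$, which is ``$2\!\Rightarrow\!2$'' of $(\zanifold,\psi)$ at $Z$.

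I expect the transfer lemma to be the main obstacle, and within it the justification of the Hessian composition identity and the vanishing of its correction term at a critical point---this is precisely what upgrades the transfer from first order to second order. The submersion hypothesis then plays a sharp, asymmetric role: surjectivity of $\diff\varphi_Z$ is what both descends $\nabla g(Y)=0$ from $\nabla h(Z)=0$ (via injectivity of the adjoint) and lets the Hessian be probed in every direction of $\tangent_{\bmanifold}(Y)$, which is why it is indispensable for (ii) but superfluous for (i). A final routine check I would include is the reduction of~\eqref{eq:def2optcon} on the smooth manifolds $\zanifold,\bmanifold$ to the Riemannian gradient/Hessian conditions, so that the chain-rule computation is legitimate.
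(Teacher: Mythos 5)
Your proposal is correct, and it arrives at the same two reductions the paper uses---(i) pull second-order stationarity back from $\bmanifold$ to $\zanifold$ and then invoke the hypothesis on $(\zanifold,\psi)$; (ii) push it forward from $\zanifold$ to $\bmanifold$ using the submersion and then invoke the hypothesis on $(\bmanifold,\phi)$---but it realizes them with different machinery. The paper works entirely with curves: for (i) it observes that $\varphi\circ\gamma$ is a curve on $\bmanifold$ for any curve $\gamma$ on $\zanifold$ through $Z$, and for (ii) it lifts an arbitrary curve $\beta$ on $\bmanifold$ through $Y$ to $\gamma=l\circ\beta$ on $\zanifold$ via the local section theorem, so that stationarity is tested along $(f\circ\phi\circ\beta)$ directly and no Hessian chain rule is ever written down. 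You instead phrase the transfer at the level of Riemannian gradients and Hessians, with the correction term $\innerp{\nabla g(Y),\,\diff^2\varphi_Z[u,u]}$ vanishing once $\nabla g(Y)=0$, and you replace the local section theorem by injectivity of $(\diff\varphi_Z)^*$ together with surjectivity of $\diff\varphi_Z$. Both routes are sound and the submersion hypothesis enters in exactly the same place. The curve formulation is slightly lighter, since it never requires defining the covariant second derivative of $\varphi$ or carrying out the reduction of~\eqref{eq:def2optcon} on a smooth manifold to the Riemannian gradient/Hessian conditions (a step you correctly flag as one you must still justify); your version, in exchange, makes transparent why (i) needs no regularity of $\varphi$ while (ii) does.
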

\begin{proof}
    (i) Given any objective function $f$, suppose that $Y=\varphi(Z)$ is a second-order stationary point on $\bmanifold$ (with respect to $f\circ\phi$). We note that any curve $\gamma(t)$ on $\zanifold$ passing through $Z$ at $t=0$ satisfies that
    \begin{equation}\label{eq:Z_secondorder}
        (f\circ\phi\circ\varphi\circ \gamma)^\prime(0)=0\ \ \ \text{and}\ \ \ (f\circ\phi\circ\varphi\circ \gamma)^{\prime\prime}(0)\ge 0,
    \end{equation}
    which holds by viewing $\varphi\circ \gamma(t)$ as a curve on $\bmanifold$ and considering the second-order stationarity of $Y$. Therefore, by~\eqref{eq:Z_secondorder}, the point $Z$ is indeed second-order stationary on $\zanifold$, which, together with the \twototwo property at $Z$, reveals the second-order stationarity of $\phi(Y)=\psi(Z)$. 

    (ii) Suppose that $Z$ is a second-order stationary point on $\zanifold$ (with respect to $f\circ\phi\circ\varphi$). Given any curve $\beta(t)$ on $\bmanifold$ with $\beta(0)=Y$, since $\varphi$ is a submersion at $Z$, we can apply the local section theorem \cite[Theorem 4.26]{lee2012manifolds} to obtain a (locally defined) mapping $l:\bmanifold\to\zanifold$ such that $(\varphi\circ l\circ \beta)(t)=\beta(t)$. Letting $\gamma(t):= (l\circ \beta)(t)$, the second-order stationarity of $Z$ indicates~\eqref{eq:Z_secondorder}. Substituting $\gamma=l\circ \beta$ and $\varphi\circ l\circ \beta=\beta$ leads to $(f\circ\phi\circ\beta)^\prime(0)=0\ \text{and}\ (f\circ\phi\circ\beta)^{\prime\prime}(0)\ge 0$, the arbitrariness of $\beta(t)$ yielding the second-order stationarity of $Y$. Hence, the property \twototwo\ at $Y$ concludes that $\psi(Z)=\phi(Y)$ is a second-order stationary point at $\manifold$, validating the \twototwo\ propety at $Z$.
\end{proof}

In summary, Theorem~\ref{the:2to2} establishes that the parameterization $(\bmanifold,\phi)$ avoids introducing spurious second-order stationary points only when the mappings $\tanL$ and $\tanQ$ preserve the tangent information, that is, $\ima(\tanL_Y)=\Btangent_{\manifold}(X)$ and condition~\eqref{eq:nece_suff_2to2} holds. Moreover, Proposition~\ref{pro:composition} offers an additional perspective for verifying the \twototwo\ property through the lens of compositions. These results will later be applied to two well-known parameterizations of $\boundedrank$, the LR parameterization and the desingularization~\cite{khrulkov2018desingularization,rebjock2024boundedrank,levin2025effect}, enabling us to exactly determine the points on $\boundedrank$ where the \twototwo\ property holds.

\section{Second-order optimality on bounded-rank matrices }\label{sec:SecondorderstationaryMr}

This section applies the framework developed in sections~\ref{sec:2tangentset}-\ref{sec:tangent_bridge_land} to low-rank optimization problems. We begin with problem~\eqref{eq:boundedrankopt}, where the feasible region is the matrix variety $\boundedrank$ coupled with an additional structured constraint $\hanifold$. By substituting the characterizations of the first- and second-order tangent sets, we derive the first- and second-order optimality conditions in section~\ref{sec:SOCMH}. Then, the focus is shifted to the scenario when $\hanifold=\mbRmn$, leading to the following formulation,
\begin{equation}\label{eq:boundedrank_noH}
\begin{aligned}
    \min_{X\in\mbR^{m\times n}}\ \ & f(X)\\[-1.5mm]
    \mathrm{s.\,t.}\ \ \ \ & X\in\boundedrank,
    \end{aligned}
\end{equation}
which is of independent interest~\cite{schneider2015Lojaconvergence,levin2023remedy}. It is shown in section~\ref{sec:NP_versec} that checking second-order optimality over $\boundedrank$ is NP-hard in general. Nevertheless, in section~\ref{sec:parameter_matvariety}, we clarify that second-order stationarity remains attainable in certain cases, through the lens of smooth parameterizations.

\subsection{Second-order optimality conditions}\label{sec:SOCMH}
Regarding the feasible region $\boundedrank\cap\hanifold$ of problem~\eqref{eq:boundedrankopt}, we recall the intersection rules developed in section~\ref{sec:lowrank_rectangular},
\begin{equation}\label{eq:calculationrule_H}
    \begin{aligned}
        \tangent_{\boundedrank\cap\hanifold}(X)&=\tangent_{\boundedrank}(X)\cap\tangent_{\hanifold}(X),
        \\
        \tangenttwo_{\boundedrank\cap\hanifold}(X;\eta)&=\tangenttwo_{\boundedrank}(X;\eta)\cap\tangenttwo_{\hanifold}(X;\eta),\ \ \text{for all}\ \eta\in\tangent_{\boundedrank\cap\hanifold}(X),
    \end{aligned}
\end{equation}
where $\hanifold$ accommodates the ambient space $\mbRmn$, the affine manifold~\cite{li2023normalboundedaffine}, the Frobenius sphere~\cite{cason2013iterative,yang2025spacedecouple}, the oblique manifold~\cite{yang2025spacedecouple}, and the hyperbolic manifold~\cite{jawanpuria2019lowrankhyperbolic}, as outlined in Table~\ref{tab:tangentsets} and~\eqref{eq:exampleHi}. Then the first- and second-order optimality conditions are derived in the following proposition for $\hanifold$ in~\eqref{eq:exampleHi}.

\begin{proposition}\label{pro:2order_opt_H}
     Given $X^*\in\boundedrank\cap\hanifold$, it is a first-order stationary point for problem~\eqref{eq:boundedrankopt} if for all $\eta\in\tangent_{\boundedrank}(X^*)\cap\tangent_{\hanifold}(X^*)$, it holds that $\innerp{\nabla f(X^*),\eta}=0$; and $X^*$ is second-order stationary if it additionally satisfies that for all $\eta\in\tangent_{\boundedrank}(X^*)\cap\tangent_{\hanifold}(X^*)$,
     \begin{equation*}
        \innerp{\nabla f(X^*),\zeta}+\innerp{\eta,\nabla^2 f(X^*)[\eta]} \ge 0,\quad\text{for all}\ \zeta\in\tangenttwo_{\boundedrank}(X^*;\eta)\cap\tangenttwo_{\hanifold}(X^*;\eta).
    \end{equation*}
\end{proposition}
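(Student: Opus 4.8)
The plan is to derive both conclusions by substituting the intersection rules \eqref{eq:calculationrule_H} directly into the general definitions of first- and second-order stationarity recalled in section~\ref{sec:smoothpara_opt}, applied to $\manifold=\boundedrank\cap\hanifold$. The prerequisite is that these intersection rules are legitimately available at $X^*$: since each $\hanifold_j$ in \eqref{eq:exampleHi} yields a pair $(\boundedrank,\hanifold_j)$ satisfying Assumption~\ref{assu:MintersecK}---verified in section~\ref{sec:lowrank_rectangular} through the LR parameterization \eqref{eq:MH_LR}---Theorem~\ref{the:expressions_McapK} applies and furnishes both lines of \eqref{eq:calculationrule_H}. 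I would therefore open by invoking Theorem~\ref{the:expressions_McapK} to replace $\tangent_{\boundedrank\cap\hanifold}(X^*)$ by $\tangent_{\boundedrank}(X^*)\cap\tangent_{\hanifold}(X^*)$ and $\tangenttwo_{\boundedrank\cap\hanifold}(X^*;\eta)$ by $\tangenttwo_{\boundedrank}(X^*;\eta)\cap\tangenttwo_{\hanifold}(X^*;\eta)$.

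For the first-order claim, I would recall that $X^*$ is first-order stationary exactly when $-\nabla f(X^*)\in\Fnormal_{\boundedrank\cap\hanifold}(X^*)=(\tangent_{\boundedrank\cap\hanifold}(X^*))^\circ$, that is, $\innerp{\nabla f(X^*),\eta}\ge 0$ for every $\eta\in\tangent_{\boundedrank}(X^*)\cap\tangent_{\hanifold}(X^*)$. The hypothesis asserts the equality $\innerp{\nabla f(X^*),\eta}=0$, which trivially implies the required inequality, so stationarity follows. I would moreover remark that the equality form is the natural one here: by \eqref{eq:Btangent_cone_boundedrank} the cone $\tangent_{\boundedrank}(X^*)$ is the sum of the subspace $\tangent_{\lowrank}(X^*)$ and the negation-symmetric set $\{R\in\normal_{\lowrank}(X^*)\mid\rank(R)\le r-s\}$, while $\tangent_{\hanifold}(X^*)$ is a linear tangent space (each $\hanifold_j$ being a smooth manifold); hence the intersection is symmetric under $\eta\mapsto-\eta$, and on such a symmetric cone $\innerp{\nabla f(X^*),\eta}\ge 0$ already forces $\innerp{\nabla f(X^*),\eta}=0$. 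Thus the stated condition is not merely sufficient but equivalent to first-order stationarity.

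For the second-order claim, I would observe that the first-order hypothesis renders \emph{every} direction of $\tangent_{\boundedrank}(X^*)\cap\tangent_{\hanifold}(X^*)$ critical, so that the critical cone $\{\eta\in\tangent_{\boundedrank\cap\hanifold}(X^*)\mid\innerp{\nabla f(X^*),\eta}=0\}$ appearing in \eqref{eq:def2optcon} coincides with the full tangent cone. Substituting the second-order intersection rule into \eqref{eq:def2optcon} then turns the defining second-order condition into precisely the displayed inequality of the proposition, quantified over all $\eta$ in the tangent cone and all $\zeta\in\tangenttwo_{\boundedrank}(X^*;\eta)\cap\tangenttwo_{\hanifold}(X^*;\eta)$, which closes the argument.

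Since the analytic content is entirely carried by Theorem~\ref{the:expressions_McapK} and by the Assumption~\ref{assu:MintersecK} verifications of section~\ref{sec:lowrank_rectangular}, there is no genuine obstacle in this proposition---it is a substitution argument. The only points requiring care are the bookkeeping identification of the critical cone with the whole tangent cone (so that the inequality need only be checked over the explicitly intersected second-order tangent sets) and the symmetry observation justifying the equality form of the first-order condition; neither involves further computation.
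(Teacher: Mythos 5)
Your proposal is correct and follows essentially the same route as the paper: both arguments rest on the intersection rules \eqref{eq:calculationrule_H}, the observation that $\tangent_{\boundedrank\cap\hanifold}(X^*)$ is symmetric under negation (the paper phrases this as closure under multiplication by any $\rho\in\mbR$, using that each $\hanifold$ is a manifold and that $\tangent_{\boundedrank}(X^*)$ in \eqref{eq:Btangent_cone_boundedrank} is negation-symmetric) so that $-\nabla f(X^*)\in\Fnormal_{\boundedrank\cap\hanifold}(X^*)$ reduces to the orthogonality condition, and then direct substitution into \eqref{eq:def2optcon}. Your additional remarks---that the first-order condition is in fact an equivalence and that the critical cone coincides with the whole tangent cone---are consistent with, and implicit in, the paper's proof.
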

\begin{proof}
    Notice that each $\hanifold$ in~\eqref{eq:exampleHi} is a manifold, which implies that $\tangent_\hanifold X$ is always a linear space. Combining this observation with the expression~\eqref{eq:Btangent_cone_boundedrank} and the rule~\eqref{eq:calculationrule_H} reveals that a direction $\eta\in\tangent_{\boundedrank\cap\hanifold}(X^*)$ if and only if all $\rho\in\mbR$ yield $\rho\eta\in\tangent_{\boundedrank\cap\hanifold}(X^*)$. Hence we have $-\nabla f(X^*)\in\Fnormal_{\boundedrank\cap\hanifold}(X^*)$ if and only if $\innerp{\nabla f(X^*),\eta}=0$ for all $\eta\in\tangent_{\boundedrank\cap\hanifold}(X^*)$, which, together with~\eqref{eq:calculationrule_H}, can be substituted into~\eqref{eq:def2optcon} to produce the desired conclusion.
\end{proof}

Taking $\hanifold=\mbRmn$ in~\eqref{eq:boundedrankopt}, the problem reduces to \eqref{eq:boundedrank_noH}---in this case, specifying the computation in Proposition~\ref{pro:2order_opt_H} is of independent interest.
\begin{proposition}\label{pro:sec_opt_Mr}
     Given $X^*\in\boundedrank$ with $\rank(X^*)=s$, it is a second-order stationary point for~\eqref{eq:boundedrank_noH} if it satisfies
     \begin{equation}\label{eq:Mr_2order_opt}
         \begin{cases}
            \nabla_{\fixedrank} f(X^*)=0\ \ \text{and}\ \ \nabla^2_{\fixedrank} f(X^*)\succeq 0,\ \ \hspace{3.4cm}\text{if}\ s=r,
    	\\
            \nabla f(X^*)=0\ \ \text{and}\ \ \innerp{\eta,\nabla^2 f(X^*)[\eta]}\ge 0\ \text{for all}\ \eta\in\Btangent_{\boundedrank}(X^*),\ \ \text{if}\ s<r,
            \\
        \end{cases}
     \end{equation}
     where $\nabla_{\fixedrank}$ and $\nabla^2_{\fixedrank}$ denote the Riemannian gradient and Riemannian Hessian on $\fixedrank$, respectively.
\end{proposition}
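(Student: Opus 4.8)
The plan is to specialize Proposition~\ref{pro:2order_opt_H} with $\hanifold=\mbRmn$ and then split into the two cases $s=r$ and $s<r$, in each case substituting the explicit description of the tangent sets to $\boundedrank$ that we have already computed. Since $\tangent_{\mbRmn}(X^*)=\mbRmn$ and $\tangenttwo_{\mbRmn}(X^*;\eta)=\mbRmn$, the intersection rule~\eqref{eq:calculationrule_H} collapses, so first-order stationarity requires $\innerp{\nabla f(X^*),\eta}=0$ for all $\eta\in\Btangent_{\boundedrank}(X^*)$, and the second-order condition reads $\innerp{\nabla f(X^*),\zeta}+\innerp{\eta,\nabla^2 f(X^*)[\eta]}\ge 0$ for all $\eta\in\Btangent_{\boundedrank}(X^*)$ with $\innerp{\nabla f(X^*),\eta}=0$ and all $\zeta\in\tangenttwo_{\boundedrank}(X^*;\eta)$.

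\textbf{Case $s=r$.} Here $X^*$ lies in the relatively open stratum $\fixedrank$, so locally $\boundedrank$ coincides with the smooth manifold $\fixedrank$; by~\eqref{eq:Fnormal_cone_boundedrank} we have $\Fnormal_{\boundedrank}(X^*)=\normal_{\fixedrank}(X^*)$ and $\Btangent_{\boundedrank}(X^*)=\tangent_{\fixedrank}(X^*)$, a linear space. First I would note that first-order stationarity $-\nabla f(X^*)\in\normal_{\fixedrank}(X^*)$ is exactly $\nabla_{\fixedrank}f(X^*)=0$. For the second-order part, I would use the explicit formula~\eqref{eq:tangenttwo_fixedrank} for $\tangenttwo_{\fixedrank}(X^*;\eta)$: every $\zeta$ in this set decomposes as $2\eta (X^*)^\dagger\eta$ plus a free element of the tangent space $\tangent_{\fixedrank}(X^*)$. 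Since $\nabla f(X^*)$ is normal and the tangent part of $\zeta$ ranges over the whole tangent space, the term $\innerp{\nabla f(X^*),\zeta}$ reduces to the curvature correction $2\innerp{\nabla f(X^*),\eta(X^*)^\dagger\eta}$. Adding this to $\innerp{\eta,\nabla^2 f(X^*)[\eta]}$ reproduces precisely the Riemannian Hessian of $f$ on $\fixedrank$ (the extrinsic Hessian plus the Weingarten/second-fundamental-form term), so the condition becomes $\innerp{\eta,\nabla^2_{\fixedrank}f(X^*)[\eta]}\ge 0$ for all tangent $\eta$, i.e.\ $\nabla^2_{\fixedrank}f(X^*)\succeq 0$.

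\textbf{Case $s<r$.} Here $\Fnormal_{\boundedrank}(X^*)=\{0\}$ by~\eqref{eq:Fnormal_cone_boundedrank}, so first-order stationarity forces $\nabla f(X^*)=0$. With $\nabla f(X^*)=0$, every $\eta\in\Btangent_{\boundedrank}(X^*)$ automatically satisfies $\innerp{\nabla f(X^*),\eta}=0$, and the term $\innerp{\nabla f(X^*),\zeta}$ vanishes identically, so the second-order condition collapses to $\innerp{\eta,\nabla^2 f(X^*)[\eta]}\ge 0$ for all $\eta\in\Btangent_{\boundedrank}(X^*)$, which is exactly the stated condition.

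The step I expect to require the most care is the curvature identification in the case $s=r$: one must verify that the extra term $2\innerp{\nabla f(X^*),\eta(X^*)^\dagger\eta}$ arising from the affine offset of the second-order tangent set is precisely the Weingarten term that converts the Euclidean Hessian into the Riemannian Hessian on $\fixedrank$. This is the standard relation between second-order tangent sets of a smooth embedded manifold and its Riemannian Hessian (cf.~\cite[Proposition~13.13]{rockafellar2009variationalanalysis} and the curve-based interpretation in~\cite{levin2025effect}), and I would invoke it rather than recompute the second fundamental form from scratch; the remaining manipulations are routine substitutions.
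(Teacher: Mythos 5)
Your proposal is correct and follows essentially the same route as the paper: first-order stationarity via $\Fnormal_{\boundedrank}(X^*)$ in each rank case, then substitution of the explicit second-order tangent set~\eqref{eq:tangenttwo_fixedrank} so that the normal-gradient kills the free tangent part and leaves the curvature term $2\innerp{\nabla f(X^*),\eta (X^*)^\dagger\eta}$, identified with the Weingarten correction in the Riemannian Hessian by citing the standard relation (the paper cites \cite[\S 7.5]{boumal2023introduction} for the same step). The only cosmetic difference is that you route the argument through Proposition~\ref{pro:2order_opt_H} with $\hanifold=\mbRmn$, whereas the paper works directly from the definition; this changes nothing of substance.
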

\begin{proof}
    The computation of Riemannian derivatives on $\fixedrank$ can be found in~\cite[\S7.5]{boumal2023introduction}. Let the SVD of $X^*$ be $X^*=U\varSigma V^\top$. When $s = r$, the condition $-\nabla f(X^*) \in \normal_{\boundedrank}(X^*)$ is equivalent, by~\eqref{eq:Tcone_lowrank}, to $\nabla f(X^*)-P_{U_\bot}\nabla f(X^*)P_{V_\bot}=0$, which, in turn, is equivalent to the Riemannian first-order optimality condition $\nabla_{\fixedrank} f(X^*)=\projection_{\tangent_{\fixedrank}X^*}(\nabla f(X^*))=0$. For the second-order condition, substituting the explicit form of $\tangenttwo_{\boundedrank}(X^*;\eta)$ in~\eqref{eq:tangenttwo_fixedrank} into~\eqref{eq:def2optcon} yields
    \begin{equation*}
        \begin{aligned}
           0\le &\ \innerp{\nabla f(X^*),\zeta}+\innerp{\eta,\nabla^2 f(X^*)[\eta]} 
            \\
            =&\ \innerp{\nabla f(X^*),2\eta X^{*\dagger}\eta + [U\ U_{\bot}]\left[ \begin{matrix}
                W_1&		W_2\\
                W_3&		0\\
            \end{matrix} \right][V\ V_{\bot}]^\top} + \innerp{\eta,\nabla^2 f(X^*)[\eta]}
            \\
            =&\ \innerp{\nabla f(X^*),2\eta X^{*\dagger}\eta} + \innerp{\eta,\nabla^2 f(X^*)[\eta]},
        \end{aligned}
    \end{equation*}
    where the last equality holds from $\nabla f(X^*)=P_{U_\bot}\nabla f(X^*)P_{V_\bot}$. The condition $\innerp{\nabla f(X^*),2\eta X^{*\dagger}\eta} + \innerp{\eta,\nabla^2 f(X^*)[\eta]}\ge 0$ for all $\eta\in\tangent_{\fixedrank}(X^*)$ is equivalent to $\nabla^2_{\fixedrank} f(X^*)\succeq 0$, as stated in \cite[\S 7.5]{boumal2023introduction}.

    When $s<r$, considering the expression~\eqref{eq:Fnormal_cone_boundedrank}, the first-order optimality requires that $-\nabla f(X^*)\in\Fnormal_{\boundedrank}(X^*)=\{0\}$. Taking $\nabla f(X^*)=0$ in~\eqref{eq:def2optcon} reveals that $\innerp{\eta,\nabla^2 f(X^*)[\eta]}\ge 0\ \text{for all}\ \eta\in\Btangent_{\boundedrank}(X^*)$.
\end{proof}

In fact, for the optimization problem over the determinantal variety~\eqref{eq:boundedrank_noH}, Proposition~\ref{pro:sec_opt_Mr} shows that the second-order optimality condition coincides with the Riemannian one at points of rank~$r$, whereas, at rank-deficient points, i.e., $\rank(X^*)<r$, the second-order condition requires that the Euclidean Hessian admit no negative curvature directions along the tangent cone.

% We adopt the Cook–Karp–Levin (CKL) model. checking whether.
\subsection{{NP-hardness} of verifying second-order optimality}\label{sec:NP_versec}
Building on the optimality condition characterized in \eqref{eq:Mr_2order_opt}, one might expect that identifying a second-order stationary point would be a tractable task. However, we find that this problem is NP-hard in general. Worse still, unless P=NP, there does not exist a fully polynomial-time algorithm for deciding whether a given point is second-order stationary in an approximate sense. This section aims to establish the hardness of verifying second-order optimality for \eqref{eq:boundedrank_noH} through a reduction from a combinatorial problem that is known to be NP-complete.

Given a point $X\in\boundedrank$ with $\rank(X)=s<r$ and the SVD $X=U\varSigma V^\top$, checking the first-order optimality can be accomplished in polynomial time~\cite{cason2013iterative,schneider2015Lojaconvergence}. Hence, according to~\eqref{eq:Mr_2order_opt}, the main bottleneck lies in deciding the existence of a negative curvature direction $\eta\in\Btangent_{\boundedrank}(X)$, which can be formulated as the following problem
\begin{equation}\label{eq:lreig}
\begin{aligned}
    \min_{\eta\in\mbR^{m\times n}}\ \ & \innerp{\eta,\aanifold(\eta)}
    \\[-1.5mm]
    \mathrm{s.\,t.}\ \ \ \ & \|\eta\|_{\frob} = 1,
    \\
    &\ \eta\in\Btangent_{\boundedrank}(X).
    \end{aligned}
\end{equation}
Note that we abstract the Hessian as a symmetric operator $\aanifold:\mbRmn\to\mbRmn$. Let $\lambda^\star$ denote the optimal value, and then the task of \underline{ver}ifying \underline{s}econd-order \underline{o}ptimality \underline{c}onditions, which is abbreviated as \verifysec, is equivalent to checking whether $\lambda^\star<0$.
\begin{tcolorbox}[
colframe=black,
colback=white,
boxrule=0.5pt,    % 边框更细
arc=2mm,          % 圆角，值越大越圆润
left=2mm, right=2mm, top=2mm, bottom=2mm % 内边距
]
\textbf{Problem:} \verifysec
\\
\textbf{Input:} Parameters $m,n,r$; point $X\in\boundedrank$; symmetric operator $\aanifold$.
\\
\textbf{Question:} Does the optimal value of \eqref{eq:lreig} $\lambda^\star<0$?
\end{tcolorbox}

Next, we introduce a combinatorial problem. Consider an undirected graph $G = (\vanifold, E)$ where $\vanifold=\{1,2,\ldots,n\}$ is the vertex set and $E$ is the edge set with $(i, j) \in E$ indicating the existence of an edge between vertices $i$ and~$j$. Additionally, a subset $S\subseteq \vanifold$ is called a \emph{clique} if every pair of vertices in $S$ is connected by an edge in $E$, and accordingly, the \emph{clique number} of the graph is defined by $\omega(G):=\max\{|S|\mid S\subseteq \vanifold\ \text{is a clique}\}$. The \emph{clique decision problem}, abbreviated as \clique, asks whether there exists a clique of size $K$, or equivalently, whether $\omega(G)\ge K$.
\begin{tcolorbox}[
colframe=black,
colback=white,
boxrule=0.5pt,    % 边框更细
arc=2mm,          % 圆角，值越大越圆润
left=2mm, right=2mm, top=2mm, bottom=2mm % 内边距
]
\textbf{Problem:} \clique
\\
\textbf{Input:} Undirected graph $G = (\vanifold, E)$; clique size $K$.
\\
\textbf{Question:} Does there exist a clique of size $K$ in $G$?
\end{tcolorbox}

In fact, \clique\ is one of the original $21$ NP-complete problems~\cite{karp1972reducibility}. More importantly, Motzkin and Straus \cite{motzkin1965maxima} bridged the quantity $\omega(G)$ with an optimization problem via the following formula,
\begin{equation}\label{eg:MS_omegaG}
    1-\frac{1}{\omega(G)}=\max_{x\in\mathrm{\Delta}_n}\sum_{(i,j)\in E}x_ix_j,
\end{equation}
where $\mathrm{\Delta}_n:=\{x\in\mbR^n:0\le x_i\le 1, i=1,2,\ldots,n,\,x_1+x_2+\cdots+x_n=1\}$ is the unit simplex. Let $e_1,e_2,\ldots,e_n$ be the standard basis in $\mbR^n$.

Now, we are ready to show the reduction from \clique\ to \verifysec, with the main idea outlined as follows---given an input $(G,K)$ for \clique, we can construct an input $(m,n,r,X,\aanifold)$ for \verifysec\ 
with $m=n$, $r=1$, $X=0$ and the symmetric operator $\aanifold$ satisfying
\[
\aanifold:\mbRmn\to\mbRmn:\,\eta\mapsto (1-\frac{1}{K-1})\eta-\frac{1}{4}\sum_{(i,j)\in E} (e_ie_j^\top+e_je_i^\top)\eta(e_ie_j^\top+e_je_i^\top).
\]
In this manner, $\Btangent_{\boundedrank}(X)$ coincides with $\{\eta\in\mbRmn\mid\rank(\eta)\le 1\}$, and thus~\eqref{eq:lreig} can be specified as follows,
\begin{equation}\label{eq:lreig_rank1}
\begin{aligned}
    \min_{u,v\in\mbR^n}\ \ & 1-\frac{1}{K-1} - \sum_{(i,j)\in E} \innerp{uv^\top,A_{ij}uv^\top A_{ij}}
    \\[-1.5mm]
    \mathrm{s.\,t.}\ \ \ \ & \|u\|_{\frob} = \|v\|_{\frob} = 1.
    \end{aligned}
\end{equation}
where we denote $A_{ij}=\frac{1}{2}(e_ie_j^\top+e_je_i^\top)$ for $(i,j)\in E$. Then it can be shown that $G$ admits a $K$-clique if and only if the optimal value of \eqref{eq:lreig_rank1} $\lambda^\star<0$.

\begin{theorem}[NP-hardness]\label{the:np}
The problem \clique\ is polynomially reducible to \verifysec, and thus verifying second-order optimality is NP-hard.
\end{theorem}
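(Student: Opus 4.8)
The plan is to exhibit a polynomial-time reduction from \clique\ to \verifysec, following the construction sketched before the statement. Given a \clique\ instance $(G,K)$ with $G=(\vanifold,E)$ and $|\vanifold|=n$, I would build the \verifysec\ instance $(m,n,r,X,\aanifold)$ with $m=n$, $r=1$, $X=0$, and $\aanifold$ the symmetric operator displayed above, taking its diagonal constant to be $1-\tfrac{1}{K-1}$ so as to match the objective in~\eqref{eq:lreig_rank1}. (Self-adjointness of $\aanifold$ follows because each $\eta\mapsto(e_ie_j^\top+e_je_i^\top)\eta(e_ie_j^\top+e_je_i^\top)$ is self-adjoint for the Frobenius product, and the instance is clearly constructible in time polynomial in $n$ and $|E|$.) The first reduction step is geometric: since $X=0$ has rank $0<r=1$, the tangent cone is $\Btangent_{\boundedrank}(0)=\{\eta\in\mbRmn\mid\rank(\eta)\le 1\}$, so the feasible set of~\eqref{eq:lreig}---rank-one matrices of unit Frobenius norm---is exactly $\{uv^\top\mid \norm{u}_\frob=\norm{v}_\frob=1\}$, and~\eqref{eq:lreig} reparameterizes as an optimization over the pair $(u,v)$ on a product of unit spheres.

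The second step is the algebraic reduction of the objective. Writing $\eta=uv^\top$ and $A_{ij}=\tfrac12(e_ie_j^\top+e_je_i^\top)$, I would expand $(e_ie_j^\top+e_je_i^\top)u=u_je_i+u_ie_j$ and $v^\top(e_ie_j^\top+e_je_i^\top)=v_ie_j^\top+v_je_i^\top$ and multiply out to obtain the identity $\innerp{uv^\top,A_{ij}uv^\top A_{ij}}=u_iu_jv_iv_j$. Summing over edges and using $\innerp{\eta,\eta}=\norm{\eta}_\frob^2=1$ gives $\innerp{\eta,\aanifold(\eta)}=\bigl(1-\tfrac1{K-1}\bigr)-\sum_{(i,j)\in E}u_iu_jv_iv_j$, which is precisely~\eqref{eq:lreig_rank1}. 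Hence $\lambda^\star=\bigl(1-\tfrac1{K-1}\bigr)-\max_{\norm{u}_\frob=\norm{v}_\frob=1}\sum_{(i,j)\in E}u_iu_jv_iv_j$, reducing everything to the value of this bilinear maximum.

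The decisive step is to evaluate that maximum and tie it to $\omega(G)$ via the Motzkin--Straus formula~\eqref{eg:MS_omegaG}. Restricting to $u=v$ and putting $x_i=u_i^2$ (so $x\in\mathrm{\Delta}_n$) shows the maximum is at least $\max_{x\in\mathrm{\Delta}_n}\sum_{(i,j)\in E}x_ix_j=1-\tfrac1{\omega(G)}$. For the matching upper bound I would linearize through $w_i=u_iv_i$: Cauchy--Schwarz gives $\sum_i|w_i|\le\norm{u}_\frob\norm{v}_\frob=1$, so with $p_i=|w_i|$, $P=\sum_ip_i\le 1$, and $x=p/P\in\mathrm{\Delta}_n$ one gets $\sum_{(i,j)\in E}u_iu_jv_iv_j\le\sum_{(i,j)\in E}p_ip_j=P^2\sum_{(i,j)\in E}x_ix_j\le 1-\tfrac1{\omega(G)}$. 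Therefore $\lambda^\star=\tfrac1{\omega(G)}-\tfrac1{K-1}$, and $\lambda^\star<0$ holds exactly when $\omega(G)\ge K$---the \clique\ question. (The degenerate case $K=1$ is decided trivially and handled separately.) This yields \clique$\,\le_p\,$\verifysec\ and hence the NP-hardness of verifying second-order optimality.

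I expect the main obstacle to be the tight evaluation of the bilinear maximum in the third step: it is not a priori clear that letting $u$ and $v$ differ cannot exceed the diagonal value $1-\tfrac1{\omega(G)}$, and the reduction collapses unless this equality is secured. The linearization $w_i=u_iv_i$ together with the bound $\sum_i|w_i|\le 1$ is exactly what forces the bilinear relaxation back onto the simplex, so this is where I would concentrate the rigor. By contrast, the tangent-cone identification at $X=0$, the expansion $\innerp{uv^\top,A_{ij}uv^\top A_{ij}}=u_iu_jv_iv_j$, and the self-adjointness and polynomial describability of $\aanifold$ are routine.
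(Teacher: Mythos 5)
Your proposal is correct and follows essentially the same reduction as the paper: the same instance $(m,n,r,X,\aanifold)=(n,n,1,0,\aanifold)$, the same identification of $\Btangent_{\boundedrank}(0)$ with rank-one matrices, the same expansion $\innerp{uv^\top,A_{ij}uv^\top A_{ij}}=u_iu_jv_iv_j$, and the same appeal to the Motzkin--Straus formula to conclude $\lambda^\star=\tfrac{1}{\omega(G)}-\tfrac{1}{K-1}$. The one substantive difference is the step you correctly flagged as decisive: collapsing the bilinear maximum over $(u,v)$ to the diagonal $u=v$. The paper handles this by citing a general result of He et al.\ (their Proposition~2, which gives $\max_{\|u\|=\|v\|=1}\sum_i(u^\top A_iu)(v^\top A_iv)=\max_{\|u\|=1}\sum_i(u^\top A_iu)^2$ via Cauchy--Schwarz), whereas you prove the needed inequality directly through the linearization $w_i=u_iv_i$, the bound $\sum_i|w_i|\le\|u\|_2\|v\|_2=1$, and rescaling onto the simplex. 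Your argument is correct and self-contained, and it merges the collapse step with the Motzkin--Straus evaluation in one pass; the paper's citation buys generality (arbitrary families of quadratic forms) at the cost of an external dependency. Two small further points in your favor: the constant $1-\tfrac1{K-1}$ you chose is the one consistent with the paper's problem~\eqref{eq:lreig_rank1} and its proof (the operator displayed before the theorem carries $1-\tfrac1{K}$, which would test for a $(K+1)$-clique instead), and your explicit handling of the degenerate case $K=1$ is a detail the paper omits.
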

\begin{proof}
    Notice that the matrices $A_{ij}$ are symmetric matrices. Therefore, we can derive
    \begin{align}
            \lambda^\star=&\min_{\|u\|_\frob = \|v\|_\frob = 1}\ 1-\frac{1}{K-1} - \sum_{(i,j)\in E} \innerp{uv^\top,A_{ij}uv^\top A_{ij}}  \nonumber
            \\
            =&\min_{\|u\|_\frob = 1}\ 1-\frac{1}{K-1} - \sum_{(i,j)\in E} (u^\top A_{ij}u)^2    \label{eq:uvtou}
            \\
            =&\min_{\|u\|_\frob = 1}\ 1-\frac{1}{K-1} - \sum_{(i,j)\in E} u_i^2u_j^2    \nonumber
            \\
            =&\ \frac{1}{\omega(G)} - \frac{1}{K-1},      \label{eq:omega-K}
    \end{align}
    where~\eqref{eq:uvtou} is obtained from \cite[Proposition~2]{he2010approximationhomogeneous} and~\eqref{eq:omega-K} comes by considering $\norm{u}_\frob=1$ and parameterizing $x_i=u_i^2$ in~\eqref{eg:MS_omegaG}.
    
    If the graph $G$ admits a clique $S$ of size $K$, i.e., $\omega(G)>K-1$. Then, the derivation~\eqref{eq:omega-K} indicates that $\lambda^\star=\frac{1}{\omega(G)}-\frac{1}{K-1}<0$. Conversely, if there does not exist a clique of size $K$ in $G$, i.e., $\omega(G)\le K-1$, we have $\lambda^\star\ge 0$. Therefore, the result for \verifysec\ answers \clique, which implies that \verifysec\ is NP-hard.
\end{proof} 

The above theorem concludes the NP-hardness of verifying second-order optimality for the problem~\eqref{eq:boundedrankopt}. Furthermore, as we shall show, even finding an approximate solution for \eqref{eq:lreig}---which one might hope could alleviate the hardness---still does not admit a polynomial-time strategy, unless P=NP. 

Specifically, we call an algorithm a fully polynomial-time approximation scheme (or FPTAS), if given any $\varepsilon\in(0,1)$, it can return an $\varepsilon$-approximate solution $\tilde{\lambda}$ for \eqref{eq:lreig} in the sense that $\tilde{\lambda}-\lambda^\star\le\varepsilon$, and the complexity is upper bounded by a polynomial function of the problem size and $\varepsilon^{-1}$.
\begin{theorem}[No FPTAS]\label{the:np_approx}
Unless P=NP, there is no FPTAS for verifying whether a point is second-order stationary for \eqref{eq:boundedrankopt}.
\end{theorem}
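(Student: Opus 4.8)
The plan is to upgrade the reduction of Theorem~\ref{the:np} into a \emph{gap} reduction, which is the standard route to ruling out an FPTAS for a problem whose optimal value is a discrete quantity separated by an inverse-polynomial margin. I would reuse verbatim the construction from the proof of Theorem~\ref{the:np}: from a \clique\ instance $(G,K)$---where we may assume $K\ge 2$, since $K=1$ is trivially a yes-instance---I build the \verifysec\ instance with $m=n$, $r=1$, $X=0$, and the symmetric operator $\aanifold$ as defined there. The crucial ingredient is the closed form $\lambda^\star=\tfrac{1}{\omega(G)}-\tfrac{1}{K-1}$ established in~\eqref{eq:omega-K}, which expresses the optimal value of~\eqref{eq:lreig} as a simple rational function of the clique number.

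From this formula I would read off a quantitative separation between the two cases. If $G$ admits a $K$-clique, then $\omega(G)\ge K$, whence
\[
\lambda^\star \le \frac{1}{K}-\frac{1}{K-1} = -\frac{1}{K(K-1)} < 0;
\]
if $G$ has no $K$-clique, then $\omega(G)\le K-1$ and $\lambda^\star\ge \frac{1}{K-1}-\frac{1}{K-1}=0$. Thus the yes- and no-instances are separated by a gap of width $\frac{1}{K(K-1)}\ge \frac{1}{n^2}$ straddling the threshold $0$. Now suppose, toward a contradiction, that an FPTAS for~\eqref{eq:lreig} exists. I would invoke it with accuracy $\varepsilon := \frac{1}{2K(K-1)}$; since $\varepsilon^{-1}=2K(K-1)\le 2n^2$ is polynomial in the input size, the scheme terminates in time polynomial in $n$. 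Because~\eqref{eq:lreig} is a minimization problem and $\tilde{\lambda}$ is the value attained at a feasible point, it obeys $\lambda^\star\le\tilde{\lambda}\le\lambda^\star+\varepsilon$. In the yes-case this yields $\tilde{\lambda}\le -\frac{1}{K(K-1)}+\frac{1}{2K(K-1)}=-\frac{1}{2K(K-1)}<0$, while in the no-case $\tilde{\lambda}\ge\lambda^\star\ge 0$. Hence testing the sign of $\tilde{\lambda}$ decides \clique\ in polynomial time, contradicting its NP-completeness unless P=NP.

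The two delicate points I would make explicit are as follows. First, the guarantee must be two-sided, i.e. $\tilde{\lambda}\ge\lambda^\star$; this holds because an FPTAS returns the objective value at a genuine feasible point of the minimization~\eqref{eq:lreig}, so the approximation only overshoots the optimum. Second, although the requested accuracy $\varepsilon$ shrinks as $n$ grows, it keeps $\varepsilon^{-1}$ polynomially bounded, so each call to the scheme stays within polynomial time---this is exactly the feature that distinguishes an FPTAS from a mere PTAS and that the argument exploits. I expect the main conceptual step to be identifying and quantifying this inverse-polynomial gap from~\eqref{eq:omega-K}; once that separation is in place, the remaining estimates are elementary, and the contradiction with the assumption $\mathrm{P}\neq\mathrm{NP}$ follows immediately.
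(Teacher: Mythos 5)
Your proposal is correct and follows essentially the same route as the paper: it reuses the reduction and the closed form $\lambda^\star=\tfrac{1}{\omega(G)}-\tfrac{1}{K-1}$ from Theorem~\ref{the:np}, chooses the same accuracy $\varepsilon=\tfrac{1}{2K(K-1)}$, and decides \clique\ by the sign of $\tilde\lambda$. The only difference is presentational---you make the inverse-polynomial gap and the one-sided guarantee $\tilde\lambda\ge\lambda^\star$ explicit, which the paper leaves implicit.
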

\begin{proof}
    Suppose that there exists a polynomial-time scheme, and we take $\varepsilon=\frac{1}{2K(K-1)}$. Therefore, if the graph admits a $K$-size clique, the FPTAS will yield $\tilde{\lambda}\le\lambda^\star+\varepsilon=\frac{1}{\omega(G)}-\frac{1}{2}(\frac{1}{K-1}+\frac{1}{K})<0$. Conversely, if there is no $K$-size clique, we have $\tilde{\lambda}\ge\lambda^\star\ge 0$. In conclusion, we can answer \clique\ if there were an FPTAS for \verifysec, which is possible only when P=NP.
\end{proof}

\subsection{Two parameterizations for {matrix varieties}}\label{sec:parameter_matvariety}
Although section~\ref{sec:NP_versec} establishes the NP-hardness of identifying second-order stationary points in general, it remains natural to ask whether finding such points is still possible in special cases.
To this end, we resort to the technique of {smooth parameterization}~\cite{levin2025effect}, since the conditions to guarantee \twototwo\ have been developed in section~\ref{sec:equi_2ndpoints}. 

Specifically, two parameterizations for $\boundedrank$ are considered: the LR parameterization~\cite{mishra2014fixedLR},
\begin{equation}\label{eq:LR}\tag{LR}
\manifold_{\mathrm{LR}} = \mbR^{m\times r}\times \mbR^{n\times r},\ \ \ \ \ \phiLR(L,R)=LR^\top,
\end{equation}
and the desingularization~\cite{khrulkov2018desingularization,rebjock2024boundedrank},
\begin{equation}\label{eq:desing}\tag{Desing}
\!\!\manifolddesing=\{(X, G) \in \mathbb{R}^{m \times n} \times \grassmann(n, n-r)\mid X G=0\},\ \ \phidesing(X,G)=X,
\end{equation}
where the \emph{Grassmann manifold}~\cite{bendokat2024grassmann} is viewed as an embedded submanifold in $\sym(n)$, i.e., $\grassmann(n,s):=\{G\in\sym(n)\mid G^2=G,\,\rank(G)=s\}$.
More background on the two parameterizations can be found in~\cite{khrulkov2018desingularization,rebjock2024boundedrank,levin2025effect,yang2025spacedecouple}.

We now apply Theorem~\ref{the:2to2} to the two parameterizations in turn. The main principle is to compute explicitly the images of $\tanL$ and $\tanQ$ defined in section~\ref{sec:equi_2ndpoints}, and substitute them into the conditions identified in Theorem~\ref{the:2to2}, thereby determining when the parameterization produces second-order stationary points on $\boundedrank$.

\begin{proposition}\label{pro:LR_2to2}
    The LR parameterization of $\boundedrank$ given by~\eqref{eq:LR} satisfies ``\,$2\!\Rightarrow\!2$'' at $(L,R)$ if and only if $\rank(LR^\top)=r$.
\end{proposition}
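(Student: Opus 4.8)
The plan is to apply the general criterion of Theorem~\ref{the:2to2} at $Y=(L,R)$ with $X=\phiLR(L,R)=LR^\top$. First I would record the relevant derivatives: since $\manifoldLR=\mbR^{m\times r}\times\mbR^{n\times r}$ is a flat (Euclidean) manifold, both $\tangent_{\manifoldLR}(Y)$ and $\tangenttwo_{\manifoldLR}(Y;v)$ equal the whole space, and expanding $(L+t\dot L)(R+t\dot R)^\top$ gives $\tanL_Y(\dot L,\dot R)=\dot L R^\top+L\dot R^\top$ together with $\diff^2\phiLR_Y[v,v]=2\dot L\dot R^\top$, so $\tanQ_{Y,v}(\ddot L,\ddot R)=\ddot L R^\top+L\ddot R^\top+2\dot L\dot R^\top$. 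Because the term $\diff\phiLR_Y[u_v]=\ddot L R^\top+L\ddot R^\top$ lies in $\ima(\tanL_Y)$, the coset $\tanQ_{Y,v}(u_v)+\ima(\tanL_Y)$ equals $2\dot L\dot R^\top+\ima(\tanL_Y)$ independently of $u_v$; this is the structural fact that drives the whole second-order analysis.

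For the \emph{``if''} direction I assume $\rank(LR^\top)=r$, which forces $L$ and $R$ to have full column rank $r$, whence $\ima(L)=\ima(U)$ and $\ima(R)=\ima(V)$ for a compact SVD $X=U\varSigma V^\top$. Then $\{\dot L R^\top\}$ is exactly the set of matrices whose row space lies in $\ima(V)$ and $\{L\dot R^\top\}$ the set whose column space lies in $\ima(U)$; their sum is $\tangent_{\fixedrank}(X)$ as in~\eqref{eq:Tcone_lowrank}, which coincides with $\Btangent_{\boundedrank}(X)$ at a rank-$r$ point, giving the first requirement $\ima(\tanL_Y)=\Btangent_{\boundedrank}(X)$. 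For the second requirement, Proposition~\ref{pro:tangenttwo_mr} (with $s=\ell=r$, via~\eqref{eq:tangenttwo_fixedrank}) shows $\tangenttwo_{\boundedrank}(X;\eta)=2\eta X^\dagger\eta+\ima(\tanL_Y)$ is a single coset for $\eta=\tanL_Y(v)$. I would then verify this coset agrees with the constant-sequence coset $2\dot L\dot R^\top+\ima(\tanL_Y)$. Since $\ima(\tanL_Y)=\tangent_{\fixedrank}(X)$ is precisely the kernel of $M\mapsto P_{U_\bot}MP_{V_\bot}$, two matrices lie in the same coset iff this projection agrees; using $P_{U_\bot}L=0$ and $R^\top P_{V_\bot}=0$ collapses $P_{U_\bot}\eta X^\dagger\eta P_{V_\bot}$ to $P_{U_\bot}\dot L\,(R^\top X^\dagger L)\,\dot R^\top P_{V_\bot}$, and the full-rank factorization identity $R^\top X^\dagger L=I_r$ reduces it to $P_{U_\bot}\dot L\dot R^\top P_{V_\bot}$. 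Hence $\tangenttwo_{\boundedrank}(X;\tanL_Y(v))$ is itself one of the cosets appearing on the right of~\eqref{eq:nece_suff_2to2} (take $v_i\equiv v$), the inclusion holds, and \twototwo\ follows.

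For the \emph{``only if''} direction I argue by contraposition; throughout I work in the genuinely nonsmooth regime $r<\min\{m,n\}$. Assume $\rank(LR^\top)=s<r$. It suffices to violate the first requirement of Theorem~\ref{the:2to2}. The set $\ima(\tanL_Y)$ is a linear subspace, being the image of the linear map $\tanL_Y$. By contrast,~\eqref{eq:Btangent_cone_boundedrank} gives $\Btangent_{\boundedrank}(X)=\tangent_{\lowrank}(X)+\{U_\bot J V_\bot^\top\mid\rank J\le r-s\}$, whose second summand is the determinantal variety of $(m-s)\times(n-s)$ matrices of rank at most $r-s$; since $1\le r-s<\min\{m,n\}-s=\min\{m-s,n-s\}$, this variety is not a linear subspace, and as the two summands meet only at $0$, neither is $\Btangent_{\boundedrank}(X)$. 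Therefore $\ima(\tanL_Y)\ne\Btangent_{\boundedrank}(X)$ and \twototwo\ fails whenever $\rank(LR^\top)<r$.

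The step I expect to be the main obstacle is the second-order coset identity in the ``if'' direction, namely that $2\eta X^\dagger\eta$ and $2\dot L\dot R^\top$ represent the same class modulo $\ima(\tanL_Y)$. Everything hinges on the clean factorization identity $R^\top X^\dagger L=I_r$ combined with the annihilation relations $P_{U_\bot}L=0$ and $R^\top P_{V_\bot}=0$; once these are secured the verification is immediate, but identifying the correct quotient representative and confirming that the curvature term $2\eta X^\dagger\eta$ is reproduced exactly by $2\dot L\dot R^\top$ is the delicate computational heart of the argument. The remaining ingredients---the explicit description of $\ima(\tanL_Y)$ and the subspace-versus-variety dichotomy---are comparatively routine.
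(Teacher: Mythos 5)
Your proof is correct and follows essentially the same route as the paper: both directions invoke Theorem~\ref{the:2to2}, with the converse resting on the non-linearity of $\Btangent_{\boundedrank}(X)$ at rank-deficient points and the forward direction on showing that $2\eta X^\dagger\eta$ and $2\dot L\dot R^\top$ define the same coset modulo $\ima(\tanL_Y)=\tangent_{\fixedrank}(X)$. The only difference is in execution: the paper verifies the coset identity by writing $(L,R)=(UB,VC)$ with $BC^\top=\varSigma$ and computing in those coordinates, whereas you obtain it coordinate-free from $\ima(\tanL_Y)=\ker\bigl(M\mapsto P_{U_\bot}MP_{V_\bot}\bigr)$ together with the identity $R^\top X^\dagger L=I_r$ --- a slightly cleaner argument that in fact shows every preimage of $\eta$ yields the same coset.
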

\begin{proof}
If $\rank(LR^\top)<r$, the expression~\eqref{eq:Btangent_cone_boundedrank} reveals that $\Btangent_{\boundedrank}(LR^\top)$ is not a linear space, and thus $\ima(\tanL_{(L,R)})\neq \Btangent_{\boundedrank}(LR^\top)$. Applying Theorem~\ref{the:2to2} shows the necessity of $\rank(L)=\rank(R)=r$.

Then we turn to the ``if\,'' part. In preparation, we present the computations of $\tanL$ and $\tanQ$ in this context:
\begin{equation}\label{eq:LR_tanLtanQ}
    \!\!\!\tanL_{(L,R)}:(\dot{L},\dot{R})\mapsto\dot{L}R^\top+L\dot{R}^\top,\ \tanQ_{(L,R),(\dot{L},\dot{R})}: (L^\prime,R^\prime)\mapsto 2\dot{L}\dot{R}^\top  + L{R}^{\prime\top} + L^{\prime}R^{\top}.
\end{equation}
Let $X=LR^\top$ and the SVD of $X$ be $X=U\varSigma V^\top$. Then $\rank(X)=r$ indicates that $(L,R)$ can be expressed as $(L,R)=(UB,VC)$ for some invertible $B,C\in\mbR^{r\times r}$ satisfying $BC^\top=\varSigma$. Given any $\eta\in\Btangent_{\boundedrank}(X)$ written as $\eta=U\dot{A}V^\top+U\dot{B}V_\bot^\top+U_\bot\dot{C}V^\top$, we can construct $\dot{L}=U\dot{A}C^{-\top}+U_\bot\dot{C}C^{-\top}$ and $\dot{R}=V_\bot\dot{B}^\top B^{-\top}$ to obtain $\tanL_{(L,R)}(\dot{L},\dot{R})=\eta$. Hence, the arbitrariness of $\eta$ concludes that $\ima(\tanL_{(L,R)})=\Btangent_{\boundedrank}(X)$. Subsequently, we aim to show that 
\begin{equation}\label{eq:second_inclusion_LR}
    \tangenttwo_{\boundedrank}(X;\eta)=\tanQ_{(L,R),(\dot{L},\dot{R})}(L^\prime,R^\prime)+\ima(\tanL_{(L,R)})
\end{equation}
for some $(L^\prime,R^\prime)$, thereby implying the inclusion~\eqref{eq:nece_suff_2to2}. On the one hand, according to the closed-form expression~\eqref{eq:tangenttwo_fixedrank}, we have
\begin{equation}\label{eq:etaXeta_t2}
    \tangenttwo_{\boundedrank}(X;\eta)=2\eta X^\dagger\eta+\tangent_{\fixedrank}(X)= 2U_\bot \dot{C}\varSigma^{-1} \dot{B} V_\bot^\top + \tangent_{\fixedrank}(X),
\end{equation}
where the second equality follows by substituting the expressions of $\eta$ and $X^\dagger=V\varSigma^{-1}U^\top$. On the other hand, taking the expressions of $(\dot{L},\dot{R})$ into~\eqref{eq:LR_tanLtanQ} leads to
\begin{equation*}
    \tanQ_{(L,R),(\dot{L},\dot{R})}(L^\prime,R^\prime)
    \equiv 2U_\bot \dot{C} C^{-\top} B^{-1} \dot{B} V_\bot^\top 
    \ (\bmod\ \tangent_{\fixedrank}(X)),
\end{equation*}
where ``$\equiv$'' indicates equality modulo the tangent space. Substituting $BC^\top=\varSigma$ and combining the result with~\eqref{eq:etaXeta_t2} yields~\eqref{eq:second_inclusion_LR}. Therefore, applying Theorem~\ref{the:2to2} verifies the \twototwo\ property at $(L,R)$ whenever $\rank(LR^\top)=r$.
\end{proof}

\begin{proposition}\label{pro:desing_2to2}
    The desingularization of $\boundedrank$ given by~\eqref{eq:desing} satisfies ``\,$2\!\Rightarrow\!2$'' at $(X,G)$ if and only if $\rank(X) = r$.
\end{proposition}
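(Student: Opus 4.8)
The plan is to apply Theorem~\ref{the:2to2} with $\bmanifold=\manifolddesing$, $\phi=\phidesing$, and $\manifold=\boundedrank$, mirroring the structure of the proof of Proposition~\ref{pro:LR_2to2}. For the ``only if'' direction I would argue by contraposition: if $\rank(X)=s<r$, then by~\eqref{eq:Btangent_cone_boundedrank} the cone $\Btangent_{\boundedrank}(X)$ contains the nonlinear rank-bounded normal piece $\{U_\bot J V_\bot^\top\mid\rank(J)\le r-s\}$ and hence fails to be a linear subspace, whereas $\ima(\tanL_{(X,G)})$ is always a linear subspace (the image of a linear map restricted to the tangent space of $\manifolddesing$). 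The two therefore cannot coincide, so the image condition $\ima(\tanL_{(X,G)})=\Btangent_{\boundedrank}(X)$ of Theorem~\ref{the:2to2} is violated and \twototwo\ fails (the theorem even supplies an explicit witnessing $f$).

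For the ``if'' direction, I first record that $\phidesing$ is the restriction to $\manifolddesing$ of the linear projection $(X,G)\mapsto X$, so $\diff^2\phidesing=0$ and consequently both $\tanL_{(X,G)}$ and $\tanQ_{(X,G),v}$ act simply by reading off the first component of a (second-order) tangent vector. The key geometric input is that, at a point with $\rank(X)=r$, the relation $XG=0$ together with $\rank(G)=n-r$ forces $\range(G)=\ker(X)$, so $G=P_{V_\bot}$ is uniquely determined by $X$; hence $X'\mapsto(X',P_{\ker(X')})$ is a smooth local section and $\phidesing$ is a local diffeomorphism from $\manifolddesing$ onto $\fixedrank$ near $(X,G)$ (see~\cite{khrulkov2018desingularization,rebjock2024boundedrank}). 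Because $\fixedrank$ is relatively open in $\boundedrank$, we moreover have $\Btangent_{\boundedrank}(X)=\tangent_{\fixedrank}(X)$ and $\tangenttwo_{\boundedrank}(X;\eta)=\tangenttwo_{\fixedrank}(X;\eta)$ near $X$.

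I then verify the two conditions of Theorem~\ref{the:2to2}. The diffeomorphism makes $\tanL_{(X,G)}$ a linear isomorphism onto $\tangent_{\fixedrank}(X)=\Btangent_{\boundedrank}(X)$, giving the image condition. For the inclusion~\eqref{eq:nece_suff_2to2}, fix $v$ with $\tanL_{(X,G)}(v)=\eta$; transporting curves through $\phidesing$ and its local inverse (so that $\tanQ_{(X,G),v}(u)=\beta_1''(0)=(\phidesing\circ\beta)''(0)$ for a realizing curve $\beta$) shows that $\ima(\tanQ_{(X,G),v})$ equals the full set of accelerations of curves in $\fixedrank$ through $X$ with velocity $\eta$, i.e.\ $\tanQ_{(X,G),v}$ surjects onto $\tangenttwo_{\fixedrank}(X;\eta)=\tangenttwo_{\boundedrank}(X;\eta)$. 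Since this second-order tangent set is a single affine translate of $\tangent_{\fixedrank}(X)=\ima(\tanL_{(X,G)})$ (see~\eqref{eq:tangenttwo_fixedrank}), for any one $u_v$ the translate $\tanQ_{(X,G),v}(u_v)+\ima(\tanL_{(X,G)})$ already equals the whole of $\tangenttwo_{\boundedrank}(X;\eta)$. Thus the right-hand side of~\eqref{eq:nece_suff_2to2} contains $\tangenttwo_{\boundedrank}(X;\eta)$ already from the constant sequence $v_i\equiv v$, so the inclusion holds (with equality) and Theorem~\ref{the:2to2} yields \twototwo.

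The main obstacle is the ``if'' direction, and specifically recognizing that the elaborate limiting/convex-hull condition~\eqref{eq:nece_suff_2to2} collapses at a rank-$r$ point: once one establishes that $\phidesing$ is a local diffeomorphism onto the smooth manifold $\fixedrank$, every quantity entering Theorem~\ref{the:2to2} is the diffeomorphic image of a smooth-manifold object, the fibres of $\tanL_{(X,G)}$ are trivial, and $\tangenttwo_{\fixedrank}(X;\eta)$ is a lone affine translate of $\ima(\tanL_{(X,G)})$. The care needed is in correctly matching second-order data across $\phidesing$ and in handling the degenerate edge case $r=\min\{m,n\}$ separately, where $\boundedrank$ is already the whole space and the statement is vacuous.
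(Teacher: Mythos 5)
Your proof is correct, but it takes a genuinely different route from the paper's. For the ``if'' direction the paper does not argue directly on $(\manifolddesing,\phidesing)$: it introduces the submersion $\varphi:\mbR^{m\times r}\times\stiefel(n,r)\to\manifolddesing$, $(\tilde L,\tilde R)\mapsto(\tilde L\tilde R^\top, I-\tilde R\tilde R^\top)$, forms the composition $\psi=\phidesing\circ\varphi$, verifies \twototwo\ for $(\mbR^{m\times r}\times\stiefel(n,r),\psi)$ by repeating the explicit $\tanL$/$\tanQ$ computation of Proposition~\ref{pro:LR_2to2}, and then transfers the conclusion back to $(\manifolddesing,\phidesing)$ via Proposition~\ref{pro:composition}(i). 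You instead exploit that at a rank-$r$ point the constraint $XG=0$ with $\rank(G)=n-r$ forces $G=P_{\ker(X)}$, so $\phidesing$ is a local diffeomorphism onto the relatively open stratum $\fixedrank$; then $\diff^2\phidesing=0$, $\ima(\tanL_{(X,G)})=\tangent_{\fixedrank}(X)=\Btangent_{\boundedrank}(X)$, and since $\tangenttwo_{\fixedrank}(X;\eta)$ is a single affine translate of $\tangent_{\fixedrank}(X)$ by~\eqref{eq:tangenttwo_fixedrank}, one membership $\tanQ_{(X,G),v}(u_v)\in\tangenttwo_{\boundedrank}(X;\eta)$ already forces $\tanQ_{(X,G),v}(u_v)+\ima(\tanL_{(X,G)})=\tangenttwo_{\boundedrank}(X;\eta)$, so condition~\eqref{eq:nece_suff_2to2} holds with the constant sequence $v_i\equiv v$ and no convex hull or limit is needed. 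Your argument is shorter and more conceptual (it isolates the general principle that \twototwo\ is automatic when the parameterization is a local diffeomorphism onto a smooth, relatively open part of $\manifold$), at the cost of having to justify smoothness of $X'\mapsto P_{\ker(X')}$ on $\fixedrank$; the paper's composition route avoids that geometric input and recycles the LR computation verbatim. For the ``only if'' direction you reprove nonlinearity of $\Btangent_{\boundedrank}(X)$ at rank-deficient points (the same argument the paper uses for Proposition~\ref{pro:LR_2to2}), whereas the paper cites the image characterization from the literature; both are valid, and both, like yours, implicitly assume $r<\min\{m,n\}$.
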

\begin{proof}
    It has been proved in \cite[Proposition 2.9]{levin2025effect} that $\ima(\tanL_{(X,G)})=\Btangent_{\boundedrank}(X)$ if and only if $\rank(X)=r$, which validates the ``only if\,'' part of our proposition.
    
    Then we focus on the ``if\,'' part by assuming $\rank(X)=r$. Following the proof of~\cite[Theorem~3]{yang2025spacedecouple} We resort to the manifold $\mbR^{m\times r} \times \stiefel(n, r)$, together with the smooth mapping $\varphi:\mbR^{m\times r}\times \stiefel(n, r) \rightarrow \manifolddesing:(\tilde{L},\tilde{R})\mapsto (\tilde{L}\tilde{R}^\top, I-\tilde{R}\tilde{R}^\top)$, which is a submersion onto $\manifolddesing$. We then introduce the composition ${\psi}:=\phi\circ\varphi$, and aim to show the \twototwo\ property of $(\mbR^{m\times r} \times \stiefel(n, r),\psi)$, which implies the \twototwo\ property of $(\manifolddesing,\phi)$, as supported by Proposition~\ref{pro:composition}.

    Let $\psi(L,R)=X$, i.e., $X=LR^\top$, and the SVD of $X$ be $X=U\varSigma V^\top$ (which indicates that $(L,R)=(UB,VC)$ with $B=\varSigma V^\top R$ and $C=\varSigma U^\top L(L^\top L)^{-1}$). The computations of $\tanL$ and $\tanQ$ directly follows~\eqref{eq:LR_tanLtanQ}. Given any $\eta\in\Btangent_{\boundedrank}(X)$ written as $\eta=U\dot{A}V^\top+U\dot{B}V_\bot^\top+U_\bot\dot{C}V^\top$, we can construct $\dot{L}=U\dot{A}C^{-\top}+U_\bot\dot{C}C^{-\top}$ and $\dot{R}=V_\bot\dot{B}^\top B^{-\top}\in\tangent_{\stiefel(n,r)}(R)$ to obtain $\tanL_{(L,R)}(\dot{L},\dot{R})=\eta$. Hence, the arbitrariness of $\eta$ concludes that $\ima(\tanL_{(L,R)})=\Btangent_{\boundedrank}(X)$.     
    
    Then, we move on to prove \eqref{eq:second_inclusion_LR}, thereby verifying condition \eqref{eq:nece_suff_2to2}. The analysis parallels that in Proposition \ref{pro:LR_2to2}. Therefore, applying Theorem~\ref{the:2to2} confirms the \twototwo\ property of $(\mbR^{m\times r}\times\stiefel(n,r),\psi)$ at $(L,R)$, while Proposition~\ref{pro:composition} further implies that $(\manifolddesing,\phidesing)$ also satisfies the \twototwo\ property at $(X,G)$.
\end{proof}

Riemannian trust-region algorithms are guaranteed to accumulate at second-order stationary points on smooth manifolds~\cite{absil2007trustRTR}, and thus Propositions~\ref{pro:LR_2to2} and~\ref{pro:desing_2to2} reveal that smooth parameterizations may find second-order stationary points on $\boundedrank$, provided that the returned point happens to have rank $r$. From this perspective, the NP-hardness characterized in section~\ref{sec:NP_versec} can be essentially attributed to the singularities of $\boundedrank$, i.e., points with rank strictly lower than $r$.

\section{Geometry of the graph of the normal cone mapping}\label{sec:Geometryofgraph}
Viewing the Mordukhovich normal cone induced by the determinantal variety as a set-valued mapping, that is,
$$\Lnormal_{\boundedrank}: \mbRmn\rightrightarrows\mbRmn:\,X\mapsto \Lnormal_{\boundedrank}(X),$$
we aim to give an explicit formula for the Mordukhovich normal cone to $\graph\Lnormal_{\boundedrank}$. Specifically, according to~\eqref{eq:Lnormal_cone_boundedrank}, the graph of $\normal_{\boundedrank}$ can be characterized by
\begin{equation*}
    \graph\Lnormal_{\boundedrank} = \hkh{(X,Y)\in\boundedrank\times \mbRmn\mid Y\in\normal_{\lowrank}(X),\ \rank(Y)\le \min\{m,n\}-r},
\end{equation*}
where we denote $s=\rank(X)$.

In this section, we investigate the variational geometry of $\graph\Lnormal_{\boundedrank}$, with the derivation illustrated below.
\begin{figure}[H]
	\centering
	\vspace{-4mm}
	\begin{tikzpicture}
		\node (M) at (0,0) {\normalsize ${\Btangent_{\graph\Lnormal_{\boundedrank}}}$};
		\node (Tc) at (4,0) {\normalsize ${\Fnormal_{\graph\Lnormal_{\boundedrank}}}$};
		\node (R) at (8,0) {\normalsize $\Lnormal_{\graph\Lnormal_{\boundedrank}}$};
		
		\draw[->,>={Stealth[round]}] (Tc) -- (R);
		\draw[->,>={Stealth[round]}] (M) -- (Tc);
		
		\node[above] at ($0.5*(M)+0.5*(Tc)$) {\normalsize $\mathrm{polar}$};
		\node[above] at ($0.5*(R)+0.5*(Tc)+(0.,0)$) {\normalsize $\lim$};
	\end{tikzpicture}
\end{figure}

\vspace{-4mm}
\noindent In detail, we first characterize the Bouligand tangent cone to $\graph\Lnormal_{\boundedrank}$ in Theorem~\ref{the:Btangent_graph}, and then take the polar operation to obtain the Fr\'echet normal cone in Corollaries~\ref{cor:Fnormal_graph_1}-\ref{cor:Fnormal_graph_2}. Consequently, in Theorem~\ref{the:Lnormal_graph}, the Mordukhovich normal cone to $\graph\Lnormal_{\boundedrank}$ is identified as the outer limit of the developed Fr\'echet normal cone. 

In preparation, we introduce some notation used throughout this section. We denote $k:=\min\{m,n\}$. Given $(X,Y)$, the ranks of $X$ and $Y$ are represented by $s$ and $k-\ell$, respectively, and specifically, the condition $(X,Y)\in\graph\Lnormal_{\boundedrank}$ implies that $0\le s\le r \le \ell \le k$.

\subsection{Bouligand tangent cone to the graph}
As a preview, we note that the derived tangent cone~\eqref{eq:Btangent_graph} is characterized via a parameterization built upon the SVD of the reference point $(X,Y)\in\graph\Lnormal_{\boundedrank}$. It is admitted that the coupling relationships among parameters are slightly involved, and thus we extract part of them in the following lemma, which appears technical but forms the basis of Theorem~\ref{the:Btangent_graph}.

\begin{lemma}\label{lem:constrained_boundedrank_curve}
    Given $0\le s\le r\le k=\min\{m,n\}$ and $R\in\mbR^{(m-s)\times (n-s)}_{\le k-r}$ with the compact SVD, $R=U_R\varSigma_RV_R^\top$. Suppose that $K\in\mbR^{(m-s)\times (n-s)}_{\le r-s}$ satisfies $K^\top R=0$ and $RK^\top=0$.  Then, for any 
    \begin{equation}\label{eq:nanifold}
    \!\!\tilde{D}\in\{D\in\mbR^{(m-s)\times (n-s)}\mid K^\top D V_{R\bot}=0,\,U_{R\bot}^\top D K^\top=0\}\cap\Btangent_{\mbR^{(m-s)\times(n-s)}_{\le k-r}}(R),
    \end{equation}
    there exist smooth curves $(R(t),K(t))\in\mbR^{(m-s)\times(n-s)}_{\le k-r}\times \mbR^{(m-s)\times(n-s)}_{\le r-s}$ such that $R^\prime(0)=\tilde{D}$, $(R(0),K(0))=(R,K)$, $K(t)^\top R(t)=0$ and $R(t)K(t)^\top=0$.
\end{lemma}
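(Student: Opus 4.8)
The plan is to pass to singular-value coordinates adapted to $R$, read off the block structure that the hypotheses force, and then build both curves \emph{simultaneously} by a common orthogonal conjugation, so that the rank bounds and the two bilinear constraints are preserved for free.

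First I would reduce to a block model. Set $p=\rank(R)$ and complete $U_R,V_R$ to orthogonal matrices $[U_R\ U_{R\bot}]\in\orth(m-s)$ and $[V_R\ V_{R\bot}]\in\orth(n-s)$. Conjugating by these transfers every assertion (ranks, the relations $K^\top R=0$, $RK^\top=0$, and the prescribed velocity $R'(0)=\tilde D$), so I may assume $R=\left(\begin{smallmatrix}\varSigma_R&0\\0&0\end{smallmatrix}\right)$. The conditions $K^\top R=0$ and $RK^\top=0$ force the column and row spaces of $K$ into $\range(U_{R\bot})$ and $\range(V_{R\bot})$, hence $K=\left(\begin{smallmatrix}0&0\\0&\tilde K\end{smallmatrix}\right)$ with $\rank(\tilde K)\le r-s$. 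Applying the bounded-rank tangent-cone formula~\eqref{eq:Btangent_cone_boundedrank} to $\Btangent_{\mbR^{(m-s)\times(n-s)}_{\le k-r}}(R)$ shows that $\tilde D$ lies in this cone exactly when its $(2,2)$-block $N:=U_{R\bot}^\top\tilde D V_{R\bot}$ has $\rank(N)\le k-r-p$, the other blocks $W_1,W_2,W_3$ being free. Finally, expanding the two defining equations $K^\top\tilde D V_{R\bot}=0$ and $U_{R\bot}^\top\tilde D K^\top=0$ of the set in~\eqref{eq:nanifold} collapses, in these coordinates, to the matrix identities $\tilde K^\top N=0$ and $N\tilde K^\top=0$; that is, the column (resp.\ row) spaces of $\tilde K$ and $N$ are orthogonal.

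With this data I would construct the curves by transporting everything with one pair of orthogonal factors. Choose skew-symmetric generators $\Omega_U\in\sksym(m-s)$ and $\Omega_V\in\sksym(n-s)$ whose only nonzero blocks are the off-diagonal ones $W_3\varSigma_R^{-1}$ and $W_2^\top\varSigma_R^{-1}$ (together with the skew-symmetry partners), put $\Theta(t)=\exp(t\Omega_U)$, $\Psi(t)=\exp(t\Omega_V)$, and define
\begin{equation*}
 R(t)=\Theta(t)\begin{pmatrix}\varSigma_R+tW_1&0\\0&tN\end{pmatrix}\Psi(t)^\top,\qquad K(t)=\Theta(t)\,K\,\Psi(t)^\top .
\end{equation*}
Orthogonal conjugation keeps $\rank(K(t))=\rank(\tilde K)\le r-s$, while the block-diagonal middle factor has rank at most $p+\rank(N)\le k-r$, so $R(t)$ stays in $\mbR^{(m-s)\times(n-s)}_{\le k-r}$. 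In $K(t)^\top R(t)$ and $R(t)K(t)^\top$ the orthogonal factors cancel, leaving $t\,\tilde K^\top N$ and $t\,N\tilde K^\top$, which vanish by the identities just derived; hence both constraints hold identically in $t$. A first-order expansion gives $R'(0)=\Omega_U R+M'(0)+R\,\Omega_V^\top$, and the choice of $\Omega_U,\Omega_V$ is precisely what makes this equal $\tilde D=\left(\begin{smallmatrix}W_1&W_2\\W_3&N\end{smallmatrix}\right)$; smoothness is immediate since everything is assembled from matrix exponentials and polynomials.

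The step needing the most care—and where naive attempts break—is reconciling the rank bound on $R(t)$ with the two orthogonality constraints as the column space of $R(t)$ tilts in $t$. Adjoining a fixed term $t\,U_{R\bot}NV_{R\bot}^\top$ to a genuine rank-$p$ curve keeps $\rank(R(t))\le k-r$ by subadditivity, but the tangential block $W_3$ generically rotates the column space of $R(t)$ into that of $K$, so $K(t)^\top R(t)=0$ fails unless $K$ is carried along. Using the \emph{same} orthogonal factors $\Theta(t),\Psi(t)$ for both curves is exactly what resolves this: it tracks the tilt while letting the relations $\tilde K^\top N=0$ and $N\tilde K^\top=0$ survive the cancellation of $\Theta(t),\Psi(t)$, so orthogonality is maintained for all $t$ and not merely to first order. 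Undoing the initial change of basis then returns curves with all required properties in the original coordinates.
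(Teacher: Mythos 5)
Your proof is correct, and it takes a genuinely different route from the paper's. The paper first rewrites any admissible $\tilde D$ as $\zeta_1+\zeta_2$ with $\zeta_1$ tangent to the fixed-rank stratum $\mbR^{(m-s)\times(n-s)}_{k-\ell}$ and $\zeta_2$ a rank-bounded normal component annihilated by $K$; it then realizes $\zeta_1$ by an arbitrary analytic curve on that stratum, invokes the analytic SVD theorem of Bunse-Gerstner et al.\ to extract moving frames $U_{R\bot}(t),V_{R\bot}(t)$ from that curve, and grafts on the normal part as $tU_{R\bot}(t)U_{K\bot}ZV_{K\bot}^\top V_{R\bot}(t)^\top$ while transporting $K$ with the same frames. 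You instead work in a fixed block basis, encode the tangential off-diagonal blocks $W_2,W_3$ directly into explicit skew-symmetric generators, and conjugate both $R$ and $K$ by the resulting matrix exponentials $\Theta(t),\Psi(t)$; the two constraints then reduce to the algebraic identities $\tilde K^\top N=0$ and $N\tilde K^\top=0$, which survive the cancellation of the orthogonal factors. Your version buys a fully explicit, self-contained curve with no appeal to the analytic SVD (the only nontrivial external input in the paper's argument), at the mild cost of having to verify the first-order expansion $R'(0)=\Omega_U R+M'(0)+R\,\Omega_V^\top=\tilde D$ by hand; the paper's version is more flexible in that it realizes an \emph{arbitrary} curve with the prescribed tangential velocity rather than a specific one. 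Both establish the same statement, and your key structural observations (the block form of $K$ forced by $K^\top R=RK^\top=0$, the identification of membership in the tangent cone with $\rank(N)\le k-r-p$, and the reduction of \eqref{eq:nanifold} to $\tilde K^\top N=0$, $N\tilde K^\top=0$) coincide with the content of the paper's characterization \eqref{eq:zeta1_zeta2}.
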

\begin{proof}
    Let $\rank(R)=k-\ell$ with $r\le \ell\le k$. Denote the set on the right side of~\eqref{eq:nanifold} by $\nanifold(R,K)$, and we can give an explicit characterization for it, that is,
    \begin{equation}\label{eq:zeta1_zeta2}
        \nanifold(R,K) = \hkh{\zeta_1+\zeta_2\ \left|\,\begin{array}{l}\zeta_1\in\Btangent_{\mbR^{(m-s)\times(n-s)}_{k-\ell}}(R),\ \zeta_2\in\normal_{\mbR^{(m-s)\times(n-s)}_{k-\ell}}(R),
        \\[2mm]
         \rank(\zeta_2)\le \ell-r,\ K^\top \zeta_2=0,\ \zeta_2K^\top=0\end{array}\right.},
    \end{equation}
    which can be verified by considering the conditions $K^\top R=0$, $RK^\top=0$, and the expressions from~\eqref{eq:Tcone_lowrank} to~\eqref{eq:Btangent_cone_boundedrank}.

    Then, for $\tilde{D}=\zeta_1+\zeta_2\in\nanifold(R,K)$, we prove the lemma by construction. In view of~\eqref{eq:zeta1_zeta2}, given any tangent vector $\zeta_1$ to the analytic manifold $\mbR^{(m-s)\times(n-s)}_{k-\ell}$ at $R$, there exists an analytic curve $R_1(t)$ on the manifold with $R_1(0)=R$ and $R_1^\prime(0)=\zeta_1$. Subsequently, \cite[Theorem 1]{bunse1991analyticSVD} reveals that $R_1(t)$ admits an analytic singular value decomposition, i.e.,
    \begin{equation}\label{eq:curve_R1}
                R_1(t)=\zkh{U_R(t)\ U_{R\bot}(t)}\left[ \begin{matrix}
    \varSigma _{R}\left( t \right)&		0\\
    0&		\varSigma _{R\bot}\left( t \right)\\
        \end{matrix} \right] \zkh{V_R(t)\ V_{R \bot}(t)}^\top. 
    \end{equation}   
    Without loss of generality, suppose $[U_R(0)\ U_{R\bot}(0)] = [U_R\ U_{R\bot}]$, $\varSigma_R(0)=\varSigma_R$, and $[V_R(0)\ V_{R\bot}(0)]=[V_R\ V_{R\bot}]$. Since $\rank(R_1(t))\equiv k-\ell$ and $\rank(\varSigma_R(0))=k-\ell$, we can find an $\varepsilon>0$ and the interval $\kh{-\varepsilon,+\varepsilon}$ such that $\varSigma_{R\bot}(t)\equiv 0$ for all $t\in(-\varepsilon,+\varepsilon)$,  which means $R_1(t)=U_R(t)\varSigma_R(t)V^\top_R(t)$. 
    
    The next step is to additionally introduce the direction~$\zeta_2$ in~\eqref{eq:zeta1_zeta2}. Let $\rank(K)=r-c\le r-s$. The conditions $K^\top R=0$ and $RK^\top=0$ indicate that $K=U_{R\bot} U_{R\bot}^\top K V_{R\bot} V_{R\bot}^\top$, and thus it admits the decomposition $K=U_{R\bot}U_K\varSigma_{K}V_K^\top V_{R\bot}^\top$ for some $U_K\in\stiefel(m-k+\ell-s,r-c)$ and $V_K\in\stiefel(n-k+\ell-s,r-c)$, the complements of which are denoted by $U_{K\bot}$ and $V_{K\bot}$, respectively. For $\zeta_2\in\normal_{\mbR^{(m-s)\times(n-s)}_{k-\ell}}(R)$ of $\rank(\zeta_2)\le\ell-r$, the conditions $K^\top \zeta_2=0$ and $\zeta_2K^\top=0$ imply that $\zeta_2$ can be parameterized by $\zeta_2 = U_{R\bot}U_{K\bot}{Z}V_{K\bot}^\top V_{R\bot}^\top$ for some $Z$ with $\rank(Z)=\rank(\zeta_2)$.

    Collecting $R_1(t)$ given in~\eqref{eq:curve_R1} and $R_2(t):=tU_{R\bot}(t)U_{K\bot} ZV_{K\bot}^\top V_{R\bot}(t)^\top$, we obtain $R(t):=R_1(t)+R_2(t)\in\mbR^{(m-s)\times(n-s)}_{\le k-r}$ and $K(t):=U_{R\bot}(t)U_K\varSigma_K V_K^\top V_{R\bot}(t)^\top\in\mbR^{(m-s)\times(n-s)}_{\le r-s}$ satisfying $R^\prime(0)=\tilde{D}=\zeta_1+\zeta_2$ and $(R(0),K(0))=(R,K)$. Moreover, $K(t)^\top R(t)=0$ and $R(t)K(t)^\top=0$ hold in the interval $(-\varepsilon,+\varepsilon)$.
\end{proof}

We now proceed to derive the tangent cone to the graph.

\begin{theorem}[Bouligand tangent cone]\label{the:Btangent_graph}
    Given $(X,Y)\in\graph\Lnormal_{\boundedrank}$ with $\rank(X)=s$, the SVD $X=U\varSigma V^\top$, and $Y=U^{}_{\bot} R_{}^{}V_{\bot}^\top$ for some $R\in\mbR^{(m-s)\times (n-s)}$. Suppose that $R$ admits the compact SVD $R=U_R\varSigma_RV_R^\top$. Then, the Bouligand tangent cone at $(X,Y)$ can be expressed as follows,
    \begin{equation}\label{eq:Btangent_graph}
        \Btangent_{\graph\Lnormal_{\boundedrank}}(X,Y)=\hkh{ (\eta,\xi)\ \left|\,\begin{array}{l}
            \eta = UAV^\top + U_\bot B\varSigma V^\top + U\varSigma C^\top V_\bot^\top + U_\bot K V^\top_\bot
           \\
           \xi = U_\bot DV_\bot ^\top - U B^\top RV_\bot^\top - U_\bot R CV^\top, 
            \\
            A\in\mbR^{s\times s},\,B\in\mbR^{(m-s)\times s},\,C\in\mbR^{(n-s)\times s},
            \\
            K\in\mbR^{(m-s)\times(n-s)}_{\le r-s},\, K^\top R=0,\,R K^\top=0,
            \\
            D\in\tangent_{\mbR_{\le k-r}^{(m-s)\times (n-s)}}(R),
            \\
            K_{}^\top D_{}^{}V^{}_{R\bot}=0,\ U_{R\bot}^\top D_{}^{}K_{}^\top =0
        \end{array} \right.
        }\!.\!\!\!\!\!
    \end{equation}
\end{theorem}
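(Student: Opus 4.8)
The plan is to replace the graph by an equivalent determinantal description and then prove the two inclusions of \eqref{eq:Btangent_graph} by hand. Using the compact SVD $X=U\varSigma V^\top$ and the normal-cone formulas \eqref{eq:Ncone_lowrank}--\eqref{eq:Lnormal_cone_boundedrank}, one checks that $Y\in\Lnormal_{\boundedrank}(X)$ is equivalent to $U^\top Y=0$ and $YV=0$, i.e. to $X^\top Y=0$ and $XY^\top=0$, so that
\[
\graph\Lnormal_{\boundedrank}=\hkh{(X,Y)\ \left|\ \rank(X)\le r,\ \rank(Y)\le k-r,\ X^\top Y=0,\ XY^\top=0\right.}.
\]
Throughout I would decompose every matrix in the frames $[U\ U_\bot]$, $[V\ V_\bot]$ into its four blocks, writing $\eta_{11}=U^\top\eta V$, $\eta_{22}=U_\bot^\top\eta V_\bot$, and so on; in this notation the reference point has $X$-block $\varSigma$ in position $(1,1)$ and $Y$-block $R$ in position $(2,2)$, all other blocks being zero.

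For necessity ($\subseteq$), I would take $(\eta,\xi)\in\Btangent_{\graph\Lnormal_{\boundedrank}}(X,Y)$, so there are $t_i\downarrow 0$ and $(\eta_i,\xi_i)\to(\eta,\xi)$ with $(X_i,Y_i):=(X+t_i\eta_i,\,Y+t_i\xi_i)$ in the graph. The rank bound on $X_i$ gives $\eta\in\Btangent_{\boundedrank}(X)$, hence $\rank(\eta_{22})=\rank K\le r-s$ by \eqref{eq:Btangent_cone_boundedrank_sigma_descrip}; projecting the rank bound on $Y_i$ onto the $(2,2)$ block shows $\hat Y_i:=U_\bot^\top Y_iV_\bot=R+t_iD_i$ has rank $\le k-r$, whence $D:=\xi_{22}\in\tangent_{\mbR_{\le k-r}^{(m-s)\times(n-s)}}(R)$. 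Expanding $X_i^\top Y_i=0$ and $X_iY_i^\top=0$ to first order in $t_i$ and reading off blocks yields $\xi_{11}=0$, $\xi_{12}=-B^\top R$, $\xi_{21}=-RC$ after the substitutions $\eta_{21}=B\varSigma$, $\eta_{12}=\varSigma C^\top$, together with $K^\top R=0$ and $RK^\top=0$.

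The coupling equations are the subtle point, and I would obtain them by pushing the expansion one order further. From the $(2,2)$-type block of $X_i^\top Y_i=0$ one gets $K_i^\top R+t_i\big((\eta_i)_{12}^\top(\xi_i)_{12}+K_i^\top D_i\big)=0$; right-multiplying by $V_{R\bot}$ annihilates the leading term exactly, since $RV_{R\bot}=0$, so after dividing by $t_i$ and letting $i\to\infty$ the surviving terms give $\eta_{12}^\top\xi_{12}V_{R\bot}+K^\top DV_{R\bot}=0$, and a direct check that $\eta_{12}^\top\xi_{12}V_{R\bot}=-C\varSigma B^\top RV_{R\bot}=0$ leaves $K^\top DV_{R\bot}=0$. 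The transposed computation applied to $X_iY_i^\top=0$, multiplied on the right by $U_{R\bot}$, gives $U_{R\bot}^\top DK^\top=0$. This shows $(\eta,\xi)$ lies in the claimed set.

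For sufficiency ($\supseteq$), I would construct an explicit smooth curve. Rotating frames by $U(t)=[U\ U_\bot]\exp(t\Omega_U)$ with $\Omega_U$ the skew block matrix assembled from $B$ (so $U'(0)=U_\bot B$, $U_\bot'(0)=-UB^\top$), and $V(t)$ likewise from $C$, I invoke Lemma~\ref{lem:constrained_boundedrank_curve} with $\tilde D=D$ to get curves $(R(t),K(t))$ keeping $\rank R(t)\le k-r$, $\rank K(t)\le r-s$, with $R'(0)=D$ and the exact orthogonality $K(t)^\top R(t)=0=R(t)K(t)^\top$. Setting
\[
X(t)=U(t)(\varSigma+tA)V(t)^\top+U_\bot(t)\,tK(t)\,V_\bot(t)^\top,\qquad Y(t)=U_\bot(t)R(t)V_\bot(t)^\top,
\]
the orthogonality from the lemma makes the column and row spaces of $Y(t)$ orthogonal to those of $X(t)$, so $(X(t),Y(t))$ stays in the graph for small $t$, and differentiating at $t=0$ recovers $X'(0)=\eta$ and $Y'(0)=\xi$. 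The hard part is entirely the necessity of the two coupling conditions, which are second-order in nature and only appear after annihilating the first-order identity by $V_{R\bot}$ (resp.\ $U_{R\bot}$) and tracking the residual $O(t_i)$ terms; on the construction side the simultaneous requirement of moving $R$ along a prescribed tangent direction while preserving orthogonality with $K$ is exactly what Lemma~\ref{lem:constrained_boundedrank_curve} absorbs, so once that lemma is available the remaining differentiation is routine.
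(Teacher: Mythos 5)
Your proposal is correct and follows essentially the same route as the paper's proof: the $\supseteq$ inclusion via an explicit curve built from rotated frames together with the $(R(t),K(t))$ pair supplied by Lemma~\ref{lem:constrained_boundedrank_curve}, and the $\subseteq$ inclusion by expanding $X_i^\top Y_i=0$ and $X_iY_i^\top=0$, reading off the first-order block relations, and then extracting the coupling conditions $K^\top DV_{R\bot}=0$ and $U_{R\bot}^\top DK^\top=0$ by annihilating the leading term with $V_{R\bot}$ (resp.\ $U_{R\bot}$) before passing to the limit. The only cosmetic difference is that you work blockwise where the paper multiplies the full matrix identity by projectors; the underlying computation is identical.
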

\begin{proof}
We begin by proving the ``$\supseteq$" part of~\eqref{eq:Btangent_graph}. Given $K\in\mbR^{(m-s)\times(n-s)}_{\le r-s}$ satisfying $K^\top R=0$ and $RK^\top=0$, we can construct curves $R(t)\in\mbR^{(m-s)\times(n-s)}_{\le k-r}$ and $K(t)\in\mbR^{(m-s)\times(n-s)}_{\le r-s}$ as stated in Lemma~\ref{lem:constrained_boundedrank_curve}. Consider, in addition, curves $[U(t)\ U_\bot(t)]\in\orth(m)$ and $[V(t)\ V_\bot(t)]\in\orth(n)$ passing through $[U\ U_\bot]$ and $[V\ V_\bot]$ at $t=0$, respectively. Then, we assemble the curves in the following manner:
$$\gamma(t):=U(t)\varSigma(t)V(t)^\top+U_\bot(t)\cdot tK(t)\cdot V_\bot(t)^\top\ \ \text{and}\ \  \beta(t):=U_\bot(t)R(t)V_\bot(t)^\top,$$
where $\varSigma(t)\in\mbR^{s\times s}$ with $\varSigma(0)=\varSigma$. 

Note that $\beta(t)^\top \gamma(t)=0$, $\gamma(t)\beta(t)^\top=0$, $\rank(\gamma(t))\le r$, and $\rank(\beta(t))\le k-r$. Hence, we obtain a smooth curve $\alpha(t):=(\gamma(t),\beta(t))$ in $\graph\Lnormal_{\boundedrank}$. Differentiating $\alpha(t)$ at $t=0$ yields $\alpha^\prime(0)=(\gamma^\prime(0),\beta^\prime(0))\in\Btangent_{\graph\Lnormal_{\boundedrank}}(X,Y)$ with
\begin{equation}\label{eq:gammap_betap_eta}
    \begin{aligned}
    \gamma^\prime(0) &= U^\prime(0)\varSigma V^\top + U\varSigma^\prime(0)V^\top + U\varSigma V^\prime(0)^\top+U_{\bot}KV_{\bot}^\top,
    \\
    \beta^\prime(0) &= U_{\bot}^\prime(0)RV_{\bot}^\top + U_{\bot} R^\prime(0)V_{\bot}^\top + U_{\bot} RV_{\bot}^\prime(0)^\top.
    \end{aligned}
\end{equation}
Taking into account the tangent space $\tangent_{\orth(m)}([U\ U_\bot])$ (see~\cite[\S3.5]{absil2008optimization}), for any $\Omega_u\in\sksym(s)$, $\Omega_{u\bot}\in\sksym(m-s)$, and $B\in\mbR^{(m-s)\times s}$, it is reasonable to construct the curve such that $U^\prime(0)=U\Omega_u+U_\bot B$ and $U_\bot^\prime(0)=U_\bot \Omega_{u\bot}-UB^\top$. Similarly, we can arrange to have $V^\prime(0)=V\Omega_v+V_\bot C$ and $V_\bot^\prime(0)=V_\bot\Omega_{v\bot}-VC^\top$ for any $\Omega_v\in\sksym(s)$, $\Omega_{v\bot}\in\sksym(n-s)$, and $C\in\mbR^{(n-s)\times s}$. Moreover, notice that $\varSigma(t)\in\mbR^{s\times s}$ is unconstrained, and thus $\varSigma^\prime(0)$ is allowed to be any $L\in\mbR^{s\times s}$. Substituting the discussed quantities into~\eqref{eq:gammap_betap_eta} leads to
\begin{equation}\label{eq:gammap_betap_eta2}
    \begin{aligned}
    \gamma^\prime(0) &= U(\Omega_u\varSigma+L-\varSigma\Omega_v)V^\top + U_\bot B\varSigma V^\top + U\varSigma C^\top V_\bot^\top + U_{\bot}KV_{\bot}^\top,
    \\
    \beta^\prime(0) &= U_\bot(\Omega_{u\bot}R+R^\prime(0)-R\Omega_{v\bot})V_\bot ^\top - UB^\top R V_\bot^\top - U_\bot R CV^\top.
    \end{aligned}
\end{equation}
We denote $A=\Omega_u\varSigma+L-\varSigma\Omega_v$ and $D=\Omega_{u\bot}R+R^\prime(0)-R\Omega_{v\bot}$. The arbitrariness of $L\in\mbR^{s\times s}$ reveals that of $A\in\mbR^{s\times s}$. Additionally, the conditions $K^\top R=0$ and $RK^\top=0$, together with the freedom in choosing $R^\prime(0)\in\nanifold(R,K)$ (cf.~Lemma~\ref{lem:constrained_boundedrank_curve}), reveal the arbitrariness of $D\in\nanifold(R,K)$, which concludes the ``$\supseteq$" part.

Next, we turn to show the ``$\subseteq$" in~\eqref{eq:Btangent_graph}. Given any $(\eta,\xi)\in\Btangent_{\graph\Lnormal_{\boundedrank}}(X,Y)$, by definition of the Bouligand tangent cone, it admits sequences $\{t_i\}\subseteq \mbR_+$ and $\{(\eta_i,\xi_i)\}\subseteq \mbRmn\times\mbRmn$ such that $t_i\to 0$, $(\eta_i,\xi_i)\to(\eta,\xi)$, and $(X,Y)+t_i(\eta_i,\xi_i)\in\graph\Lnormal_{\boundedrank}$, or equivalently,
\begin{equation}\label{eq:relation_sequence}
    X+t_i\eta_i\in\boundedrank\ \ \text{and}\ \ Y+t_i\xi_i\in\Lnormal_{\boundedrank}(X+t_i\eta_i).
\end{equation}
We note that $\eta\in\tangent_{\boundedrank}(X)$, and according to the expression~\eqref{eq:Btangent_cone_boundedrank}, $(\eta,\xi)$ has the following form,
\begin{equation}\label{eq:eta_four_term}
\begin{aligned}
\eta=&\ UA_\eta V^\top+U_\bot B_\eta V^\top+UC_\eta V^\top_\bot+U_{\bot}K V^\top_{\bot},      
\\
\xi=&\ UA_\xi V^\top+U_\bot B_\xi V^\top+UC_\xi V^\top_\bot+U_{\bot}D V^\top_{\bot},  
\end{aligned}
\end{equation}
where $\rank(K)\le r-s$. Then we will determine the relationships between the involved parameters (e.g., $A_\eta,A_\xi$).

Firstly, the requirement~\eqref{eq:relation_sequence} reveals that
\begin{equation}\label{eq:XYtk0}
(X+t_i\eta_i)^\top(Y+t_i\xi_i)=t_i\eta_i^\top Y+t_iX^\top \xi_i+t_i^2\eta_i^\top\xi_i=0.  
\end{equation}
Dividing the equation by $t_i$ and letting $i\to+\infty$ yield $\eta^\top Y+X^\top\xi=0$, i.e.,
$VB_\eta^\top RV_\bot^\top + V_\bot K^\top RV_\bot^\top + V\varSigma A_\xi V^\top + V\varSigma C_\xi V_\bot^\top = 0$,
which implies $C_\xi =-\varSigma^{-1}B_\eta^\top R$, $A_\xi=0$, and $K^\top R=0$. In a similar way, we can derive $B_\xi=-RC^\top_\eta\varSigma^{-1}$ and $RK^\top=0$. Substituting these equalities into~\eqref{eq:eta_four_term} and comparing the expression with~\eqref{eq:Btangent_graph} reduce the task to verifying that $D\in\nanifold(R,K)$ as defined in Lemma~\ref{lem:constrained_boundedrank_curve}. To see this, noticing that $U_\bot^\top\xi_iV_\bot\to U_\bot^\top\xi V_\bot = D$, and
\begin{equation*}
    U_\bot^\top(Y+t_i\xi_i)V_\bot=R+t_iU_\bot^\top\xi_iV_\bot\in \mbR^{(m-s)\times (n-s)}_{\le k-r},
\end{equation*}
we have $D\in\tangent_{\mbR^{(m-s)\times (n-s)}_{\le k-r}}(R)$. Moreover, pre- and postmultiplying~\eqref{eq:XYtk0} by $V_\bot V_\bot^\top$ and $ V_\bot V_{R\bot}$, respectively, we obtain $V_\bot V_\bot^\top \eta_i^\top\xi_iV_\bot V_{R\bot}=0$. Letting $i\to+\infty$ and incorporating the expression of $(\eta,\xi)$~\eqref{eq:eta_four_term} reveal that $V_\bot(K^\top U_\bot^\top+C^\top_\eta U^\top)(UC_\xi+U_\bot D)V_{R\bot}=0$, which is simplified to $V_\bot(K^\top D+C_\eta^\top C_\xi)V_{R\bot}=0$. Consequently, we use $C_\xi V_{R\bot}=-\varSigma^{-1}B_\eta^\top RV_{R\bot}=0$ to derive $K^\top DV_{R\bot}=0$, and similarly, we can also find that $U_{R\bot}^\top DK^\top=0$, indicating that $D\in\nanifold(R,K)$. Therefore, we have identified all the relationships to conclude that $(\eta,\xi)$ belongs to the set on the right side of~\eqref{eq:Btangent_graph}.
\end{proof}

\subsection{Fr\'echet normal cone to the graph}
As shown in~\eqref{eq:Btangent_graph}, the characterization of the tangent cone at $(X,Y)$ is relevant to the rank of $X$. This observation leads to different treatments of $\Fnormal_{\graph\Lnormal_{\boundedrank}}(X,Y)$ in two cases, depending on whether $X$ attains rank $r$, which mirrors the first-order result in~\eqref{eq:Fnormal_cone_boundedrank}. Accordingly, Corollaries~\ref{cor:Fnormal_graph_1} and~\ref{cor:Fnormal_graph_2} address the cases $\rank(X)<r$ and $\rank(X)=r$, respectively.

\begin{corollary}\label{cor:Fnormal_graph_1}
    Given $(X,Y)\in\graph\Lnormal_{\boundedrank}$ with $\rank(X)=s<r$, the SVD $X=U\varSigma V^\top$, and $Y=U^{}_{\bot} R_{}^{}V_{\bot}^\top$ for some $R\in\mbR^{(m-s)\times (n-s)}$. Suppose that $\rank(R)=k-\ell$ and it admits the compact SVD $R=U_R\varSigma_RV_R^\top$. Then, the Fr\'echet normal cone at $(X,Y)$ can be expressed as follows,
    \begin{equation}\label{eq:Fnormal_graph_deficient}
        \Fnormal_{\graph\Lnormal_{\boundedrank}}(X,Y)=\hkh{ (\hat{\upsilon},\hat{\omega})\ \left|\,\begin{array}{l}
            \hat{\upsilon} =  U_\bot U_R\varSigma_R\hat{C}_1^\top V^\top + U\hat{B}_1^\top \varSigma_RV_R^\top V_\bot^\top
            \\
            \hspace{7mm}+ U_\bot Z V^\top_\bot,
           \\
           \hat{\omega} = U\hat{A}V^\top + U_\bot U_R\hat{B}_1\varSigma V^\top + U_\bot U_{R\bot}\hat{B}_2 V^\top 
           \\
           \hspace{7mm}+ U \varSigma\hat{C}_1V_R^\top V_\bot^\top+ U \hat{C}_2V_{R\bot}^\top V_\bot^\top + U_\bot\hat{Z} V_\bot^\top
            \\
            \hat{A}\in\mbR^{s\times s},\,\hat{B}_1\in\mbR^{(k-\ell)\times s},\,\hat{B}_2\in\mbR^{(m-k+\ell-s)\times s}
            \\
            \hat{C}_1\in\mbR^{s\times (k-\ell)},\,\hat{C}_2\in\mbR^{s\times(n-k+\ell-s)},
            \\
            {Z}\in \Btangent_{\mbR^{(m-s)\times(n-s)}_{k-\ell}}(R),\,\hat{Z}\in\Fnormal_{\mbR^{(m-s)\times(n-s)}_{\le k-r}}(R)
        \end{array} \right.
        }.
    \end{equation}
\end{corollary}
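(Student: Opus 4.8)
The plan is to obtain $\Fnormal_{\graph\Lnormal_{\boundedrank}}(X,Y)$ as the polar of the Bouligand tangent cone computed in Theorem~\ref{the:Btangent_graph}. The decisive structural observation is that the tangent cone~\eqref{eq:Btangent_graph} splits as $\Btangent_{\graph\Lnormal_{\boundedrank}}(X,Y)=\lanifold+\kanifold$, where $\lanifold$ is the linear subspace swept out by the free parameters $A,B,C$ (with $K=0$, $D=0$) and $\kanifold$ is the cone swept out by the coupled pair $(K,D)$ (with $A=B=C=0$). Since the constraints on $D$ in~\eqref{eq:Btangent_graph} involve only $K$, $R$, $U_{R\bot}$, $V_{R\bot}$, the parameter set factorizes and the induced map into $(\eta,\xi)$ is linear, so $\Fnormal=\lanifold^{\perp}\cap\kanifold^{\circ}$. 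I would compute the two factors separately, writing every test pair $(\hat\upsilon,\hat\omega)$ in the block form adapted to $[U\ U_\bot]$ and $[V\ V_\bot]$, with $\hat\upsilon_{ij},\hat\omega_{ij}$ denoting the corresponding blocks.

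For $\lanifold^{\perp}$, the plan is to pair $(\hat\upsilon,\hat\omega)$ against the three families of generators. Reading off~\eqref{eq:Btangent_graph}, the blocks $(1,1),(1,2),(2,1),(2,2)$ of $\eta$ are $(A,\,\varSigma C^\top,\,B\varSigma,\,K)$ and those of $\xi$ are $(0,\,-B^\top R,\,-RC,\,D)$. Varying $A\in\mbR^{s\times s}$ freely forces $\hat\upsilon_{11}=0$; varying $B$ and using $\innerp{\hat\upsilon_{21},B\varSigma}-\innerp{\hat\omega_{12},B^\top R}=\innerp{\hat\upsilon_{21}\varSigma-R\hat\omega_{12}^\top,B}$ forces $\hat\upsilon_{21}\varSigma=R\hat\omega_{12}^\top$; symmetrically, varying $C$ forces $\hat\upsilon_{12}^\top\varSigma=R^\top\hat\omega_{21}$. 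Because $\xi$ has vanishing $(1,1)$ block, $\hat\omega_{11}$ stays unconstrained. A direct check then shows that the shared-parameter ansatz of~\eqref{eq:Fnormal_graph_deficient}---with $\hat B_1$ linking $\hat\upsilon_{12}$ to $\hat\omega_{21}$ and $\hat C_1$ linking $\hat\upsilon_{21}$ to $\hat\omega_{12}$, using that $\varSigma,\varSigma_R$ are invertible on the relevant ranges---is exactly a parametrization of this linear system, so this step is largely bookkeeping.

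The substantive step is $\kanifold^{\circ}$, i.e.\ identifying the pairs $(Z,\hat Z):=(\hat\upsilon_{22},\hat\omega_{22})$ with $\innerp{Z,K}+\innerp{\hat Z,D}\le 0$ for every feasible $(K,D)$, where $D\in\nanifold(R,K)$ as in Lemma~\ref{lem:constrained_boundedrank_curve}. Here I would invoke the explicit description~\eqref{eq:zeta1_zeta2}. Testing $K=0$ gives $\nanifold(R,0)=\tangent_{\mbR^{(m-s)\times(n-s)}_{\le k-r}}(R)$ by~\eqref{eq:Btangent_cone_boundedrank}, whence $\innerp{\hat Z,D}\le 0$ on this whole tangent cone, i.e.\ $\hat Z\in\Fnormal_{\mbR^{(m-s)\times(n-s)}_{\le k-r}}(R)$. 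Testing $D=0$ and letting $K$ range over the feasible rank-$\le r-s$ matrices with $K^\top R=RK^\top=0$---equivalently $K=U_{R\bot}\tilde K V_{R\bot}^\top$ with $\rank(\tilde K)\le r-s$---and using that $-K$ is also feasible yields $\innerp{Z,K}=0$ for all such $K$. Because $s<r$, rank-one choices of $\tilde K$ are admissible and span the full block, forcing $U_{R\bot}^\top Z V_{R\bot}=0$, which by~\eqref{eq:Tcone_lowrank} is precisely $Z\in\tangent_{\mbR^{(m-s)\times(n-s)}_{k-\ell}}(R)$. Conversely, for $(Z,\hat Z)$ of this form one has $\innerp{Z,K}=\innerp{U_{R\bot}^\top Z V_{R\bot},\tilde K}=0$ and, since $\nanifold(R,K)\subseteq\tangent_{\mbR^{(m-s)\times(n-s)}_{\le k-r}}(R)$, also $\innerp{\hat Z,D}\le 0$; hence sufficiency is immediate. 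Re-encoding $\lanifold^{\perp}\cap\kanifold^{\circ}$ through the generators $\hat A,\hat B_1,\hat B_2,\hat C_1,\hat C_2,Z,\hat Z$ then reproduces~\eqref{eq:Fnormal_graph_deficient}.

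I expect the main obstacle to be the decoupling inside $\kanifold^{\circ}$: a priori $Z$ and $\hat Z$ could be linked through the joint constraints defining $\nanifold(R,K)$. Resolving this hinges on two points I would verify carefully. First, the hypothesis $s<r$ guarantees nonzero admissible $K$, so the $D=0$ test genuinely constrains $Z$; this is exactly what distinguishes the present case from $\rank(X)=r$ treated in Corollary~\ref{cor:Fnormal_graph_2}. Second, once $\hat Z\in\Fnormal_{\mbR_{\le k-r}^{(m-s)\times(n-s)}}(R)$ is fixed, I must check that the normal part $\zeta_2$ in~\eqref{eq:zeta1_zeta2} contributes nothing further: either $\ell>r$, so $R$ is rank-deficient in $\mbR^{(m-s)\times(n-s)}_{\le k-r}$ and $\hat Z=0$ by~\eqref{eq:Fnormal_cone_boundedrank}, or $\ell=r$, so $\rank(\zeta_2)\le\ell-r=0$ forces $\zeta_2=0$. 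Either way no cross term between $Z$ and $\hat Z$ survives, which is precisely what makes the clean product form~\eqref{eq:Fnormal_graph_deficient} possible.
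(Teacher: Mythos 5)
Your proposal is correct and follows essentially the same route as the paper: take the polar of the Bouligand tangent cone from Theorem~\ref{the:Btangent_graph}, expand the inner product block-wise in the $[U\ U_\bot]$, $[V\ V_\bot]$ bases, and isolate each constraint by zeroing out the remaining parameters (your $\lanifold+\kanifold$ splitting with $\Fnormal=\lanifold^\perp\cap\kanifold^\circ$ is just a cleaner packaging of the paper's argument, and your explicit sufficiency check and the reduction of $\innerp{Z,K}=0$ to $U_{R\bot}^\top Z V_{R\bot}=0$ via rank-one $K$ fill in details the paper leaves implicit).
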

\begin{proof}
    Taking the polar operation on both sides of~\eqref{eq:Btangent_graph} will yield the desired Fr\'echet normal cone. Specifically, notice that $(\eta,\xi)$ belongs to $\Btangent_{\graph\Lnormal_{\boundedrank}}(X,Y)$ if and only if $(-\eta,-\xi)$ is in it, and thus $(\hat{\upsilon},\hat{\omega})\in\Fnormal_{\graph\Lnormal_{\boundedrank}}(X,Y)$ is equivalent to $\innerp{(\hat{\upsilon},\hat{\omega}),(\eta,\xi)}=0$ for all $(\eta,\xi)\in\Btangent_{\graph\Lnormal_{\boundedrank}}(X,Y)$. 

    Given $(\hat{\upsilon},\hat{\omega})\in\mbRmn\times\mbRmn$, we can parameterize them as follows,
    \begin{equation}\label{eq:hatupsilon_hatomega}
        \begin{aligned}
            \hat\upsilon &=  U\hat{A}_{\upsilon}V^\top + U_\bot \hat{B}_\upsilon V^\top + U\hat{C}_\upsilon V_\bot^\top + U_\bot Z V_\bot^\top,
           \\
           \hat\omega &= U\hat{A}_{\omega}V^\top + U_\bot \hat{B}_\omega V^\top + U\hat{C}_\omega V_\bot^\top + U_\bot \hat{Z} V_\bot^\top.
        \end{aligned}
    \end{equation}
    Considering $(\eta,\xi)$ as expressed in~\eqref{eq:Btangent_graph}, we have
    \begin{equation}\label{eq:innerto0}
        \begin{aligned}
        \innerp{(\hat{\upsilon},\hat{\omega}),(\eta,\xi)} = &\ \innerp{\hat{A}_\upsilon,A} + \innerp{\hat{A}_\omega,0}+\innerp{\hat{B}_\upsilon\varSigma-R\hat{C}_\omega^\top,B} 
        \\
        &+ \innerp{\hat{C}_\upsilon^\top\varSigma-R^\top\hat{B}_\omega,C} + \innerp{Z,K} + \innerp{\hat{Z},D}            
        \end{aligned}
    \end{equation}
    Letting $A,B,C,K$ be zero matrices of corresponding shapes, the orthogonal requirement $\langle(\hat{\upsilon},\hat{\omega}),(\eta,\xi)\rangle=0$ implies that $\innerp{\hat{Z},D}=0$ for all $D\in\tangent_{\mbR^{(m-s)\times(n-s)}_{\le k-r}}(R)$; similar processes conclude that $\hat{A}_\upsilon=0$, $\hat{A}_\omega$ is freely chosen from $\mbR^{s\times s}$, and $\innerp{Z,K}=0$ for all $K$ described in~\eqref{eq:Btangent_graph}. Moreover, we let $A,K,D$ be zero matrices to find that
    % \begin{equation*}
    %     \trace(\hat{B}^\top_\upsilon B\varSigma)+\trace(\hat{C}_\upsilon^\top\varSigma C^\top)-\trace(\hat{B}_\omega^\top RC)-\trace(\hat{C}^\top_\omega B^\top R)=0\ \ \text{for all}\ \ B\in\mbR^{(m-s)\times s}\ \text{and}\ C\in\mbR^{(n-s)\times s},
    % \end{equation*}
    \begin{equation*}
        \innerp{\hat{B}_\upsilon \varSigma-R\hat{C}^\top_\omega,B}+\innerp{\hat{C}_\upsilon^\top\varSigma-R^\top \hat{B}_\omega,C}=0,\ \ \text{for all}\ (B,C)\in\mbR^{(m-s)\times s}\times \mbR^{(n-s)\times s},
    \end{equation*}
     which, therefore, enforces that $\hat{B}_\upsilon=R\hat{C}^\top_\omega\varSigma^{-1}$ and $\hat{C}_\upsilon=\varSigma^{-1}\hat{B}_\omega^\top R$.  Taking  the relationships, and substituting $\hat{C}_\omega=\varSigma\hat{C}_1V^\top_R+\hat{C}_2V_{R\bot}^\top$ and $\hat{B}_\omega=U_R\hat{B}_1\varSigma+U_{R\bot}\hat{B}_2$ into~\eqref{eq:hatupsilon_hatomega} lead to the formulation~\eqref{eq:Fnormal_graph_deficient}.
\end{proof} 

We then turn to the case $\rank(X)=r$. The main difference is that when $X$ attains rank $r$, the parameter $K$ in~\eqref{eq:Btangent_graph} is forced to vanish, implying that the component of $\eta$ spanned by $U_\bot$ and $V_\bot$ disappears. 

\begin{corollary}\label{cor:Fnormal_graph_2}
    Given $(X,Y)\in\graph\Lnormal_{\boundedrank}$ with $\rank(X)=r$, the SVD $X=U\varSigma V^\top$, and $Y=U^{}_{\bot} R_{}^{}V_{\bot}^\top$ for some $R\in\mbR^{(m-s)\times (n-s)}$. Suppose that $\rank(R)=k-\ell$ and it admits the compact SVD $R=U_R\varSigma_RV_R^\top$. Then, the Fr\'echet normal cone at $(X,Y)$ can be expressed as follows,
    \begin{equation}\label{eq:Fnormal_graph_full}
        \Fnormal_{\graph\Lnormal_{\boundedrank}}(X,Y)=\hkh{ (\hat{\upsilon},\hat{\omega})\ \left|\,\begin{array}{l}
            \hat{\upsilon} =  U_\bot U_R\varSigma_R\hat{C}_1^\top V^\top + U\hat{B}_1^\top \varSigma_RV_R^\top V_\bot^\top
            \\
            \hspace{7mm}+ U_\bot Z V^\top_\bot,
           \\
           \hat{\omega} = U\hat{A}V^\top + U_\bot U_R\hat{B}_1\varSigma V^\top + U_\bot U_{R\bot}\hat{B}_2 V^\top
           \\
           \hspace{7mm}+ U \varSigma\hat{C}_1V_R^\top V_\bot^\top+ U \hat{C}_2V_{R\bot}^\top V_\bot^\top +U_\bot\hat{Z} V_\bot^\top 
            \\
            \hat{A}\in\mbR^{s\times s},\,\hat{B}_1\in\mbR^{(k-\ell)\times s},\,\hat{B}_2\in\mbR^{(m-k+\ell-s)\times s}
            \\
            \hat{C}_1\in\mbR^{s\times (k-\ell)},\,\hat{C}_2\in\mbR^{s\times(n-k+\ell-s)},
            \\
            Z\in\mbR^{(m-r)\times(n-r)},\,\hat{Z}\in\Fnormal_{\mbR^{(m-s)\times(n-s)}_{\le k-r}}(R)
        \end{array} \right.
        }.
    \end{equation}
\end{corollary}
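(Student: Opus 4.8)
The plan is to mirror the proof of Corollary~\ref{cor:Fnormal_graph_1}, exploiting the single structural simplification that occurs when $\rank(X)=r$. First I would specialize the Bouligand tangent cone~\eqref{eq:Btangent_graph} of Theorem~\ref{the:Btangent_graph} to the present case. Since $s=r$, the block $K$ ranges over $\mbR^{(m-r)\times(n-r)}_{\le r-s}=\{0\}$, so $K$ is forced to vanish and the accompanying constraints $K^\top R=0$, $RK^\top=0$, $K^\top D V_{R\bot}=0$, $U_{R\bot}^\top D K^\top=0$ all become vacuous. The tangent cone thus collapses to the set of $(\eta,\xi)$ with $\eta=UAV^\top+U_\bot B\varSigma V^\top+U\varSigma C^\top V_\bot^\top$ and $\xi=U_\bot DV_\bot^\top-UB^\top RV_\bot^\top-U_\bot RCV^\top$, where $A,B,C$ range freely over their respective spaces and $D\in\Btangent_{\mbR^{(m-r)\times(n-r)}_{\le k-r}}(R)$.

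Next I would take the polar. As in the rank-deficient case, each constituent cone is symmetric under negation ($A,B,C$ range over linear spaces, and the bounded-rank tangent cone carrying $D$ is closed under negation), hence so is the whole tangent cone; consequently $(\hat{\upsilon},\hat{\omega})\in\Fnormal_{\graph\Lnormal_{\boundedrank}}(X,Y)$ if and only if $\innerp{(\hat{\upsilon},\hat{\omega}),(\eta,\xi)}=0$ for every tangent vector. I would parameterize $(\hat{\upsilon},\hat{\omega})$ exactly as in~\eqref{eq:hatupsilon_hatomega} and expand the pairing as in~\eqref{eq:innerto0}, but now with the $\innerp{Z,K}$ term absent. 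Varying $A$ forces $\hat{A}_\upsilon=0$ while leaving $\hat{A}_\omega$ free; varying $B$ and $C$ yields the couplings $\hat{B}_\upsilon=R\hat{C}_\omega^\top\varSigma^{-1}$ and $\hat{C}_\upsilon=\varSigma^{-1}\hat{B}_\omega^\top R$; and varying $D$ over $\Btangent_{\mbR^{(m-r)\times(n-r)}_{\le k-r}}(R)$ forces $\innerp{\hat{Z},D}=0$ for all such $D$, which by definition of the polar is exactly $\hat{Z}\in\Fnormal_{\mbR^{(m-s)\times(n-s)}_{\le k-r}}(R)$.

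The crux, and the only genuine departure from Corollary~\ref{cor:Fnormal_graph_1}, is the behaviour of the $U_\bot(\cdot)V_\bot^\top$ block $Z$ of $\hat{\upsilon}$. In the rank-deficient setting the surviving $K$ imposed $\innerp{Z,K}=0$ for every admissible $K$; since $K^\top R=0$ and $RK^\top=0$ give $K=U_{R\bot}MV_{R\bot}^\top$, and since $r-s\ge1$ lets rank-one such $K$ span the whole normal space $\normal_{\mbR^{(m-s)\times(n-s)}_{k-\ell}}(R)$ (cf.~\eqref{eq:Ncone_lowrank}), the block $Z$ was pinned to $\Btangent_{\mbR^{(m-s)\times(n-s)}_{k-\ell}}(R)$. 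Here $K\equiv0$, so this orthogonality requirement disappears entirely and $Z$ is left free over $\mbR^{(m-r)\times(n-r)}$; this is precisely the single difference between~\eqref{eq:Fnormal_graph_deficient} and~\eqref{eq:Fnormal_graph_full}, and it mirrors the rank-$r$ versus rank-deficient dichotomy already recorded in~\eqref{eq:Fnormal_cone_boundedrank}. I expect this vanishing-$K$ step to be the only place demanding care, as it is where the hypothesis $\rank(X)=r$ does real work; the remaining computation is routine. Finally, substituting $\hat{C}_\omega=\varSigma\hat{C}_1V_R^\top+\hat{C}_2V_{R\bot}^\top$ and $\hat{B}_\omega=U_R\hat{B}_1\varSigma+U_{R\bot}\hat{B}_2$ into the derived couplings reassembles the blocks into the claimed form~\eqref{eq:Fnormal_graph_full}.
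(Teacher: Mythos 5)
Your proposal is correct and follows essentially the same route as the paper: the paper's proof of Corollary~\ref{cor:Fnormal_graph_2} simply notes that the argument of Corollary~\ref{cor:Fnormal_graph_1} carries over verbatim except that $\rank(X)=r$ forces $K=0$ in~\eqref{eq:innerto0}, so the term $\innerp{Z,K}$ imposes no constraint and $Z$ is left free in $\mbR^{(m-r)\times(n-r)}$. Your additional explanation of why the surviving $K$ pins $Z$ to $\Btangent_{\mbR^{(m-s)\times(n-s)}_{k-\ell}}(R)$ in the rank-deficient case is accurate and correctly isolates the one place where the hypothesis $\rank(X)=r$ does real work.
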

\begin{proof}
The derivation follows the same reasoning as in the proof of Corollary~\ref{cor:Fnormal_graph_1}, except that $\rank(X)=r$ enforces $K=0$ in~\eqref{eq:innerto0}. Consequently, $Z$ becomes unconstrained in $\mbR^{(m-r)\times(n-r)}$.
\end{proof}

It is worth noting that $\fixedrank$ is relatively open in $\boundedrank$~\cite{olikier2022continuity}. That is, in a neighborhood of a point $X$ with $\rank(X)=r$, the determinantal variety $\boundedrank$ coincides with the smooth manifold $\fixedrank$. Since $\fixedrank$ is an embedded submanifold of $\mathbb{R}^{m\times n}$, Theorem~6.23 in~\cite{lee2012manifolds} implies that the normal bundle $\normal\fixedrank$ is an embedded submanifold of $\mathbb{R}^{m\times n}\times\mathbb{R}^{m\times n}$ of dimension $mn$. Consequently, when $\rank(X)=r$, we have $\tangent_{\normal \fixedrank}(X,Y) = \Btangent_{\graph\Lnormal_{\boundedrank}}(X,Y)$, and $\normal_{\normal \fixedrank}(X,Y) = \Fnormal_{\graph\Lnormal_{\boundedrank}}(X,Y) = \Lnormal_{\graph\Lnormal_{\boundedrank}}(X,Y)$, as characterized by Corollary~\ref{cor:Fnormal_graph_2}.

\subsection{Mordukhovich normal cone to the graph}\label{sec:Mordukhovichnormalcone}
We are now in a position to develop the Mordukhovich normal cone, for which two auxiliary lemmas are required.

Let $\mathbb{R} _{\ge}^{d}$ be the set of vectors in $\mathbb{R}^{d}$ with elements being positive and arranged in a non-increasing order, i.e.,
\begin{equation*}
    \mathbb{R} _{\ge}^{d}:=\{ x\in \mathbb{R} ^d\mid x_1\ge x_2\ge \cdots \ge x_d>0 \}.
\end{equation*}
We define the mapping $\mathfrak{D}:\mathbb{R} _{\ge}^{d_1}\times \mathbb{R} _{\ge}^{d_2}\rightarrow \mathbb{R} ^{d_1\times d_2}$ as follows,
\begin{equation*}
    \mathfrak{D}( x,y) _{jt}:=\frac{x_j}{x_j+y_t},\ \ \text{for}\ \ j=1,2,\dots,d_1\ \text{and}\ t=1,2,\ldots,d_2.
\end{equation*}
Then, the mapping $\mathfrak{D}$ introduces the following set,
\begin{equation}\label{eq:varTheta}
    \!\!\mathrm{\Theta}( d_1,d_2 ) :=\hkh{ \lim _{i\rightarrow \infty}\mathfrak{D}( z_{1}^{i},z_{2}^{i} )\in \mathbb{R} ^{d_1\times d_2}\mid z_{1}^{i}\rightarrow 0,z_{2}^{i}\rightarrow 0,z_{1}^{i}\in \mathbb{R} _{\ge}^{d_1},z_{2}^{i}\in \mathbb{R} _{\ge}^{d_2} }.
\end{equation}
In fact, the set $\mathrm{\Theta}$ helps characterize the asymptotic behavior of two families of positive singular values (see the proof of Theorem~\ref{the:Lnormal_graph}). Additionally, we remark that the definitions of $\mathbb{R} _{\ge}^{d}$, $\mathfrak{D}$, and $\mathrm{\Theta}$ are inspired by~\cite[\S3.2]{ding2014SDCMPCC} and \cite[\S3.2]{wu2014SDCMPCC}, where related concepts were introduced for studying normal cones to $\graph\Lnormal_{\sym^+(n)}$.

The subsequent lemma identifies a basic relation in matrix computation, which was proved in~\cite[Lemma~2.2]{wu2014SDCMPCC}. 

\begin{lemma}\label{lem:D_equivalence}
    Given vectors $b\in\mbR^{d_1}_{\ge}$, $q\in\mbR^{d_2}_\ge$ and matrices $B,Q\in\mbR^{d_1\times d_2}$, it holds that
    \begin{equation*}
        \Diag\left( b \right) B=Q\Diag\left( q \right) \Longleftrightarrow \mathfrak{D}(b,q) \odot B+\left(\mathfrak{D}(b,q) -{\bm 1}_{d_1\times d_2} \right) \odot Q=0,
    \end{equation*}
    where $\odot$ denotes the Hadamard product and ${\bm 1}_{d_1\times d_2}$ is the all-ones matrix of size $d_1\times d_2$.
\end{lemma}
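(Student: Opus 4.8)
The plan is to reduce the matrix identity to a family of scalar equations indexed by entries $(j,t)$ and to verify their equivalence one entry at a time, using the strict positivity of the vectors $b$ and $q$ to clear denominators. Since the statement is a purely algebraic equivalence for fixed $b \in \mbR_{\ge}^{d_1}$ and $q \in \mbR_{\ge}^{d_2}$---no limits or convergence are involved---I expect the whole argument to be a short direct computation. There is essentially no serious obstacle here; the only point requiring care is the sign bookkeeping hidden in $\mathfrak{D}(b,q) - {\bm 1}_{d_1\times d_2}$, and the single genuine hypothesis that gets used is the nonvanishing of the denominators $b_j + q_t$.

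First I would read off the $(j,t)$ entries of the left-hand equation. The entry of $\Diag(b)B$ is $b_j B_{jt}$ and the entry of $Q\Diag(q)$ is $q_t Q_{jt}$, so $\Diag(b)B = Q\Diag(q)$ holds if and only if $b_j B_{jt} = q_t Q_{jt}$ for every $j \in \{1,\dots,d_1\}$ and $t \in \{1,\dots,d_2\}$. Next I would compute the $(j,t)$ entry of the right-hand expression. By definition $\mathfrak{D}(b,q)_{jt} = b_j/(b_j+q_t)$, and the key simplification is
\[
    \mathfrak{D}(b,q)_{jt} - 1 = \frac{b_j - (b_j + q_t)}{b_j+q_t} = -\frac{q_t}{b_j+q_t},
\]
so that the $(j,t)$ entry of $\mathfrak{D}(b,q)\odot B + (\mathfrak{D}(b,q) - {\bm 1}_{d_1\times d_2})\odot Q$ equals
\[
    \frac{b_j}{b_j+q_t}\, B_{jt} - \frac{q_t}{b_j+q_t}\, Q_{jt} = \frac{b_j B_{jt} - q_t Q_{jt}}{b_j+q_t}.
\]

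Finally I would invoke positivity: since $b \in \mbR_{\ge}^{d_1}$ and $q \in \mbR_{\ge}^{d_2}$ have strictly positive entries, the denominator $b_j + q_t$ is nonzero for all $(j,t)$. Hence the displayed entry vanishes if and only if $b_j B_{jt} = q_t Q_{jt}$, which is precisely the scalar condition extracted from the left-hand equation. As this equivalence holds entrywise for every $(j,t)$, the two matrix conditions are equivalent, completing the argument. The main (and essentially only) subtlety is ensuring $b_j + q_t \neq 0$, which is exactly what the standing assumption $b,q \in \mbR_{\ge}^{\bullet}$ guarantees; without it the second formulation would lose entries and the equivalence would break.
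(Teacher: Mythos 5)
Your proof is correct. The paper itself does not prove this lemma but merely cites \cite[Lemma~2.2]{wu2014SDCMPCC}; your entrywise computation---reading off $b_j B_{jt}=q_t Q_{jt}$ on the left, simplifying $\mathfrak{D}(b,q)_{jt}-1=-q_t/(b_j+q_t)$ on the right, and using $b_j+q_t>0$ to clear the denominator---is exactly the standard direct argument one would expect behind that citation, and it is complete as written.
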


Note that the Mordukhovich normal cone is defined through the outer limit \eqref{eq:liminting_normalcone}, and thus we introduce the next lemma to capture the behavior of sequences convergent to $(X,Y)\in\graph\Lnormal_{\boundedrank}$. Specifically, it is shown that the convergence of ${(X_i,Y_i)}$ to $(X,Y)$ induces the convergence of the subspaces spanned by $X_i$ and $Y_i$ as well as of their orthogonal complements (possibly after taking a subsequence), which indeed extends the idea of~\cite[Lemma 4.2]{olikier2022continuity}.

\begin{lemma}\label{lem:fivepart_convergence}
 Given $(X,Y)\in\graph\Lnormal_{\boundedrank}$ with $\rank(X)=s$, $\rank(Y)=k-\ell$, and the SVDs $X=U\varSigma V^\top$ and $Y=U_Y\varSigma_YV_Y^\top$. Let $\{(X_i,Y_i)\}_{i\in\mbN}\subseteq\manifold_{\downr}\times\manifold_{k-\upr} $ be a sequence in $\graph\Lnormal_{\boundedrank}$ converging to $(X,Y)$, where $s\le \downr\le r\le \upr\le\ell\le k$. Then, there exist $\tilde{U}_\bot\in\stiefel(m,\downr-s)$, $\breve{U}_\bot\in\stiefel(m,m-k+\upr-\downr)$, $\tilde{U}_{Y\bot}\in\stiefel(m,\ell-\upr)$, $\tilde{V}_\bot\in\stiefel(n,\downr-s)$, $\breve{V}_\bot\in\stiefel(n,n-k+\upr-\downr)$, and $\tilde{V}_{Y\bot}\in\stiefel(n,\ell-\upr)$ such that $[U\ \tilde{U}_\bot\ \breve{U}_\bot\ \tilde{U}_{Y\bot}\  U_Y]\in\orth(m)$ and $[V\ \tilde{V}_\bot\ \breve{V}_\bot\ \tilde{V}_{Y\bot}\ V_Y]\in\orth(n)$, together with a subsequence $\{(X_{i_j},Y_{i_j})\}_{j\in\mbN}$ and two associated sequences
    \begin{equation*}\label{eq:U_V_spacesequence}
        \begin{aligned}
            \{(U_{i_j},\tilde{U}_{i_j\bot},\breve{U}_{i_j\bot},\tilde{U}_{i_jY\bot},U_{i_j Y})\}_{j\in\mbN} \ &\subseteq\ \orth(m),
            \\
            \{(V_{i_j},\tilde{V}_{i_j\bot},\breve{V}_{i_j\bot},\tilde{V}_{i_jY\bot},V_{i_j Y})\}_{j\in\mbN} \ &\subseteq\ \orth(n),
        \end{aligned}
    \end{equation*}
    satisfying the following properties.
    \begin{itemize}[left=0cm]
    \item For all $i_j$, $\ima[U_{i_j}\ \tilde{U}_{i_j\bot}]=\ima X_{i_j}$, $\ima[V_{i_j}\ \tilde{V}_{i_j\bot}]=\ima X^\top_{i_j}$, $\ima[\tilde{U}_{i_j Y\bot}\ U_{i_j Y}]=\ima Y_{i_j}$, and $\ima[\tilde{V}_{i_j Y\bot}\ {V}_{i_j Y}]=\ima Y^\top_{i_j}$.
    \item It holds that 
    \begin{equation}\label{eq:subspace_seq}
        \begin{aligned}
            \lim_{j\to\infty}[U_{i_j}\ \tilde{U}_{i_j\bot}\ \breve{U}_{i_j\bot}\ \tilde{U}_{i_jY\bot}\ U_{i_j Y}] &= [U\ \tilde{U}_\bot\ \breve{U}_\bot\ \tilde{U}_{Y\bot}\ U_Y],
            \\
            \lim_{j\to\infty}[V_{i_j}\ \tilde{V}_{i_j\bot}\ \breve{V}_{i_j\bot}\ \tilde{V}_{i_jY\bot}\ V_{i_j Y}] &= [V\ \tilde{V}_\bot\ \breve{V}_\bot\ \tilde{V}_{Y\bot}\ V_Y].
        \end{aligned}
    \end{equation}
    \end{itemize}
\end{lemma}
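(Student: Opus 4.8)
The plan is to construct the two orthogonal frames blockwise, reading off each block from the column and row spaces of $X_i$ and $Y_i$, and to secure convergence by combining compactness of the Grassmannians with a polar-decomposition lifting of convergent subspaces to convergent orthonormal frames. The first step exploits the graph condition to produce the orthogonality that makes the block decomposition possible. Since $(X_i,Y_i)\in\graph\Lnormal_{\boundedrank}$ with $\rank(X_i)=\downr$, the characterization~\eqref{eq:Lnormal_cone_boundedrank} together with~\eqref{eq:Ncone_lowrank} gives $Y_i\in\normal_{\manifold_{\downr}}(X_i)$, whence $\ima Y_i\perp\ima X_i$ and $\ima Y_i^\top\perp\ima X_i^\top$ for every $i$. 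Thus $\ima X_i$ (dimension $\downr$) and $\ima Y_i$ (dimension $k-\upr$) are mutually orthogonal in $\mbR^m$, their orthogonal complement has dimension $m-k+\upr-\downr$, and the analogous statement holds in $\mbR^n$ for the row spaces.

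Next I would pass to subsequences to converge the subspaces. The assignments $i\mapsto\ima X_i,\ \ima X_i^\top,\ \ima Y_i,\ \ima Y_i^\top$ take values in compact Grassmannians, so along a single nested subsequence $\{i_j\}$ all four converge, say $\ima X_{i_j}\to W_X$ and $\ima Y_{i_j}\to W_Y$ (and similarly on the row side). Passing to the limit in the per-$i$ orthogonality yields $W_X\perp W_Y$. From $X_{i_j}\to X$ I would derive the inclusion $\ima X\subseteq W_X$: for $w=Xv$ and $P_{X_{i_j}}$ the orthogonal projection onto $\ima X_{i_j}$, one has $\|(I-P_{X_{i_j}})w\|=\|(I-P_{X_{i_j}})(w-X_{i_j}v)\|\le\|Xv-X_{i_j}v\|\to 0$, so $P_{X_{i_j}}w\to w$ and $w\in W_X$; the same argument gives $\ima Y\subseteq W_Y$ and the row-space inclusions. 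I then fix limit frames matching the prescribed block sizes: $U$ spanning $\ima X$ and $\tilde U_\bot$ spanning $W_X\ominus\ima X$ (dimension $\downr-s$); $U_Y$ spanning $\ima Y$ and $\tilde U_{Y\bot}$ spanning $W_Y\ominus\ima Y$ (dimension $\ell-\upr$); and $\breve U_\bot$ spanning $(W_X\oplus W_Y)^\perp$ (dimension $m-k+\upr-\downr$). By $W_X\perp W_Y$ and the inclusions, these assemble into $[U\ \tilde U_\bot\ \breve U_\bot\ \tilde U_{Y\bot}\ U_Y]\in\orth(m)$, and the $V$-side frame is built identically.

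The final step lifts the convergent subspaces to convergent frames along the subsequence. For the $X$-block I would project the fixed orthonormal limit frame $[U\ \tilde U_\bot]$ onto $\ima X_{i_j}$ and take the orthogonal factor of the polar decomposition of $P_{X_{i_j}}[U\ \tilde U_\bot]$; since this projected matrix tends to the orthonormal $[U\ \tilde U_\bot]$, its polar factor $[U_{i_j}\ \tilde U_{i_j\bot}]$ is, for large $j$, an orthonormal basis of $\ima X_{i_j}$ satisfying $U_{i_j}\to U$ and $\tilde U_{i_j\bot}\to\tilde U_\bot$. The crucial point is that the polar factor, unlike a Gram--Schmidt factor, preserves the block-wise limits and hence the prescribed splitting. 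Applying the same construction to $\ima Y_{i_j}$ and to $(\ima X_{i_j}\oplus\ima Y_{i_j})^\perp$ yields the remaining convergent blocks, the orthogonality $\ima X_{i_j}\perp\ima Y_{i_j}$ guarantees the five blocks form an element of $\orth(m)$, and discarding finitely many indices makes the span identities $\ima[U_{i_j}\ \tilde U_{i_j\bot}]=\ima X_{i_j}$, $\ima[\tilde U_{i_j Y\bot}\ U_{i_j Y}]=\ima Y_{i_j}$ hold for every term; the row side is identical, giving~\eqref{eq:subspace_seq}.

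The main obstacle I anticipate is the simultaneous control of all blocks: each per-$i$ frame must span exactly the required column or row space while converging with the correct internal splitting, and a single common subsequence must handle $X_i$, $Y_i$, their shared orthogonal complement, and the four row/column variants at once. This is precisely where the polar-factor lifting is indispensable, since it decouples the convergence of the sub-blocks of each space without mixing them, and where one must be careful that the convergence claims are only valid for large indices and are promoted to all indices by a harmless re-indexing.
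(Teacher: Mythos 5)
Your proposal is correct and follows essentially the same route as the paper's proof: extract the per-index orthogonality $\ima X_i\perp\ima Y_i$ from the graph condition, use compactness to pass to a subsequence along which the relevant subspaces converge, and lift subspace convergence to frame convergence by orthogonalizing projections of fixed limit frames (the polar factor $Z(Z^\top Z)^{-1/2}$ is exactly the paper's $\qf$ map). The only cosmetic difference is the order of operations — you converge the Grassmannian limits first and then lift fixed frames, whereas the paper builds the $\tilde U_{i\bot}$ completions per index and extracts their limit by Stiefel compactness afterwards — but the mechanism is the same.
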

\begin{proof}
    See Appendix~\ref{sec:proof_Mor}.
\end{proof}

Recalling from the definition of the Mordukhovich normal cone~\eqref{eq:liminting_normalcone}, the direction $(\upsilon,\omega)\in\Lnormal_{\graph\Lnormal_{\boundedrank}}(X,Y)$ if and only if there exist a sequence $\{(X_i,Y_i)\}_{i\in\mbN}$ and the associated $(\hat{\upsilon}_i,\hat{\omega}_i)\in\Fnormal_{\graph\Lnormal_{\boundedrank}}(X_i,Y_i)$ such that 
\begin{equation}\label{eq:limit_viwi}
    \lim_{i\to\infty} (X_i,Y_i)=(X,Y)\ \ \text{and}\ \ \lim_{i\to\infty} (\hat{\upsilon}_i,\hat{\omega}_i)=(\upsilon,\omega).
\end{equation}
Then, we arrive at the characterization of the Mordukhovich normal cone.

\begin{theorem}[Mordukhovich normal cone]\label{the:Lnormal_graph}
    Given $(X,Y)\in\graph\Lnormal_{\boundedrank}$, where $\rank(X)=s$ and $\rank(Y)=k-\ell$. Let the SVDs be $X=U\varSigma V^\top$ and $Y=U_Y\varSigma_YV_Y^\top$. Then the element $(\upsilon,\omega)$ belongs to $\Lnormal_{\graph\Lnormal_{\boundedrank}}(X,Y)$ if and only if there exist $\downr,\upr$ with $s\le\downr\le r\le\upr\le\ell$, $\tilde{U}_\bot\in\stiefel(m,\downr-s)$, $\tilde{U}_{Y\bot}\in\stiefel(m,\ell-\upr)$, $\breve{U}_\bot\in\stiefel(m,m-k+\upr-\downr)$, $\tilde{V}_\bot\in\stiefel(n,\downr-s)$, $\tilde{V}_{Y\bot}\in\stiefel(n,\ell-\upr)$, $\breve{V}_\bot\in\stiefel(n,n-k+\upr-\downr)$ with $[U\ \tilde{U}_\bot\ \breve{U}_\bot\  U_Y\ \tilde{U}_{Y\bot}]\in\orth(m)$ and $[V\ \tilde{V}_\bot\ \breve{V}_\bot\ V_Y\ \tilde{V}_{Y\bot}]\in\orth(n)$ such that $(\upsilon,\omega)$ can be expressed by
    \begin{equation}\label{eq:vw_expression}
        \begin{aligned}
        &\upsilon =\left[ U^+\,\,\breve{U}_\bot\,\,U_{Y}^{+} \right] \left[ 
        \begin{array}{ccc}
            0 & 0 & B^{\upsilon} \\
            0 &  Z_1 & Z_2\\
            C^\upsilon & Z_3 & Z_4
        \end{array} 
        \right] \left[ V^+\,\,\breve{V}_\bot\,\,V_{Y}^{+} \right] ^{\top},
        \\
        &\omega = \left[ U^+\,\,\breve{U}_\bot\,\,U_{Y}^{+} \right] \left[ \begin{matrix}
        	A&		C&		C^{\omega}\\
        	B&		0&		0\\
        	B^{\omega}&		0&		\hat{Z}\\
        \end{matrix} \right] \left[ V^+\,\,\breve{V}_\bot\,\,V_{Y}^{+} \right] ^{\top},
        \end{aligned}
    \end{equation}
    where $U^+:=[U\ \tilde{U}_\bot]$, $U^+_{Y}:=[U_{Y}\ \tilde{U}_{Y\bot}]$, $V^+:=[V\ \tilde{V}_{\bot}]$, $V^+_{Y}:=[V_{Y}\ \tilde{V}_{Y\bot}]$; $Z_1=0$ if $\downr<r$ otherwise $Z_1$ is not restricted; $\hat{Z}=0$ if $\upr>r$ otherwise $\hat{Z}$ is not restricted; and $B^\upsilon$, $B^\omega$, $C^\upsilon$, $C^\omega$ have the following forms,
    \begin{equation}\label{eq:BBCC}
        B^{\upsilon}=\left[ \begin{matrix}
                \varSigma ^{-1}G_{1}^{\top}\varSigma _Y&		0\\
                E_{1}^{\upsilon}&		F_{1}^{\upsilon}\\
            \end{matrix} \right],\,
        B^{\omega}=\left[ \begin{matrix}
            	G_1&		0\\
            	E_{1}^{\omega}&		F_{1}^{\omega}\\
                \end{matrix} \right],\,
        C^{\upsilon}=\left[ \begin{matrix}
                	\varSigma _YG_{2}^{\top}\varSigma ^{-1}&		E_{2}^{\upsilon}\\
                	0&		F_{2}^{\upsilon}\\
                \end{matrix} \right],\,
        C^{\omega}=\left[ \begin{matrix}
        	G_2&		E_{2}^{\omega}\\
        	0&		F_{2}^{\omega}\\
        \end{matrix} \right],
    \end{equation}
    satisfying 
    \begin{equation}\label{eq:A}
        \begin{aligned}
            &D \odot F_1^\omega+(D-{\bm 1}_{(\ell-\upr)\times(\downr-s)})\odot(F^\upsilon_1)^\top=0,
            \\
            &D^\top \odot F_2^\omega+(D^\top-{\bm 1}_{(\downr-s)\times (\ell-\upr)})\odot(F^\upsilon_2)^\top=0,
        \end{aligned}
    \end{equation}
    {for some} $D\in\mathrm{\Theta}(\ell-\upr, \downr-s)$. The dimensions of the matrix parameters are summarized in Table~\ref{tab:mat_dims}.
\end{theorem}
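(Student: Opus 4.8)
The plan is to compute $\Lnormal_{\graph\Lnormal_{\boundedrank}}(X,Y)$ directly from its definition \eqref{eq:liminting_normalcone} as the outer limit of the Fr\'echet normal cone mapping, that is, $(\upsilon,\omega)\in\Lnormal_{\graph\Lnormal_{\boundedrank}}(X,Y)$ precisely when there are sequences as in \eqref{eq:limit_viwi} with $(\hat\upsilon_i,\hat\omega_i)\in\Fnormal_{\graph\Lnormal_{\boundedrank}}(X_i,Y_i)$. I would prove both inclusions. For necessity, I start from such sequences; since the ranks take values in a finite set, I pass to a subsequence along which $\rank(X_i)\equiv\downr$ and $\rank(Y_i)\equiv k-\upr$ are constant, with $s\le\downr\le r\le\upr\le\ell$ forced by $(X_i,Y_i)\in\graph\Lnormal_{\boundedrank}$. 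I then invoke Lemma~\ref{lem:fivepart_convergence} to extract a further subsequence along which the singular-vector frames of $X_i$ and $Y_i$ converge to a common refined orthonormal splitting $[U\ \tilde U_\bot\ \breve U_\bot\ \tilde U_{Y\bot}\ U_Y]\in\orth(m)$ and its $V$-counterpart. These frames supply the coordinates in which the block expression \eqref{eq:vw_expression} is written.

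The next step is the block bookkeeping. In the refined frame I express each $(\hat\upsilon_i,\hat\omega_i)$ through the Fr\'echet normal cone formula, using Corollary~\ref{cor:Fnormal_graph_1} when $\downr<r$ and Corollary~\ref{cor:Fnormal_graph_2} when $\downr=r$. Here I would expand $R_i:=U_{i\bot}^\top Y_iV_{i\bot}$ in its own compact SVD and track how the free parameters $\hat A,\hat B_1,\hat B_2,\hat C_1,\hat C_2,Z,\hat Z$ of the Fr\'echet cone distribute over the five-by-five block partition. The case split in the statement---$Z_1$ unrestricted iff $\downr=r$, and $\hat Z$ unrestricted iff $\upr=r$---mirrors exactly the two corollaries: the block spanned by the ``spurious'' directions $\breve U_\bot,\breve V_\bot$ is free in the rank-$r$ case of Corollary~\ref{cor:Fnormal_graph_2} and collapses in the rank-deficient case of Corollary~\ref{cor:Fnormal_graph_1}, while the normal-space parameter $\hat Z$ attached to the kernel of $R_i$ is free when $R_i$ has the maximal rank $k-r$ (i.e. $\upr=r$) and vanishes otherwise, by \eqref{eq:Fnormal_cone_boundedrank}. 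Blocks that are either unconstrained or forced to zero at each finite $i$ pass to their limits directly, yielding the free matrices $A,B,C,C^\omega,B^\omega,Z_1,\dots,Z_4$ and the structural zeros in \eqref{eq:vw_expression}.

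The core difficulty is the asymptotics of the coupled blocks $F_1^\upsilon,F_1^\omega,F_2^\upsilon,F_2^\omega$ in \eqref{eq:BBCC}, where two families of singular values meet: the vanishing singular values of $X_i$ (sitting along $\tilde U_\bot,\tilde V_\bot$) and the vanishing singular values of $Y_i$ (sitting along $\tilde U_{Y\bot},\tilde V_{Y\bot}$). The finite-$i$ Fr\'echet constraints $\hat B_{\upsilon,i}=R_i\hat C_{\omega,i}^\top\varSigma_i^{-1}$ and its transpose translate, on the relevant sub-block, into a relation of the form $\Diag(b_i)\,B_i=Q_i\,\Diag(q_i)$ with $b_i,q_i$ the two vanishing singular-value vectors. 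Applying Lemma~\ref{lem:D_equivalence} rewrites each such relation as a Hadamard identity with coefficient matrix $\mathfrak{D}(b_i,q_i)$, and extracting a convergent subsequence $\mathfrak{D}(b_i,q_i)\to D$ produces, by the very definition \eqref{eq:varTheta}, a limit $D\in\mathrm{\Theta}(\ell-\upr,\downr-s)$ satisfying \eqref{eq:A}. The delicate points are: confirming that these ratio matrices are the \emph{only} surviving coupling (all other cross terms decouple because they pair a singular value bounded away from zero with a vanishing one, sending the ratio to $0$ or $1$), and verifying that the block dimensions recorded in Table~\ref{tab:mat_dims} are exactly those dictated by $s,\downr,\upr,\ell,k$.

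For sufficiency I would reverse this construction. Given the frames, the block matrices, and a $D\in\mathrm{\Theta}(\ell-\upr,\downr-s)$ satisfying \eqref{eq:A}, I choose, by the definition \eqref{eq:varTheta}, positive decreasing vectors $z_1^i\to0$ and $z_2^i\to0$ with $\mathfrak{D}(z_1^i,z_2^i)\to D$. I build $X_i$ of rank $\downr$ by inserting the singular values $z_1^i$ along $\tilde U_\bot,\tilde V_\bot$, and $Y_i$ of rank $k-\upr$ by inserting $z_2^i$ along $\tilde U_{Y\bot},\tilde V_{Y\bot}$, so that $(X_i,Y_i)\to(X,Y)$ and $(X_i,Y_i)\in\graph\Lnormal_{\boundedrank}$ (the orthogonality $X_i^\top Y_i=0=Y_iX_i^\top$ being built into the frame). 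Using Lemma~\ref{lem:D_equivalence} in the reverse direction, I assemble $(\hat\upsilon_i,\hat\omega_i)$ from the given block data so that they satisfy the finite-$i$ Fr\'echet relations of the appropriate corollary and converge to $(\upsilon,\omega)$, certifying $(\upsilon,\omega)\in\Lnormal_{\graph\Lnormal_{\boundedrank}}(X,Y)$. I expect the asymptotic ratio analysis of the third paragraph to be the main obstacle, since it requires simultaneously controlling two interacting scales of vanishing singular values and showing that their only persistent interaction is precisely the one captured by $\mathrm{\Theta}$.
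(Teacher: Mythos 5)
Your proposal follows essentially the same route as the paper's proof: the outer-limit definition, a constant-rank subsequence combined with Lemma~\ref{lem:fivepart_convergence} to fix the refined frames, block bookkeeping through Corollaries~\ref{cor:Fnormal_graph_1}--\ref{cor:Fnormal_graph_2}, and Lemma~\ref{lem:D_equivalence} to convert the finite-$i$ coupling between $B_i^\omega$ and $B_i^\upsilon$ into a Hadamard identity whose limit yields $D\in\mathrm{\Theta}(\ell-\upr,\downr-s)$, with sufficiency handled by the reverse construction from sequences generating $D$. The only slip is cosmetic: in your sufficiency step the roles of $z_1^i$ and $z_2^i$ are swapped, since $z_2^i\in\mathbb{R}_{\ge}^{\downr-s}$ must supply the extra singular values of $X_i$ along $\tilde{U}_\bot,\tilde{V}_\bot$ while $z_1^i\in\mathbb{R}_{\ge}^{\ell-\upr}$ supplies those of $Y_i$, which does not affect the argument.
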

\begin{table}[h]
\caption{Dimensions of matrix parameters in Theorem~\ref{the:Lnormal_graph}, where $\upr-\downr$ is abbreviated as $\Delta r$.}
\label{tab:mat_dims}
\centering
\renewcommand{\arraystretch}{1.}  % 控制行距
\setlength{\tabcolsep}{4.5pt}        % 控制列距
\begin{tabular}{lccc}
\toprule
Param. & $A$ & $B$ & $C$  \\
\midrule
Dim. & ${\downr\times \downr}$ & ${(m-k+\Delta r)\times \downr}$ &  ${\downr\times(n-k+\Delta r)}$ \\
\midrule
Param. & $D$ & $G_{1}$, $G_2^\top$ & $E_{1}^\omega$, $(E_{2}^\omega)^\top$
\\
\midrule
Dim. &  ${(\ell-\upr)\times(\downr-s)}$ &  ${(k-\ell)\times s}$ &
${(\ell-\upr)\times s}$
\\
\midrule
Param. &  $E_{1}^\upsilon$, $(E_{2}^\upsilon)^\top$ & $F^\upsilon_1$, $(F_1^\omega)^\top$, $(F^\upsilon_2)^\top$, $F_2^\omega$  & $Z_1$\\[2pt]
\midrule
Dim.  & ${(\downr-s)\times(k-\ell)}$ & 
${(\downr-s)\times (\ell-\upr)}$ & $(m-k+\Delta r)\times (n-k+\Delta r)$ \\[2pt]
\midrule
Param. & $Z_2$ & $Z_3$ & $Z_4$,\,$\hat{Z}$ \\
\midrule
Dim. & $(m-k+\Delta r)\times (k-\upr)$ &  $(k-\upr)\times (n-k+\Delta r)$ & {$(k-\upr)\times (k-\upr)$} \\
\bottomrule
\end{tabular}
\end{table}
\begin{proof}
    We remark that Corollaries~\ref{cor:Fnormal_graph_1}–\ref{cor:Fnormal_graph_2} provide the parameterization in terms of $Y = U_\bot R V_\bot^\top$ and $R = U_R \varSigma_R V_R^\top$. To align with the notation used in this theorem, we can therefore identify $(U_Y, \varSigma_Y, V_Y) = (U_\bot U_R, \varSigma_R, V_\bot V_R)$; a similar argument applies when considering normal cones at $(X_i,Y_i)$.

    To prove the ``if" part of the theorem, we will construct a sequence $\{(X_i,Y_i)\}_{i\in\mbN}$ with $\rank(X_i)=\downr$, $\rank(Y_i)=k-\upr$ and the associated $(\hat{\upsilon}_i,\hat{\omega}_i)$, which produces $(\upsilon,\omega)$ expressed by~\eqref{eq:vw_expression} in the manner of~\eqref{eq:limit_viwi}. To this end, taking into account the matrix $D\in\mathrm{\Theta}(\ell-\upr, \downr-s)$ in~\eqref{eq:A}, we assume that $D$ is generated by the sequences $\{z^i_1\}\subseteq\mbR_{\ge}^{\ell-\upr}$ and $\{z^i_2\}\subseteq\mbR_{\ge}^{\downr-s}$ as in~\eqref{eq:varTheta}. Subsequently, the $(X_i,Y_i)$ can be given by
    \begin{equation*}
        X_i=\left[ U\,\,\tilde{U}_{\bot} \right] \left[ \begin{matrix}
	\varSigma&		0\\
	0&		\Diag(z^i_2)\\
        \end{matrix} \right] \left[ V\,\,\tilde{V}_{\bot} \right] ^{\top}\!\!,\ 
        Y_i=\left[ U_Y\,\,\tilde{U}_{Y\bot} \right] \left[ \begin{matrix}
        	\varSigma _Y&		0\\
        	0&		\Diag (z^i_1)\\
        \end{matrix} \right] \left[ V_Y\,\,\tilde{V}_{Y\bot} \right] ^{\top}.
    \end{equation*}
    According to~\eqref{eq:Fnormal_graph_deficient} and~\eqref{eq:Fnormal_graph_full}, we can design $(\hat{\upsilon}_i,\hat{\omega}_i)\in\Fnormal_{\graph\Lnormal_{\boundedrank}}(X_i,Y_i)$ in the following form,
    \begin{equation}\label{eq:viwi_expression}
        \begin{aligned}
        &\hat{\upsilon}_i :=\left[ U^+\,\,\breve{U}_\bot\,\,U_{Y}^{+} \right] \left[ 
        \begin{array}{ccc}
            0 & 0 & B_i^{\upsilon} \\
            0 & Z_1 & Z_2\\
            C^\upsilon_i & Z_3 & Z_4
        \end{array} 
        \right] \left[ V^+\,\,\breve{V}_\bot\,\,V_{Y}^{+} \right] ^{\top},
        \\
        &\hat{\omega}_i := \left[ U^+\,\,\breve{U}_\bot\,\,U_{Y}^{+} \right] \left[ \begin{matrix}
        	A&		C&		C_i^{\omega}\\
        	B&		0&		0\\
        	B_i^{\omega}&		0&  \hat{Z}\\
        \end{matrix} \right] \left[ V^+\,\,\breve{V}_\bot\,\,V_{Y}^{+} \right] ^{\top},
        \end{aligned}
    \end{equation}
    where $Z_1=0$ if $\downr<r$ otherwise $Z_1$ is not restricted, and $\hat{Z}=0$ if $\upr>r$ otherwise $\hat{Z}$ is not restricted. Next, we detail the construction of $(B^\omega_i,B^\upsilon_i)$, which is required to satisfy the following coupling relation, as revealed by the expression~\eqref{eq:Fnormal_graph_full},
    \begin{equation}\label{eq:BwBv_constraint}
        \left[ \begin{matrix}
	\varSigma _Y&		0\\
	0&		\Diag( z^i_1)\\
        \end{matrix} \right] B_{i}^{\omega}=\left( B_{i}^{\upsilon} \right) ^{\top}\left[ \begin{matrix}
        	\varSigma&		0\\
        	0&		\Diag (z^i_2)\\
        \end{matrix} \right].
    \end{equation}
   We denote $D^i=\mathfrak{D}(z^i_1,z^i_2)$, $D^i_{jt}=D^i(j,t)$, and $D_{jt}=\lim_{i\to\infty}D^i_{jt}$, where ``$(j,t)$" refers to the entry in the $j$-th row and $t$-th column of a matrix. Then, two sequences of matrices, $\{F^\omega_{1i}\}_{i\in\mbN}$ and $\{F^\upsilon_{1i}\}_{i\in\mbN}$ are defined as follows, 
    \begin{equation}\label{eq:construct_F}
        \left( F_{1i}^{\omega}\left( j,t \right) ,F_{1i}^{\upsilon}\left( t,j \right) \right) :=\begin{cases}
        	\left( F_{1}^{\omega}\left( j,t \right) ,\frac{D_{jt}^{i}}{1-D_{jt}^{i}}F_{1}^{\omega}\left( j,t \right) \right) ,\ \    D_{jt}\ne 1,\\
        	\left( \frac{1-D_{jt}^{i}}{D_{jt}^{i}}F_{1}^{\upsilon}\left( t,j \right) ,F_{1}^{\upsilon}\left( t,j \right) \right) ,\ \ \,    D_{jt}=1.\\
        \end{cases}
    \end{equation}
    for $i\in\mbN$, $j=1,2,\ldots,\ell-\upr$, and $t=1,2,\ldots,\downr-s$. Finally, the construction of $(B^\omega_i,B^\upsilon_i)$ is divided into four blocks, which is aligned with~\eqref{eq:BBCC},
    \begin{equation*}
        B_i^{\omega}:=\left[ \begin{matrix}
	G_1&		\ \varSigma^{-1}_Y(E^\upsilon_1)^\top\Diag(z_2^i)\\
	E_{1}^{\omega}&		F_{1i}^{\omega}\\
        \end{matrix} \right],\ \ \  B_i^{\upsilon}:=\,\,\left[ \begin{matrix}
        	\varSigma ^{-1}G_{1}^{\top}\varSigma _Y&		\ \varSigma^{-1}(E^\omega_1)^\top\Diag(z^i_1)\\
        	E_{1}^{\upsilon}&		F_{1i}^{\upsilon}\\
        \end{matrix} \right].
    \end{equation*}
    It can be verified that $(B^\omega_i,B^\upsilon_i)$ satisfies~\eqref{eq:BwBv_constraint} according to~\eqref{eq:construct_F}. Moreover, by the relationship \eqref{eq:A}, taking $i\to\infty$ in~\eqref{eq:construct_F} shows that $\left( F_{1i}^{\omega} ,F_{1i}^{\upsilon} \right) \to (F_{1}^{\omega} ,F_{1}^{\upsilon})$. Additionally, the matrices $(C_i^\omega,C^\upsilon_i)$ can be constructed in a similar manner. Consequently, taking $i\to\infty$, the limit of $(\hat{\upsilon}_i,\hat{\omega}_i)$ in \eqref{eq:viwi_expression} yields the desirable $(\upsilon,\omega)$ in~\eqref{eq:vw_expression}.

    We then turn to the ``only if" part of the theorem. Suppose that $(\upsilon,\omega)\in\Lnormal_{\graph\Lnormal_{\boundedrank}}(X,Y)$ is generated by the sequence $\{(X_i,Y_i)\}_{i\in\mbN}$ in the sense of~~\eqref{eq:limit_viwi}, and then we will identify that the structure of $(\upsilon,\omega)$ coincides with~\eqref{eq:vw_expression}. To this end, note that there exists $(\downr,\upr)$ such that $s\le \downr\le r\le \upr\le\ell\le k$ and a subsequence $\{(X_{i_j},Y_{i_j})\}_{j\in\mbN}$ satisfying $\rank(X_{i_j})=\downr$ and $\rank(Y_{i_j})=k-\upr$ for every $j\in\mbN$, and the associated subspace sequences~\eqref{eq:subspace_seq} as constructed in Lemma~\ref{lem:fivepart_convergence}. 

    Subsequently, we concentrate on the sequence $\{(X_{i_j},Y_{i_j})\}_{j\in\mbN}$ and re-assign the index as $i\in\mbN$, e.g., $\{(X_i,Y_i)\}_{i\in\mbN}$ for simplicity. Inheriting the notation in the statement of Lemma~\ref{lem:fivepart_convergence} and following the expression~\eqref{eq:Fnormal_graph_deficient}, we have
    \begin{equation*}
        \begin{aligned}
            \hat{\upsilon}_i &=  \left[ U_{i}^{+}\,\,\breve{U}_i\,\,U_{iY}^{+} \right] \left[ \begin{matrix}
                0 & 0 & B_i^{\upsilon} \\
                0 & Z_{1i} & Z_{2i}\\
                C^\upsilon_i & Z_{3i} & Z_{4i}
            \end{matrix} \right] \left[ V_{i}^{+}\,\,\breve{V}_i\,\,V_{iY}^{+} \right] ^{\top},
           \\
           \hat{\omega}_i &= \left[ U_{i}^{+}\,\,\breve{U}_i\,\,U_{iY}^{+} \right] \left[ \begin{matrix}
            	A_{i}^{\omega}&		C_{i2}^{\omega}&		C_{i1}^{\omega}\\
            	B_{i2}^{\omega}&		0&		0\\
            	B_{i1}^{\omega}&		0&		\hat{Z}_i\\
            \end{matrix} \right] \left[ V_{i}^{+}\,\,\breve{V}_i\,\,V_{iY}^{+} \right] ^{\top},
        \end{aligned}
    \end{equation*}
    where $U^+_i:=[U_i\ \tilde{U}_{i\bot}]$, $U^+_{iY}:=[\tilde{U}_{iY\bot}\ U_{iY}]$, $V^+_i:=[V_i\ \tilde{V}_{i\bot}]$, $V^+_{iY}:=[\tilde{V}_{iY\bot}\ V_{iY}]$; $Z_{1i}=0$ if $\downr<r$ otherwise $Z_{1i}$ is not restricted; and $\hat{Z}_i=0$ if $\upr>r$ otherwise $\hat{Z}_i$ is not restricted. Moreover, we note that
    \begin{equation*}
        \begin{aligned}
            &B^\upsilon_i=(U^+_i)^\top \hat{\upsilon}_iV^+_{iY},\  C^\upsilon_i=(U^+_{iY})^\top\hat{\upsilon}_iV_i^+,\ Z_{1i}=\breve{U}_i ^{\top}\hat{\upsilon}_i\breve{V}_i,
            \\
            & Z_{2i}=\breve{U}_i ^{\top}\hat{\upsilon}_i{V}_{iY}^{+},\ Z_{3i}=({U}_{iY}^{+})^{\top}\hat{\upsilon}_i\breve{V}_i,\ Z_{4i}=({U}_{iY}^{+})^{\top}\hat{\upsilon}_i{V}_{iY}^{+},
            \\
            & A_{i}^{\omega}=( U_{i}^{+} ) ^{\top}\hat\omega _iV_{i}^{+} ,\ B_{i1}^{\omega}=( U_{iY}^{+} ) ^{\top}\hat\omega _i V_{i}^{+} ,\ B_{i2}^{\omega}=( \breve{U}_i ) ^{\top}\hat\omega _i V_{i}^{+} , 
            \\
            &C_{i1}^{\omega}=( U_{i}^{+} ) ^{\top}\hat\omega _i V_{iY}^{+},\ C_{i2}^{\omega}=( U_{i}^{+} ) ^{\top}\hat\omega _i\breve{V}_i,\ \hat{Z}_i=({U}_{iY}^{+})^{\top}\hat{\omega}_i{V}_{iY}^{+}.
        \end{aligned}
    \end{equation*}
    All the quantities listed above converge (taking a subsequence if necessary), and thus, letting  $i\to+\infty$ leads to the following equalities,
    \begin{equation}\label{eq:vw_decomposition}
        \begin{aligned}
        &\upsilon =\left[ U^+\,\,\breve{U}_\bot\,\,U_{Y}^{+} \right] \left[ \begin{matrix}
            0 & 0 & B^{\upsilon} \\
            0 & Z_1 & Z_2\\
            C^\upsilon & Z_3 & Z_4
        \end{matrix} \right] \left[ V^+\,\,\breve{V}_\bot\,\,V_{Y}^{+} \right] ^{\top},
        \\
        &\omega = \left[ U^+\,\,\breve{U}_\bot\,\,U_{Y}^{+} \right] \left[ \begin{matrix}
        	A^{\omega}&		C_{2}^{\omega}&		C_{1}^{\omega}\\
        	B_{2}^{\omega}&		0&		0\\
        	B_{1}^{\omega}&		0&		\hat{Z}\\
        \end{matrix} \right] \left[ V^+\,\,\breve{V}_\bot\,\,V_{Y}^{+} \right] ^{\top},
        \end{aligned}
    \end{equation}
    Next, we examine the relation between the blocks $B_1^\omega$ and $B^\upsilon$. Through~\eqref{eq:Fnormal_graph_full}, we have $\varSigma_{R_i}B^\omega_{i1}=(B_i^\upsilon)^\top\varSigma_i$, which, by Lemma~\ref{lem:D_equivalence}, is equivalent to 
    \begin{equation}\label{eq:SigmaBC}
        {\varLambda}_i \odot B_{i1}^{\omega}+\left( {\varLambda}_i -{\bm 1}_{\left( k-\ell \right) \times \downr} \right) \odot \left( B_{i}^{\upsilon} \right) ^{\top}=0,
    \end{equation}
    where $\varLambda_i=\mathfrak{D}(\ddiag(\varSigma_{R_i}),\ddiag(\varSigma_i))$. Notice that the last $(\ell - \upr)$ diagonal entries of $\varSigma_{R_i}$ vanish in the limit, and similarly, the last $(\downr - s)$ diagonal entries of $\varSigma_i$ vanish. Accordingly, we partition the matrices into four blocks $B_{1}^{\omega}=\left[ \begin{matrix}
	G^{\omega}&		J^{\omega}\\
	E^{\omega}&		F^{\omega}\\
    \end{matrix} \right] \ \text{and}\ B^{\upsilon}=\left[ \begin{matrix}
    	G^{\upsilon}&		J^{\upsilon}\\
    	E^{\upsilon}&		F^{\upsilon}\\
    \end{matrix} \right] $, and then take $i\to+\infty$ in~\eqref{eq:SigmaBC} to obtain
    \begin{equation*}
        \left[ \begin{matrix}
        	\varXi&		{\bm 1}_{( k-\ell ) \times ( \downr-s )}\\
        	{\bm 0}_{( \ell-\upr ) \times s}&		D\\
        \end{matrix} \right] \odot \left[ \begin{matrix}
        	G^{\omega}&		J^{\omega}\\
        	E^{\omega}&		F^{\omega}\\
        \end{matrix} \right]   +\left[ \begin{matrix}
        	\varXi -{\bm 1}&		{\bm0}_{( k-\ell ) \times ( \downr-s )}\\
        	-{\bm 1}_{( \ell-\upr ) \times s}&		D -{\bm1}\\
        \end{matrix} \right] \odot\left[ \begin{matrix}
        	G^{\upsilon}&		J^{\upsilon}\\
        	E^{\upsilon}&		F^{\upsilon}\\
        \end{matrix} \right]   ^{\top}\!\!\!=0,
    \end{equation*}
    where $\varXi=\mathfrak{D}(\ddiag(\varSigma_{Y}),\ddiag(\varSigma))$, and $D$ is an element in $\mathrm{\Theta}(\ell-\upr,\downr-s)$. Therefore, it is concluded that $J^\omega=0$, $J^{\upsilon}=0$, $\varSigma_Y G^\omega=(G^\upsilon)^\top\varSigma$, and $(F^\omega,F^\upsilon)$ satisfies $D\odot F^\omega+(D-{\bm 1}_{(\ell-\upr)\times(\downr-s)})\odot(F^\upsilon)^\top=0$. A parallel analysis for the pair $(C_1^\omega,C^\upsilon)$ in~\eqref{eq:vw_decomposition} can be conducted, thereby completing the proof.
    % Note that $X_i^\top\hat{\upsilon}_i=\hat{\omega}_i^\top Y_i$, and thus it holds that $X^\top\upsilon={\omega}^\top Y$, i.e.,
\end{proof}

\smallskip

Recalling the definition from \cite{mordukhovich2006variationalI}, the (Mordukhovich) coderivative to $\Lnormal_{\boundedrank}(\cdot)$ at $(X,Y)\in\graph\Lnormal_{\boundedrank}$ is a set-valued mapping $\diff^* \Lnormal_{\boundedrank}(X,Y): \mbRmn\rightrightarrows\mbRmn$ given as follows, 
\begin{equation*}
    \diff^* \Lnormal_{\boundedrank}(X,Y)[\omega^*] = \hkh{\upsilon^*\in \mbRmn\mid (\upsilon^*,-\omega^*)\in\Lnormal_{\graph\Lnormal_{\boundedrank}}(X,Y)},
\end{equation*}
for all $\omega^*\in\mbRmn$. Consequently, the explicit formula of $\Lnormal_{\graph\Lnormal_{\boundedrank}}(X,Y)$ identified in Theorem~\ref{the:Lnormal_graph} allows for the direct computation of the coderivative. 

\section{Bilevel programming problems with low-rank structure}\label{sec:LRBP}
Bilevel optimization, in which upper- and lower-level problems are nested with each other, has witnessed various applications~\cite{yang2025sobiRL,yang2025lancbio} and theoretical developments~\cite{lin2014solving}. When the lower-level problem possesses specific structures, e.g., the semidefinite constraints, a more tailored treatment is required~\cite{ding2014SDCMPCC,wu2014SDCMPCC,dempe2018optimality}.

In this section, we consider the bilevel programming problem~\eqref{eq:lowrank_BiO}, where the lower level seeks a solution constrained to the set of bounded-rank matrices. Our goal is to derive an optimality condition, as a direct application of the results developed in Section~\ref{sec:Geometryofgraph}.

\subsection{Motivating applications}\label{sec:bilevel_app}
We now present two representative applications falling into the scope of the formulation~\eqref{eq:lowrank_BiO}.

\paragraph{\emph{\textbf{Bilevel optimization with low-rank adaptation.}}} Natural language processing has increasingly adopted bilevel optimization to address various tasks \cite{grangier2023LLMshift,shen2025seal,zangrando2025debora}. Specifically, the upper level introduces a task-oriented variable $x\in\mbR^q$ while the lower level trains a large language model (LLM), which resorts to the popular parameter-efficient fine-tuning approach \cite{hu2022lora}, i.e., freezing the pretrained model weight $\bar{X}\in\mbRmn$ and optimizing an additive low-rank trainable matrix $X\in\boundedrank$. Therefore, the following bilevel formulation summarizes the discussed applications,
\begin{equation*}
\begin{array}{cl}
         \min\limits_{x\in\mbR^q, X^*\in\mbRmn}\ & \ \frac{1}{\left|\mathcal{D}_1\right|} \sum_{y_i \in \mathcal{D}_{1}} \lanifold(x,\bar{X}+X^*;y_i) 
         \\[3mm]
         \mathrm{s.\,t.}\!\!\!\!&\ G(x)\le 0,
         \\
         &\ 
        \begin{aligned}[t]
            X^* \in \argmin_{X\in\mbRmn}&\ \ \frac{1}{\left|\mathcal{D}_{2}\right|} \sum_{y_i \in \mathcal{D}_{2}} \lanifold(x, \bar{X}+X; y_i),
            \\
            \st&\ \ X\in\boundedrank,
        \end{aligned}
\end{array}
\end{equation*}
where $\lanifold$ denotes the loss function, $\danifold_j$ ($j=1,2$) are different datasets, and $\{y_i\}$ label the associated data points. 

\paragraph{\emph{\textbf{Data hyper-cleaning with low-rank model.}}} A line of applications in machine learning community only receives corrupted or noisy data while is required to train a reliable model. To this end, the approach, data hyper-cleaning~\cite{shaban2019truncated} formulates the task as a bilevel problem, 
\begin{equation*}
\begin{array}{cl}
         \min\limits_{w\in\mbR^q, X^*\in\mbRmn}\ \ & \ \frac{1}{\left|\mathcal{D}_1\right|} \sum_{y_i \in \mathcal{D}_{1}} \lanifold(w,X^*;y_i) 
         \\[2mm]
         \mathrm{s.\,t.}&\ 
        \begin{aligned}
            X^* &\in \argmin_{X\in\mbRmn}\ \frac{1}{\left|\mathcal{D}_{2}\right|} \sum_{y_i \in \mathcal{D}_{2}} c(w_i) \lanifold(w, X; y_i),
        \end{aligned}
\end{array}
\end{equation*}
where the upper level searches for a weight $w$ deciding the confidence of each data through a mapping $c:\mbR^q\to\mbR_+$, and the lower level trains a model according to the weighted data. When the lower-level model $X\in\mbRmn$ possesses a low-rank structure---typical examples including image recovery \cite{zhang2013hyperspectral,wang2017reweighted} and network training \cite{idelbayev2020lowrankcompress,yaras2024compressible}---it is advantageous to impose the constraint $\rank(X)\le r$, which will effectively circumvent parameter redundancy while preserving a decent performance.

\subsection{Optimality conditions via a relaxation}
Note that finding a global minimizer of a function subject to the bounded-rank constraint is NP-hard in general~\cite{gillis2011NPlowrank}. Nevertheless, existing literature~\cite{schneider2015Lojaconvergence,levin2023remedy,olikier2023RFDR} is able to find a first-order point in the sense that the antigradient belongs to the Mordukhovich normal cone of the determinantal variety. Therefore, we turn to the formulation~\eqref{eq:lowrank_BiO_M1}, which serves as a relaxation for \eqref{eq:lowrank_BiO} by replacing the lower-level global optimality with the first-order stationarity. Subsequently, introducing a slack variable $Y\in\mbR^{m\times n}$, we obtain the following formulation equivalent to~\eqref{eq:lowrank_BiO_M1},
\begin{equation}\label{eq:lowrank_BiO_M}\tag{M-LRBP}
\begin{aligned}
    \min_{x\in\mbR^q,X\in\mbR^{m\times n}}&\ \lanifold(x,X)\\[-1.5mm]
    \mathrm{s.\,t.}\ \ \ \ \ \,&\ G(x)\le 0,
        \\
        &\ \nabla_X F(x,X)+Y=0,
        \\
        &\ (X,Y)\in\graph \Lnormal_{\boundedrank}.
    \end{aligned}
\end{equation}
Since the relaxation is based on the Mordukhovich normal cone, we prefix the name of~\eqref{eq:lowrank_BiO} with an additional ``M-''. Moreover, we assume that the mappings $\lanifold:\mbR^q\times\mbRmn\to\mbR$ and $G:\mbR^q\to\mbR^p$ are continuously differentiable, while $F:\mbR^q\times\mbRmn\to\mbR$ is twice continuously differentiable.

We then investigate the relationship between~\eqref{eq:lowrank_BiO} and~\eqref{eq:lowrank_BiO_M} in terms of \emph{local optimal solutions}, namely, points that minimize the objective $\lanifold$ over a neighborhood in the feasible region.
\begin{proposition}
    If $(\tilde{x},\tilde{X},\tilde{Y})$ is a local optimal solution of~\eqref{eq:lowrank_BiO_M}, and additionally, $\tilde{X}\in\argmin_{X\in\boundedrank} F(\tilde{x},X)$, then $(\tilde{x},\tilde{X})$ is a local optimal solution of~\eqref{eq:lowrank_BiO}.
\end{proposition}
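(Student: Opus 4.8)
The plan is to exploit the fact that the feasible region of~\eqref{eq:lowrank_BiO} embeds into that of~\eqref{eq:lowrank_BiO_M}, because any lower-level global minimizer automatically satisfies the M-stationarity constraint of the relaxation. First I would verify that the candidate $(\tilde{x},\tilde{X})$ is itself feasible for~\eqref{eq:lowrank_BiO}: the inequality $G(\tilde{x})\le 0$ is inherited from the feasibility of $(\tilde{x},\tilde{X},\tilde{Y})$ for~\eqref{eq:lowrank_BiO_M}, while the additional hypothesis $\tilde{X}\in\argmin_{X\in\boundedrank}F(\tilde{x},X)$ guarantees that $\tilde{X}$ is a genuine lower-level global minimizer. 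Hence $(\tilde{x},\tilde{X})$ is an admissible candidate, and the task reduces to certifying its \emph{local} optimality.

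The core of the argument is a neighborhood-transfer construction. Let $\mathcal{N}$ be a neighborhood of $(\tilde{x},\tilde{X},\tilde{Y})$ witnessing local optimality for~\eqref{eq:lowrank_BiO_M}. Given an arbitrary feasible point $(x,X^*)$ of~\eqref{eq:lowrank_BiO} near $(\tilde{x},\tilde{X})$, the point $X^*$ is a global---hence local---minimizer of $F(x,\cdot)$ over $\boundedrank$, so the first-order necessary condition yields $-\nabla_X F(x,X^*)\in\Fnormal_{\boundedrank}(X^*)\subseteq\Lnormal_{\boundedrank}(X^*)$. Setting $Y^*:=-\nabla_X F(x,X^*)$, I would then confirm that $(x,X^*,Y^*)$ meets all three constraints of~\eqref{eq:lowrank_BiO_M}: the equality $\nabla_X F(x,X^*)+Y^*=0$ holds by construction, the pair $(X^*,Y^*)$ lies in $\graph\Lnormal_{\boundedrank}$ by the membership just established together with $X^*\in\boundedrank$, and $G(x)\le 0$ is inherited. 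Thus $(x,X^*,Y^*)$ is feasible for the relaxation and carries the same objective value $\lanifold(x,X^*)$.

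The final step controls the slack coordinate $Y^*$. Since $F$ is twice continuously differentiable, $\nabla_X F$ is continuous, and the reference point satisfies $\tilde{Y}=-\nabla_X F(\tilde{x},\tilde{X})$ (again by feasibility of~\eqref{eq:lowrank_BiO_M}); consequently $Y^*\to\tilde{Y}$ as $(x,X^*)\to(\tilde{x},\tilde{X})$, so shrinking the neighborhood of $(\tilde{x},\tilde{X})$ forces $(x,X^*,Y^*)\in\mathcal{N}$. Local optimality for~\eqref{eq:lowrank_BiO_M} then delivers $\lanifold(\tilde{x},\tilde{X})\le\lanifold(x,X^*)$, and the arbitrariness of the nearby feasible point $(x,X^*)$ proves the claim. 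I expect the main---albeit mild---obstacle to be precisely this continuity bookkeeping: one must ensure that the auxiliary slack $Y^*$ generated by an arbitrary nearby feasible point stays inside the witnessing neighborhood $\mathcal{N}$, which is exactly where the $C^2$ regularity of $F$ is used, and one must invoke that the first-order condition over the nonsmooth set $\boundedrank$ yields membership in the Mordukhovich (not merely Fr\'echet) normal cone so that the $\graph\Lnormal_{\boundedrank}$ constraint is actually satisfied.
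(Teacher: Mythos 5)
Your proof is correct and follows essentially the same route as the paper's: lift any nearby feasible point $(x,X^*)$ of~\eqref{eq:lowrank_BiO} to a feasible point $(x,X^*,-\nabla_X F(x,X^*))$ of~\eqref{eq:lowrank_BiO_M} via the first-order necessary condition, and use continuity of $\nabla_X F$ to keep the slack coordinate inside the witnessing neighborhood. The paper phrases this as a contradiction with a sequence $(\tilde{x}_i,\tilde{X}_i)\to(\tilde{x},\tilde{X})$ while you argue directly, but the substance is identical; your explicit remark that $\Fnormal_{\boundedrank}(X^*)\subseteq\Lnormal_{\boundedrank}(X^*)$ is a slightly more careful justification of the step the paper attributes to the literature.
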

\begin{proof}
    Suppose, toward a contradiction, that $(\tilde{x},\tilde{X})$ is not locally optimal for problem~\eqref{eq:lowrank_BiO}. i.e., there exist $(\tilde{x}_i,\tilde{X}_i)\to(\tilde{x},\tilde{X})$ feasible for~\eqref{eq:lowrank_BiO} such that $\lanifold(\tilde{x}_i,\tilde{X}_i)<\lanifold(\tilde{x},\tilde{X})$. Note that the optimality $\tilde{X}_i\in\argmin_{X\in\boundedrank} F(\tilde{x}_i,X)$ always implies the first-order condition $-\nabla_X F(\tilde{x}_i,\tilde{X}_i)\in\Lnormal_{\boundedrank}(\tilde{X}_i)$; see~\cite{schneider2015Lojaconvergence}. Therefore, any feasible points $(\tilde{x}_i,\tilde{X}_i)$ of~\eqref{eq:lowrank_BiO} induce an $(\tilde{x}_i,\tilde{X}_i,-\nabla_XF(\tilde{x}_i,\tilde{X}_i))$ feasible for~\eqref{eq:lowrank_BiO_M}. The condition $\lanifold(\tilde{x}_i,\tilde{X}_i)<\lanifold(\tilde{x},\tilde{X})$ contradicts the local optimality of $(\tilde{x},\tilde{X},\tilde{Y})$.
\end{proof}

Finally, taking into account the coderivative of the Mordukhovich normal cone mapping developed in section~\ref{sec:Geometryofgraph}, we can give a Fritz John type M-stationary condition for~\eqref{eq:lowrank_BiO_M}, where the notation follows from Theorem~\ref{the:Lnormal_graph}, e.g., $k:=\min\{m,n\}$ and $0\le s\le r \le \ell \le k$, and we denote by $\janifold_X$ the partial Jacobian of a mapping with respect to $X$.

\begin{proposition}\label{pro:M-stationary}
    Let $(\tilde{x},\tilde{X},\tilde{Y})$ be a local optimal solution to problem~\eqref{eq:lowrank_BiO_M}. Suppose that $\rank(\tilde{X})=s$ and $\rank(\tilde{Y})=k-\ell$, and let the SVDs be $\tilde{X}=U\varSigma V^\top$ and $\tilde{Y}=U_Y\varSigma_YV_Y^\top$. Then there exist a multiplier $(\mu,\lambda,\delta)\in\mbR\times\mbR^{p}\times\mbR^{m\times n}$ and matrices $(\upsilon,\omega)\in\mbRmn\times\mbRmn$ such that
    \begin{align}
        &\mu\nabla_x\lanifold(\tilde{x},\tilde{X})+\nabla G(\tilde{x})\lambda+\janifold_X(\nabla_xF)(\tilde{x},\tilde{X})[\delta] = 0,  \label{eq:lowrank_BiO_optimilaty_1}
        \\
        &\mu\nabla_X\lanifold(\tilde{x},\tilde{X})+\janifold_X (\nabla_X F)(\tilde{x},\tilde{X})[\delta] + \upsilon = 0,     \label{eq:lowrank_BiO_optimilaty_2}
        \\
        &\delta+\omega = 0,   \label{eq:lowrank_BiO_optimilaty_3}
        \\
        & \innerp{G(\tilde{x}),\lambda}_{\mbR^p}= 0,\ \ \lambda\ge 0,   \label{eq:lowrank_BiO_optimilaty_4}
        \\
        &(\upsilon,\omega)\ \text{are expressed as in~\eqref{eq:vw_expression}}.    \nonumber
    \end{align}
\end{proposition}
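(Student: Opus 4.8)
The plan is to recognize \eqref{eq:lowrank_BiO_M} as a mathematical program with a smooth objective, smooth (in)equality constraints, and one closed geometric constraint, and then to apply a generalized Fritz John multiplier rule; the explicit formula of Theorem~\ref{the:Lnormal_graph} will supply the structure of the resulting normal-cone element. First I would collect the variables into $z=(x,X,Y)\in\mbR^q\times\mbRmn\times\mbRmn$ and rewrite the feasible region as $\mathcal{F}=\Omega\cap\Phi^{-1}(\Lambda)$, where $\Omega:=\mbR^q\times\graph\Lnormal_{\boundedrank}$ is the geometric constraint, $\Phi(z):=(G(x),\,\nabla_X F(x,X)+Y)$ is the smooth constraint mapping, and $\Lambda:=\mbR^p_{-}\times\{0\}^{m\times n}$. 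Here $\Phi$ is continuously differentiable because $F$ is twice continuously differentiable, and $\Omega$ is closed since $\graph\Lnormal_{\boundedrank}$ is the graph of the outer-semicontinuous cone mapping $\Lnormal_{\boundedrank}(\cdot)$ (equivalently, a set cut out by rank and orthogonality conditions). Since $(\tilde x,\tilde X,\tilde Y)$ is a local minimizer of the smooth $\lanifold$ over the closed set $\mathcal{F}$, the basic first-order condition $-\nabla\lanifold(\tilde z)\in\Fnormal_{\mathcal{F}}(\tilde z)\subseteq\Lnormal_{\mathcal{F}}(\tilde z)$ holds.

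Next I would invoke the generalized Fritz John multiplier rule for geometric-constraint programs \cite{mordukhovich2006variationalI,rockafellar2009variationalanalysis}, which---without any constraint qualification---yields $\mu\ge 0$, a pair $(\lambda,\delta)\in\Lnormal_{\Lambda}(\Phi(\tilde z))$, and a normal-cone element $n\in\Lnormal_{\Omega}(\tilde z)$, not all trivial, such that $0\in\mu\nabla\lanifold(\tilde z)+\codiff\Phi(\tilde z)(\lambda,\delta)+\Lnormal_{\Omega}(\tilde z)$. The membership $(\lambda,\delta)\in\Lnormal_{\Lambda}(\Phi(\tilde z))$ unpacks into $\lambda\ge 0$ together with $\innerp{G(\tilde x),\lambda}=0$, which is exactly \eqref{eq:lowrank_BiO_optimilaty_4}, while the block $\delta\in\mbR^{m\times n}$ is free. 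By the product rule for normal cones, $\Lnormal_{\Omega}(\tilde z)=\{0\}^q\times\Lnormal_{\graph\Lnormal_{\boundedrank}}(\tilde X,\tilde Y)$, and I would denote its two matrix blocks by $(\omega,\upsilon)$.

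Then I would evaluate $\codiff\Phi(\tilde z)(\lambda,\delta)=\nabla\Phi(\tilde z)^*(\lambda,\delta)$ block by block, which is legitimate because $\Phi$ is smooth. Reading off the $x$-, $X$- and $Y$-components of the stationarity inclusion produces \eqref{eq:lowrank_BiO_optimilaty_1}, \eqref{eq:lowrank_BiO_optimilaty_2} and \eqref{eq:lowrank_BiO_optimilaty_3}, respectively: the $Y$-block of the adjoint is the identity applied to $\delta$, the $X$-block is $\janifold_X(\nabla_X F)(\tilde x,\tilde X)[\delta]$, and the $x$-block splits into $\nabla G(\tilde x)\lambda$ plus the adjoint of $D_x(\nabla_X F)$ applied to $\delta$. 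The one nonroutine identity is that this last adjoint equals $\janifold_X(\nabla_x F)(\tilde x,\tilde X)[\delta]$, which I would justify by the symmetry of the Hessian $\nabla^2 F$, namely that the mixed second derivatives $\nabla^2_{Xx}F$ and $\nabla^2_{xX}F$ are mutual adjoints. Finally, since $(\omega,\upsilon)\in\Lnormal_{\graph\Lnormal_{\boundedrank}}(\tilde X,\tilde Y)$ and the data $\tilde X=U\varSigma V^\top$, $\tilde Y=U_Y\varSigma_YV_Y^\top$ with $\rank(\tilde X)=s$, $\rank(\tilde Y)=k-\ell$ match the hypotheses of Theorem~\ref{the:Lnormal_graph}, that theorem furnishes the closed-form parameterization \eqref{eq:vw_expression} for $(\omega,\upsilon)$, completing the argument.

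I expect the main obstacle to be the clean justification of the multiplier rule in the geometric-constraint (Fritz John) form rather than in KKT form---that is, confirming no constraint qualification is needed and that $\Lnormal_{\mathcal{F}}(\tilde z)$ is validly upper-estimated by $\codiff\Phi(\tilde z)(\Lnormal_{\Lambda}(\Phi(\tilde z)))+\Lnormal_{\Omega}(\tilde z)$---together with pinning down the Hessian-symmetry identity for the mixed $x$--$X$ block. The remaining work is bookkeeping across the three coordinate blocks and a direct appeal to Theorem~\ref{the:Lnormal_graph}.
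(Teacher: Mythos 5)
Your proposal is correct and follows essentially the same route as the paper: the paper's proof likewise treats \eqref{eq:lowrank_BiO_M} as a program with inequality, equality, and geometric constraints, invokes the generalized Fritz John rule (citing \cite[Theorem 5.21]{mordukhovich2006variationalII}) to obtain the multipliers and a normal-cone element of $\graph\Lnormal_{\boundedrank}$ at $(\tilde X,\tilde Y)$, and then applies Theorem~\ref{the:Lnormal_graph} to express $(\omega,\upsilon)$ via~\eqref{eq:vw_expression}. Your additional block-by-block adjoint computation and the Hessian-symmetry remark for the mixed $x$--$X$ block are correct details that the paper leaves implicit.
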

\begin{proof}
    The formulation~\eqref{eq:lowrank_BiO_M} is a program with inequality constraints, equality constraints, and a geometric constraint with $(\tilde{x},\tilde{X},\tilde{Y})$ as a local solution. Applying \cite[Theorem 5.21]{mordukhovich2006variationalII} implies the existence of a multiplier $(\mu,\lambda,\delta)\in\mbR\times\mbR^{p}\times\mbR^{m\times n}$ with $(\mu,\lambda,\delta)\neq 0$ and matrices $(\upsilon,\omega)\in\Lnormal_{\graph\Lnormal_{\boundedrank}}(\tilde{X},\tilde{Y})$ satisfying conditions~\eqref{eq:lowrank_BiO_optimilaty_1}-\eqref{eq:lowrank_BiO_optimilaty_4}. In addition, we note that the feasibility of $(\tilde{x},\tilde{X},\tilde{Y})$ reveals that $\tilde{Y}=-\nabla_X F(\tilde{x},\tilde{X})$ and $(\tilde{X},\tilde{Y})\in\graph \Lnormal_{\boundedrank}$, which validates the application of Theorem~\ref{the:Lnormal_graph} to express $(\upsilon,\omega)\in\Lnormal_{\graph\Lnormal_{\boundedrank}}(\tilde{X},\tilde{Y})$ via~\eqref{eq:vw_expression}.
\end{proof}

\section{Conclusions and perspectives}\label{sec:conclusion}
In this paper, we conduct the variational analysis of determinantal varieties. Specifically, we provide a unified framework for analyzing first- and second-order tangent sets to various low-rank set, recovering existing results, and revealing a range of new ones. Drawing on the tangent sets, we establish a sufficient and necessary condition to characterize the second-order equivalence between a general nonsmooth problem and its smooth parameterization. The developed framework is applied to low-rank optimization. In another thread of analysis, we investigate the geometry of the graph of the Mordukhovich normal cone to the matrix variety, which plays a role in low-rank bilevel programs. We conclude with several remarks and outline potential directions for future research inspired by this work.

\emph{Extension to sparsity constraints.} 
In fact, Theorem~\ref{the:cal_tangentsets} also finds potential applications in sparse scenarios.  Specifically, consider the sparse set $\canifold_s := \{x\in\mbR^q \mid \|x\|_0 \le s\}$, where $\|\cdot\|_0$ counts the number of nonzero entries of a vector. 
Let $|x|^\downarrow$ denote the vector obtained by sorting $(|x_1|,\dots,|x_q|)$ in a non-increasing order. Then, analogous to~\eqref{eq:sigmarboundedrank}, the sparse set admits the characterization $\canifold_s=\{x\in\mbR^q \mid (|x|^\downarrow)_{s+1}=0\}$. It can be verified that $\canifold_s$ satisfies the error bound condition: $\dist(y,\canifold_s)\le \sqrt{q-s}\,(|y|^\downarrow)_{s+1}$ for any $y\in\mbR^q$. Additionally, we note that $(|x|^\downarrow)_{s+1}=\lambda_{s+1}(\Diag(|x|))$, where both the mappings $x\mapsto \Diag(|x|)$ and $\lambda_{s+1}(\cdot)$ are locally Lipschitz and admit first- and second-order directional derivatives; hence, the composite mapping $x\mapsto (|x|^\downarrow)_{s+1}$ inherits the same properties. Therefore, Theorem~\ref{the:cal_tangentsets} can be invoked to derive the first- and second-order tangent sets to $\canifold_s$. Extending the spirit, Theorem~\ref{the:expressions_McapK} may further be employed to develop the intersection rules, when an additional constraint is imposed to the sparse set.

{\emph{Low-rank sets intersecting with inequality constraints.}} An immediate extension of Theorem~\ref{the:expressions_McapK} involves cases where $\kanifold$ is defined by a system including inequalities, i.e., $\kanifold=\{X\in\mathbb{R}^q \mid h(X)=0,\; g(X)\le 0\}$, mirroring the setup in Theorem~\ref{the:cal_tangentsets}. In such a scenario, the parameterization $(\bmanifold,\phi)$ may pull $\kanifold$ back to a \emph{manifold with boundary}~\cite{lee2012manifolds}. An appropriate generalization of Theorem~\ref{the:expressions_McapK} holds promise for applications to the intersection of $\boundedrank$ with constraints such as the closed unit Frobenius ball, the symmetric box, or the spectrahedron, as discussed in~\cite{li2020jotaspectral}.

\emph{Algorithms for low-rank bilevel programming problems.} While Proposition~\ref{pro:M-stationary} gives an optimality condition for problem~\eqref{eq:lowrank_BiO_M}, designing an algorithm provable to accumulate at such stationary points remains a challenge. We envision that progress can be made by initially restricting attention to scenarios where the lower-level low-rank problem minimizes a strongly convex objective~\cite{park2018findingefficiently}, a direction inspired by the avenue developed in the existing bilevel optimization literature~\cite{ghadimi2018approximation}.

\appendix

\normalsize

\section{Tangent sets to tensor varieties}\label{app:tensorvariety}
In this section, we introduce the hierarchical Tucker (HT) variety \cite{hackbusch2009newHT,grasedyck2010hierarchicalSVD}, and then show that the Tucker and tensor train (TT) varieties \cite{tucker1964extension,oseledets2011TTSVD} arise as specific cases of the HT variety. Finally, a proof for Proposition~\ref{pro:tangenttovarieties} is provided.

\subsection{Hierarchical Tucker varieties}\label{app:HTvariety}
Given the order $d$, a \emph{dimension tree} $T$ on $\{1,2,\ldots,d\}$ is a binary tree whose nodes are nonempty subsets $t\subseteq\{1,\ldots,d\}$ such that: 1)~the root is $t_r=\{1,\ldots,d\}$; 2)~the leaves are the singletons $\{k\}$, $k=1,2,\ldots,d$; 3)~if $t$ is an internal node with children $t_1$ and $t_2$, then $t=t_1\cup t_2$ and $k_1<k_2$ for all $k_1\in t_1$ and $k_2\in t_2$. For each subset $t\subseteq\{1,2,\ldots,d\}$, we denote the associated dimension by $n_t:=\prod_{k\in t}n_k$, and the set complementary to $t$ by $t_{-}=\{1,2,\ldots,d\} \backslash t$.

Based on a fixed dimension tree $T$, the \emph{HT mode-$t$ unfolding} (or matricization) of $\tensX\in\tensorspace$ is formed by arranging the modes in $t=\{\mu_1,\ldots,\mu_p\}$ along the row dimension and those in $t_-=\{v_1,\ldots,v_{d-p}\}$ along the column dimension:
$$
X^\mht_{(t)} \in \mbR^{n_t\times n_{t_-}}\ \ \text {with}\ \ X^\mht_{(t)}(i_{\mu_1}, i_{\mu_2}, \ldots, i_{\mu_p};i_{\nu_1}, i_{\nu_2}, \ldots, i_{\nu_{d-p}})=\mathbf{X}_{i_1, \ldots, i_d},
$$
Each matricization operator is invertible, as it is a one-to-one rearrangement of the tensor entries. Hence the mapping $\tensX \mapsto X^{\mht}_{(t)}$ admits a unique inverse $\tensorizeht_{(t)}:\mathbb{R}^{n_t\times n_{t_-}}\to\tensorspace$, called the \emph{mode-$t$ tensorization}.

The \emph{HT rank} of a tensor $\tensX$ is defined as the tuple
$$
\rankht(\tensX)=\left(r_t\right)_{t\in T}\  \text{ with }\  r_t=\rank(X^\mht_{(t)})\ \text{for}\ t\in T.
$$
Consequently, we can introduce the set of hierarchical tensors with an HT rank at most $\vecr\in\mbN^{|T|}$:
\begin{equation*}
    \boundedht = \{\tensX\in\tensorspace \mid \rankht(\tensX)\le\vecr\}.
\end{equation*}
Note that $\boundedht$ is the common zero set of all $(r_t+1)$-minors of the mode-$t$ unfolding matrices $\{X^{\mht}_{(t)}\}_{t\in T}$, i.e., $\rank(X^\mht_{(t)})\le r_t$ for all $t\in T$. Hence $\boundedht$ is a real algebraic variety; we refer to it as the \emph{HT variety}.

\subsection{Reduction to Tucker varieties and tenser train varieties}\label{app:reduction}
Different choices of the dimension tree $T$ with the associated rank parameter $\vecr=(r_t)_{t\in T}$ will yield varieties with different structures. We then specify the constructions of $T$ and $(r_t)_{t\in T}$ to reduce the HT variety to two typical instances---the \emph{Tucker variety} \cite{tucker1964extension,gao2025lowranktucker} and the \emph{tenser train variety} \cite{oseledets2011TTSVD,kutschan2018tangentTT}, respectively.

To derive the concept of the Tucker variety, let $T$ be the dimension tree on $\{1,2,\ldots,d\}$ that, at each level, divides the first spatial index to the left child and assigns the rest to the right child. Formally, set $t_{r}=\{1,2,\ldots,d\}$, and recursively, for the internal node $t=\{k,\ldots,d\}$ with $2\le k<d$, define its two children by $t_1=\{k\}$ and $t_2=\{k+1,\ldots,d\}$. Moreover, letting $r_{\{k\}}$ denote the rank of the mode-$\{k\}$ unfolding of a given $\tensX$ for $k=1,2,\ldots,d$, we can define the mapping $\rank_\mtc:\tensorspace\to \mathbb{N}^d:\,\tensX\mapsto(r_{\{1\}},r_{\{2\}},\ldots,r_{\{d\}})$. Consequently, the Tucker variety can be defined as
\begin{equation}\label{eq:tctensor}
    \boundedtucker = \{\tensX\in\tensorspace \mid \ranktc(\tensX)\le\vecr^\mtc\},
\end{equation}
where $\vecr^\mtc=(r^\mtc_1,r^\mtc_2,\ldots, r^\mtc_d)\in\mathbb{N}^d$ is a given Tucker rank parameter such that $r^\mtc_k\le\min\{n_{\{k\}},n_{\{k\}_-}\}$ for $k\in\{1,2,\ldots,d\}$.

In parallel with the derivation of~\eqref{eq:tctensor}, we can also view the TT variety as a special case of the HT variety, which has been discussed in~\cite[\S5]{uschmajew2013geometryofHT}; for completeness, we briefly recall the construction. We adopt the same partition of the tree $T$ as in the Tucker case, namely, setting the root $t_{r}=\{1,2,\ldots,d\}$, and recursively, for the internal node $t=\{k,\ldots,d\}$ with $2\le k<d$, defining the children by $t_1=\{k\}$ and $t_2=\{k+1,\ldots,d\}$. Then, letting $r_{\{k,\ldots,d\}}$ denote the rank of the mode-$\{k,\ldots,d\}$ unfolding of a given $\tensX$ for $k=2,\ldots,d$, we can define the mapping $\rank_\mtt:\tensorspace\to \mathbb{N}^{d-1}:\,\tensX\mapsto(r_{\{2,\ldots,d\}},r_{\{3,\ldots,d\}},\ldots,r_{\{d\}})$, which introduces the definition of the TT variety as follows,
\begin{equation}\label{eq:tttensor}
    \boundedtt = \{\tensX\in\tensorspace \mid \ranktt(\tensX)\le\vecr^\mtt\},
\end{equation}
where $\vecr^\mtt=(r^\mtt_1,r^\mtt_2,\ldots, r^\mtt_{d-1})\in\mathbb{N}^{d-1}$ is a given TT rank parameter such that $r^\mtt_k\le\min\{n_{\{1,\ldots,k\}},n_{\{1,\ldots,k\}_-}\}$ for $k\in\{1,2,\ldots,d-1\}$.
\subsection{Proof of Proposition~\ref{pro:tangenttovarieties}}\label{app:prooftangentvariety}
\begin{proof}
We begin by formally verifying that the $\boundedtensor$ given through~\eqref{eq:tensor_defined_sigma} satisfies Assumption~\ref{assu:errorbound}. Firstly, note that the mapping $\tensX\mapsto\sigma_{r_t+1}(X^\mht_{(t)})$ is Lipschitz continuous for all $t\in T$. We then turn to Assumption~\ref{assu:errorbound}(ii). For an arbitrary $\tensY\in\tensorspace$, we consider the hierarchical truncation constructed by \cite[Lemma~3.15]{grasedyck2010hierarchicalSVD} and denote it as $\projection_{\le \vecr}(\tensY)$.  It is revealed from \cite[Theorem~3.11]{grasedyck2010hierarchicalSVD} that 
\begin{equation*}
    \dist(\tensY,\boundedht)\le \|\tensY-\projection_{\le \vecr}(\tensY)\|_{\frob} \le \sqrt{\sum_{t\in T}\sum_{i>r_t}\sigma^2_{i}(Y^{\mht}_{(t)})} \le  \rho\sqrt{\sum_{t\in T} \sigma^2_{r_t+1}(Y^{\mht}_{(t)})}\,,
\end{equation*}
where $\rho=\sqrt{\max_{t\in T}\{\min\{n_t,n_{t_-}\}-r_t\}}$. Therefore, we can apply Theorem~\ref{the:cal_tangentsets} to $\boundedtensor$ given through~\eqref{eq:tensor_defined_sigma}.

Regarding the tangent cone, we have
\begin{align*}
    \Btangent_{\boundedht}(\tensX) &= \hkh{\tenseta\in\tensorspace \mid \sigma^\prime_{r_t+1}(X^\mht_{(t)};\eta^\mht_{(t)})=0\ \text{for}\ t\in T}
    \\
    &=\bigcap_{t\in T} \hkh{\tenseta\in\tensorspace \mid \sigma^\prime_{r_t+1}(X^\mht_{(t)};\eta^\mht_{(t)})=0}
    \\
    &= \bigcap_{t\in T}{\tensorize_{(t)}^\mht\kh{\Btangent_{\ranifold_t}(X^\mht_{(t)})}}
\end{align*}
where $\ranifold_t:=\mbR_{\le r_t}^{n_t\times n_{t_-}}$ and the last equality comes from~\eqref{eq:Btangent_cone_boundedrank_sigma_descrip}. Similarly, let $\tenseta\in \Btangent_{\boundedtensor}(\tensX)$, and it holds that
\begin{align*}
    \tangenttwo_{\boundedht}(\tensX;\tenseta) &= \hkh{\tenszeta\in\tensorspace \mid \sigma^{\prime\prime}_{r_t+1}(X^\mht_{(t)};\eta^\mht_{(t)},\zeta^\mht_{(t)})=0\ \text{for}\ t\in T}
    \\
    &=\bigcap_{t\in T} \hkh{\tenszeta\in\tensorspace \mid \sigma^{\prime\prime}_{r_t+1}(X^\mht_{(t)};\eta^\mht_{(t)},\zeta^\mht_{(t)})=0}
    \\
    &= \bigcap_{t\in T}{\tensorize_{(t)}^\mht\kh{\tangenttwo_{\ranifold_t}(X^\mht_{(t)};\eta^\mht_{(t)})}}.
\end{align*}
\end{proof}

\section{Proof of Theorem~\ref{the:expressions_McapK}}\label{app:McapK}
In this section, we provide the proof for Theorem~\ref{the:expressions_McapK}. The main idea is to show that if $(\manifold,\kanifold)$ satisfies Assumption~\ref{assu:MintersecK}, their intersection $\manifold\cap\kanifold$ satisfies Assumption~\ref{assu:errorbound}, thereby applying the developed Theorem~\ref{the:cal_tangentsets} gives the conclusion.

As a preliminary, we show that any set $\kanifold$ realized as a level set of a smooth mapping with constant rank satisfies the error bound property, which is closely related to Assumption~\ref{assu:MintersecK}(i).

\begin{lemma}\label{lem:local_error_bound_constant_rank}
Let $h:\mathbb{R}^q\to\mathbb{R}^k$ be smooth and let $\kanifold=\{\tilde X\in\mathbb{R}^q \mid h(\tilde X)=0\}$. Given $X\in\kanifold$, suppose that there exists a neighborhood $\neighbor_1\subseteq\mbR^q$ around $X$ where the Jacobian $\diff h$ has constant rank $r$. Then it admits a neighborhood $\neighbor_2\subseteq\mbR^q$ around $X$ and a constant $\rho>0$ such that $\dist(Y,\kanifold)\le \rho\|h(Y)\|_2$ for all $Y\in\neighbor_2$.
\end{lemma}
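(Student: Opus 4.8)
The plan is to reduce everything to a local normal form via the constant rank theorem, after which the error bound becomes a statement about a bi-Lipschitz comparison between the original coordinates and a normalizing chart. First I would invoke the rank theorem \cite[Theorem 4.12]{lee2012manifolds}: since $\diff h$ has constant rank $r$ throughout $\neighbor_1$, there exist smooth diffeomorphisms $\psi$ on a neighborhood of $X$ (onto an open product region $\cU\subseteq\mbR^q$) and $\chi$ on a neighborhood of $h(X)=0$ (onto an open set $\cV\subseteq\mbR^k$), with $\psi(X)=0$ and $\chi(0)=0$, such that in these coordinates $h$ takes the linearized form $(\chi\circ h\circ\psi^{-1})(u_1,\dots,u_q)=(u_1,\dots,u_r,0,\dots,0)$.

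Using this normal form, I would identify $\kanifold$ locally with a coordinate subspace. Because $\chi$ is a diffeomorphism fixing the origin, the condition $h(\tilde X)=0$ holds if and only if $\chi(h(\tilde X))=0$, which by the normal form is equivalent to $u_1=\cdots=u_r=0$, where $u=\psi(\tilde X)$. Thus, on the chart, $\kanifold$ corresponds exactly to $\{u\in\cU\mid u_1=\cdots=u_r=0\}$. I would take $\cU$ to be a product of intervals so that the coordinate projection $P(u):=(0,\dots,0,u_{r+1},\dots,u_q)$ maps $\cU$ into itself; this guarantees that for $\tilde X$ in a small enough $\neighbor_2$, the point $\tilde X_0:=\psi^{-1}(P(\psi(\tilde X)))$ indeed lies in $\kanifold$.

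The error bound then follows from a two-sided Lipschitz estimate. On the one hand, $\dist(\tilde X,\kanifold)\le\|\tilde X-\tilde X_0\|\le L_{\psi^{-1}}\|\psi(\tilde X)-P(\psi(\tilde X))\|_2=L_{\psi^{-1}}\|(u_1,\dots,u_r)\|_2$, where $L_{\psi^{-1}}$ is a Lipschitz constant of $\psi^{-1}$ on the relevant compact set. On the other hand, the normal form gives $(u_1,\dots,u_r,0,\dots,0)=\chi(h(\tilde X))$, so $\|(u_1,\dots,u_r)\|_2=\|\chi(h(\tilde X))-\chi(0)\|_2\le L_\chi\|h(\tilde X)\|_2$, with $L_\chi$ a Lipschitz constant of $\chi$ near $0$. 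Combining the two inequalities yields $\dist(\tilde X,\kanifold)\le\rho\|h(\tilde X)\|_2$ with $\rho:=L_{\psi^{-1}}L_\chi$, as claimed.

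The main obstacle I anticipate is the bookkeeping of neighborhoods: I must shrink $\neighbor_2$ so that the chart-space projection $P$ keeps $\tilde X_0$ inside the domain on which the normal form and the Lipschitz constants are valid. Choosing the chart image $\cU$ to be a product region makes $P$ a self-mapping and resolves this cleanly; the remaining care is simply to ensure that $\psi$, $\psi^{-1}$, and $\chi$ are Lipschitz on the compact closures of the shrunken neighborhoods, which holds automatically since each is smooth.
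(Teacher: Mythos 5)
Your proposal is correct and follows essentially the same route as the paper's proof: apply the constant rank theorem to obtain a normal form $(\chi\circ h\circ\psi^{-1})(u)=(u_1,\dots,u_r,0,\dots,0)$, identify $\kanifold$ locally with the coordinate subspace $u_1=\cdots=u_r=0$, project onto it in chart coordinates, and combine the Lipschitz constants of $\psi^{-1}$ and $\chi$ to get $\rho$. Your extra care in choosing the chart image as a product region so that the projection is a self-mapping is a slight refinement over the paper's argument, which leaves that point implicit.
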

\begin{proof}
By the constant rank theorem \cite[Theorem~4.12]{lee2012manifolds}, there exist open neighborhoods $\neighbor\subseteq\mathbb{R}^q$ of $X$ and $\wanifold\subseteq\mathbb{R}^k$ of $h(X)=0\in\mbR^k$, a diffeomorphism $\phi:\neighbor\to \neighbor'\subseteq\mbR^q$ with $\phi(X)=0$, and a diffeomorphism $\psi:\wanifold\to \wanifold'\subseteq\mbR^k$ with $\psi(0)=0$, such that in the coordinates $(u,v)\in\mathbb{R}^r\times\mathbb{R}^{q-r}$ one has
\[
(\psi\circ h\circ \phi^{-1})(u,v)=(u,0)\in\mathbb{R}^r\times\mathbb{R}^{\,k-r}.
\]
In these coordinates, we have $\kanifold\cap \neighbor=\{\tilde{X}\in \neighbor\mid \phi(\tilde{X})=(0,v)\ \text{for some}\ v\in \mbR^{q-r}\}$.

Shrink $\neighbor$ if necessary such that $\phi$ and $\phi^{-1}$ are Lipschitz on $\neighbor$ and $\neighbor^\prime$ with constants $L_{\phi}$ and $L_{\phi^{-1}}$, respectively; and similarly, shrink $\wanifold$ such that $\psi$ is Lipschitz on $\wanifold$ with a constant $L_{\psi}$. Moreover, take the neighborhood $\neighbor_2\subseteq \neighbor$ and $h(\neighbor_2)\subseteq \wanifold$. Given any $Y\in \neighbor_2$, let $\phi(Y)=(u,v)$ and $Y_p\in\kanifold$ be the point with the coordinate $\phi(Y_p)=(0,v)$, which leads to
\begin{equation}\label{eq:dist_YK}
\dist(Y,\kanifold)\le\|Y-Y_p\|_2\le L_{\phi^{-1}}\| (u,v)-(0,v)\|_2=L_{\phi^{-1}}\|u\|_2.
\end{equation}
On the other hand, it holds that $\|u\|_2=\|(u,0)\|_2=\|\psi\circ h\circ \phi^{-1}(u,v)\|_2=\|\psi(h(Y))-\psi(0)\|_2\le L_{\psi}\|h(Y)\|_2$. Combining it with~\eqref{eq:dist_YK} gives $\dist(Y,\kanifold)\le L_{\phi^{-1}}L_{\psi}\|h(Y)\|_2$. Setting $\rho:=L_{\phi^{-1}}L_{\psi}$ completes the proof.
\end{proof}

Subsequently, we show that when two manifolds intersect transversally, the distance to their intersection can be bounded by the distances to each manifold. This property resonates with Assumption~\ref{assu:MintersecK}(ii).
\begin{lemma}\label{lem:transverse_local_error_bound}
Let $\kanifold_1,\kanifold_2\subseteq\mathbb{R}^q$ be smooth embedded manifolds, and let $X\in \kanifold_1\cap\kanifold_2$. Suppose that $\kanifold_1$ and $\kanifold_2$ intersect transversally at $X$, i.e., $\tangent_{\kanifold_1}(X)+\tangent_{\kanifold_2}(X)=\mathbb{R}^q$. Then there exist a neighborhood $\neighbor\subseteq\mathbb{R}^q$ of $X$ and a constant $C>0$ such that
\[
\dist(Y,\kanifold_1\cap\kanifold_2)\ \le\ C\Bigl(\dist(Y,\kanifold_1)+\dist(Y,\kanifold_2)\Bigr)\ \ \text{for all}\ Y\in \neighbor.
\]
\end{lemma}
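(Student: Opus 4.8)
The plan is to reduce the statement to the error bound already established in Lemma~\ref{lem:local_error_bound_constant_rank} by encoding the transversal intersection as the zero set of a single submersion. Since $\kanifold_1$ and $\kanifold_2$ are smooth embedded manifolds, near $X$ each $\kanifold_i$ coincides with the zero set of a smooth map $h_i:\neighbor\to\mbR^{k_i}$ whose differential $\diff (h_i)_X$ is surjective with $\ker \diff (h_i)_X=\tangent_{\kanifold_i}(X)$, where $k_i$ denotes the codimension of $\kanifold_i$. I would then form the stacked map $H:=(h_1,h_2):\neighbor\to\mbR^{k_1+k_2}$ and observe that $\ker \diff H_X=\tangent_{\kanifold_1}(X)\cap\tangent_{\kanifold_2}(X)$, so that $\dim \ima \diff H_X=q-\dim(\tangent_{\kanifold_1}(X)\cap\tangent_{\kanifold_2}(X))$. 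By the inclusion--exclusion identity $\dim(\tangent_{\kanifold_1}(X)+\tangent_{\kanifold_2}(X))=(q-k_1)+(q-k_2)-\dim(\tangent_{\kanifold_1}(X)\cap\tangent_{\kanifold_2}(X))$, the transversality hypothesis $\tangent_{\kanifold_1}(X)+\tangent_{\kanifold_2}(X)=\mbR^q$ is exactly equivalent to $\dim(\tangent_{\kanifold_1}(X)\cap\tangent_{\kanifold_2}(X))=q-k_1-k_2$, hence to $\diff H_X$ being surjective. Thus $H$ is a submersion at $X$, of constant (full) rank on a neighborhood of $X$, with $\kanifold_1\cap\kanifold_2$ locally equal to $H^{-1}(0)$.

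With this reduction in hand, Lemma~\ref{lem:local_error_bound_constant_rank} applies to $H$, yielding a neighborhood $\neighbor'$ of $X$ and a constant $\rho>0$ with $\dist(Y,\kanifold_1\cap\kanifold_2)\le \rho\|H(Y)\|_2\le \rho(\|h_1(Y)\|_2+\|h_2(Y)\|_2)$ for $Y\in\neighbor'$; here I use that the distance to the full intersection is no larger than the distance to its local piece $H^{-1}(0)$. It then remains to bound each $\|h_i(Y)\|_2$ by $\dist(Y,\kanifold_i)$. Since $h_i$ is smooth it is $L_i$-Lipschitz on a neighborhood of $X$, and it vanishes on $\kanifold_i$; choosing $Y_i\in\kanifold_i$ with $\|Y-Y_i\|_2\le \dist(Y,\kanifold_i)+\varepsilon$ gives $\|h_i(Y)\|_2=\|h_i(Y)-h_i(Y_i)\|_2\le L_i\|Y-Y_i\|_2$, and letting $\varepsilon\to 0$ produces $\|h_i(Y)\|_2\le L_i\dist(Y,\kanifold_i)$. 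Combining the two estimates yields the claim with $C=\rho\max\{L_1,L_2\}$.

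The work here is technical care rather than a deep obstacle, and it concentrates in the localization: the defining functions $h_i$ represent $\kanifold_i$ only on a chart near $X$, whereas $\dist(Y,\kanifold_i)$ refers to the entire manifold, so I must ensure the near-minimizer $Y_i$ stays inside the Lipschitz neighborhood. This is arranged by noting $\dist(Y,\kanifold_i)\le\|Y-X\|_2$ (as $X\in\kanifold_i$), whence $\|Y_i-X\|_2\le \|Y_i-Y\|_2+\|Y-X\|_2\le 2\|Y-X\|_2+\varepsilon$, so shrinking $\neighbor$ keeps all relevant points in the chart. The only genuinely structural point is the equivalence between transversality and surjectivity of $\diff H_X$; once that is in place, the estimate is a direct assembly of Lemma~\ref{lem:local_error_bound_constant_rank} with the elementary Lipschitz bound, and this lemma then feeds back into the verification of Assumption~\ref{assu:errorbound}(ii) for $\manifold\cap\kanifold$ in the proof of Theorem~\ref{the:expressions_McapK}.
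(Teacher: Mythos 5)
Your proposal is correct and follows essentially the same route as the paper's proof: represent each $\kanifold_i$ locally as the zero set of a full-rank defining map $h_i$, stack them into $H=(h_1,h_2)$, invoke Lemma~\ref{lem:local_error_bound_constant_rank} on $H^{-1}(0)$, and convert $\|h_i(Y)\|_2$ into $\dist(Y,\kanifold_i)$ via local Lipschitzness. You additionally spell out two points the paper leaves implicit---the dimension-count showing transversality is equivalent to surjectivity of $\diff H_X$, and the verification that the near-minimizers $Y_i$ stay within the chart---both of which are correct and welcome refinements rather than a different argument.
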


\begin{proof}
Since $\kanifold_i$ are embedded, according to \cite[Theorem 8.75]{boumal2023introduction}, there exist neighborhoods $\neighbor_i$ of $X$ and smooth $h_i:\neighbor_i\to\mathbb{R}^{k_i}$ with full-rank Jacobians at $X$, such that $\kanifold_i\cap \neighbor_i=\{\,\tilde X\in \neighbor_i\mid\ h_i(\tilde X)=0\}$ for $i=1,2$.

Let $\neighbor:=\neighbor_1\cap \neighbor_2$ and $h:\neighbor\to \mbR^{k_1}\times \mbR^{k_2}:\, \tilde{X}\mapsto(h_1(\tilde{X}),h_2(\tilde{X}))$. Transversality at $X$ is equivalent to $\rank(\diff h_X)=k_1+k_2$. By continuity of $\diff h$, shrinking $\neighbor$ if necessary, we may assume $\rank(\diff h_X)\equiv k_1+k_2$ on $\neighbor$. Apply Lemma~\ref{lem:local_error_bound_constant_rank} on the manifold $\{\tilde{X}\in \neighbor\mid h(\tilde{X})=0\}$
to give a $\rho_h>0$ such that
\begin{equation}\label{eq:EB-int}
\dist(Y,\kanifold_1\cap\kanifold_2)\le\rho_h\|h(Y)\|_2
\ \ \text{for all}\ Y\in\neighbor.
\end{equation}
After possibly shrinking the neighborhood $\neighbor$, there exist $L_i>0$ such that $h_i$ are $L_i$-Lipschitz on $\neighbor$ ($i=1,2$). 

Consequently, give any $Y\in\neighbor$, we can pick $\projection_i(Y)\in\kanifold_i$ with $\|Y-\projection_i(Y)\|_2=\dist(Y,\kanifold_i)$, and thus we have $\|h_i(Y)\|_2=\|h_i(Y)-h_i(\projection_i(Y))\|_2\le L_i\dist(Y,\kanifold_i)$ for $i=1,2$. Additionally, incorporating $\|h(Y)\|_2\le \|h_1(Y)\|_2+\|h_2(Y)\|_2$ into \eqref{eq:EB-int} yields $\dist(Y,\kanifold_1\cap\kanifold_2)\le\rho_h\!\left(L_1\,\dist(Y,\kanifold_1)+L_2\,\dist(Y,\kanifold_2)\right)$, and taking $C:=\rho_h\max\{L_1,L_2\}$ arrives at the conclusion.
\end{proof}

\bigskip

We are now in a position to prove Theorem~\ref{the:expressions_McapK}. The proof proceeds by combining Lemma~\ref{lem:local_error_bound_constant_rank} and Lemma~\ref{lem:transverse_local_error_bound} to establish the error bound property of $\bmanifold \cap \bkanifold$ in the auxiliary space $\mbR^{\bq}$, and then transferring this property to $\manifold \cap \kanifold$ via the smooth mapping $\phi:\mbR^{\bq}\to\mbR^q$. As a result, $\manifold \cap \kanifold$ satisfies Assumption~\ref{assu:errorbound}, and thus falls within the scope of Theorem~\ref{the:cal_tangentsets}.

\medskip

\noindent\textit{Proof of Theorem~\ref{the:expressions_McapK}}\ \ \ 
Since $\manifold$ satisfies Assumption~\ref{assu:errorbound} and $h$ is smooth, we can take a neighborhood $\neighbor_p\subseteq\mbR^q$ of $X$ such that $c_1$ and $h$ are $L_c$- and $L_h$-Lipschitz on $\neighbor_p$, respectively, and $\dist({Y},\manifold)\le \rho_\manifold\|c_1({Y})\|_2$ holds for a constant $\rho_\manifold>0$ and any ${Y}\in \neighbor_p$. Therefore, we can find a neighborhood $\neighbor\subseteq \neighbor_p$ such that given any $Y\in \neighbor$, there exists $Y_p\in\manifold\cap \neighbor_p$ such that $\|Y-Y_p\|_2=\dist(Y,\manifold)$. 

Since $\phi_{\bmanifold}:\bmanifold\to\manifold$ is smooth and open at $x\in\phi^{-1}(X)$, we can shrink $(\neighbor_p,\neighbor)$ until it admits a neighborhood $\lineB_p\subseteq\mbR^{\bq}$ of $x$ such that $\manifold\cap\neighbor_p\subseteq \phi(\bmanifold\cap\lineB_p)$ and $\phi$ is $L_\phi$-Lipschitz on $\lineB_p$. Therefore, we can find a preimage of $Y_p$, denoted by $y_p\in\bmanifold\cap\lineB_p$. By Lemma~\ref{lem:transverse_local_error_bound} and shrinking $(\neighbor_p,\neighbor,\lineB_p)$ if necessary, we can find a $z_p\in(\bmanifold\cap\bkanifold)\cap\lineB_p$ such that $\|y_p-z_p\|_2=\dist(y_p,\bmanifold\cap\bkanifold)$, and the transversality of $\bmanifold\cap\bkanifold$ reveals that for a constant $C>0$, we have
\begin{equation}\label{eq:ypzp}
\|y_p-z_p\|_2\le C\kh{\dist(y_p,\bmanifold)+\dist(y_p,\bkanifold)}.    
\end{equation}
Note that $\dist(y_p,\bmanifold)=0$ due to $y_p\in\bmanifold$. Moreover, since $\bkanifold=\{\tilde{x}\in\mbR^{\bq}\mid h\circ\phi (\tilde{x})=0\}$ satisfies Assumption~\ref{assu:MintersecK}(i), we apply Lemma~\ref{lem:local_error_bound_constant_rank} on $\bkanifold$ to obtain $\dist(y_p,\bkanifold)\le \rho_{\bkanifold}\|h(\phi(y_p))\|_2$ for a constant $\rho_{\kanifold}>0$. Therefore, \eqref{eq:ypzp} indicates that
\begin{equation}
    \|y_p-z_p\|_2\le C\dist(y_p,\bkanifold)\le C\rho_{\bkanifold}\|h(\phi(y_p))\|_2 = C\rho_{\bkanifold}\|h(Y_p)\|_2
\end{equation}
Consequently, letting $Z_p=\phi(z_p)\in\manifold\cap\kanifold$, it holds that for all $Y\in\neighbor$,
\begin{equation*}
\begin{aligned}
    \|Y-Z_p\|_2 &\le \|Y-Y_p\|_2 + \|Y_p-Z_p\|_2 
    \\
    &\le \rho_\manifold\|c_1(Y)\|_2 + L_{\phi}\|y_p-z_p\|_2   
    \\
    &\le \rho_\manifold\|c_1(Y)\|_2 + L_{\phi}C\rho_{\bkanifold}\|h(Y_p)\|_2
    \\
    &\le \rho_\manifold\|c_1(Y)\|_2 + L_{\phi}C\rho_{\bkanifold} \kh{L_h\|Y-Y_p\|_2 + \|h(Y)\|_2}
    \\
    &\le \rho_\manifold(1+L_{\phi}L_hC\rho_{\bkanifold} )\|c_1(Y)\|_2 + L_{\phi}C\rho_{\bkanifold} \|h(Y)\|_2,
\end{aligned}
\end{equation*}
where we employ the the triangle inequality $\|h(Y_p)\|_2 - \|h(Y)\|_2\le \|h(Y_p)-h(Y)\|_2$, and the Lipschitz continuity of $\phi$ and $h$. Therefore, setting $\rho=\rho_\manifold(1+L_{\phi}L_hC\rho_{\bkanifold} )+L_{\phi}C\rho_{\bkanifold}$ concludes that $\dist(Y,\manifold\cap\kanifold)\le \rho\|(c_1(Y),h(Y))\|_2$ for all $Y\in\neighbor$, which together with the locally Lipschitz property of $(c_1,h)$, verifies that $\manifold\cap\kanifold$ satisfies Assumption~\ref{assu:errorbound}. Applying Theorem~\ref{the:cal_tangentsets} to $\manifold\cap\kanifold=\{\tilde{X}\in\mbR^q\mid c_1(\tilde{X})=0,\,h(\tilde{X})=0\}$ gives
\begin{align}
    \Btangent_{\manifold\cap\kanifold}(X) &= \{\eta\in\mbR^q\mid c_1^\prime(X;\eta)=0,\,h^\prime(X;\eta)=0\}    \nonumber
    \\
    &= \{\eta\in\mbR^q\mid c_1^\prime(X;\eta)=0\}\cap\{\eta\in\mbR^q\mid h^\prime(X;\eta)=0\}   \nonumber
    \\
    &= \Btangent_{\manifold}(X)\cap\Btangent_{\kanifold}(X),  \nonumber
\end{align}
where the last equality holds since both $\manifold$ and $\kanifold$ satisfy Assumption~\ref{assu:errorbound}. Similarly, letting $\eta\in\Btangent_{\manifold\cap\kanifold}(X)$, we have
\begin{align}
    \tangenttwo_{\manifold\cap\kanifold}(X;\eta) &= \{\zeta\in\mbR^q\mid c_1^{\prime\prime}(X;\eta,\zeta)=0,\,h^{\prime\prime}(X;\eta,\zeta)=0\}    \nonumber
    \\
    &= \{\zeta\in\mbR^q\mid c_1^{\prime\prime}(X;\eta,\zeta)=0\}\cap\{\zeta\in\mbR^q\mid h^{\prime\prime}(X;\eta,\zeta)=0\}   \nonumber
    \\
    &= \tangenttwo_{\manifold}(X;\eta)\cap\tangenttwo_{\kanifold}(X;\eta). \nonumber
\end{align}
\qed

\section{Tangent sets to $\boundedrank\cap\hanifold$}\label{app:McapH_application}
In this section, which supplements section~\ref{sec:lowrank_rectangular}, we apply Theorem~\ref{the:expressions_McapK} to sets of low-rank rectangular matrices in the form of $\boundedrank\cap\hanifold$, recovering the first-order results in \cite{li2023normalboundedaffine} and \cite{yang2025spacedecouple}, respectively, and further characterizing the second-order tangent sets. Finally, we consider the case $\hanifold = \matrixhyperboloid$ and derive the first- and second-order tangent sets of $\boundedrank \cap \matrixhyperboloid$, which represent novel contributions.

The derivation proceeds by checking that $\boundedrank\cap\hanifold$ satisfies Assumption~\ref{assu:MintersecK}, which mainly resorts to the LR parameterization for $\boundedrank$,
\begin{equation}\label{eq:LR_para}
(\manifold_{\mathrm{LR}},\phi_{\mathrm{LR}})=(\mbR^{m\times r}\times \mbR^{n\times r},\, (L,R)\mapsto LR^\top).
\end{equation}
Then, a straightforward application of Theorem~\ref{the:expressions_McapK} leads to the intersection rules
\begin{equation}\label{eq:expressions_McapH}
\begin{aligned}
    \Btangent_{\boundedrank\cap\hanifold}(X) &= \Btangent_{\boundedrank}(X)\cap\Btangent_{\hanifold}(X),
    \\
    \tangenttwo_{\boundedrank\cap\hanifold}(X;\eta) &= \tangenttwo_{\boundedrank}(X;\eta)\cap\tangenttwo_{\hanifold}(X;\eta)\ \ \text{for any}\ \eta\in\Btangent_{\boundedrank\cap\hanifold}(X).
\end{aligned}
\end{equation}

To facilitate the discussion, we note that $\manifoldLR$ coincides with the whole ambient Euclidean space $\mbR^{m\times r}\times \mbR^{n\times r}$, and thus the transversality property in Assumption~\ref{assu:MintersecK}(ii) naturally holds. Moreover, $\phiLR|_{\manifoldLR}$ is open at $(L, R)\in\manifoldLR$ if and only if $\rank(L) = \rank(R) = \rank(LR^\top)$, according to~\cite[Theorem~2.3, Proposition~2.8]{levin2025effect}. Therefore, when $(\manifoldLR, \phiLR)$ is chosen as the smooth parameterization of $\boundedrank$, conditions (ii) and (iii) of Assumption~\ref{assu:MintersecK} are automatically satisfied. It then remains to verify condition (i) for applying Theorem~\ref{the:expressions_McapK} to $\boundedrank \cap \hanifold$. This observation motivates the following corollary, which can be viewed as an instance of Theorem~\ref{the:expressions_McapK} specified for $\boundedrank\cap\hanifold$.

\begin{corollary}\label{cor:expressions_MrcapH}
    Suppose that $h:\mbRmn\to\mbR^q$ is smooth and $\hanifold=\{\tilde{X}\in\mbRmn\mid h(\tilde{X})=0\}$ satisfies Assumption~\ref{assu:errorbound} at point $X\in\boundedrank\cap\hanifold$. Additionally, the differential of the $h\circ\phiLR$ has constant rank in a neighborhood of $$\bhanifold:=\{(L,R)\in\mbR^{m\times r}\times \mbR^{n\times r}\mid h(\phiLR(L,R))=0\}=\phiLR^{-1}(\hanifold).$$ Then, the intersection rules~\eqref{eq:expressions_McapH} for the associated tangent sets hold.
\end{corollary}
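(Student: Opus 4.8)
The plan is to show that the pair $(\boundedrank,\hanifold)$, equipped with the LR parameterization $(\manifoldLR,\phiLR)$ from~\eqref{eq:LR_para}, satisfies every hypothesis of Assumption~\ref{assu:MintersecK} at $X$, so that the intersection rules~\eqref{eq:expressions_McapH} follow at once from Theorem~\ref{the:expressions_McapK} with $(\manifold,\kanifold)=(\boundedrank,\hanifold)$. The argument is therefore purely a verification: each required condition is either established earlier in the paper or supplied by the corollary's hypothesis.

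First I would record that both sets satisfy Assumption~\ref{assu:errorbound}. For $\boundedrank$ this is the content of section~\ref{sec:2tangentset}: through the characterization~\eqref{eq:sigmarboundedrank}, the mapping $\sigmar$ is Lipschitz by Weyl's inequality and furnishes the error bound $\dist(X,\boundedrank)\le(\min\{m,n\}-r)^{1/2}\sigmar(X)$. For $\hanifold$ the property holds by assumption. Next, condition~(i) of Assumption~\ref{assu:MintersecK}---that $\diff(h\circ\phiLR)$ have constant rank in a neighborhood of $\bhanifold$---is exactly the standing hypothesis of the corollary, which in turn makes $\bhanifold=\phiLR^{-1}(\hanifold)$ a smooth manifold.

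It then remains to check the transversality~(ii) and openness~(iii) conditions, both of which simplify drastically thanks to the structure of the LR parameterization. Since $\manifoldLR=\mbR^{m\times r}\times\mbR^{n\times r}$ is the full ambient space, the tangent space $\tangent_{\manifoldLR}(y)$ equals the whole space at every $y$, so $\tangent_{\manifoldLR}(y)+\tangent_{\bhanifold}(y)=\mbR^{\bq}$ holds trivially, yielding~(ii). For~(iii) I would exhibit a \emph{balanced} factorization of $X$: writing the compact SVD $X=U\varSigma V^\top$ with $\rank(X)=s$, set $L=[\,U\varSigma^{1/2}\ \ 0\,]$ and $R=[\,V\varSigma^{1/2}\ \ 0\,]$, padded with zero columns to sizes $m\times r$ and $n\times r$. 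Then $\phiLR(L,R)=LR^\top=X\in\hanifold$, so $(L,R)\in\manifoldLR\cap\bhanifold$, and $\rank(L)=\rank(R)=s=\rank(LR^\top)$; by the openness criterion of~\cite[Theorem~2.3, Proposition~2.8]{levin2025effect}, this guarantees that $\phiLR|_{\manifoldLR}$ is open at $(L,R)$.

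With all hypotheses of Assumption~\ref{assu:MintersecK} verified at $X$, Theorem~\ref{the:expressions_McapK} applies verbatim and delivers~\eqref{eq:expressions_McapH}. The only genuine subtlety---and the step I would treat most carefully---is the construction in~(iii): one must choose the preimage so that it simultaneously lies in $\bhanifold$ and satisfies the rank-matching condition that triggers openness. Everything else is a direct transcription of the discussion preceding the corollary into the framework of Assumption~\ref{assu:MintersecK}.
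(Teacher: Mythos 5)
Your proposal is correct and follows essentially the same route as the paper: verify Assumption~\ref{assu:MintersecK} for $(\boundedrank,\hanifold)$ with the LR parameterization---transversality is trivial because $\manifoldLR$ is the full ambient space, condition~(i) is the corollary's hypothesis, and openness follows from the rank-matching criterion of Levin et al.---and then invoke Theorem~\ref{the:expressions_McapK}. Your explicit balanced factorization $L=[\,U\varSigma^{1/2}\ \ 0\,]$, $R=[\,V\varSigma^{1/2}\ \ 0\,]$ for condition~(iii) is a detail the paper leaves implicit, and it is a valid and welcome addition.
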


\subsection{$\hanifold$ as an affine manifold}\label{app:Haffine}
Consider the case $\hanifold=\affine(m,n)= \{X\in\mbRmn\,|\ \aanifold(X)-b={0}\}$. Let $A_1,A_2,\ldots,A_q\in\mbRmn$ be the matrices constituting the mapping $\aanifold:\mbRmn\to\mbR^q$, i.e., $\aanifold(X)_i=\innerp{A_i,X}$ for $i=1,2,\ldots,q$. Given $X\in\boundedrank$ with $\rank(X)=s$ and the SVD $X=U\varSigma V^\top$, denoting
\begin{equation*}
    T^i_X = \left[ \begin{matrix}
	U^{\top}A_iV&		U^{\top}A_iV_{\bot}\\
	U_{\bot}^{\top}A_iV&		0\\
\end{matrix} \right]\ \ \text{and}\ \ E^i_X=U^\top A_i V\ \ \text{for}\ i=1,2,\ldots,q,
\end{equation*}
Li and Luo~\cite{li2023normalboundedaffine} proposed the following constraint qualification.
\begin{assumption}{\cite[Assumptions~3.3 and 3.4]{li2023normalboundedaffine}}\label{assu:affinemanifold}
    % Given $X\in\boundedrank$ with $\rank(X)=s$ and the SVD $X=U\varSigma V^\top$. 
    When $s=r$, the matrices $T^i_X$, $i=1,2,...,q$, are linearly independent; when $s<r$, the matrices $E^i_X$, $i=1,2,...,q$, are linearly independent.
\end{assumption}

Taking into account the LR parameterization $(\manifoldLR,\phiLR)$ in~\eqref{eq:LR_para}, we present the preimage of $\hanifold=\affine(m,n)$ under the mapping $\phiLR$ as follows,
\begin{equation}\label{eq:barH_affine}
    \bhanifold = \phiLR^{-1}(\hanifold)=\{(L,R)\in\mbR^{m\times r}\times \mbR^{n\times r}\mid \innerp{A_i,LR^\top}-b_i=0,\,i=1,2,\ldots,q\}.
\end{equation}
In addition, the set $\bhanifold$ can be characterized as the level set of $\bar{h}:\mbR^{m\times r}\times \mbR^{n\times r}\to\mbR^{q}:\,\bar{h}(L,R)_i:=\innerp{A_i,LR^\top}-b_i,\ \text{for}\ i=1,2,\ldots,q$. In this view, we then show that $\bhanifold$ is indeed an embedded submanifold in $\mathbb{R}^{m\times r}\times\mathbb{R}^{n\times r}$ under Assumption~\ref{assu:affinemanifold}.

\begin{proposition}\label{pro:manifold_affine}
    Suppose that Assumption~\ref{assu:affinemanifold} holds at all $X\in\boundedrank\cap\affine(m,n)$. The set $\bhanifold = \phiLR^{-1}(\affine(m,n))$ is a smooth submanifold embedded in $\mathbb{R}^{m\times r}\times\mathbb{R}^{n\times r}$ of dimension $(mr+nr-q)$.
\end{proposition}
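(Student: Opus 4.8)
The plan is to realize $\bhanifold$ as a regular level set and invoke the submersion theorem. First I would write $\bar{h}:\mbR^{m\times r}\times\mbR^{n\times r}\to\mbR^q$, $\bar{h}(L,R)_i=\innerp{A_i,LR^\top}-b_i$, so that $\bhanifold=\bar{h}^{-1}(0)$; by the regular value theorem \cite[Corollary~5.14]{lee2012manifolds} it then suffices to show that $\diff\bar{h}_{(L,R)}$ is surjective onto $\mbR^q$ for every $(L,R)\in\bhanifold$, and the dimension $mr+nr-q$ follows at once as the codimension equal to $\dim\mbR^q$. A direct computation gives $\diff\bar{h}_{(L,R)}[\dot{L},\dot{R}]_i=\innerp{A_i,\dot{L}R^\top+L\dot{R}^\top}=\innerp{A_iR,\dot{L}}+\innerp{A_i^\top L,\dot{R}}$, so the adjoint reads $(\diff\bar{h}_{(L,R)})^*(c)=(MR,\,M^\top L)$ with $M:=\sum_{i=1}^q c_iA_i$. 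Hence surjectivity of $\diff\bar{h}_{(L,R)}$ is equivalent to injectivity of this adjoint, i.e. to the implication $MR=0,\ M^\top L=0\ \Rightarrow\ c=0$.

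The key step will be to convert the conditions $MR=0$ and $M^\top L=0$, which involve the factors $L,R$, into conditions on $M$ read off from the SVD of $X=LR^\top$. Let $\rank(X)=s$ and $X=U\varSigma V^\top$ be the compact SVD. Since $\ima(U)=\ima(X)\subseteq\ima(L)$ and $\ima(V)=\ima(X^\top)\subseteq\ima(R)$, the identity $M^\top L=0$ forces $\ima(M)\subseteq\ima(L)^\perp\subseteq\ima(U)^\perp$, i.e. $U^\top M=0$; likewise $MR=0$ forces $\ima(M^\top)\subseteq\ima(R)^\perp\subseteq\ima(V)^\perp$, i.e. $MV=0$. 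In particular $U^\top MV=0$, that is, $\sum_{i=1}^q c_iE^i_X=0$. This reduction holds uniformly over all preimages $(L,R)$ of a fixed $X$, including those whose rank lies strictly between $s$ and $r$, since only the inclusions $\ima(U)\subseteq\ima(L)$ and $\ima(V)\subseteq\ima(R)$ are used.

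It then remains to invoke Assumption~\ref{assu:affinemanifold} according to the value of $s$. When $s<r$, the relation $\sum_i c_iE^i_X=0$ together with the linear independence of $\{E^i_X\}$ yields $c=0$. When $s=r$, every preimage must satisfy $\rank(L)=\rank(R)=r$, forcing $\ima(L)=\ima(U)$ and $\ima(R)=\ima(V)$; hence $M^\top L=0$ and $MR=0$ become \emph{equivalent} to $U^\top M=0$ and $MV=0$, which is precisely $\sum_i c_iT^i_X=0$ (the blocks $U^\top MV$, $U^\top MV_\bot$, $U_\bot^\top MV$ all vanish), and linear independence of $\{T^i_X\}$ gives $c=0$. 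In either case $\diff\bar{h}_{(L,R)}$ is surjective, so $0$ is a regular value of $\bar{h}$ and $\bhanifold$ is an embedded submanifold of $\mbR^{m\times r}\times\mbR^{n\times r}$ of dimension $mr+nr-q$.

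The main obstacle I anticipate is handling the rank-deficient preimages correctly, where the defining map $\bar{h}$ looks degenerate. The subtlety is that the \emph{same} pair of linear conditions $U^\top M=0$, $MV=0$ must be tested against two different independence hypotheses: the weaker central-block condition on $E^i_X$ in the regime $s<r$ and the full three-block condition on $T^i_X$ in the regime $s=r$. What makes the weaker hypothesis sufficient when $s<r$ is that $U^\top M=0$ and $MV=0$ \emph{automatically} entail the central relation $U^\top MV=0$; the care needed is to derive $U^\top M=0$ and $MV=0$ at \emph{every} preimage, not merely the ``minimal'' factorizations $L=[U\varSigma^{1/2}\ 0]$, $R=[V\varSigma^{1/2}\ 0]$, which is exactly where the inclusions $\ima(U)\subseteq\ima(L)$ and $\ima(V)\subseteq\ima(R)$ are essential. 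Finally, surjectivity on the level set extends to a neighborhood by lower semicontinuity of the rank, which recovers the constant-rank hypothesis required by Corollary~\ref{cor:expressions_MrcapH}.
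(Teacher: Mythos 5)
Your proof is correct, but it takes the dual route to the one in the paper. The paper establishes surjectivity of $\diff\bar{h}_{(L,R)}$ \emph{primally}: it parameterizes $(L,R)=(UB_1+U_\bot B_2,\,VC_1+V_\bot C_2)$, extracts the algebraic relations $B_1C_1^\top=\varSigma$, $B_1C_2^\top=B_2C_1^\top=B_2C_2^\top=0$ from the SVD of $X=LR^\top$, and then exhibits explicit perturbations --- $(\dot L,\dot R)=((U\dot B_1+U_\bot\dot B_2)C^{-\top},(V\dot C_1+V_\bot\dot C_2)B^{-\top})$ when $s=r$, and $(\dot L,\dot R)=(0,\,V\dot C\varSigma^{-1}C_1)$ when $s<r$ --- whose images pair arbitrarily against $T^i_X$ or $E^i_X$. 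You instead show injectivity of the adjoint $c\mapsto(MR,\,M^\top L)$ with $M=\sum_i c_iA_i$, and the only structural fact you need is the pair of range inclusions $\ima(U)\subseteq\ima(L)$, $\ima(V)\subseteq\ima(R)$, which immediately convert $M^\top L=0$, $MR=0$ into $U^\top M=0$, $MV=0$ and hence into $\sum_i c_iE^i_X=0$ (and, when $s=r$, into $\sum_i c_iT^i_X=0$, since the lower-right block of $T^i_X$ is zero by definition). Your version buys a more uniform treatment of the possibly rank-deficient factorizations of a fixed $X$ --- you never need to invert $B$, $C$, or $\varSigma$, nor to solve for specific tangent directions --- at the cost of one extra (standard) reduction from surjectivity to injectivity of the adjoint. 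Both arguments land on exactly the same two linear-independence hypotheses of Assumption~\ref{assu:affinemanifold}, and your closing remark that full rank on the level set propagates to a neighborhood by lower semicontinuity of the rank (capped above by $q$) correctly supplies the constant-rank hypothesis of Corollary~\ref{cor:expressions_MrcapH}.
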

\begin{proof}
We compute the differential of $\bar{h}$ below,
\begin{equation}\label{eq:diffbarh_affine}
    \kh{\diff \bar{h}_{(L,R)}[\dot{L},\dot{R}]}_i =\innerp{A_i,L\dot{R}^\top} + \innerp{A_i,\dot{L}R^\top},\ \text{for}\ i=1,2,\ldots,q,
\end{equation}    
and it suffices to prove that $\rank(\diff\bar{h}_{(L,R)})=q$ for any $(L,R)\in\bhanifold$, according to~\cite[Corollary 5.14]{lee2012manifolds}. To this end, let $X=LR^\top$ with the SVD $X=U\varSigma V^\top$.

We first consider the case $\rank(X)=r$, which indicates that $(L,R)$ can be written as $(L,R)=(UB,VC)$ for some invertible $B,C\in\mbR^{r\times r}$. Taking $(\dot{L},\dot{R})=((U\dot{B}_1+U_\bot \dot{B}_2)C^{-\top},(V\dot{C}_1+V_\bot \dot{C}_2)B^{-\top})$ in~\eqref{eq:diffbarh_affine} yields
\begin{equation*}
\diff \bar{h}_{(L,R)}[\dot{L},\dot{R}]_i = \Big\langle T^i_X, \left[ \begin{matrix}
\dot{C}_{1}^{\top}+\dot{B}_1&		\dot{C}_{2}^{\top}\\
\dot{B}_2&		0\\
\end{matrix} \right] \Big\rangle\ \ \text{for}\ i=1,2,\ldots,q.
\end{equation*}
The linear independence of $T^i_X$ and the arbitrariness of $\dot{B}_1,\dot{C}_1\in\mbR^{r\times r}$, $\dot{B}_2\in\mbR^{(m-r)\times r}$, $\dot{C}_2\in\mbR^{(n-r)\times r}$ reveal that $\rank \diff\bar{h}_{(L,R)} = q$.

The second case is $\rank(X)=s<r$. We parameterize $(L,R)$ by $(L,R)=(UB_1+U_\bot B_2, VC_1+V_\bot C_2)$, and the SVD of $X$ implies that
\begin{equation*}
    U\varSigma V^\top = LR^\top = UB_1C^\top_1 V^\top + UB_1C_2^\top V_\bot^\top + U_\bot B_2C_1^\top V^\top + U_\bot B_2 C_2^\top V_\bot^\top.
\end{equation*}
Therefore, we have $B_1^{}C_1^\top=\varSigma$, $B_1^{}C^\top_2=0$, $B_2^{}C_1^\top=0$, and $B_2^{}C^\top_2=0$. Taking $(\dot{L},\dot{R})=(0,V\dot{C}\varSigma^{-1}C_1)$ with $\dot{C}\in\mbR^{s\times s}$ in~\eqref{eq:diffbarh_affine} yields
\begin{equation*}
    \diff \bar{h}_{(L,R)}[\dot{L},\dot{R}]_i =\innerp{A_i,L\dot{R}^\top}=\innerp{ A_i,U\dot{C}^\top V^\top}=\innerp{E^i_X,\dot{C}^\top},\ \text{for}\ i=1,2,\ldots,q.   
\end{equation*}
The linear independence of $E^i_X$ and the arbitrariness of $\dot{C}\in\mbR^{s\times s}$ indicate that $\rank \diff\bar{h}_{(L,R)} = q$.
\end{proof}

The above proof concludes that the differential of $\bar{h}$ has full rank $q$ in the level set $\bhanifold$, and thus applying Corollary~\ref{cor:expressions_MrcapH} directly gives~\eqref{eq:expressions_McapH} with $\hanifold=\affine(m,n)$.

We then delve into the closed-form formula of the tangent cone at a point $X\in\boundedrank\cap\affine(m,n)$ when $\rank(X)=s$. Recalling~\eqref{eq:Btangent_cone_boundedrank}, any $\eta\in\Btangent_{\boundedrank}(X)$ can be parameterized as $\eta=UW_1V^\top + UW_2V^\top_\bot + U_\bot W_3V^\top + U_\bot JV^\top_\bot$ with $\rank(J)\le r-s$. If, additionally, $\eta$ belongs to $\Btangent_{\affine(m,n)}(X)$, i.e., $\innerp{A_i,\eta}=0$ for $i=1,2,\ldots,q$, we have $\innerp{U^\top A_i V, W_1} + \innerp{U^\top A_iV_\bot,W_2} + \innerp{U_\bot^\top A_i V,W_3} + \innerp{U_\bot^\top A_iV_\bot,J}=0$, and then substitute the expression of $T^i_X$ to obtain
\begin{equation*}
\Big\langle T^i_X, \left[ \begin{matrix}
W_1 &		W_2\\
W_3 &		0\\
\end{matrix} \right] \Big\rangle +\innerp{U_\bot^\top A_iV_\bot,J} =0\ \ \text{for}\ i=1,2,\ldots,q.
\end{equation*}
Therefore, the tangent cone to $\boundedrank\cap\affine(m,n)$ admits the following characterizations,
\begin{align}
&\Btangent_{\boundedrank\cap\affine(m,n)}(X) \nonumber
\\[1mm]
=&\ \Btangent_{\boundedrank}(X)\cap\Btangent_{\affine(m,n)}(X)   \nonumber
\\[1mm]
=&\hkh{\left[ U\,\,U_{\bot} \right] \left[ \begin{matrix}
W_1 &	W_2\\
W_3 &   J\\
\end{matrix} \right] \left[ V\,\,V_{\bot} \right] ^{\top}\left|\,\begin{array}{l}
W_1\in\mbR^{s\times s},\, W_2\in\mathbb{R} ^{s\times \left( n-s \right)},
\\
W_3\in\mathbb{R} ^{\left( m-s \right) \times s},\, J\in \mbR^{\left( m-s \right) \times \left( n-s \right)},
\\
\rank(J)\le r-s,
\\[1mm]
\Big\langle T^i_X, \left[ \begin{matrix}
    W_1 &		W_2\\
    W_3 &		0\\
\end{matrix} \right] \Big\rangle +\innerp{U_\bot^\top A_iV_\bot,J} =0, i\in [q] \label{eq:tangent_MHaffine}
\end{array}
\right.},    
\end{align}
where we denote $[q]=\{1,2,\ldots,q\}$. Taking the polar operation on the above equality yields the Fr\'echet normal cone as a byproduct,
\begin{equation}\label{eq:normal_MHaffine}
    \Fnormal_{\boundedrank\cap\affine(m,n)}(X) = \Fnormal_{\boundedrank}(X) + \Fnormal_{\affine(m,n)}(X),
\end{equation}
where $\Fnormal_{\boundedrank}(X)$ is presented in~\eqref{eq:Fnormal_cone_boundedrank} and $\Fnormal_{\affine(m,n)}(X)=\{\sum_{i=1}^q c_iA_i\mid c_i\in\mbR,\,i\in[q]\}$. We remark that normal cone~\eqref{eq:normal_MHaffine} recovers \cite[Theorem 3.7]{li2023normalboundedaffine}, and the developed tangent cone~\eqref{eq:tangent_MHaffine} serves as a new result.

Finally, given any $\eta\in\Btangent_{\boundedrank\cap\affine(m,n)}(X)$, we note that $\tangenttwo_{\affine(m,n)}(X;\eta)=\Btangent_{\affine(m,n)}(X) $ by definition~\eqref{eq:second_tangentcone} and the affine structure of $\affine(m,n)$. Hence, an explicit characterization of $\tangenttwo_{\boundedrank\cap\affine(m,n)}(X;\eta)$ is attainable by intersecting the result in~\eqref{eq:tangenttwo_mr} with the fixed subspace $\Btangent_{\affine(m,n)}(X)$.

\subsection{$\hanifold$ as an orthogonally invariant manifold}\label{app:Horthogonalinvariant}

Yang et al.~\cite{yang2025spacedecouple} considered $\boundedrank\cap\hanifold$ with the $\hanifold$ encompassing $\Fsphere(m,n)$ and $\oblique(m,n)$ as specific instances. They resorted to the concept of ``orthogonal invariance'' as follows.

\begin{assumption}{\cite[Assumption~1]{yang2025spacedecouple}}\label{assu:h_orth_invariant}
    The set $\hanifold$ is the level set of a smooth and orthogonally invariant mapping $h:\,\mathbb{R}^{m\times n}\rightarrow \mathbb{R}^{q}$ in the sense that 
    \begin{equation}\label{eq:orth_invariant_h}
       \hanifold=\{X\in\mbRmn\mid h(X)=0\},\ \ \text{and}\ \  h(XQ) = h(X),\ \text{for all}\ Q\in\orth(n).
    \end{equation}
    Moreover, $h$ has full rank $q$ in $\hanifold$, i.e., $\rank(\diff h_X)=q$ for {all} $X\in\hanifold$.
\end{assumption}
We utilize the parameterization $(\manifoldLR,\phiLR)$ to lift $\hanifold$ through the mapping $\phiLR$:
\begin{equation}\label{eq:barH_orth}
    \bhanifold = \phiLR^{-1}(\hanifold)=\{(L,R)\in\mbR^{m\times r}\times \mbR^{n\times r}\mid \bar{h}(L,R):=h(LR^\top)=0\}.
\end{equation}
Then, the aim is to show that $\diff \bar{h}$ has full rank in the level set $\bhanifold$, and thus $\bhanifold$ is an embedded submanifold; see the following proposition.
\begin{proposition}\label{pro:manifold_orth_inv}
    Suppose that $\hanifold$ satisfies Assumption~\ref{assu:h_orth_invariant}. The set $\bhanifold = \phiLR^{-1}(\hanifold)$ is a smooth submanifold embedded in $\mathbb{R}^{m\times r}\times\mathbb{R}^{n\times r}$ of dimension $(mr+nr-q)$.
\end{proposition}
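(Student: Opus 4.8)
The plan is to show that $\bhanifold$ is the zero level set of the map $\bar h(L,R)=h(LR^\top)$ viewed as a submersion, mirroring the structure of the proof of Proposition~\ref{pro:manifold_affine}. Concretely, it suffices to prove that the differential $\diff\bar h_{(L,R)}$ has full rank $q$ at every $(L,R)\in\bhanifold$; then $0$ is a regular value of $\bar h$ and \cite[Corollary~5.14]{lee2012manifolds} yields that $\bhanifold$ is an embedded submanifold of codimension $q$, hence of dimension $mr+nr-q$. By the chain rule, writing $X=LR^\top$, the differential is $\diff\bar h_{(L,R)}[\dot L,\dot R]=\diff h_X[\dot L R^\top+L\dot R^\top]$, so the task reduces to showing that the linear map $(\dot L,\dot R)\mapsto \diff h_X[\dot L R^\top+L\dot R^\top]$ is surjective onto $\mbR^q$.

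The key structural input is the orthogonal invariance of $h$. Let $\rank(X)=s$ with compact SVD $X=U\varSigma V^\top$, $V\in\stiefel(n,s)$. I will use that $\diff h_X[\Delta]$ depends on $\Delta$ only through its row-space component $\Delta V$; equivalently, each gradient $\nabla h_i(X)$ has row space contained in $\vecspan(V)$, so that $\diff h_X[\Delta]=\diff h_X[(\Delta V)V^\top]$ for every $\Delta\in\mbRmn$. Differentiating the identity $h(X\exp(t\Omega))=h(X)$ at $t=0$ gives $\diff h_X[X\Omega]=0$ for all $\Omega\in\sksym(n)$, which already annihilates the directions $U N V_\bot^\top$; to annihilate the remaining directions $U_\bot N'V_\bot^\top$ one invokes the factorization $h(X)=\tilde h(XX^\top)$ through a smooth $\tilde h$ (valid by Schwarz's theorem on smooth invariants of the compact group $\orth(n)$, or directly from the structural lemma established in \cite{yang2025spacedecouple}), since $\Delta X^\top+X\Delta^\top=0$ whenever $\Delta V=0$. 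Because $\diff h_X$ has rank $q$ by Assumption~\ref{assu:h_orth_invariant}, the induced map $M\mapsto \diff h_X[MV^\top]$ from $\mbR^{m\times s}$ to $\mbR^q$ is then surjective.

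It remains to show that the row-space components $\Delta V=\dot L R^\top V+L\dot R^\top V$ already sweep out all of $\mbR^{m\times s}$. Setting $\dot R=0$ and using $R^\top V=C_1^\top$ with $C_1:=V^\top R\in\mbR^{s\times r}$ gives $\Delta V=\dot L C_1^\top$. From $XV=U\varSigma=LC_1^\top$ and $\rank(X)=s$ it follows that $C_1$ has full row rank $s$, so $C_1^\top\in\mbR^{r\times s}$ has full column rank; hence $\dot L\mapsto \dot L C_1^\top$ is onto $\mbR^{m\times s}$ (e.g.\ $\dot L=M(C_1C_1^\top)^{-1}C_1$ realizes any target $M$). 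Composing with the surjection from the previous step shows $\diff\bar h_{(L,R)}$ has rank $q$, uniformly for both $s=r$ and $s<r$, which finishes the argument. I expect the main obstacle to be the structural lemma of the second paragraph: orthogonal invariance alone (the $\orth(n)$-orbit directions $X\Omega$) does not reach the $U_\bot(\cdot)V_\bot^\top$ block, so one genuinely needs the smooth factorization through $XX^\top$ to guarantee that $\diff h_X$ is blind to everything but $\Delta V$. Once that is secured, the rank computation is the same short argument for every rank of $X$, which is a modest simplification over the case split in Proposition~\ref{pro:manifold_affine}.
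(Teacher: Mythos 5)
Your proof is correct and follows essentially the same route as the paper's: both reduce the claim to showing $\diff\bar h_{(L,R)}$ has full rank $q$ by combining the kernel property $\{BV_\bot^\top\}\subseteq\kernel(\diff h_X)$ (which the paper imports directly from \cite[Proposition~1]{yang2025spacedecouple}, and which you additionally justify via the orbit directions $X\Omega$ together with the smooth factorization $h=\tilde h(XX^\top)$) with the surjectivity of $\dot L\mapsto \dot L R^\top V$ onto $\mbR^{m\times s}$, which rests on $\rank(V^\top R)=s$ exactly as the paper's identities $B_1C_1^\top=\varSigma$, $B_1C_2^\top=0$ do. The only cosmetic difference is your explicit preimage $\dot L=M(C_1C_1^\top)^{-1}C_1$ in place of the paper's $\dot L=\tilde B\varSigma^{-1}B_1$; both yield the same conclusion via \cite[Corollary~5.14]{lee2012manifolds}.
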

\begin{proof}
The differential of $\bar{h}$ can be computed as
\begin{equation}\label{eq:diffbarh_orth}
    \diff \bar{h}_{(L,R)}[\dot{L},\dot{R}]=\diff h_X[L\dot{R}^\top] + \diff h_X[\dot{L}R^\top], 
\end{equation}
where we denote $X=LR^\top$. Suppose that $\rank(X)=s\le r$ and let the SVD of $X$ be $X=U\varSigma V^\top$. We parameterize $(L,R)$ by $(L,R)=(UB_1+U_\bot B_2, VC_1+V_\bot C_2)$, and the SVD of $X$ implies that
\begin{equation*}
    U\varSigma V^\top = LR^\top = UB_1C^\top_1 V^\top + UB_1C_2^\top V_\bot^\top + U_\bot B_2C_1^\top V^\top + U_\bot B_2 C_2^\top V_\bot^\top.
\end{equation*}
Therefore, it holds that $B_1^{}C_1^\top=\varSigma$ and $B_1^{}C_2^\top=0$. Moreover, using \cite[Proposition~1]{yang2025spacedecouple} shows that $\{BV^\top_\bot\mid B\in\mbR^{m\times(n-s)}\} \subseteq\kernel(\diff {h}_X)$. This observation, together with the full-rankness of $\diff h_X$, reveals that any $b\in\mbR^q$ admits a preimage $\eta$ in the form of $\eta = \tilde{B}V^\top \in\mbR^{m\times n}$ for some $\tilde{B}\in\mbR^{m\times s}$, i.e., $\diff h_{X}[\tilde{B}V^\top] =b$. Taking $(\dot{L},\dot{R})=(\tilde{B}\varSigma^{-1}B_1,0)$ in~\eqref{eq:diffbarh_orth} shows that $\diff\bar{h}_{(L,R)}[\dot{L},\dot{R}]=b$. Consequently, the arbitrariness of $b\in\mbR^q$ implies that $\diff \bar{h}$ has the full rank $q$ in $\bhanifold$, which completes the proof by invoking \cite[Corollary 5.14]{lee2012manifolds}.
\end{proof}

Consequently, applying Corollary~\ref{cor:expressions_MrcapH} to $\boundedrank \cap \hanifold$ yields the intersection rules for the tangent sets in~\eqref{eq:expressions_McapH}. Motivated by this result, we now derive the closed-form expressions for the cases $\hanifold = \Fsphere(m,n)$ and $\hanifold = \oblique(m,n)$. Specifically, we note that $\tangent_{\Fsphere(m,n)}(X)=\{\eta\in\mbRmn\mid\trace(X^\top \eta)=0\}$, and thus
\begin{equation*}
\begin{aligned}
\Btangent_{\boundedrank\cap\Fsphere(m,n)}(X)&= \tangent_{\boundedrank}(X)\cap\tangent_{\Fsphere(m,n)}(X)
\\
&=\hkh{\left[ U\,\,U_{\bot} \right] \left[ \begin{matrix}
W_1 &		W_2\\
W_3&		J\\
\end{matrix} \right] \left[ V\,\,V_{\bot} \right] ^{\top}\left|\,\begin{array}{l}
W_1\in\mbR^{s\times s},\, W_2\in\mathbb{R} ^{s\times \left( n-s \right)},
\\
W_3\in\mathbb{R} ^{\left( m-s \right) \times s},
\\
J\in \mbR^{\left( m-s \right) \times \left( n-s \right)},
\\
\rank(J)\le r-s,\, \trace(\varSigma W_1) = 0
\end{array}
\right.}. 
\end{aligned}
\end{equation*}
Similarly, from $\tangent_{\oblique(m,n)}(X)=\{\eta\in\mbRmn\mid\ddiag(X\eta^\top)=0\}$, we obtain
\begin{equation*}
\begin{aligned}
\Btangent_{\boundedrank\cap\oblique(m,n)}(X)&= \tangent_{\boundedrank}(X)\cap\tangent_{\oblique(m,n)}(X)
\\
&=\hkh{\left[ U\,\,U_{\bot} \right] \left[ \begin{matrix}
W_1 &		W_2\\
W_3&		J\\
\end{matrix} \right] \left[ V\,\,V_{\bot} \right] ^{\top}\left|\,\begin{array}{l}
W_1\in\mbR^{s\times s},\, W_2\in\mathbb{R} ^{s\times \left( n-s \right)},
\\
W_3\in\mathbb{R} ^{\left( m-s \right) \times s},
\\
J\in \mbR^{\left( m-s \right) \times \left( n-s \right)},
\\
\rank(J)\le r-s,
\\
\ddiag(U\varSigma(W_1^\top U^\top+W_3^\top U_\bot^\top))\!=\!0,\!\!\!
\end{array}
\right.}
\\
&=\hkh{\left[ U\,\,U_{\bot} \right] \left[ \begin{matrix}
W_1 &	W_2\\
W_3&		J\\
\end{matrix} \right] \left[ V\,\,V_{\bot} \right] ^{\top}\left|\,\begin{array}{l}
W_1\in\mbR^{s\times s},\, W_2\in\mathbb{R} ^{s\times \left( n-s \right)},
\\
W_3\in\mathbb{R} ^{\left( m-s \right) \times s},
\\
J\in \mbR^{\left( m-s \right) \times \left( n-s \right)},
\\
\rank(J)\le r-s,
\\
UW_1+U_\bot W_3\in\tangent_{\oblique(m,s)}(U\varSigma)
\end{array}
\right.}.
\end{aligned}
\end{equation*}
The explicit formulas of the tangent cones recover the results in \cite[Theorem~6.1]{cason2013iterative} and \cite[Theorem~1]{yang2025spacedecouple}.

\subsection{$\hanifold$ as the product of hyperbolic manifolds}\label{app:hyperbolic}
Let $\lanifold = \mathrm{Diag}(-1,1,\ldots,1) \in \mathbb{R}^{m\times m}$. For $x, y \in \mathbb{R}^m$, we define the Lorentzian inner product as $\langle x,y\rangle_\lanifold:=x^\top \lanifold y = -x_1y_1 + \sum_{i=2}^m x_iy_i$. Then, we consider the upper sheet of an $(m-1)$-dimensional hyperboloid to define \emph{hyperbolic manifold}:
\begin{equation*}
    \mathbb{H}_{m-1}=\{x\in\mathbb{R}^m\mid\langle x,x\rangle_\lanifold=-1,\ x_1>0\}.
\end{equation*}
Stacking $n$ vectors in $\mathbb{H}_{m-1}$ gives rise to the product manifold,
\begin{equation}\label{eq:hyperbolicmatrixset}
    \mathbb{H}^n_{m-1}=\{X\in\mbRmn\mid X_i\in\hyperboloid\ \text{for}\ i=1,2,\ldots,n\},
\end{equation}
where $X_{i}$ extracts the $i$-th column of $X$.

The hyperbolic manifold is a smooth manifold with negative constant curvature, and it has attracted recent interest in the machine learning community for learning hyperbolic embeddings of entities~\cite{nickel2018learninghierarchieslorentz}.
For computational efficiency, Jawanpuria et al.~\cite{jawanpuria2019lowrankhyperbolic} proposed learning hyperbolic embeddings within a latent low-dimensional subspace. Specifically, they searched for a low-rank matrix in $\mathbb{R}^{m\times n}$ with columns encoding $(m-1)$-dimensional hyperbolic embeddings corresponding to $n$ data points, which, in turn, motivates our study on the geometry of the feasible region $\boundedrank \cap \matrixhyperboloid$.

To align with the spirit of Assumption~\ref{assu:MintersecK}, we then treat $\matrixhyperboloid$ as the zero set of a sequence of functions $h_i:\mbRmn\to\mbR:\,X\mapsto\innerp{X_i,X_i}_{\lanifold}+1$ ($i=1,2,\ldots,n$), that is, $\matrixhyperboloid=\{X\in\mbRmn\mid h_i(X)=0,\,X_{i,1}>0\ \text{for}\ i=1,2,\ldots,n\}$, where $X_{i,1}$ denotes the first element of $X_i$. Let $h:=(h_1,h_2,\ldots,h_n)$, and a direct computation tells that $\diff h$ has full rank on $\matrixhyperboloid$. 

Furthermore, define the smooth functions $\bar{h}_i:\mathbb{R}^{m\times r}\times\mathbb{R}^{n\times r}\to\mathbb{R}$ by 
$$
\bar{h}_i(L,R):=\innerp{(LR^\top)_i, (LR^\top)_i}_{\lanifold} + 1,\ \text{for}\ i=1,2,\dots,n,
$$
and the mapping $\bar{h}:=(\bar{h}_1,\bar{h}_2\dots,\bar{h}_n)$. We note that $\bar{h}=h\circ\phiLR$, and are interested in the preimage of $\hanifold=\matrixhyperboloid$ through the smooth mapping $\phiLR$:
\begin{equation}\label{eq:barH_hyperbolic}
    \bhanifold = \phiLR^{-1}(\matrixhyperboloid)=\{(L,R)\in\mbR^{m\times r}\times \mbR^{n\times r}\mid \bar{h}(L,R)=0\}\cap\zanifold,
\end{equation}
where $\zanifold:=\{(L,R)\in\mbR^{m\times r}\times \mbR^{n\times r}\mid (LR^\top)_{i,1}>0\ \text{for}\ i=1,2,\ldots,n\}$. Notice that $\zanifold$ is an open subset, and the following proposition reveals that $\bhanifold$ is a smooth manifold.

\begin{proposition}\label{pro:manifold_hyperbolic}
     The set $\bhanifold = \phiLR^{-1}(\matrixhyperboloid)$ is a smooth submanifold embedded in $\mathbb{R}^{m\times r}\times\mathbb{R}^{n\times r}$ of dimension $(mr+nr-n)$.
\end{proposition}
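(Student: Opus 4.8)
The plan is to follow exactly the strategy used in the proofs of Propositions~\ref{pro:manifold_affine} and~\ref{pro:manifold_orth_inv}: I would show that $0$ is a regular value of $\bar{h}$ restricted to the open set $\zanifold$, i.e., that the differential $\diff\bar{h}_{(L,R)}$ has full rank $n$ at every point of the level set $\bhanifold$, and then invoke \cite[Corollary 5.14]{lee2012manifolds} to conclude that $\bhanifold$ is an embedded submanifold of $\mathbb{R}^{m\times r}\times\mathbb{R}^{n\times r}$ of dimension $(mr+nr)-n$. Since $\zanifold$ is open, $\bhanifold=(\bar{h}|_{\zanifold})^{-1}(0)$, so it suffices to work on the open manifold $\zanifold$ and check the regularity condition only on $\bhanifold$.

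First I would compute the differential via the chain rule, using $\bar{h}=h\circ\phiLR$. Because each $h_i(X)=\innerp{X_i,X_i}_{\lanifold}+1$ depends only on the $i$-th column of $X$, I would write $X=LR^\top$, denote by $\rho_i$ the $i$-th row of $R$ (so that $X_i=L\rho_i$) and by $\dot\rho_i$ the $i$-th row of $\dot R$. The chain rule then yields
\[
\diff(\bar{h}_i)_{(L,R)}[\dot L,\dot R]=2\innerp{X_i,\ \dot L\rho_i+L\dot\rho_i}_{\lanifold}.
\]

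The crucial observation---the analog of the full-rank hypotheses imposed in the affine and orthogonally invariant cases---is that on $\bhanifold$ the constraint $\bar{h}_i=0$ together with $(LR^\top)_{i,1}>0$ forces $X_i\in\hyperboloid$, hence $\innerp{X_i,X_i}_{\lanifold}=-1\neq 0$. This immediately gives $L^\top\lanifold X_i\neq 0$ for each $i$, since $\rho_i^\top(L^\top\lanifold X_i)=\innerp{X_i,X_i}_{\lanifold}=-1$. I would then exploit the column-wise decoupling of the constraints: setting $\dot L=0$, the $i$-th component reduces to $\diff(\bar{h}_i)=2(L^\top\lanifold X_i)^\top\dot\rho_i$, and since the rows $\dot\rho_i$ of $\dot R$ are mutually independent variables, any target $b\in\mbR^n$ is attained by choosing each $\dot\rho_i$ proportional to $L^\top\lanifold X_i$. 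This shows $\diff\bar{h}_{(L,R)}$ is surjective onto $\mbR^n$, i.e., of full rank $n$, at every point of $\bhanifold$.

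I expect the argument to be somewhat cleaner than in the affine and orthogonally invariant settings precisely because each $\bar{h}_i$ couples only the $i$-th column of $LR^\top$ to the $i$-th row of $R$, so the $n$ constraints separate; in particular no separate constant-rank assumption on $h$ is needed. The main point to handle carefully will be the interplay between the open set $\zanifold$ and the level set---full rank is required only on $\bhanifold$, where the hyperboloid identity $\innerp{X_i,X_i}_{\lanifold}=-1$ becomes available and drives the nonvanishing of $L^\top\lanifold X_i$. Once full rank is established, \cite[Corollary 5.14]{lee2012manifolds} delivers that $\bhanifold$ is an embedded submanifold of the open manifold $\zanifold$, hence of $\mathbb{R}^{m\times r}\times\mathbb{R}^{n\times r}$, of dimension $(mr+nr)-n$, as claimed.
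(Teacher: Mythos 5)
Your proposal is correct and follows essentially the same route as the paper's proof: both compute the partial differential of $\bar h_i$ with respect to $R$, observe that it is supported only on the $i$-th row of $\dot R$, use the hyperboloid identity $\langle X_i,X_i\rangle_\lanifold=-1$ to deduce $L^\top\lanifold X_i\neq 0$, and conclude full rank $n$ before invoking \cite[Corollary 5.14]{lee2012manifolds}. The only cosmetic difference is that you phrase full rank as surjectivity of the differential while the paper phrases it as linear independence of the gradients $\{\nabla_R\bar h_i\}_{i=1}^n$; these are equivalent.
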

\begin{proof}
    According to~\cite[Corollary 5.14]{lee2012manifolds} and the characterization~\eqref{eq:barH_hyperbolic}, it suffices to prove that $\rank(\diff\bar{h}_{(L,R)})=n$ for every $(L,R)\in\bhanifold$. In preparation, write $R^i\in\mathbb{R}^{1\times r}$ for the $i$-th row of $R$ and note that $x_i=LR^{i\top}\in\mathbb{R}^m$. We then compute the gradients of $\bar{h}_i$ with respect to $R$ as follows:
    \begin{equation}\label{eq:gradsbarhL}
    \begin{aligned}
        (\nabla_R \bar{h}_i)^{i}&= 2x_i^\top \lanifold L \in\mathbb{R}^{1\times r}, 
        \\
        (\nabla_R \bar{h}_i)^{j} &= 0\ \ \text{for}\ j=1,\ldots,i-1,i+1,\ldots,n,
    \end{aligned}
    \end{equation}
    where we use the superscripts to extract the corresponding rows from a matrix. Moreover, $\nabla_L \bar{h}_i = 2\lanifold x_i R^i \in \mathbb{R}^{m\times r}$. Hence, given a point $(L,R)\in\bhanifold$, we have $x_i^\top \lanifold x_i
    = (LR^{i\top})^\top \lanifold (LR^{i\top})
    = R^i(L^\top \lanifold L) R^{i\top}
    = -1$, and thus it holds that 
    \begin{equation}\label{eq:neqRLx}
    (\nabla_R \bar{h}_i)^{i\top} = 2L^\top \lanifold x_i= 2(L^\top \lanifold L)R^{i\top}\neq 0,\ \ \text{for}\ i=1,2,\ldots,n.
    \end{equation}

    We now show that the differentials of $\bar{h}_1,\bar{h}_2\dots,\bar{h}_n$ are linearly independent, for which it suffices to consider only the $\nabla_R \bar{h}_i$. In detail, the computation~\eqref{eq:gradsbarhL} reveals that each $\nabla_R \bar{h}_i$ is supported exclusively on the $i$-th row. Therefore, if there exists $\alpha\in\mbR^n$ such that $\sum_{i=1}^n\alpha_i\,\nabla_R \bar{h}_i=0$, then the $i$-th row of the sum equals $2\alpha_i x_i^\top \lanifold L=0$, which forces $\alpha_i=0$ for all $i$ by~\eqref{eq:neqRLx}. This concludes that $\{\nabla_R \bar{h}_i\}_{i=1}^n$ are linearly independent, and thus the full gradients $\{(\nabla_L \bar{h}_i,\nabla_R \bar{h}_i)\}_{i=1}^n$ are also linearly independent. In other words, we have $\rank(\diff\bar{h}_{(L,R)})=n$ on $\bhanifold$.
\end{proof}

The above proof shows that $\boundedrank \cap \matrixhyperboloid$ falls within the scope of Theorem~\ref{the:expressions_McapK}---or more specifically, Corollary~\ref{cor:expressions_MrcapH}---and therefore we obtain
\begin{equation*}
\begin{aligned}
\Btangent_{\boundedrank\cap\matrixhyperboloid}(X)&= \tangent_{\boundedrank}(X)\cap\tangent_{\matrixhyperboloid}(X)
\\
&=\hkh{\eta = \left[ U\,\,U_{\bot} \right] \left[ \begin{matrix}
W_1 &		W_2\\
W_3&		J\\
\end{matrix} \right] \left[ V\,\,V_{\bot} \right] ^{\top}\left|\,\begin{array}{l}
W_1\in\mbR^{s\times s},\, W_2\in\mathbb{R} ^{s\times \left( n-s \right)},
\\
W_3\in\mathbb{R} ^{\left( m-s \right) \times s},
\\
J\in \mbR^{\left( m-s \right) \times \left( n-s \right)},
\\
\rank(J)\le r-s,
\\
\ddiag(X^\top\lanifold \eta) = 0
\end{array}
\right.},
\end{aligned}
\end{equation*}
where the second equality holds by substituting the formula $\tangent_{\matrixhyperboloid}(X)=\{\eta\in\mbRmn\mid \ddiag(X^\top\lanifold \eta)= {0}\}$ (see~\cite{nickel2018learninghierarchieslorentz}). Similarly, the intersection rule for the second-order tangent set also holds: 
\begin{equation*}
\tangenttwo_{\boundedrank\cap\matrixhyperboloid}(X;\eta) = \tangenttwo_{\boundedrank}(X;\eta)\cap\tangenttwo_{\matrixhyperboloid}(X;\eta)\ \ \text{for any}\ \eta\in\Btangent_{\boundedrank\cap\matrixhyperboloid}(X).
\end{equation*}

\section{Tangent sets to low-rank symmetric sets}\label{app:lowranksym}
This section serves as a supplement to sections~\ref{sec:lowrank_sym}-\ref{sec:lowrank_posi}, and the organization is outlined as follows. Appendix~\ref{app:tangent_Sj} applies Theorem~\ref{the:cal_tangentsets} to $\sanifold_j$ for $j \in \{1,2,\ldots,r+1\}$, characterizing the corresponding tangent sets. Appendix~\ref{app:SU} then aggregates these results to derive the tangent sets to $\sanifold(n)$. Finally, in Appendix~\ref{app:S+U}, we identify $\boundedranksdp(n)$ with $\sanifold_{r+1}$, and thus translate the obtained results on $\sanifold_{r+1}$ to yield the tangent sets to $\boundedranksdp(n)$; moreover, applying Theorem~\ref{the:expressions_McapK} derives the tangent sets to $\boundedranksdp \cap \uanifold$ when $\uanifold$ is an affine set, a case that has garnered increasing interest in recent work \cite{boumal2020deterministic,levin2025effect}.

\smallskip

We first verify that each $\sanifold_j$ satisfies Assumption~\ref{assu:errorbound}, which is defined as follows,
\begin{equation}\label{eq:characterize_Sj}
 \sanifold_j=\{X\in\sym(n)\mid\lambda_{j}(X)=0,\,\lambda_{j+n-r-1}(X)=0\},\ \ \text{for}\ j=1,2,\ldots,r+1.
\end{equation}
Give an index $j\in\{1,2,\ldots, r+1\}$ and any $\tilde{X} \in \sym(n)$ that admits a spectral decomposition $\tilde{X}=\bU \bar{\varSigma}\bU^\top$ where $\bU\in\orth(n)$ and $\bar{\varSigma}(i,i)=\lambda_i(\tilde{X})$ for $i=1,2,\ldots,n$. We can construct a point $\tilde{X}_P=\bU\bar{\varSigma}_P\bU^\top\in\sanifold_j$, 
where $\bar{\varSigma}_P$ is a diagonal matrix with the entries defined as follows,
\begin{equation}\label{eq:proj_Sj}
    \bar{\varSigma}_P(i,i) = \begin{cases}
    \max\{0,\lambda_i(\tilde{X})\},\  \ \ \ \ \text{if}\ 1\le i \le j-1,
	\\
    0, \hspace{24.5mm}\ \text{if}\ j\le i \le j+n-r-1,
    \\
    \min\{0,\lambda_i(\tilde{X})\},\  \ \ \ \ \ \text{if}\ j+n-r\le i \le n.
\end{cases}
\end{equation}
Therefore, we have
\begin{equation}\label{eq:disttosr}
    \begin{aligned}
    \!\!\!\dist(\tilde{X},\sanifold_j)^2 \le \|\tilde{X}-\tilde{X}_P\|_\frob^2 =&\!\ \sum_{i=1}^{j-1} |\lambda_i(\tilde{X})-\max\{0,\lambda_i(\tilde{X})\}|^2 + \!\sum_{i=j}^{j+n-r-1}\lambda_i(\tilde{X})^2\!\!\!
    \\
    &+\sum_{i=j+n-r}^n |\lambda_i(\tilde{X})-\min\{0,\lambda_i(\tilde{X})\}|^2.
    \end{aligned}
\end{equation}
If $\lambda_{j}(\tilde{X})\ge 0$, it holds that $\sum_{i=1}^{j-1} |\lambda_i(\tilde{X})-\max\{0,\lambda_i(\tilde{X})\}|^2 = 0$. Otherwise, if $\lambda_{j}(\tilde{X}) < 0$, there exists an index $e$ ($1 \le e \le j$) such that $e$ is the smallest index with $\lambda_e(\tilde{X}) < 0$, implying that
\begin{equation}\label{eq:case2}
   \sum_{i=1}^{j-1} |\lambda_i(\tilde{X})-\max\{0,\lambda_i(\tilde{X})\}|^2= \sum_{i=e}^{j-1}\lambda_i(\tilde{X})^2\le n|\lambda_{j}(\tilde{X})|^2.
\end{equation}
A parallel discussion on the sign of $\lambda_{j+n-r-1}(\tilde{X})$ leads to 
\begin{equation}\label{eq:case3}
    \sum_{i=j+n-r}^n |\lambda_i(\tilde{X})-\min\{0,\lambda_i(\tilde{X})\}|^2 \le n |\lambda_{j+n-r-1}(\tilde{X})|^2.
\end{equation}
Consequently, we collect the estimates~\eqref{eq:disttosr}, \eqref{eq:case2}, and \eqref{eq:case3} to obtain
\begin{align}
\|\tilde{X}-\tilde{X}_P\|_\frob^2 &\le n|\lambda_{j}(\tilde{X})|^2 + \sum_{i=j}^{j+n-r-1}\lambda_i(\tilde{X})^2 + n |\lambda_{j+n-r-1}(\tilde{X})|^2    \nonumber
\\
&\le 3n \max\{|\lambda_{j}(\tilde{X})|^2,|\lambda_{j+n-r-1}(\tilde{X})|^2\},    \label{eq:estimate_X-Xp}
\end{align}
which reveals that $\sanifold_j$ given by~\eqref{eq:characterize_Sj} satisfies the error bound condition. Additionally, Weyl's inequality \cite{weyl1912inequality} indicates the Lipschitz continuity of the mapping $\tilde{X}\mapsto(\lambda_{j}(\tilde{X}),\lambda_{j+n-r-1}(\tilde{X}))$, which justifies the application of Theorem~\ref{the:cal_tangentsets} to $\sanifold_j$.

\subsection{Tangent sets to $\sanifold_j$}\label{app:tangent_Sj}
Applying Theorem~\ref{the:cal_tangentsets} to $\sanifold_j$ reduces to finding the zeros of the directional derivatives of eigenvalue mappings. To this end, we note that the explicit expressions for the directional derivatives of $\lambda_i$ ($i=1,2,\ldots,n$) are provided in \cite{torki2001secondtoeigen,zhang2013secondordersingular}. In preparation, we recall some necessary notation from \cite{zhang2013secondordersingular} and section~\ref{sec:directionalderisigma}.

For the spectral decomposition of a symmetric matrix $X\in\sym(n)$, we adopt notation consistent with the full SVD in~\eqref{eq:fullSVD}, 
serving as its symmetric counterpart. Specifically, we replace $[\bar{\varSigma}\ 0]$ with $\bar{\Lambda}$ and $\bar{V}$ with $\bar{U}$, yielding
\[
    X=\bar{U}\bar{\Lambda}\bar{U}^\top,
\]
where $\bar{U}\in\orth(n)$ and $\bar{\Lambda}=\Diag(\lambda_1(X),\ldots,\lambda_n(X))$. We slightly abuse the notation of ${\mu}$ and $\alpha_k$ by adapting them to this symmetric scenario. In detail, suppose that $X$ admits $t$ distinct eigenvalues, $\mu_1>\mu_2>\ldots>\mu_t$, based on which we categorize the index set $\{1,2,\ldots,n\}$ into $\alpha_k=\left\{i\mid \lambda_i(X)=\mu_k, 1 \leq i \leq n\right\}$ for $k=1, \ldots, t$. Moreover, the definitions of $N_k$ and $\beta_j^k$ for $j=1,2,\ldots,N_k$ follow those in section~\ref{sec:directionalderisigma} when $m=n$. The index mappings also require a minor adaptation as follows:
\begin{equation}\label{eq:adapt_indexmapping}
\begin{aligned}
q_a &:\{1, \ldots, n\} \rightarrow\{1, \ldots, t\},\ q_a(i)=k, \text { if } i \in \alpha_k, \\
l &:\{1, \ldots, n\} \rightarrow \mathbb{N},\ l(i)=i-\kappa_{q_a(i)-1}, \\
q_b &:\{1, \ldots, n\} \rightarrow \mathbb{N},\ q_b(i)=e,\ \text{if}\ l(i) \in \beta_e^{q_a(i)},\\
l^{\prime} &:\{1, \ldots, n\} \rightarrow \mathbb{N},\ l^{\prime}(i)=l(i)-\kappa_{q_b(i)-1}^{(q_a(i))},
\end{aligned}  
\end{equation}
where $\kappa_i:=\sum_{j=1}^i|\alpha_j|$ and $\kappa_i^{(k)}:=\sum_{j=1}^i|\beta_j^k|$. We then outline the results of \cite[Theorem 2.1]{zhang2013secondordersingular}. Given $i\in\alpha_k$ and $\eta\in\sym(n)$, the first-order directional derivative of $\lambda_i$ at $X$ along $\eta$ can be computed by
\begin{equation}\label{eq:lambdaprime}
    \lambda^{\prime}_i(X;\eta) = \lambda_{l(i)} (\bareta_{\alpha_k\alpha_k}),
\end{equation}
where $\bareta_{\alpha_k\alpha_k}=\bU_{\alpha_k}^\top\eta \bU_{\alpha_k}$. Given, in addition, a direction $\zeta\in\sym(n)$, the second-order directional derivative is expressed by
\begin{equation}\label{eq:lambdaprimeprime}
    \lambda^{\prime\prime}_i(X;\eta,\zeta) = \lambda_{l^\prime(i)}\kh{(Q^k_{\beta^k_{q_b(i)}})^\top \bU_{\alpha_k}^\top[\zeta-2\eta(X-\lambda_i(X)I)^\dagger\eta]\bU_{\alpha_k} Q^k_{\beta^k_{q_b(i)}}},
\end{equation}
where $Q^k\in\orth^{|\alpha_k|}(\bareta_{\alpha_k\alpha_k})$, and the subscript $\beta^k_{q_b(i)}$ extracts the columns of $Q^k$ indexed by the set $\beta^k_{q_b(i)}$.

We now proceed to derive the tangent sets to $\sanifold_j$ by determining the zeros of $\lambda^\prime$ and $\lambda^{\prime\prime}$ given in~\eqref{eq:lambdaprime} and~\eqref{eq:lambdaprimeprime}, respectively; see the following proposition.

\begin{proposition}\label{pro:tangenttoSj}
    Given $j\in\{1,2,\ldots,r+1\}$, $X\in\sanifold_j$ with $\rank{(X)}=s$, and the spectral decomposition $X=U\varLambda U^\top$ with $U\in\stiefel(n,s)$. Let $s_+\le s$ count the positive eigenvalue of $X$. The tangent cone to $\sanifold_j$ can be characterized by
    \begin{equation}\label{eq:tangenttoSj}
        \Btangent_{\sanifold_j}(X)=\hkh{[U\ U_{\bot}]\left[ \begin{matrix}
        	W_1&		W_2\\
        	W_2^\top&		J\\
        \end{matrix} \right][U\ U_{\bot}]^\top \left|\,\begin{array}{l}
        W_1\in\sym(s),
        \\
        W_2\in\mbR^{s\times (n-s)},
        \\
        J \in \sym(n-s),
        \\
        \lambda_{j-s_+}(J)=0,
        \\
        \lambda_{j+n-r-1-s_+}(J)=0
        \end{array}\right.
        }.
    \end{equation}
    Additionally, given a direction $\eta\in\Btangent_{\sanifold_j}(X)$ parameterized in the above manner with $\rank(J)=\ell-s$ for some $s\le\ell\le r$. Let $\ell_+\le \ell -s$ count the positive eigenvalue of $J$. Let the spectral decomposition of $U_\bot J U^\top_\bot$ be $U_\bot J U^\top_\bot=U_{\eta} \varSigma_\eta U_{\eta}^\top$ with $U_{\eta}\in\stiefel(n,\ell-s)$. Take ${U}_{\eta\bot}$ such that $[U\ U_\eta\ U_{\eta\bot}]\in\orth(n)$, and denote $U^+=[U\ U_{\eta}]$. It holds that
    \begin{equation}\label{eq:tangenttwotoSj}
        \tangenttwo_{\sanifold_j}(X;\eta)=\hkh{2\eta X^\dagger\eta + [U^+\ U_{\eta\bot}]\left[ \begin{matrix}
        	W_1&		W_2\\
        	W_2^\top&		L\\
        \end{matrix} \right][U^+\ U_{\eta\bot}]^\top \left|\,\begin{array}{l}
        W_1\in\sym(\ell),
        \\
        W_2\in\mbR^{\ell\times (n-\ell)},
        \\
        L\in\sym(n-\ell),
        \\
        \lambda_{j-s_+-\ell_+}(L) = 0,
        \\
        \lambda_{j+n-r-1-s_+-\ell_+}(L) = 0
        \end{array}\right.
        }.
    \end{equation}
\end{proposition}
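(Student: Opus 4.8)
The plan is to invoke Theorem~\ref{the:cal_tangentsets} with $c_1(\tilde X) = (\lambda_j(\tilde X),\ \lambda_{j+n-r-1}(\tilde X))$ and no inequality constraint $c_2$. The error bound and Lipschitz requirements of Assumption~\ref{assu:errorbound} have already been verified for $\sanifold_j$ in \eqref{eq:estimate_X-Xp} and through Weyl's inequality, and the two eigenvalue mappings admit first- and second-order parabolic directional derivatives by \cite{zhang2013secondordersingular}; hence both parts of Theorem~\ref{the:cal_tangentsets} apply, and the tangent sets reduce to the common zeros of the directional derivatives of $\lambda_j$ and $\lambda_{j+n-r-1}$. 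First I would record the block structure: writing $k_0$ for the index with $\mu_{k_0}=0$, the defining conditions $\lambda_j(X)=\lambda_{j+n-r-1}(X)=0$ place both $j$ and $j+n-r-1$ inside the zero-eigenvalue block $\alpha_{k_0}$, whose columns $\bU_{\alpha_{k_0}}$ span $\kernel X$ and may be taken equal to $U_\bot$. Since $\kappa_{k_0-1}$ counts the eigenvalues strictly larger than $0$, namely the $s_+$ positive ones, the index maps in \eqref{eq:adapt_indexmapping} give $l(j)=j-s_+$ and $l(j+n-r-1)=j+n-r-1-s_+$.

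For the first-order statement I would substitute $\bareta_{\alpha_{k_0}\alpha_{k_0}}=U_\bot^\top\eta U_\bot=J$ into \eqref{eq:lambdaprime}, obtaining $\lambda_j^\prime(X;\eta)=\lambda_{j-s_+}(J)$ and $\lambda_{j+n-r-1}^\prime(X;\eta)=\lambda_{j+n-r-1-s_+}(J)$. Writing $\eta$ in the basis $[U\ U_\bot]$ as in \eqref{eq:tangenttoSj} makes $J=U_\bot^\top\eta U_\bot$ the free $(2,2)$-block, so the two stationarity conditions become exactly $\lambda_{j-s_+}(J)=0$ and $\lambda_{j+n-r-1-s_+}(J)=0$, yielding \eqref{eq:tangenttoSj}.

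For the second-order set I would apply \eqref{eq:lambdaprimeprime} at $i\in\{j,\,j+n-r-1\}$. Because $\lambda_i(X)=0$, the resolvent collapses, $(X-\lambda_i(X)I)^\dagger=X^\dagger$, so the matrix argument is $U_\bot^\top(\zeta-2\eta X^\dagger\eta)U_\bot$ conjugated by the columns of $Q^{k_0}$ selected by $\beta^{k_0}_{q_b(i)}$. Since $\lambda_{l(i)}(J)=0$, the active block $\beta^{k_0}_{q_b(i)}$ is the zero eigenspace of $J$; choosing $Q^{k_0}\in\orth^{n-s}(J)$ and recalling that $U_\bot$ carries this eigenspace onto $U_{\eta\bot}$, the conjugation produces $L:=U_{\eta\bot}^\top(\zeta-2\eta X^\dagger\eta)U_{\eta\bot}\in\sym(n-\ell)$. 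Counting the $\ell_+$ positive eigenvalues of $J$ gives $\kappa^{(k_0)}_{q_b(i)-1}=\ell_+$, whence $l^\prime(j)=j-s_+-\ell_+$ and $l^\prime(j+n-r-1)=j+n-r-1-s_+-\ell_+$; the two constraints then read $\lambda_{j-s_+-\ell_+}(L)=0$ and $\lambda_{j+n-r-1-s_+-\ell_+}(L)=0$. Parameterizing $\zeta-2\eta X^\dagger\eta$ in the basis $[U^+\ U_{\eta\bot}]$ with $L$ as its $(2,2)$-block delivers \eqref{eq:tangenttwotoSj}.

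The main obstacle is the careful bookkeeping of the four index maps $q_a,l,q_b,l^\prime$ in \eqref{eq:adapt_indexmapping}: one must confirm that $j$ and $j+n-r-1$ both route through the zero-eigenvalue branch at both orders, that $\kappa_{k_0-1}=s_+$ and $\kappa^{(k_0)}_{q_b(i)-1}=\ell_+$, and that the orthogonal factor $Q^{k_0}$ can be aligned so its zero-eigenvalue columns reproduce $U_{\eta\bot}$. The well-definedness of $L$ under the freedom in choosing $Q^{k_0}$ follows because $\lambda_{l^\prime(\cdot)}$ depends only on the eigenvalues of the selected block, which are invariant under conjugation within the zero eigenspace.
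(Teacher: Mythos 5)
Your proposal is correct and follows essentially the same route as the paper's proof in Appendix~\ref{app:tangent_Sj}: both apply Theorem~\ref{the:cal_tangentsets} to the equality system $(\lambda_j,\lambda_{j+n-r-1})=0$ using the already-verified error bound~\eqref{eq:estimate_X-Xp}, identify $l(i)=i-s_+$ and $l'(i)=i-s_+-\ell_+$ through the index maps~\eqref{eq:adapt_indexmapping}, and read off the tangent sets as the zeros of the directional derivatives~\eqref{eq:lambdaprime} and~\eqref{eq:lambdaprimeprime} with $J=U_\bot^\top\eta U_\bot$ and $L=U_{\eta\bot}^\top(\zeta-2\eta X^\dagger\eta)U_{\eta\bot}$. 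Your explicit remark on the well-definedness of $L$ under the freedom in choosing $Q^{k_0}$ is a detail the paper leaves implicit, but it does not change the argument.
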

\begin{proof}
It suffices to apply Theorem~\ref{the:cal_tangentsets} to $\sanifold_j(n)$, and then to obtain the corresponding tangent sets as the zeros of the directional derivatives of the eigenvalue mappings $\tilde{X}\mapsto(\lambda_{j}(\tilde{X}),\lambda_{j+n-r-1}(\tilde{X}))$. In fact, computing the zeros of~\eqref{eq:lambdaprime} and~\eqref{eq:lambdaprimeprime} proceeds in parallel with the analysis in sections~\ref{sec:directionalderisigma} and~\ref{sec:tangenttoMr}.

The condition $X\in\sanifold_j$ implies that $\lambda_{j}({X})=\lambda_{j+n-r-1}({X})=0$, and thus the indices $q_a(j)=q_a(j+n-r-1)$ point to the zero eigenvalue of $X$. Therefore, taking $i=j$ in~\eqref{eq:adapt_indexmapping} yields the index set $\alpha_k$ with $k:=q_a(j)$, which corresponds to the set of all zero eigenvalues of $X$. Hence we have $\bareta_{\alpha_k\alpha_k}=U_\bot^\top \eta U_\bot$ in this scenario. By definition of the index mapping~\eqref{eq:adapt_indexmapping}, we have $l(j)=j-s_+$ and $l(j+n-r-1)=j+n-r-1-s_+$, with $s_+\le s$ counting the positive eigenvalues of $X$. By~\eqref{eq:lambdaprime}, the condition $\lambda^\prime_{j}(X;\eta)=\lambda_{j+n-r-1}^\prime(X;\eta)=0$ is equivalent to
\begin{equation}\label{eq:eigen_J_lambda}
    \lambda_{j-s_+}(U_\bot^\top\eta U_\bot) = \lambda_{j+n-r-1-s_+}(U_\bot^\top\eta U_\bot) = 0,
\end{equation}
which yields the expression~\eqref{eq:tangenttoSj} by denoting $J=U_\bot^\top\eta U_\bot$.

The derivation of the second-order tangent set~\eqref{eq:tangenttwotoSj} parallels the proof of Proposition~\ref{pro:tangenttwo_mr}, which starts by identifying the values of the associated index mappings. Given the direction $\eta\in\Btangent_{\sanifold_j}(X)$ with the associated $\rank(J)=\ell-s$. We notice from~\eqref{eq:eigen_J_lambda} that the singular value of $J=\bareta_{\alpha_k\alpha_k}$ indexed by $l(j)=j-s_+$ is zero, and thus $q_b(j)$ points to the zero eigenvalue of $J$. Therefore, taking $i=j$ in~\eqref{eq:lambdaprimeprime} reveals that the index set $\beta_{q_b(j)}^k$ corresponds to the set of all zero eigenvalues of $J$. Hence we can identify the quantity in~\eqref{eq:lambdaprimeprime} as $\bU_{\alpha_k} Q^k_{\beta^k_{q_b(j)}}=U_{\eta\bot}$ without loss of generality. Moreover, the index $l^\prime(j)$ counts the position of $\lambda_{l(j)}(J)$ in the zero eigenvalues of $J$, indicating that $l^\prime(j)=l(j)-\ell_+=j-s_+-\ell_+$. A similar analysis can be implemented by letting $i=j+n-r-1$ in~\eqref{eq:adapt_indexmapping} to obtain $l^\prime(j+n-r-1)=j+n-r-1-s_+-\ell_+$. Consequently, by~\eqref{eq:lambdaprimeprime}, the condition $\lambda^{\prime\prime}_{j}(X;\eta,\zeta)=\lambda^{\prime\prime}_{j+n-r-1}(X;\eta,\zeta)=0$ is equivalent to
\begin{equation*}
    \lambda_{j-s_+-\ell_+}(U_{\eta\bot}^\top[\zeta-2\eta X^\dagger\eta]U_{\eta\bot}) = \lambda_{j+n-r-1-s_+-\ell_+}(U_{\eta\bot}^\top[\zeta-2\eta X^\dagger\eta]U_{\eta\bot}) = 0,
\end{equation*}
which yields the expression~\eqref{eq:tangenttwotoSj} by denoting $L=U_{\eta\bot}^\top[\zeta-2\eta X^\dagger\eta]U_{\eta\bot}$.

\end{proof}

\subsection{Tangent sets to $\boundedranks\cap\uanifold$}\label{app:SU}
We first investigate the geometry of $\boundedranks\cap\uanifold$ when $\uanifold$ is the trivial ambient space, i.e., $\uanifold=\sym(n)$. To this end, we collect the derived tangent sets to $\sanifold_j$ for $j\in\{1,2,\ldots,r+1\}$, thereby obtaining those to $\boundedranks=\bigcup_{j=1}^{r+1}\sanifold_j$.

\vspace{1mm}

\noindent\textit{Proof of Proposition~\ref{pro:tangenttoS}}\ \ \ The condition $\rank(X)=s\le r$ implies that $X\in \sanifold_j$ if and only if $j\in \janifold(X):=\{s_++1, \ldots, r+1 -s_-\}$, where $s_+$ and $s_-$ count the positive and negative eigenvalues of $X$, respectively. Therefore, we have
\begin{align}
        \tangent_{\boundedranks}(X) = \bigcup_{j\in \janifold(X)}\tangent_{\sanifold_j}(X),    \label{eq:tangent_unionSj}
\end{align}
Enlightened by~\eqref{eq:tangenttoSj}, we observe that
\begin{equation*}
    \sym_{\le r-s}(n-s) = \bigcup_{j\in\janifold(X)}\{J\in\sym(n-s)\mid \lambda_{j-s_+}(J)=\lambda_{j+n-r-1-s_+}(J)=0\},
\end{equation*}
which, together with~\eqref{eq:tangenttoSj} and~\eqref{eq:tangent_unionSj}, produces the expression~\eqref{eq:tangent_Sr}.

Furthermore, given the direction $\eta\in\Btangent_{\boundedranks}(X)$ parameterized in the form of~\eqref{eq:tangent_Sr} with $\rank(J)=\rank(U^\top_\bot \eta U_\bot)=\ell-s$. According to~\eqref{eq:tangenttoSj}, it holds that $X\in\sanifold_j$ and $\eta\in\Btangent_{\sanifold_j}(X)$ if and only if $j\in\janifold^\prime(X;\eta):=\{s_++\ell_++1,\ldots,r+1-s_--\ell_-\}$, where $\ell_+$ and $\ell_-$ count the positive and negative eigenvalues of $J=U^\top_\bot \eta U_\bot\in\sym(n-s)$, respectively. Therefore, we have
\begin{equation}\label{eq:tangenttwo_unionSj}
    \tangenttwo_{\boundedranks}(X;\eta) = \bigcup_{j\in \janifold^\prime(X;\eta)}\tangenttwo_{\sanifold_j}(X;\eta),\ \ \text{for any}\ \eta\in\tangent_{\boundedranks}(X).
\end{equation}
Then, the relations $s_++s_-=s$, $\ell_+ + \ell_- =\ell-s $, and the observation that
\begin{equation*}
    \bigcup_{j\in\janifold^\prime(X;\eta)}\{L\in\sym(n-\ell)\mid \lambda_{j-s_+-\ell_+}(L)=\lambda_{j+n-r-1-s_+-\ell_+}(L)=0\} = \sym_{\le r-\ell}(n-\ell)
\end{equation*}
conclude~\eqref{eq:tangenttwo_Sr} by substituting~\eqref{eq:tangenttwotoSj} into~\eqref{eq:tangenttwo_unionSj}.\qed

\subsubsection{$\uanifold$ as a Frobenius sphere}
We then impose an additional constraint $\uanifold$ on $\boundedranks$ by considering the representative example $\uanifold=\{X\in\sym(n)\mid \|X\|_{\frob}^2-1=0\}$~\cite{cason2013iterative,li2020jotaspectral}. The main principle is to apply Theorem~\ref{the:cal_tangentsets} to $\sanifold_j\cap\uanifold$, and then assemble the results to obtain the tangent sets of the union $\sym_{\le r}(n)\cap\uanifold=\bigcup_{j=1}^{r+1}(\sanifold_j\cap\uanifold)$ according to the rule~\eqref{eq:tangentcupXi}.

We first verify that each 
$$\sanifold_j\cap\uanifold=\{X\in\sym(n)\mid\lambda_{j}(X)=0,\,\lambda_{j+n-r-1}(X)=0,\,h(X)=0\}$$
satisfies Assumption~\ref{assu:errorbound}, where $h(X):=\|X\|_{\frob}^2-1$. To see this, given $\tilde{X} \in \sym(n)$ that admits an spectral decomposition $\tilde{X}=\bU \bar{\varSigma}\bU^\top$ where $\bU\in\orth(n)$ and $\bar{\varSigma}(i,i)=\lambda_i(\tilde{X})$ for $i=1,2,\ldots,n$. We can construct a point $\tilde{X}^{\diamond}_P=\bU\bar{\varSigma}^{\diamond}_P\bU^\top\in\sanifold_j$, 
where $\bar{\varSigma}^{\diamond}_P:=\bar{\varSigma}_P/\|\bar{\varSigma}_P\|_\frob$ with $\bar{\varSigma}_P$ given in~\eqref{eq:proj_Sj}. Therefore, letting $\tilde{X}_P=\bU\bar{\varSigma}_P\bU^\top$ yields
\begin{align}
\dist(\tilde{X},\sanifold_j\cap\uanifold) &\le \|\tilde{X}-\tilde{X}_P\|_\frob + \|\tilde{X}_P-\tilde{X}^\diamond_P\|_\frob \nonumber
\\
&= \|\tilde{X}-\tilde{X}_P\|_\frob + |\|\tilde{X}_P\|_\frob-1|  \nonumber
\\
&\le 2\|\tilde{X}-\tilde{X}_P\|_\frob + |\|\tilde{X}\|_\frob-1|, \label{eq:pro_SjcapU}
\end{align}
where we use the triangle inequality $|\|\tilde{X}_P\|_\frob-1|\le |\|\tilde{X}\|_\frob-1| + |\|\tilde{X}\|_\frob-\|\tilde{X}_P\|_\frob|\le |\|\tilde{X}\|_\frob-1| + \|\tilde{X}-\tilde{X}_P\|_\frob$. Consequently, we can confirm the error bound property of $\sanifold_j\cap\uanifold$ by incorporating~\eqref{eq:estimate_X-Xp} and $|\|\tilde{X}\|_\frob-1| \le |h(\tilde{X})|$ into~\eqref{eq:pro_SjcapU}. Applying Theorem~\ref{the:cal_tangentsets} to $\sanifold_j\cap\uanifold$ and taking into account the rule~\eqref{eq:tangentcupXi} show that
\begin{align*}
    \tangent_{\boundedranks\cap\uanifold}(X) = \bigcup_{j=1}^{r+1} \tangent_{\sanifold_j\cap\uanifold}(X) =\bigcup_{j=1}^{r+1}(\tangent_{\sanifold_j}(X) \cap \tangent_{\uanifold}(X)) = \tangent_{\boundedranks}(X)\cap\tangent_\uanifold(X).
\end{align*}
Similarly, given any $\eta\in\tangent_{\boundedranks\cap\uanifold}(X)$, we have
\begin{align*}
    \tangenttwo_{\boundedranks\cap\uanifold}(X;\eta) = \tangenttwo_{\boundedranks}(X;\eta)\cap\tangenttwo_\uanifold(X;\eta).
\end{align*}

\subsection{Tangent sets to~$\boundedranksdp(n)\cap\uanifold$}\label{app:S+U}
We have clarified in section~\ref{sec:lowrank_posi} that the set $\boundedranksdp(n)$ coincides with $\sanifold_{r+1}$ defined in~\eqref{eq:characterize_Sj}, and thus Proposition~\ref{pro:tangenttoS+r} directly inherits the results derived in Proposition~\ref{eq:tangenttoSj}. 

The discussion then proceeds to the intersection $\boundedranksdp(n)\cap\uanifold$ with a nontrivial $\uanifold$. Building upon the results of Theorem~\ref{the:expressions_McapK}, we employ the well-known Burer--Monteiro parameterization \cite{burer2003BM}:
\begin{equation}\label{eq:BM_para}
(\manifoldBM,\phiBM)=(\mbR^{n\times r},\, R\mapsto RR^\top).
\end{equation}
It is worth noting that $\manifoldBM$ equals the whole ambient Euclidean space $\mbR^{n\times r}$. Moreover, $\phiBM|_{\manifoldBM}$ is open at all $R\in\manifoldBM$, according to~\cite[Theorem~2.3, Proposition 2.7]{levin2025effect}. Therefore, Theorem~\ref{the:expressions_McapK} can be adapted to the positive semidefinite setting, yielding the following corollary, which is analogous to Corollary~\ref{cor:expressions_MrcapH}.

\begin{corollary}\label{cor:expressions_S+capU}
    Suppose that $h:\sym(n)\to\mbR^q$ is smooth and $\uanifold=\{\tilde{X}\in\sym(n)\mid h(\tilde{X})=0\}$ satisfies Assumption~\ref{assu:errorbound} at the point $X\in\boundedranksdp(n)\cap\uanifold$. Additionally, the differential of the mapping $h\circ\phiBM$ has constant rank in a neighborhood of $$\buanifold:=\{R\in \mbR^{n\times r}\mid h(\phiBM(R))=0\}=\phiBM^{-1}(\uanifold).$$ Then, the following intersection rules hold,
    \begin{equation}\label{eq:expressions_S+capU}
    \begin{aligned}
        \Btangent_{\boundedranksdp(n)\cap\uanifold}(X) &= \Btangent_{\boundedranksdp(n)}(X)\cap\Btangent_{\uanifold}(X),
        \\
        \tangenttwo_{\boundedranksdp(n)\cap\uanifold}(X;\eta) &= \tangenttwo_{\boundedranksdp(n)}(X;\eta)\cap\tangenttwo_{\uanifold}(X;\eta)\ \ \text{for any}\ \eta\in\Btangent_{\boundedranksdp(n)\cap\uanifold}(X).
    \end{aligned}
    \end{equation}    
\end{corollary}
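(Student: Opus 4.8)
The plan is to verify that the pair $(\boundedranksdp(n),\uanifold)$ satisfies Assumption~\ref{assu:MintersecK} with the Burer--Monteiro parameterization $(\manifoldBM,\phiBM)$ from~\eqref{eq:BM_para}, and then to invoke Theorem~\ref{the:expressions_McapK} to obtain the intersection rules~\eqref{eq:expressions_S+capU}. This mirrors the proof of Corollary~\ref{cor:expressions_MrcapH}, with the LR parameterization replaced by the Burer--Monteiro one; the positive semidefinite structure enters only through the factorization $X=RR^\top$.

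First I would confirm the two error-bound prerequisites. Since $\boundedranksdp(n)=\sanifold_{r+1}$ by~\eqref{eq:characterization_S+r}, the verification carried out at the beginning of Appendix~\ref{app:lowranksym}---in particular the estimate~\eqref{eq:estimate_X-Xp} together with Weyl's inequality---already shows that $\boundedranksdp(n)$ satisfies Assumption~\ref{assu:errorbound} at $X$, while $\uanifold$ does so by hypothesis. I would also record that $(\manifoldBM,\phiBM)$ is a genuine smooth parameterization: every positive semidefinite matrix of rank at most $r$ admits a factorization $X=RR^\top$ with $R\in\mbR^{n\times r}$, so $\phiBM(\manifoldBM)=\boundedranksdp(n)$.

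Next I would dispatch the three regularity conditions of Assumption~\ref{assu:MintersecK}. Condition~(i)---constant rank of $\diff(h\circ\phiBM)$ in a neighborhood of $\buanifold$, hence smoothness of $\buanifold$---is exactly the standing hypothesis of the corollary. Condition~(ii) is immediate: because $\manifoldBM=\mbR^{n\times r}$ coincides with the whole ambient space, we have $\tangent_{\manifoldBM}(R)=\mbR^{n\times r}$ for every $R$, whence $\tangent_{\manifoldBM}(R)+\tangent_{\buanifold}(R)=\mbR^{n\times r}$ and transversality is automatic. For condition~(iii), I would take a factorization $X=RR^\top$; then $h(\phiBM(R))=h(X)=0$ gives $R\in\manifoldBM\cap\buanifold$ with $\phiBM(R)=X$, while the openness of $\phiBM|_{\manifoldBM}$ at every point is supplied by~\cite[Theorem~2.3, Proposition~2.7]{levin2025effect}.

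With Assumption~\ref{assu:MintersecK} in force, Theorem~\ref{the:expressions_McapK} applies verbatim and yields both the first- and second-order rules in~\eqref{eq:expressions_S+capU}. In this setting the argument is essentially bookkeeping: the genuine content is front-loaded into the error bound for $\sanifold_{r+1}$ established in Appendix~\ref{app:lowranksym} and into the cited openness of the Burer--Monteiro map. The only point that might seem delicate---the transversality in~(ii)---in fact collapses here because the lifted manifold fills the whole ambient space, so no serious obstacle arises beyond correctly matching each hypothesis of the corollary to the corresponding clause of Assumption~\ref{assu:MintersecK}.
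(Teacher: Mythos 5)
Your proposal is correct and follows essentially the same route as the paper: the paper likewise verifies Assumption~\ref{assu:MintersecK} for $(\boundedranksdp(n),\uanifold)$ via the Burer--Monteiro parameterization~\eqref{eq:BM_para}, noting that transversality is automatic because $\manifoldBM=\mbR^{n\times r}$ fills the ambient space, that openness of $\phiBM|_{\manifoldBM}$ comes from \cite[Theorem~2.3, Proposition~2.7]{levin2025effect}, and that the constant-rank condition is the standing hypothesis, before invoking Theorem~\ref{the:expressions_McapK}. Your additional bookkeeping (the error bound for $\sanifold_{r+1}$ from Appendix~\ref{app:lowranksym} and surjectivity of $\phiBM$) is accurate and consistent with the paper's argument.
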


\subsubsection{$\uanifold$ as an affine set}
We apply Corollary~\ref{cor:expressions_S+capU} to $\boundedranksdp(n)\cap\uanifold$ when $\uanifold$ is an affine set. In fact, the study of $\boundedranksdp(n)\cap\uanifold$ with $\uanifold=\
\{X\in\sym(n)\mid\aanifold(X)-b=0\}$~\cite{boumal2020deterministic,levin2025effect} has been motivated by low-rank SDPs with linear equality constraints, which have witnessed a wide range of applications. Specifically, let $A_1,A_2,\ldots,A_q\in\sym(n)$ be the matrices constituting the linear mapping $\aanifold:\sym(n)\to\mbR^q$, i.e., $\aanifold(X)_i=\innerp{A_i,X}$ for $i=1,2,\ldots,q$; and the tangent cone to $\boundedranksdp(n)\cap\uanifold$ was first derived in \cite{levin2025effect} based on the standard assumption below.

\begin{assumption}{\cite[Assumption~1.1]{boumal2020deterministic}}\label{assu:h_affinesdp} 
    The set $\uanifold=\{X\in\sym(n)\mid\aanifold(X)-b=0\}$ defined by $A_1,A_2,\ldots,A_q\in\sym(n)$ satisfies at least one of the following conditions.
    \begin{itemize}
        \item [\emph{(i)}] $\{A_1R,A_2R,\ldots,A_qR\}$ are linearly independent in $\mbR^{n\times r}$;
        \item [\emph{(ii)}] $\{A_1R,A_2R,\ldots,A_qR\}$ span a subspace of constant dimension in $\mbR^{n\times r}$ for all $R$ in an open neighborhood of\ $\buanifold=\{R\in\mbR^{n\times r}\mid\aanifold(RR^\top)-b=0\}$.
    \end{itemize}
\end{assumption}

Denote $h:\sym(n)\to\mbR^q: X\mapsto\aanifold(X)-b$. Under Assumption~\ref{assu:h_affinesdp}, the analysis in \cite[Appendix A]{boumal2020deterministic} shows that the differential of $h\circ\phiBM$ at $R$ has rank equal to the dimension of the space spanned by $\{A_1R,A_2R,\ldots,A_qR\}$, which validates the application of Corollary~\ref{cor:expressions_S+capU}. Therefore, the intersection rules~\eqref{eq:expressions_S+capU} hold with $\uanifold$ as an affine set satisfying Assumption~\ref{assu:h_affinesdp}.

Finally, given $X\in\boundedranksdp(n)\cap\uanifold$ with $\rank{(X)}=s$ and the spectral decomposition $X=U\varLambda U^\top$, we present the closed-form formula of the tangent cone:
\begin{equation*}
\begin{aligned}
\Btangent_{\boundedranksdp(n)\cap\uanifold}(X)&= \tangent_{\boundedranksdp(n)}(X)\cap\tangent_{\uanifold}(X)
\\
&=\hkh{\eta = [U\ U_{\bot}]\left[ \begin{matrix}
        	W_1&		W_2\\
        	W_2^\top&		J\\
        \end{matrix} \right][U\ U_{\bot}]^\top \left|\,\begin{array}{l}
        W_1\in\sym(s),
        \\
        W_2\in\mbR^{s\times (n-s)},
        \\
        J\in\sym^+_{\le r-s}(n-s),
        \\
        \innerp{A_i,\eta}=0\ \text{for}\ i\in[q]
        \end{array}\right.
        },
\end{aligned}
\end{equation*}
which recovers the result in~\cite[Corollary 4.12]{levin2025effect}. In addition, given any $\eta\in\Btangent_{\boundedranksdp(n)\cap\uanifold}(X)$, we have $\tangenttwo_{\uanifold}(X;\eta)=\Btangent_{\uanifold}(X) $ according to definition~\eqref{eq:second_tangentcone} and the affine structure of $\uanifold$. Therefore, the second-order tangent set $$\tangenttwo_{\boundedranksdp(n)\cap\uanifold}(X;\eta)=\tangenttwo_{\boundedranksdp(n)}(X;\eta)\cap\tangenttwo_{\uanifold}(X;\eta)$$ can be characterized by substituting the formula of $\tangenttwo_{\boundedranksdp(n)}(X;\eta)$ developed in~Proposition~\ref{pro:tangenttoS+r}.

\section{Lemmas for section~\ref{sec:equi_2ndpoints}}\label{app:lemmas2to2}

This section is devoted to several technical lemmas essential for the analysis in section~\ref{sec:equi_2ndpoints}. We begin by relating the second-order tangent set to the second fundamental form on a smooth manifold in the following lemma.

\begin{lemma}\label{lem:funda_tangent2}
Given a smooth manifold $\bmanifold$ embedded in an Euclidean space $\wanifold$. Let $Y \in \bmanifold$ and $v \in \tangent_{\bmanifold}(Y)$. The associated second-order tangent set can be identified as an affine subspace, $ \tangenttwo_{\bmanifold}(Y; v) = \secfunda_Y(v, v) + \tangent_{\bmanifold}(Y)$.
\end{lemma}

\begin{proof}
    As shown in \cite[Proposition~5.16]{lee2012manifolds}, the submanifold $\bmanifold$ can be locally realized as the level set of a smooth submersion $h: \wanifold \to \mathbb{R}^k$ (where $k$ is the codimension of $\bmanifold$). That is, there exists a neighborhood $\neighbor$ of $Y$ in $\wanifold$ such that $\bmanifold\cap\neighbor=\{\tilde{Y}\in\neighbor\mid h(\tilde{Y})=0\}$. Additionally, it holds that $\tangent_{\bmanifold}(Y)=\kernel(\diff h_Y)$.
    
    It is revealed in~\cite[formula~(3.7)]{levin2025effect} that the second-order tangent set to the manifold can be characterized by $\tangenttwo_{\bmanifold}(Y;v)=\{u_v\in\wanifold\mid\diff h_Y[u_v]+\diff^2h_Y[v,v]=0\}$. That is, $\tangenttwo_{\bmanifold}(Y;v)$ consists of all vectors $u_v \in \wanifold$ satisfying the following linear equation:
    \begin{equation}\label{eq:linearequation}
        \diff h_Y[u_v]=-\diff^2h_Y[v,v],
    \end{equation}
    the solution set of which is an affine subspace given by the sum of a particular solution and the kernel of the linear operator $\diff h_Y$. Note that $\kernel(\diff h_Y)=\tangent_{\bmanifold}(Y)$, and thus it suffices to show that $\secfunda_Y(v,v)$ is a particular solution of~\eqref{eq:linearequation}.
    
    Let $\beta(t)$ be a smooth curve on $\bmanifold$ such that $\beta(0) = Y$, $\beta^\prime(0) = v$, and we denote $w = \beta^{\prime\prime}(0)$. By definition of the second fundamental form, we have $\secfunda(v,v)=\projection_{\normal_{\bmanifold}(Y)}(w)$. Moreover, since the curve lies on the manifold, we have the identity $h(\beta(t)) \equiv 0$. Twice differentiating it with respect to $t$ yields
    \begin{equation*}
        \diff h_{\beta(t)}[\beta^{\prime\prime}(t)] + \diff^2h_{\beta(t)}[\beta^\prime(t), \beta^\prime(t)] = 0.
    \end{equation*}
    Evaluating at $t=0$ and substituting $(\beta(0),\beta^\prime(0),\beta^{\prime\prime}(0))=(Y,v,w)$, we obtain $\diff h_{Y}[w]=-\diff^2h_{Y}[v,v]$. Substituting $w=\secfunda_Y(v,v)+\projection_{\tangent_{\bmanifold}(Y)}(w)$ and recalling $\kernel(\diff h_Y)=\tangent_{\bmanifold}(Y)$ lead to $\diff h_{Y}[\secfunda_Y(v,v)]=-\diff^2h_{Y}[v,v]$, which concludes that $\secfunda_Y(v,v)$ is a particular solution of~\eqref{eq:linearequation}.
\end{proof}

\bigskip

The subsequent two lemmas examine the properties of the set $\qanifold_Y(v)$ defined in~\eqref{eq:defGammaY}. Specifically, Lemma~\ref{lem:cone_QY0} characterizes the structure of $\qanifold_Y(0)$ for the case $v=0$, while Lemma~\ref{lem:second_Q} demonstrates the role of $\qanifold_Y(v)$ in optimization landscapes for general tangent vectors $v$.
\begin{lemma}\label{lem:cone_QY0}
    The set $\qanifold_Y(0)$ is a cone, i.e., if $\zeta \in \qanifold_Y(0)$, then $c \zeta \in \qanifold_Y(0)$ for all $c\ge 0$. Moreover, it holds that $\ima(\tanL_Y)\subseteq\qanifold_Y(0)$.
\end{lemma}

\begin{proof}
    Let $\zeta \in \qanifold_Y(0)$ and $c \ge 0$. By definition of $\qanifold_Y(0)$ in~\eqref{eq:defGammaY}, there exist $\{v_i\}_{i\in\mbN} \subset \tangent_{\bmanifold}(Y)$ with $\tanL_Y(v_i) \to 0$, $u_i \in \tangenttwo_{\bmanifold}(Y; v_i)$, and $\{z_i\}_{i\in\mbN} \subseteq \ima(\tanL_Y)$ such that $\zeta = \lim_{i\to\infty} \kh{\tanQ_{Y,v_i}(u_i) + z_i}$.
    We consider the characterization of the second-order tangent set: $\tangenttwo_{\bmanifold}(Y; v) = \secfunda_Y(v, v) + \tangent_{\bmanifold}(Y)$. Thus, for each $i$, we can decompose $u_i$ as $u_i = \secfunda_Y(v_i, v_i) + w_i$ for some $w_i \in \tangent_{\bmanifold}(Y)$. Then, we can introduce the scaled sequences $\tilde{v}_i := \sqrt{c} v_i$ and $\tilde{u}_i := c u_i$, which directly yields $\tanL_Y(\tilde{v}_i) = \sqrt{c} \tanL_Y(v_i) \to 0$. In addition, we have
    \[
        \tilde{u}_i = c \secfunda_Y(v_i, v_i) + c w_i = \secfunda_Y(\sqrt{c} v_i, \sqrt{c} v_i) + c w_i = \secfunda_Y(\tilde{v}_i, \tilde{v}_i) + c w_i,
    \]
    revealing that $\tilde{u}_i \in \tangenttwo_{\bmanifold}(Y; \tilde{v}_i)$. Finally, setting $\tilde{z}_i = c z_i \in \ima(\tanL_Y)$, the limit becomes $\lim_{i\to\infty} \kh{\tanQ_{Y,\tilde{v}_i}(\tilde{u}_i) + \tilde{z}_i} 
    = c \lim_{i\to\infty} \kh{\tanQ_{Y,v_i}(u_i) + z_i} 
    = c \zeta\in \qanifold_Y(0)$.

    Moreover, we can take $v_i=0$, which implies that $\tangenttwo_{\bmanifold}(Y;v_i)=\tangent_{\bmanifold}(Y)$~\cite[Proposition 9]{giorgi2010overviewsectengent}. Therefore, for any $u\in\ima(\tanL_Y)$, set $u_i=0$, and $z_i=u$ to obtain $\tanQ_{Y,v_i}(u_i)+z_i=u$ for all $i\in\mbN$, indicating that $u = \lim_{i\to\infty}\tanQ_{Y,v_i}(u_i)+z_i\in\qanifold_Y(0)$. The arbitrariness of $u\in\ima(\tanL_Y)$ concludes that $\ima(\tanL_Y)\subseteq\qanifold_Y(0)$. 
\end{proof}

\begin{lemma}\label{lem:second_Q}
    Given a smooth function $f$ defined on $\eanifold$ and a parameterization $(\bmanifold,\phi)$ for $\manifold$, suppose that $Y\in\bmanifold$ is a second-order stationary point for problem~\eqref{eq:PM} and denote $X=\phi(Y)$. Then for any $v\in\tangent_{\bmanifold}(Y)$, it holds that 
    \begin{equation*}
        \innerp{\nabla f(X),\zeta}  + \left\langle\tanL_Y(v),\nabla^2 f(X)[\tanL_Y(v)]\right \rangle \ge 0,\ \ \text{for all}\ \zeta\in\qanifold_Y(v).
    \end{equation*}
\end{lemma}
\begin{proof}
    The first-order stationarity of $Y$ reveals that $\innerp{\nabla \bar{f}(Y),v^\prime}=0$ for all $v^\prime \in\tangent_{\bmanifold}(Y)$. Substituting $\nabla\bar{f}(Y)=\diff\phi_Y^*(\nabla f(X))$ shows that $\nabla f(X)\perp \ima(\tanL_Y)$. According to the definition of $\qanifold_Y(v)$, any element $\zeta\in\qanifold_Y(v)$ can be realized by $\zeta = \lim_{i\to\infty} \tanQ_{Y,v_i}(u_{i})+z_i$ with $\{v_i\}_{i\in\mbN}\subseteq\tangent_{\bmanifold}(Y)$, $\{z_i\}_{i\in\mbN}\subseteq\ima(\tanL_Y)$, and the associated $u_{i}\in\tangenttwo_{\bmanifold}(Y;v_i)$ satisfying $\tanL_Y(v_i) \to \tanL_Y(v)$. Since $Y$ is second-order stationary for~\eqref{eq:PM}, the condition~\eqref{eq:def2optcon} indicates that for any $(v_i,u_i)$, it holds that $\innerp{\nabla \bar{f}(Y),u_{i}}+\innerp{v_{i},\nabla^2 \bar{f}(Y)[v_{i}]} \ge 0$. Applying the equality~\eqref{eq:2order_deri}, and substituting $z_i\in\ima(\tanL_Y)$ together with the first-order condition $\nabla f(X)\bot\ima(\tanL_Y)$ yield that $\innerp{\nabla f(X),\tanQ_{Y,v_{i}}(u_{i})+z_i}  + \left\langle\tanL_Y(v_{i}),\nabla^2 f(X)[\tanL_Y(v_{i})]\right \rangle \ge 0$. Letting $i\to\infty$ in the inequality, we obtain $\innerp{\nabla f(X),\zeta}  + \left\langle\tanL_Y(v),\nabla^2 f(X)[\tanL_Y(v)]\right \rangle \ge 0$. 
\end{proof}

\bigskip

The next lemma investigates the property of $\varGamma_Y$ constructed in~\eqref{eq:defGammaY}.
\begin{lemma}\label{lem:Gamma_Y}
    Given a smooth function $f$ defined on $\eanifold$ and a parameterization $(\bmanifold,\phi)$ for $\manifold$, suppose that $Y\in\bmanifold$ is a second-order stationary point for problem~\eqref{eq:PM}, and $\ima(\tanL_Y)=\tangent_{\manifold}(X)$ with $X\!=\phi(Y)$. Then for any $\tau\in\varGamma_Y$, it holds that $\innerp{\nabla f(X),\tau}=0$.
\end{lemma}
\begin{proof}
Recalling the definition of $\varGamma_Y$~\eqref{eq:defGammaY}, we denote $\tau \in \varGamma_Y$ by 
\[
\tau=\projection_{\Fnormal_{\manifold}(X)}\kh{\diff\phi_Y(\secfunda(v_0,v_1))+\diff^2\phi_Y[v_0,v_1]}\ \text{for some}\ v_0\in\ker(\tanL_Y),\, v_1\in\tangent_{\bmanifold}(Y).
\]
Define a symmetric bilinear function on $\Btangent_{\bmanifold}(Y)\times\Btangent_{\bmanifold}(Y)$ as follows,
\begin{equation*}
q(\tilde{v}_0,\tilde{v}_1):=\langle\nabla f(X),\diff\phi_Y(\secfunda(\tilde{v}_0,\tilde{v}_1)) +\diff^2\phi_Y[\tilde{v}_0,\tilde{v}_1]\rangle + \innerp{\tanL_Y(\tilde{v}_0),\nabla^2 f(X)[\tanL_Y(\tilde{v}_1)]}.
\end{equation*}
Since $v_0\in\ker(\tanL_Y)$, we have $\tanL_Y(v_0)=0$, and thus
\begin{align*}
    \innerp{\nabla f(X),\tau} = \innerp{\nabla f(X),\projection_{\Fnormal_{\manifold}(X)}\kh{\diff\phi_Y(\secfunda(v_0,v_1))+\diff^2\phi_Y[v_0,v_1]}} = q(v_0,v_1).
\end{align*}
The last equality comes from the first-order optimality $\nabla \bar{f}(Y)=\diff\phi_Y^*(\nabla f(X))\perp \tangent_{\bmanifold}(Y)$, which, together with $\ima(\tanL_Y)=\tangent_{\manifold}(X)$, reveals that $\nabla f(X)\in\Fnormal_{\manifold}(X)$. Then, to prove $\innerp{\nabla f(X),\tau}=0$, it suffices to prove $q(v_0,v_1)=0$ for all $v_0\in\ker(\tanL_Y),\, v_1\in\tangent_{\bmanifold}(Y)$. To see this, we note from~\eqref{eq:imaQ_secfunda} and~\eqref{eq:2order_deri} that $q(\tilde{v},\tilde{v})=\innerp{\nabla\bar{f}(Y),u_{\tilde{v}}}+\innerp{\tilde{v},\nabla^2\bar{f}(Y)[\tilde{v}]}$, and thus the second-order stationarity of $Y$ reveals that $q(\tilde{v},\tilde{v})\ge 0$ for all $\tilde{v}\in\Btangent_{\bmanifold}(Y)$. Considering the direction $v_0 + t v_1$, we have $q(v_0+tv_1,v_0+tv_1) \ge 0$ for all $t$, which can be expanded as follows,
\begin{equation}\label{eq:expand_q}
    q(v_0+tv_1,v_0+tv_1) = q(v_0,v_0) + 2t q(v_0,v_1) + t^2 q(v_1,v_1)\ge 0.
\end{equation}
Based on $\tanL_Y(v_0)=0$, it holds that $\diff\phi_Y(\secfunda(v_0,v_0))+\diff^2\phi_Y[v_0,v_0]\in \ima(\tanQ_{Y,v_0})\subseteq\tangenttwo_{\manifold}(X;0)=\tangent_{\manifold}(X)$, where the equality holds from~\cite[Proposition 9]{giorgi2010overviewsectengent}. Therefore, we have 
\begin{equation*}
q(v_0,v_0)\!=\!\underbrace{\langle\nabla f(X),\diff\phi_Y(\secfunda(v_0,v_0) +\diff^2\phi_Y[v_0,v_0]\rangle}_{=0,\ \text{since}\ \nabla f(X)\perp \tangent_{\manifold}(X)} + \innerp{\underbrace{\tanL_Y(v_0)}_{=0^{}_{}},\nabla^2 f(X)[\tanL_Y(v_0)]} \!= 0,
\end{equation*}
which enforces $q(v_0,v_1)\ge 0$ to guarantee~\eqref{eq:expand_q}. Replacing $v_1$ with $-v_1$ and repeating the analysis show that $q(v_0,v_1) = 0$.
\end{proof}

\section{Proof of Lemma~\ref{lem:fivepart_convergence}}\label{sec:proof_Mor}
We prove Lemma~\ref{lem:fivepart_convergence} in section~\ref{sec:Mordukhovichnormalcone}, borrowing the idea from~\cite[Lemma 4.2]{olikier2022continuity}.

\vspace{1mm}

\noindent\textit{Proof of Lemma~\ref{lem:fivepart_convergence}}\ \ 
Let $\underline{X}_i\in\projection_{\manifold_s}(X_i)$ and $\underline{Y}_i\in\projection_{\manifold_{k-\ell}}(Y_i)$. Throughout the proof, we write $P_i^X:=\underline{X}_i\underline{X}_i^\dagger\in\R^{m\times m}$ and
$\Pi_i^X:=\underline{X}_i^\dagger \underline{X}_i\in\R^{n\times n}$ for the orthogonal projections onto
$\ima \underline{X}_i$ and $\ima \underline{X}_i^\top$, respectively; likewise,
$P_i^Y:=\underline{Y}_i\underline{Y}_i^\dagger$ and $\Pi_i^Y:=\underline{Y}_i^\dagger \underline{Y}_i$ represent the projections onto
$\ima \underline{Y}_i$ and $\ima \underline{Y}_i^\top$, respectively. Furthermore, we define the orthogonalization of a matrix $Z$ with full column rank by $\qf(Z):=Z(Z^\top Z)^{-1/2}$. In the analysis, if needed, we may take a subsequence, still indexed by $i$ for simplicity.

Noticing that $(\underline{X}_i,\underline{X}_i^\dagger)\to(X,X^\dagger)$ as $i\to\infty$, we set $U_i:=\qf(P_i^X\,U)\in\stiefel(m,s)$ and $V_i:=\qf(\Pi_i^X\,V)\in\stiefel(n,s)$ to obtain $U_i\to U$ and $V_i\to V$. Next, complete $U_i$ and $V_i$ to orthonormal bases
of $\ima X_i$ and $\ima X_i^\top$ by selecting
$\tilde U_{i\bot}\in\stiefel(m,\downr-s)$ and
$\tilde V_{i\bot}\in\stiefel(n,\downr-s)$, respectively, such that $\ima[U_i\ \tilde U_{i\bot}]=\ima X_i$ and $\ima[V_i\ \tilde V_{i\bot}]=\ima X_i^\top$. By compactness of the Stiefel manifolds, after taking a subsequence if necessary, there exist $\tilde U_\bot\in\stiefel(m,\downr-s)$ and
$\tilde V_\bot\in\stiefel(n,\downr-s)$ with
$\tilde U_{i\bot}\to \tilde U_\bot$ and $\tilde V_{i\bot}\to \tilde V_\bot$.

In a similar manner, we can set $U_{iY}:=\qf(P_i^Y\,U_Y)\in\stiefel(m,k-\ell)$,
$V_{iY}:=\qf(\Pi_i^Y\,V_Y)\in\stiefel(n,k-\ell)$, and find $\tilde U_{iY\bot}\in\stiefel(m,\ell-\upr)$ and $\tilde V_{iY\bot}\in\stiefel(n,\ell-\upr)$ such that $\ima[U_{iY}\ \tilde U_{iY\bot}]=\ima Y_i$ and $\ima[V_{iY}\ \tilde V_{iY\bot}]=\ima Y_i^\top$. Moreover, we have $[U_{iY}\ \tilde U_{iY\bot}]\to [U_Y\ \tilde{U}_{Y\bot}]$ and $[V_{iY}\ \tilde V_{iY\bot}]\to [V_Y\ \tilde{V}_{Y\bot}]$ for some $\tilde U_{Y\bot}\in\stiefel(m,\ell-\upr)$ and $\tilde V_{Y\bot}\in\stiefel(n,\ell-\upr)$. 

Passing to the limit $i\to\infty$ in the orthogonality $\ima Y_i\perp\ima X_i$ and
$\ima Y_i^\top\perp\ima X_i^\top$ gives the orthogonality of $[U\ \tilde U_\bot\ U_Y\ \tilde U_{Y\bot}]$ and $[V\ \tilde V_\bot\ V_Y\ \tilde V_{Y\bot}]$, respectively. Let $\breve U_\bot\in\stiefel\!\big(m,\,m-k+\upr-\downr\big)$ and
$\breve V_\bot\in\stiefel\!\big(n,\,n-k+\upr-\downr\big)$ be orthonormal bases of $(\ima[U\ \tilde U_\bot\ U_Y\ \tilde U_{Y\bot}])^\perp$ and $(\ima[V\ \tilde V_\bot\ V_Y\ \tilde V_{Y\bot}])^\perp$, respectively. Define
\[
\breve U_{i\bot}:=\qf\!\big((I_m-P_i^X-\tilde{U}_{i\bot}\tilde{U}_{i\bot}^\top-P_i^Y-\tilde{U}_{iY\bot}\tilde{U}_{iY\bot}^\top)\,\breve U_\bot\big)
\in\stiefel\!\big(m,\,m-k+\upr-\downr\big),
\]
\[
\breve V_{i\bot}:=\qf\!\big((I_n-\Pi_i^X-\tilde{V}_{i\bot}\tilde{V}_{i\bot}^\top-\Pi_i^Y-\tilde{V}_{iY\bot}\tilde{V}_{iY\bot}^\top)\,\breve V_\bot\big)
\in\stiefel\!\big(n,\,n-k+\upr-\downr\big).
\]
Using the continuity of the orthogonalization, we obtain $\breve U_{i\bot}\to
\breve U_\bot$ and $\breve V_{i\bot}\to
\breve V_\bot$ as $i\to\infty$. Note that $\breve U_{i\bot}$ are orthogonal to
both $\ima X_i$ and $\ima Y_i$; similarly, $\breve V_{i\bot}$ are orthogonal to
both $\ima X_i^\top$ and $\ima Y_i^\top$. Therefore, we have
\[
[U_i\ \tilde U_{i\bot}\ \breve U_{i\bot}\ \tilde U_{iY\bot}\ U_{iY}]
\in\orth(m),
\ \ \ \text{and}\ \ \ 
[V_i\ \tilde V_{i\bot}\ \breve V_{i\bot}\ \tilde V_{iY\bot}\ V_{iY}]
\in\orth(n)
\]
for each $i$. Letting $i\to\infty$ and collecting the limits of each component, we conclude that the sequences satisfy the required properties. \qed

\vspace{1mm}

\printbibliography

\end{document}